\newtheorem{thm}{{Theorem}}[section]
\newtheorem{cor}[thm]{{Corollary}}
\newtheorem{lemma}[thm]{{Lemma}}
\newtheorem{prop}[thm]{Proposition}
\theoremstyle{definition}
\newtheorem{defn}[thm]{Definition}
\newtheorem{rk}[thm]{{Remark}}
\renewcommand{\phi}{\varphi}
\newcommand{\N}{\mathbb N}
\newcommand{\Z}{\mathbb Z}
\newcommand{\NN}{\mathbb N}
\newcommand{\QQ}{\mathbb{Q}}
\newcommand{\ZZ}{\mathbb{Z}}
\newcommand{\VV}{\mathbb{V}}
\newcommand{\DD}{\mathbb{D}}
\newcommand{\BB}{\mathbb{B}}
\newcommand{\Zp}{\mathbb{Z}_p}
\newcommand{\Qp}{\mathbb{Q}_p}
\newcommand{\Cp}{\mathbb{C}_p}
\newcommand{\Fp}{\mathbb{F}_p}
\newcommand{\FF}{\mathbb{F}}
\newcommand{\LL}{\mathbb{L}}
\newcommand{\TT}{\mathbb{T}}
\newcommand{\cF}{\mathcal{F}}
\newcommand{\cO}{\mathcal{O}}
\newcommand{\cG}{\mathcal{G}}
\newcommand{\cM}{\mathcal{M}}
\newcommand{\cX}{\mathcal{X}}
\newcommand{\cP}{\mathcal{P}}
\newcommand{\cY}{\mathcal{Y}}
\newcommand{\cT}{\mathcal{T}}
\newcommand{\cE}{\mathcal{E}}
\newcommand{\cU}{\mathcal{U}}
\newcommand{\cV}{\mathcal{V}}
\newcommand{\ka}{\kappa}
\newcommand{\ra}{\rightarrow}
\newcommand{\lra}{\longrightarrow}
\newcommand{\hra}{\hookrightarrow}
\newcommand{\ur}{\mathrm{ur}}
\DeclareMathOperator{\Spec}{Spec}
\DeclareMathOperator{\Sp}{Sp}
\DeclareMathOperator{\Spa}{Spa}
\DeclareMathOperator{\Spf}{Spf}
\DeclareMathOperator{\Gal}{Gal}
\DeclareMathOperator{\Fil}{Fil}
\newcommand{\bk}{\overline{k}}
\newcommand{\xto}[1][]{\xrightarrow{#1}}
\newcommand{\simto}{
\xto[\sim]} 
\newcommand{\BBdrp}{\mathbb{B}_{\mathrm{dR}}^+}
\newcommand{\BBcrp}{\mathbb{B}_{\mathrm{cris}}^+}
\newcommand{\BBcr}{\mathbb{B}_{\mathrm{cris}}}
\newcommand{\AAcr}{\mathbb{A}_{\mathrm{cris}}}
\newcommand{\AAinf}{\mathbb{A}_{\mathrm{inf}}}
\newcommand{\BBinf}{\mathbb{B}_{\mathrm{inf}}}
\newcommand{\BBdr}{\mathbb{B}_{\mathrm{dR}}}
\newcommand{\proet}{\mathrm{pro\acute{e}t}}
\newcommand{\et}{\mathrm{\acute{e}t}}
\newcommand{\ho}{\widehat{\otimes}}
\newcommand{\dR}{\mathrm{dR}}
\newcommand{\cris}{\mathrm{cris}}
\newcommand{\ad}{\mathrm{ad}}
\newcommand{\an}{\mathrm{an}}
\numberwithin{equation}{subsection}
\begin{document}

\author{Fucheng Tan, Jilong Tong}
\title{Crystalline comparison isomorphisms in $p$-adic Hodge theory: the absolutely unramified case}

\date{}

\maketitle

\markright{Crystalline comparison isomorphism in $p$-adic Hodge theory}

\thispagestyle{plain}

\begin{abstract}

We construct the crystalline comparison isomorphisms for proper smooth formal schemes over an absolutely unramified base. Such isomorphisms hold for \'etale cohomology with nontrivial coefficients, as well as  in the relative setting, i.e. for proper smooth morphisms of smooth formal schemes. The proof is formulated in terms of the pro-\'etale topos introduced by Scholze, and uses his primitive comparison theorem for the structure sheaf on the pro-\'etale site. Moreover, we need to prove the Poincar\'e lemma for crystalline period sheaves, for which we adapt the idea of Andreatta and Iovita. Another ingredient for the proof is the geometric acyclicity of crystalline period sheaves, whose computation is due to Andreatta and Brinon.
\end{abstract}

\tableofcontents

\section*{Notation}

\begin{itemize}

\item Let $p$ be a prime number.

\item Let $k$ be a $p$-adic field, i.e., a discretely valued complete nonarchimedean extension of $\Qp$, whose residue field $\kappa$ is a perfect field of characteristic $p$.  (We often assume $k$ to be  absolutely unramified in this paper.)

\item Let  $\bk$ be a fixed algebraic closure of $k$.  Set $\Cp:=\widehat{\bk}$ the $p$-adic completion of $\bk$. The $p$-adic valuation $v$ on $\Cp$ is normalized so that $v(p)=1$. Write the absolute Galois group $\Gal(\bk/k)$ as $G_k$.

\item For a (commutative unitary)  ring $A$, let $A\langle T_1,\ldots, T_d\rangle$ be the PD-envelope of the polynomial ring $A[T_1,\ldots, T_d]$ with respect to the ideal $(T_1,\ldots, T_d)\subset A[T_1,\ldots, T_d]$ (with the requirement that the PD-structure be compatible with the one on the ideal $(p)$) and then let $A\{\langle T_1,\ldots, T_d\rangle\}$ be its $p$-adic completion.

\item We use the symbol $\simeq$ to denote canonical isomorphisms, and sometimes quasi-isomorphisms.  The symbol $\approx$ is frequently used for almost isomorphisms with respect to some almost-setting that will be fixed later.

 \end{itemize}

\section{Introduction}

Let $k$ be a discretely valued complete nonarchimedean field over $\Qp$, which is absolutely unramified.

Consider a rigid analytic variety over $k$, or more generally  an adic space $X$ over $\Spa(k,\cO_k)$ which admits a proper smooth  formal model $\cX$ over $\Spf{\cO_k}$, whose special fiber is denoted by $\cX_0$. Let $\LL$ be a lisse $\Z_p$-sheaf on $X_{\et}$.
On the one hand, we have the $p$-adic \'etale cohomology $H^i(X_{\bk}, \LL)$ which is a finitely generated $\Zp$-module carrying a continuous $G_k=\Gal(\bk/k)$-action. On the other hand,  one may consider the crystalline cohomology $H_{\cris}^i(\cX_0/\cO_k, \cE)$ with the coefficient $\cE$ being a filtered (convergent) $F$-isocrystal on $\cX_0/\cO_k$. At least in the case that $X$ comes from a scheme and the coefficients $\LL$ and $\cE$ are trivial, it was Grothendieck's problem of the \emph{mysterious functor} to find a comparison between the two cohomology theories. This problem was later formulated as the \emph{crystalline conjecture} by Fontaine \cite{Fon82}.

In the past decades, the crystalline conjecture was proved in various generalities,  by  Fontaine-Messing, Kato, Tsuji, Niziol,  Faltings, Andreatta-Iovita,  Beilinson and Bhatt. Among them, the first proof for the whole conjecture was given by Faltings \cite{Fal}. Along this line, Andreatta-Iovita introduced   the Poincar\'e lemma for the crystalline period sheaf $\BBcr$ on the Faltings site, a sheaf-theoretic   generalization of  Fontaine's period ring $B_{\cris}	$. Both the approach of Fontaine-Messing and that of Faltings-Andreatta-Iovita use an intermediate topology, namely the syntomic topology and the Faltings topology, respectively. The approach of Faltings-Andreatta-Iovita, however, has the advantage that it works for nontrivial coefficients $\LL$ and $\cE$.

More recently, Scholze \cite{Sch} introduced the pro-\'etale site $X_{\proet}$, which allows him to construct the de Rham comparison isomorphism for any proper smooth adic space over  a discretely valued complete nonarchimedean field over  $\Qp$, with coefficients being lisse $\Z_p$-sheaves on $X_{\proet}$. (The notion of lisse $\Zp$-sheaf on $X_{\et}$ and that   on $X_{\proet}$ are equivalent.) Moreover, his approach is direct and flexible enough to attack the relative version of the de Rham comparison isomorphism, i.e. the comparison for a proper smooth morphism between two smooth adic spaces.

It seems  that to deal with nontrivial coefficients in a comparison isomorphism, one is forced to work over analytic bases. For the generality and some technical advantages provided by the pro-\'etale topology, we adapt Scholze's approach to give a proof of the crystalline conjecture for    proper smooth formal schemes over $\Spf{\cO_k}$, with nontrivial coefficients, in both  absolute and relative settings. Meanwhile, we point out that the method adopted in our proof is rather different from that in \cite{BMS}, in which the authors develop a new cohomology that allows them  to prove a strong integral comparison theorem (for trivial coefficients).

Let us explain our construction of crystalline comparison isomorphism (in the absolutely unramified case) in more details.
First of all, Scholze is able to prove the finiteness of the \'etale cohomology of  a proper smooth  adic space over $\mathbb C_p=\widehat{\bk}$ with coefficient $\LL'$ being an $\FF_p$-local system. Consequently, he shows the following ``primitive comparison", an almost  (with respect to the maximal ideal of $\cO_{\mathbb C_p}$) isomorphism
\[
H^i(X_{\mathbb C_p,\et}, \LL')\otimes_{\Fp} \cO_{\mathbb C_p}/p\stackrel{\approx}{\lra} H^i(X_{\mathbb C_p,\et}, \LL'\otimes_{\Fp} \cO_{X}^+/p).
\]
With some more efforts, one can produce the primitive comparison isomorphism in the crystalline case:
\begin{thm}[see Theorem \ref{inout}] For $\LL$ a lisse $\Zp$-sheaf on $X_{\et}$, we have a functorial isomorphism of $B_{\cris}$-modules
\begin{equation}H^i(X_{\bk,\et}, \LL)\otimes_{\Zp} B_{\cris} \simto H^i(X_{\bk,\proet}, \LL\otimes \mathbb{B}_{\cris}).\end{equation}
compatible with $G_k$-action, filtration, and Frobenius.
\end{thm}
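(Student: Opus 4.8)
The plan is to deduce the theorem from Scholze's primitive comparison for $\hOXp/p$ recalled above, by a d\'evissage along the integral crystalline period sheaf $\AAcr$, followed by inverting $p$ and $t$. On $X_{\bk,\proet}$ one has $\BBcrp=\AAcr[1/p]$ and $\BBcr=\BBcrp[1/t]$, with $\LL\otimes\AAcr=\varprojlim_n(\LL\otimes\AAcr/p^n)$. Since $R\Gamma(X_{\bk,\proet},-)$ commutes with inverting $t$ and with the filtered colimits occurring below, and since almost isomorphisms (for the almost structure implicit in Scholze's primitive comparison) become honest isomorphisms after inverting $p$, it suffices to prove, as an almost isomorphism,
\begin{equation}\label{eq:primAcris} H^i(X_{\bk,\proet},\,\LL\otimes\AAcr)\;\approx\;H^i(X_{\bk,\et},\LL)\otimes_{\Zp}\AAcr, \end{equation}
and then to invert $p$ and $t$, which is harmless for all the structures involved.

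To prove \eqref{eq:primAcris} I would work at finite level, on $\AAcr/p^n$. The structural input is the explicit description of $\AAcr$ as the $p$-adically completed divided-power envelope of $\AAinf=W(\widehat{\cO}_X^{+\flat})$ along $\ker\theta$ (the computations of Andreatta--Brinon recalled in this paper), together with the passage via tilting from $\AAinf$- to $\hOXp/p$-coefficients: these show that $\AAcr/p^n$ is built from the single sheaf $\hOXp/p$ by successive extensions and filtered colimits, the colimits arising both from the divided-power degree and from the $[\varpi^\flat]$-adic structure of the tilt. Feeding Scholze's primitive comparison, in its derived finite-level form
\[ R\Gamma(X_{\bk,\proet},\LL\otimes\hOXp/p^m)\;\approx\;R\Gamma(X_{\bk,\et},\LL)\otimes^{\mathrm L}_{\Zp}\cO_{\Cp}/p^m \]
(a formal consequence of the stated mod-$p$ version and of the finiteness of \'etale cohomology), through this d\'evissage --- and using the almost acyclicity of $\hOXp$ on affinoid perfectoids together with the properness of $X$ to commute $R\Gamma(X_{\bk,\proet},-)$ past the colimits --- gives the $\AAcr/p^n$-version of \eqref{eq:primAcris}. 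Passing to the inverse limit over $n$, and using that $R\Gamma(X_{\bk,\et},\LL)$ is a perfect complex of $\Zp$-modules (Scholze's finiteness theorem), so that the limit commutes past $\otimes_{\Zp}\AAcr$ and carries no $\varprojlim^1$ term, yields \eqref{eq:primAcris}; inverting $p$ and then $t$ produces the desired isomorphism of $\BBcr$-modules.

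The remaining compatibilities are formal. Every sheaf and morphism above descends the $G_k$-action from $X_{\bk}$, so \eqref{eq:primAcris} and its consequences are $G_k$-equivariant. The comparison map is $\BBcr$-linear and carries $H^i(X_{\bk,\et},\LL)\otimes 1$ into the part of the target fixed by the crystalline Frobenius $\vp$ and contained in $\Fil^0$, because $1\in\BBcr$ is $\vp$-fixed and lies in $\Fil^0$; hence the map intertwines $1\otimes\vp$ on the two sides. Finally, $\BBcr$ carries the filtration induced by the filtered embedding $\BBcr\hra\BBdr$, and compatibility of the comparison with filtrations reduces, through this embedding, to Scholze's filtered comparison for $\BBdr$-coefficients together with the compatibility of the crystalline and de Rham comparison maps.

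The hard part is the d\'evissage in the second step. Unlike $\BBdrp/\Fil^n$, which is a \emph{finite} iterated extension of Tate twists of $\hOX$, the sheaf $\AAcr/p^n$ is an \emph{infinite} filtered colimit of $\hOXp/p$-type sheaves --- the divided-power structure of $\AAcr$ contributes infinitely many new generators $\gamma_{p^j}(\xi)$. Commuting $R\Gamma(X_{\bk,\proet},-)$ past this colimit on the proper space $X$ is not automatic on the pro-\'etale site, and this is precisely where the almost mathematics, the almost acyclicity of affinoid perfectoids, and Scholze's finiteness theorem are used essentially. Once this colimit and the $p$-adic inverse limit are under control, inverting $p$ and $t$ is routine.
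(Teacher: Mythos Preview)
Your outline is correct and follows the same overall strategy as the paper: d\'evissage from Scholze's primitive comparison up to $\AAcr$, then invert $p$ and $t$. The execution, however, differs from the paper's in two places worth noting.

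First, the paper does not start from the $\hOXp/p$ comparison directly; it takes as input the $\AAinf$-comparison already proved in \cite[Theorem 8.4]{Sch13}, namely the almost isomorphism $H^i(X_{\bk},\LL)\otimes_{\Zp}A_{\inf}\approx H^i(X_{\bk},\LL\otimes\AAinf)$. This packages away the passage through the tilt, which involves an \emph{inverse} limit and is not a filtered colimit of copies of $\hOXp/p$ as your description suggests; your sentence ``$\AAcr/p^n$ is built from the single sheaf $\hOXp/p$ by successive extensions and filtered colimits'' is imprecise at exactly this point. From $\AAinf$ the paper then passes to $\AAcr^0$ by a concrete device: set $\widetilde{\AAcr^0}=\AAinf[X_0,X_1,\dots]/(X_i^p-a_iX_{i+1})$, which is free over $\AAinf$ so that the comparison for it is immediate, and use the short exact sequence $0\to\widetilde{\AAcr^0}\xrightarrow{X_0-\xi}\widetilde{\AAcr^0}\to\AAcr^0\to 0$. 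The $p$-adic completion step is handled by explicit $p^N$-torsion bookkeeping (choose $N$ killing $\LL_{\rm tor}$ and all $H^i(X_{\bk},\LL)_{\rm tor}$, bound kernels and cokernels by $p^{2N}$, etc.), rather than by appealing abstractly to perfectness of $R\Gamma(X_{\bk,\et},\LL)$; your route via perfectness would also work, but the paper's explicit bounds are what make the almost-to-honest passage transparent.

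Second, for filtration compatibility the paper argues directly on graded pieces: since $\mathrm{gr}^r\BBcrp|_{X_{\bk}}\simeq\widehat{\cO_X^+}[1/p]\cdot\xi^r$, strictness reduces to showing $H^i(X_{\bk},\LL)\otimes_{\Zp}\Cp\simeq H^i(X_{\bk},\LL\otimes\widehat{\cO}_X)$, which is proved as a separate lemma by the same $p^N$-torsion method. Your detour through the filtered embedding $\BBcr\hookrightarrow\BBdr$ and Scholze's de Rham comparison would also work, but it presupposes the compatibility of the crystalline and de Rham comparison maps, which has not been established at this point in the paper; the direct check on gradeds is self-contained and cheaper. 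The Frobenius compatibility is, as you say, formal: the map is induced by $\widehat{\ZZ}_p\hookrightarrow\BBcrp$, which is $\varphi$-equivariant for the trivial Frobenius on the source.
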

It seems to us that such a result alone may have interesting arithmetic applications, since it works for any lisse $\Zp$-sheaves, without the crystalline condition needed for comparison theorems.

 Following Faltings,  we say a lisse $\ZZ_p$-sheaf $\LL$  on  the pro-\'etale site $X_{\proet}$  is \emph{crystalline} if there exists a filtered $F$-isocrystal $\cE$ on $\cX_0/\cO_k$ together with an isomorphism of $\cO\BBcr$-modules
\begin{equation}\label{associated1}
\cE\otimes_{\cO_X^{\ur}} \cO\BBcr\simeq \LL\otimes_{\ZZ_p}\cO\BBcr,
\end{equation}
which is compatible with connection, filtration and Frobenius. Here, $\cO_X^{\ur}$ is the pullback to $X_{\proet}$  of $\cO_{\cX_{\et}}$ and $\cO\BBcr$ is the crystalline period sheaf of $\cO_X^{\ur}$-module with connection $\nabla$ such that $\cO\BBcr^{\nabla=0}=\BBcr$.  When this holds, we say the lisse $\ZZ_p$-sheaf $\LL$ and the filtered $F$-isocrystal $\cE$ are \emph{associated}.

We illustrate the  construction of  the crystalline comparison isomorphism briefly.
Firstly, we prove  a Poincar\'e lemma for the crystalline period sheaf $\mathbb{B}_{\cris}$ on  $X_{\proet}$. It follows from the Poincar\'e lemma (Proposition \ref{poincare}) that the natural morphism from $\BBcr$  to the de Rham complex $DR(\cO\BBcr)$ of $\cO\BBcr$ is a quasi-isomorphism, which is compatible with  filtration and Frobenius.
When $\LL$ and $\cE$ are associated, the natural morphism
\[
\LL\otimes_{\ZZ_p}DR(\cO\BBcr) \rightarrow DR(\cE)\otimes \cO\BBcr
\]
is an isomorphism compatible with Frobenius  and filtration. Therefore we find  a quasi-isomorphism
\begin{equation*}
\LL\otimes_{\ZZ_p}\BBcr\simeq   DR(\cE)\otimes\cO\BBcr.
\end{equation*}
From this we deduce
\begin{equation*}
R\Gamma(X_{\bk,\proet}, \LL\otimes_{\ZZ_p} \BBcr)\simto R\Gamma(X_{\bk,\proet}, DR(\cE)\otimes \cO\mathbb B_{\cris} ).
\end{equation*}
Via the natural  morphism of topoi $\overline{w}: X_{\bk, \proet}^{\sim}\ra \mathcal X_{\et}^{\sim}$, one has
\begin{equation*}
R\Gamma(X_{\bk, \proet},DR(\cE)\otimes\cO\BBcr)\simeq R\Gamma(\cX_{\et},DR(\cE)\ho_{\cO_k} B_{\cris}))
\end{equation*}
for which  we have used the fact that the natural morphism \[\cO_{\cX}\widehat{\otimes}_{\cO_k}B_{\cris}\to R\overline w_{\ast}\cO\BBcr\] is an isomorphism (compatible with extra structures), which is  a result of Andreatta-Brinon.

Combining the isomorphisms above, we obtain the desired crystalline comparison isomorphism.
\begin{thm}[see Theorem \ref{thm.comp}]\label{abs}  Let $\LL$ be a lisse $\ZZ_p$-sheaf on $X$ and $\mathcal E$ be a filtered $F$-isocrystal on $\cX_0/\cO_k$ which  are associated as in (\ref{associated1}). Then there is a natural isomorphism of $B_{\cris}$-modules
\[
H^i(X_{\bk,\et}, \LL)\otimes B_{\cris} \simto H_{\cris}^i(\cX_0/\cO_k,\mathcal E)\otimes_{k} B_{\cris}
\]which is compatible with $G_k$-action, filtration and Frobenius.
\end{thm}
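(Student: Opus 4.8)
The plan is to concatenate the three isomorphisms sketched in the introduction, keeping track of the $G_k$-action, the filtration and the Frobenius at each stage. I would begin from the primitive comparison isomorphism of Theorem~\ref{inout},
\[
H^i(X_{\bk,\et},\LL)\otimes_{\Zp}B_{\cris}\;\simto\;H^i(X_{\bk,\proet},\LL\otimes\BBcr),
\]
which is already compatible with all the structures in question; this reduces the theorem to identifying the right-hand side with $H_{\cris}^i(\cX_0/\cO_k,\cE)\otimes_k B_{\cris}$.

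For the first of the two remaining identifications I would use the Poincar\'e lemma for $\BBcr$ (Proposition~\ref{poincare}): the natural map $\BBcr\to DR(\cO\BBcr)$ is a quasi-isomorphism of complexes of sheaves on $X_{\proet}$, compatible with filtration and Frobenius. Tensoring with the lisse sheaf $\LL$ and invoking the hypothesis that $\LL$ and $\cE$ are associated --- i.e.\ the isomorphism $\cE\otimes_{\cO_X^{\ur}}\cO\BBcr\simeq\LL\otimes_{\Zp}\cO\BBcr$ respecting connection, filtration and Frobenius --- one gets an isomorphism of de Rham complexes $\LL\otimes_{\Zp}DR(\cO\BBcr)\simeq DR(\cE)\otimes\cO\BBcr$, hence a quasi-isomorphism $\LL\otimes_{\Zp}\BBcr\simeq DR(\cE)\otimes\cO\BBcr$ in the filtered, Frobenius-equivariant derived category over $X_{\bk,\proet}$. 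Applying $R\Gamma(X_{\bk,\proet},-)$ then identifies $R\Gamma(X_{\bk,\proet},\LL\otimes_{\Zp}\BBcr)$ with $R\Gamma(X_{\bk,\proet},DR(\cE)\otimes\cO\BBcr)$.

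The second identification is the geometric one. Using the morphism of topoi $\overline{w}\colon X_{\bk,\proet}^{\sim}\to\cX_{\et}^{\sim}$ and the Andreatta-Brinon acyclicity result that $\cO_{\cX}\ho_{\cO_k}B_{\cris}\to R\overline{w}_{*}\cO\BBcr$ is an isomorphism compatible with connection, filtration and Frobenius, I would push the de Rham complex down to $\cX_{\et}$: since $DR(\cE)$ is a bounded complex of coherent $\cO_{\cX}[1/p]$-modules, the projection formula together with the Leray spectral sequence for $\overline{w}$ gives
\[
R\Gamma(X_{\bk,\proet},DR(\cE)\otimes\cO\BBcr)\;\simeq\;R\Gamma\big(\cX_{\et},DR(\cE)\ho_{\cO_k}B_{\cris}\big).
\]
Finally, since $\cX$ is proper and $DR(\cE)$ computes the de Rham cohomology of the filtered $F$-isocrystal $\cE$, which by the de Rham-crystalline comparison for isocrystals on the smooth proper special fiber computes $H_{\cris}^{\bullet}(\cX_0/\cO_k,\cE)$, and since $k$ is a field (so $B_{\cris}$ is flat over $k$ and the completed tensor product commutes with the finite-dimensional cohomology), the right-hand side is $H_{\cris}^i(\cX_0/\cO_k,\cE)\otimes_k B_{\cris}$. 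Concatenating the three isomorphisms yields the theorem, with the filtration on the crystalline side induced by the Hodge filtration of $DR(\cE)$ and the Frobenius coming from the $F$-isocrystal structure together with the Frobenius of $B_{\cris}$.

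Granting Proposition~\ref{poincare} and the Andreatta-Brinon acyclicity as quoted, the step I expect to be the main obstacle is the bookkeeping of the \emph{filtration} through the last two identifications. The filtration on $\cO\BBcr$, hence on $DR(\cO\BBcr)$, is a decreasing filtration by $\BBdrp$-submodules built out of $t$ together with the auxiliary period coordinates, and one must check that $R\overline{w}_{*}$ carries $\Fil^{\bullet}\big(DR(\cE)\otimes\cO\BBcr\big)$ to the tensor-product filtration $\Fil^{\bullet}\big(DR(\cE)\ho_{\cO_k}B_{\cris}\big)$, and that the resulting filtered spectral sequence is strict enough that passing to $\Fil^{\bullet}$ commutes with taking cohomology. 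This requires the local computation of the graded pieces of $\cO\BBcr$ and an accompanying strictness statement, and is where most of the work lies; by contrast the $G_k$- and Frobenius-equivariance are essentially formal once the isomorphisms have been constructed.
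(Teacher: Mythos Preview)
Your outline is essentially the paper's proof: Theorem~\ref{thm.comp} is stated as the composition of Theorem~\ref{inout} with Theorem~\ref{main1}, and the latter is proved exactly along the lines you sketch (Poincar\'e lemma, associatedness, then Corollary~\ref{quasicor} and Lemma~\ref{quasi} to descend to $\cX_{\et}$ and compare with $DR(\cE)\widehat{\otimes}_k B_{\cris}$, finally invoking \eqref{eq.BO}).

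One correction to your assessment of where the work lies. You say the Frobenius compatibility is ``essentially formal'', but in the paper this is in fact the most delicate point of the proof of Theorem~\ref{main1}. The Frobenius on $\cO\BBcr$ is only defined locally on $\cX$, depending on a choice of Frobenius lift $\sigma$ (see \S\ref{localfrob}), and the Frobenius on $H^i_{\cris}(\cX_0/\cO_k,\cE)$ comes from an $F$-crystal $\cM$ with $\cE\simeq\cM^{\rm an}(n)$, so carries a built-in twist by $p^{-n}$. The paper checks compatibility by restricting the inverse of \eqref{eq.iso} to $H^i_{\cris}(\cX_0/\cO_k,\cE)$, passing to a small open $\cU$ with a Frobenius lift, and verifying the relation $\varphi\circ\theta=\tfrac{1}{p^n}\theta\circ\psi$ there; this uses that on such $\cU$ the quasi-isomorphism of Corollary~\ref{quasicor} is Frobenius-compatible and that $\varphi_{\cU}$ on $\cE$ corresponds to $\psi_{\cU}/p^n$ on $\cM[1/p]$. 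By contrast, the filtration bookkeeping you worry about is handled rather cleanly by working in the filtered derived category and checking on graded pieces (reducing to the flatness of $\cO_{\mathbb C_p}$ over $\cO_k$), as in the proof of Theorem~\ref{main1}.
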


After obtaining a refined version of the acyclicity of crystalline period sheaf $\cO\BBcr$ in \S\ \ref{acy}, we achieve the crystalline comparison in the relative setting, which reduces to Theorem \ref{abs} when $\cY=\Spf \cO_k$:

\begin{thm}[see Theorem \ref{thm.relativecomp}]
Let  $f\colon \cX\to \cY$ be a proper smooth morphism of smooth formal schemes over $\Spf \cO_k$, with $f_k\colon X\to Y$ the generic fiber  and  $f_{\cris}$  the morphism between the crystalline topoi. Let $\LL, \cE$ be as in Theorem \ref{abs}. Suppose that $R^if_{k*}\LL$ is a lisse  $\ZZ_p$-sheaf on $Y$. Then it is crystalline and is associated to the filtered $F$-isocrystal $R^if_{\cris \ast}\cE$.
 \end{thm}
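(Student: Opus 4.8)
Unwinding the definition of ``associated'' from \eqref{associated1}, what must be shown is that on $Y_{\proet}$ there is an isomorphism of $\OBcr$-modules
\[ R^if_{\cris*}\cE\otimes_{\cO_Y^{\ur}}\OBcr\;\simto\;R^if_{k*}\LL\otimes_{\ZZ_p}\OBcr \]
compatible with connection, filtration and Frobenius, once $R^if_{\cris*}\cE$ has been equipped with its Gauss--Manin connection, Frobenius and Hodge filtration, which make it a filtered $F$-isocrystal on $\cY_0/\cO_k$. The plan is to relativize the proof of Theorem~\ref{abs}: run the same chain of isomorphisms with the base $\Spf\cO_k$ replaced by $\cY$, apply $Rf_{\proet*}$ (resp.\ $Rf_{\cris*}$) throughout, and use, in place of the Andreatta--Brinon identity $\cO_{\cX}\ho_{\cO_k}B_{\cris}\simto R\overline w_*\OBcr$ employed there, its refined relative form established in \S\ref{acy}.

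First, since $\LL$ and $\cE$ are associated over $\cX_0/\cO_k$, combining \eqref{associated1} with Proposition~\ref{poincare}, applied to the crystalline period sheaves $\BBcr^{\cX/\cY}$ and $\OBcr^{\cX/\cY}$ of $\cX$ relative to $\cY$ on $X_{\proet}$, yields a quasi-isomorphism
\[ \LL\otimes_{\ZZ_p}\BBcr^{\cX/\cY}\;\simto\;DR_{\cX/\cY}(\cE)\otimes_{\cO_X^{\ur}}\OBcr^{\cX/\cY} \]
on $X_{\proet}$, compatible with filtration and Frobenius, where $DR_{\cX/\cY}$ is the de Rham complex relative to $\cY$ and one uses that the relative connection on $\OBcr^{\cX/\cY}$ has horizontal sections $\BBcr^{\cX/\cY}$. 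Applying $Rf_{\proet*}$ to the right-hand side, the refined geometric acyclicity of the crystalline period sheaf from \S\ref{acy} --- the relative counterpart of the Andreatta--Brinon computation --- together with the relative crystalline-to-de Rham comparison for the $F$-isocrystal $\cE$, identifies
\[ Rf_{\proet*}\bigl(DR_{\cX/\cY}(\cE)\otimes_{\cO_X^{\ur}}\OBcr^{\cX/\cY}\bigr)\;\simeq\;\bigl(Rf_{\cris*}\cE\bigr)\otimes_{\cO_Y^{\ur}}\OBcr \]
on $Y_{\proet}$, compatibly with connection, filtration and Frobenius.

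Next, applying $Rf_{\proet*}$ to the left-hand side, one needs the relative primitive comparison for the crystalline period sheaf,
\[ Rf_{\proet*}\bigl(\LL\otimes_{\ZZ_p}\BBcr^{\cX/\cY}\bigr)\;\simeq\;\bigl(Rf_{k*}\LL\bigr)\otimes_{\ZZ_p}\BBcr, \]
which is exactly where the hypothesis that $R^if_{k*}\LL$ is lisse enters: as in Scholze's treatment of the relative de Rham comparison, this is reduced to the primitive comparison with $\hOX/p$-coefficients in families --- Scholze's finiteness and proper base change results on the pro-\'etale site --- together with the acyclicity computations, and it forces the relevant Leray-type spectral sequences to degenerate after tensoring with the period sheaf, so that $R^if_{\proet*}(\LL\otimes_{\ZZ_p}\BBcr^{\cX/\cY})=(R^if_{k*}\LL)\otimes_{\ZZ_p}\BBcr$. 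Combining the three isomorphisms and extending scalars from $\BBcr$ to $\OBcr$ on $Y_{\proet}$ gives the desired isomorphism; the connection is tracked through the relative Poincar\'e lemma, the filtration and Frobenius through the earlier steps, which yields all the required compatibilities and hence the asserted association.

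The crux is the relative primitive comparison for $\BBcr$ used above: it genuinely uses properness of $f$ and the lisse hypothesis on $R^if_{k*}\LL$ and is not a formal consequence of the absolute statements. Coupled with it, making the entire relative chain functorial and compatible with connection, filtration and Frobenius --- in particular arranging the refined relative acyclicity of \S\ref{acy} in a filtered, Frobenius-equivariant form, and comparing the relative de Rham complex of $\OBcr^{\cX/\cY}$ with the one computing $Rf_{\cris*}\cE$ --- is the principal technical burden.
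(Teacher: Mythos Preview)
Your overall strategy matches the paper's: relativize Theorem~\ref{abs} by combining a relative Poincar\'e lemma, the refined acyclicity of \S\ref{acy}, and a relative primitive comparison (these are Propositions~\ref{relativepoincare}, \ref{main1relative}, and \ref{part1relative} respectively). However, two points need correction.

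First, there are no separate ``relative'' period sheaves $\BBcr^{\cX/\cY}$, $\OBcr^{\cX/\cY}$ in this setup. The relative Poincar\'e lemma (Proposition~\ref{relativepoincare}) is stated for the \emph{absolute} sheaf $\cO\BB_{\cris,X}^+$ equipped with the relative connection $\nabla_{X/Y}$, and the horizontal sections are $\BB_{\cris,X}^+\widehat{\otimes}_{f_k^{-1}\BB_{\cris,Y}^+}f_k^{-1}\cO\BB_{\cris,Y}^+$, not a stand-alone $\BBcr^{\cX/\cY}$. Your three displayed isomorphisms are not consistent under either reading of your notation: if $\BBcr^{\cX/\cY}$ denotes this tensor product, then your primitive comparison should land in $(Rf_{k*}\LL)\otimes\cO\BB_{\cris,Y}$, not $(Rf_{k*}\LL)\otimes\BBcr$; if it denotes $\BB_{\cris,X}$, then it is not the kernel of $\nabla_{X/Y}$. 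The paper keeps these pieces straight by proving $(R^if_{k*}\LL)\otimes\BB_{\cris,Y}\simto R^if_{k*}(\LL\otimes\BB_{\cris,X})$ and $R^if_{k*}(\LL\otimes\BB_{\cris,X}\widehat{\otimes}f_k^{-1}\cO\BB_{\cris,Y})\simto w_{\cY}^{-1}(R^if_{\cris *}\cE)\otimes\cO\BB_{\cris,Y}$ separately, and then bridges them with a further projection-formula step $R^if_{k*}(\LL\otimes\mathbb A_{\cris,X})\widehat{\otimes}\cO\mathbb A_{\cris,Y}\simto R^if_{k*}(\LL\otimes\mathbb A_{\cris,X}\widehat{\otimes}f_k^{-1}\cO\mathbb A_{\cris,Y})$, using that $\cO\mathbb A_{\cris,Y}/p^n$ is locally free over $\mathbb A_{\cris,Y}/p^n$. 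Your ``extending scalars from $\BBcr$ to $\OBcr$'' gestures at this last step but does not close the gap created by the inconsistent notation.

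Second, you omit a genuine preliminary: one must check that $R^if_{\cris*}\cE$ is actually a \emph{filtered} convergent $F$-isocrystal, i.e., that the Hodge filtration is by locally direct summands. The paper handles this by first invoking Proposition~\ref{prop.crisdr} (so $\LL$ is de Rham with associated filtered module $\cE$) and then applying Scholze's degeneration of the relative Hodge-to-de Rham spectral sequence \cite[Theorem 8.8]{Sch13}, which makes each graded piece locally free. Without this, the assertion that $R^if_{k*}\LL$ is associated to the filtered $F$-isocrystal $R^if_{\cris*}\cE$ is not well-posed.
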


\textbf{Acknowledgments.}
The authors are deeply indebted to Andreatta, Iovita and Scholze for the works \cite{AI} and \cite{Sch}. They wish to express their gratitude to  Kiran Kedlaya for his interest in this project, and to the referee for the detailed comments. During the preparation of this work, the first named author was supported by the Research Institute for Mathematical
Sciences, a Joint Usage/Research Center located in Kyoto University, and the second named author benefited from the Support Project of High-level Teachers in Beijing Municipal Universities in the Period of 13th Five-year Plan.

\section{Crystalline period sheaves}

Let $k$ be a discretely valued nonarchimedean extension of $\Qp$, with $\kappa$ its residue field.
Let $X$ be a locally noetherian adic space over $\mathrm{Spa}(k,\cO_k)$.  For the fundamentals on the pro-\'etale site $X_{\proet}$, we refer to \cite{Sch}.

The following terminology and notation will be used frequently throughout the paper. We shall fix once for all an algebraic closure $\bk$ of $k$, and consider $X_{\bk}:=X\times_{\Spa(k, \cO_k)}\Spa(\bk, \cO_{\bk})$ as an object of $X_{\proet}$ (see the paragraph after the proof of Proposition 3.13 in \cite{Sch}). As in  \cite[Definition 4.3]{Sch}, an object $U\in X_{\proet}$ lying above $X_{\bk}$ is called an \emph{affinoid perfectoid} (lying above $X_{\bk}$) if $U$ has a pro-\' etale presentation $U=\varprojlim U_i \to X$ by affinoids $U_i=\Spa(R_i,R_i^+)$ above $X_{\bk}$ such that, with $R^+$ the $p$-adic completion of $\varinjlim R_i^+$ and $R=R^+[1/p]$,  the pair $(R,R^+)$ is a perfectoid affinoid $(\widehat{\bk}, \cO_{\widehat{\bk}})$-algebra. Write $\widehat{U}=\Spa(R,R^+)$. By \cite[Proposition 4.8, Lemma 4.6]{Sch}, the set of affinoid perfectoids lying above $X_{\bk}$ of $X_{\proet}$ forms a basis for the topology.

\subsection{Period sheaves and their acyclicities} Following \cite{Sch}, let
\[
\nu\colon X_{\proet}^{\sim}\longrightarrow X_{\et}^{\sim}
\]
be the morphism of topoi, which, on the underlying sites, sends an \'etale morphism $U\to X$ to the pro-\'etale morphism from $U$ (viewed as a constant projective system) to $X$. Consider $\cO_X^+=\nu^{-1}\cO_{X_{\et}}^+$ and $\cO_X=\nu^{-1}\cO_{X_{\et}}$,  the (uncompleted) structural sheaves on $X_{\proet}$. More concretely, for $U=\varprojlim U_i$ a qcqs (quasi-compact and quasi-separated) object of $X_{\proet}$, one has  $\cO_X(U)=\varinjlim \cO_{X}(U_i)=\varinjlim \cO_{X_{\et}}(U_i)$ (\cite[Lemma 3.16]{Sch}). Set
\[
\widehat{\cO}_{X}^+:=\varprojlim_{n} \cO_X^+/p^n,\quad \widehat{\cO}_X:=\widehat{\cO}_X^+[1/p],\quad \textrm{and}\quad \cO_X^{\flat +}:=\varprojlim_{x\mapsto x^p}\cO_X^+/p.
\]
For $U\in X_{\proet}$ an affinoid perfectoid lying above $X_{\bk}$ with $\widehat{U}=\Spa(R,R^+)$, by \cite[Lemmas 4.10, 5.10]{Sch}, we have
\begin{equation*}
\widehat{\cO}_X^+(U) =R^+, \quad \widehat{\cO}_X(U)=R, \quad \textrm{and}\quad \cO_X^{\flat+}(U) = R^{\flat+}:=\varprojlim_{x\mapsto x^p}R^+/p.
\end{equation*}
Denote
\[
R^{\flat+}\ra R^+,\quad x=(x_0, x_1, \cdots)\mapsto x^{\sharp}:=\lim_{n\ra \infty}\widehat{x}_n^{p^n},
\]
for $\widehat{x}_{n}$ any lifting from $R^{+}/p$ to $R^+$. We have the multiplicative bijection induced by projection $R^+\ra R^+/p$:
\[
\varprojlim_{x\mapsto x^p}R^+\simto R^{\flat+},
\]
whose inverse sends $x\in R^{\flat+}$ to $(x^{\sharp},(x^{1/p})^{\sharp}, \cdots)$. Put $\mathbb A_{\inf}:=W(\cO_X^{\flat +})$ and $\BBinf=\AAinf[\frac{1}{p}]$. As $R^{\flat +}$ is a perfect ring, $\AAinf(U)=W(R^{\flat +})$ has no $p$-torsion. In particular, $\AAinf$ has no $p$-torsion and it is a subsheaf of $\BBinf$.

Following Fontaine, define as in \cite[Definition 6.1]{Sch} a natural  morphism
\begin{equation}\label{theta}
\theta\colon \mathbb A_{\inf}\to \widehat{\cO}_X^+
\end{equation} which, on an affinoid perfectoid $U$ with $\widehat{U}=\Spa(R,R^+)$, is given by
\begin{equation}\label{thetaU}
\theta(U)\colon \mathbb A_{\inf}(U)=W(R^{\flat +}) \longrightarrow \widehat{\cO}_X^+(U)=R^+, \quad  \sum_{n=0}^{\infty}p^n[x_n] \mapsto \sum_{n=0}^{\infty}p^nx_{n}^{\sharp}
\end{equation}
with $x_n\in R^{\flat +}$. As $(R,R^+)$ is a perfectoid affinoid algebra, $\theta(U)$ is known to be surjective (cf. \cite[5.1.2]{Bri}). Therefore, $\theta$ is also surjective.

\begin{defn}Let $X$ be a locally noetherian adic space over $\Spa(k, \cO_k)$ as above.
\begin{enumerate}
\item Define $\mathbb A_{\cris}$ to be the $p$-adic completion of the PD-envelope $\mathbb A_{\cris}^0$ of $\mathbb A_{\inf}$ with respect to the ideal sheaf $\ker(\theta)\subset \mathbb A_{\inf}$, and define $\mathbb B_{\cris}^+:=\mathbb A_{\cris}[1/p]$.

\item For $r\in \mathbb Z_{\geq 0}$, set $\Fil^r \AAcr^0:=\ker(\theta)^{[r]}\subset \AAcr^0$ to be the $r$-th divided power ideal, and $\Fil^{-r}\AAcr^0=\AAcr^0$. The family $\{\Fil^r\AAcr^0:r\in \mathbb Z\}$ gives a descending filtration of $\AAcr^0$.

\item For $r\in  \ZZ$, define $\Fil^{r}\AAcr\subset \AAcr$ to be the image of the following morphism of sheaves (we shall see below that this map is injective):
\begin{equation}\label{eq.mor}
\varprojlim_n  (\Fil^r\AAcr^0)\slash p^n\longrightarrow \varprojlim_n \AAcr^0/p^n=\AAcr,
\end{equation}
and define $\Fil^{r}\BBcrp=\Fil^r\AAcr[1/p]$.
\end{enumerate}
\end{defn}

Let $p^{\flat}=(p_i)_{i\geq 0}$ be a fixed family of elements of $\bk$ such that $p_0=p$ and that $p_{i+1}^p=p_i$ for any $i\geq 0$. Set $\xi:=[p^{\flat}]-p$, which can be seen as a section of the restriction $\mathbb A_{\inf}|_{X_{\bk}}$ of the pro-\'etale sheaf $\mathbb A_{\inf}$ to $X_{\proet}/X_{\bk}$.
\begin{prop} \label{nonzerodivisor}
We have $\ker(\theta)|_{X_{\bk}}=(\xi)\subset \mathbb A_{\inf}|_{X_{\bk}}$. Furthermore, $\xi\in \AAinf|_{X_{\bk}}$ is not a zero-divisor.
\end{prop}

\begin{proof} As the set of affinoid perfectoids $U$ lying above $X_{\bk}$ forms a basis for the topology of $X_{\proet}/X_{\bk}$, we only need to check that, for any such  $U$, $\xi\in \AAinf(U)$ is not a zero-divisor and that the kernel of $\theta(U)\colon \AAinf(U)\to \widehat{\cO}_X^+(U)$ is generated by $\xi$. Write $\widehat{U}=\Spa(R,R^+)$. Then $\AAinf(U)=W(R^{\flat +})$ and $\widehat{\cO}_X^{+}(U)=R^+$, hence we reduce our statement to (the proof of) \cite[Lemma 6.3]{Sch}.
\end{proof}

\begin{cor}\label{cor.DespOfAcris0} \begin{enumerate}
\item We have $
\mathbb A_{\cris}^0|_{X_{\bk}}=\mathbb A_{\inf}|_{X_{\bk}}\left[\xi^n/n!: n\in \mathbb N \right]\subset \mathbb{B}_{\inf}|_{X_{\bk}}$. In particular, $\AAcr^0$ and $\AAcr$ have no $p$-torsion. Moreover, for every $r\geq 0$, $\Fil^r\AAcr^0|_{X_{\bk}}=\mathbb{A}_{\inf}|_{X_{\bk}}[\xi^{n}/n! : n\geq r]$ and $\mathrm{gr}^r \AAcr^0|_{X_{\bk}}=\widehat{\cO}_X^+\cdot (\xi^r/r!)\stackrel{\sim}{\rightarrow}\widehat{\cO}_X^+|_{X_{\bk}}$.

\item The morphism \eqref{eq.mor} is injective, hence $\varprojlim_n\Fil^r\AAcr^0/p^n\stackrel{\sim}{\longrightarrow}\Fil^r\AAcr$. Moreover, for $r\geq 0$, $\mathrm{gr}^r \AAcr|_{X_{\bk}}\stackrel{\sim}{\longrightarrow}\widehat{\cO}_X^+|_{X_{\bk}}$.
\end{enumerate}
\end{cor}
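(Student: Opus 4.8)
This corollary is the sheaf-theoretic form of Fontaine's classical computation of $B_{\cris}^+$, and my plan is to reduce every assertion to the explicit description of the divided power envelope of a principal ideal generated by a non-zero-divisor. First I would reduce to the locus above $X_{\bk}$ and to the basis of affinoid perfectoids: all the assertions decorated by $|_{X_{\bk}}$ concern sheaves on $X_{\proet}/X_{\bk}$, where the affinoid perfectoids lying above $X_{\bk}$ form a basis by \cite[Proposition 4.8, Lemma 4.6]{Sch13}, while the injectivity of \eqref{eq.mor}, asserted over all of $X_{\proet}$, can also be tested over $X_{\bk}$ because $X_{\bk}\to X$ is a covering in $X_{\proet}$, so that for any $U\to X$ in $X_{\proet}$ the base change $U\times_X X_{\bk}\to U$ is again a covering and a local section killed by a map of sheaves that vanishes on $X_{\bk}$ already vanishes. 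On an affinoid perfectoid $U$ with $\widehat U=\Spa(R,R^+)$ one has, by \eqref{eq.identification}, $\AAinf(U)=W(R^{\flat+})$, which is $p$-torsion free, $p$-adically complete, and carries on $(p)$ its canonical divided powers (since $v_p(p^n/n!)\ge1$ for all $n\ge1$), and by Lemma \ref{nonzerodivisor} the kernel of $\theta(U)$ is $(\xi)$ with $\xi$ a non-zero-divisor.

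Next I would use the standard description of the divided power envelope: for a $p$-torsion free ring $A$ in which $(p)$ has divided powers and $\xi\in A$ is a non-zero-divisor, the PD envelope of $(\xi)$ compatible with the divided powers on $(p)$ is the subring $A[\xi^n/n!:n\ge1]$ of $A[1/p]$, with PD ideal $\sum_{n\ge1}A\cdot\xi^n/n!$ and $r$-th divided power ideal $\sum_{n\ge r}A\cdot\xi^n/n!$; here the identities $(\xi^a/a!)(\xi^b/b!)=\binom{a+b}{a}\xi^{a+b}/(a+b)!$ and $(\xi^a/a!)^{[m]}=\tfrac{(am)!}{(a!)^m\,m!}\,\xi^{am}/(am)!$ show this subring is stable under divided powers and let one read off the filtration (this is Fontaine's computation; cf. the analogous argument in \cite{Bri}). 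Applying this on the basis and sheafifying gives the first two formulas of (1), namely $\AAcr^0|_{X_{\bk}}=\AAinf|_{X_{\bk}}[\xi^n/n!:n\in\NN]\subset\BBinf|_{X_{\bk}}$ and $\Fil^r\AAcr^0|_{X_{\bk}}=\AAinf|_{X_{\bk}}[\xi^n/n!:n\ge r]$. For the graded pieces I would consider the $\AAinf|_{X_{\bk}}$-linear surjection $a\mapsto a\,\xi^r/r!$ onto $\mathrm{gr}^r\AAcr^0|_{X_{\bk}}$: if $a\,\xi^r/r!\in\Fil^{r+1}\AAcr^0$ then, clearing denominators inside $\BBinf|_{X_{\bk}}$, $a\xi^r\in\xi^{r+1}\BBinf|_{X_{\bk}}$, hence $a\in\xi\BBinf|_{X_{\bk}}\cap\AAinf|_{X_{\bk}}=(\xi)$ as $\xi$ is a non-zero-divisor; since the reverse inclusion is clear, $\mathrm{gr}^r\AAcr^0|_{X_{\bk}}\simeq\AAinf|_{X_{\bk}}/\ker\theta|_{X_{\bk}}\stackrel{\sim}{\longrightarrow}\widehat{\cO_X^+}|_{X_{\bk}}$ via $\theta$, proving (1).

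For (2), it suffices for the injectivity of \eqref{eq.mor} that each map $\Fil^r\AAcr^0/p^n\to\AAcr^0/p^n$ be injective, i.e. that $\AAcr^0/\Fil^r\AAcr^0$ be $p$-torsion free (one then concludes by the left-exactness of $\varprojlim_n$); by the first paragraph this can be checked over $X_{\bk}$, where $\AAcr^0/\Fil^r\AAcr^0$ has a finite filtration with graded pieces $\mathrm{gr}^j\AAcr^0|_{X_{\bk}}\simeq\widehat{\cO_X^+}|_{X_{\bk}}$ for $0\le j\le r-1$, and these are $p$-torsion free (on the basis, $\widehat{\cO_X^+}(U)=R^+$ is the $p$-adic completion of a torsion-free ring), so that, $p$-torsion freeness being stable under extensions, $\AAcr^0/\Fil^r\AAcr^0|_{X_{\bk}}$ is $p$-torsion free; the image of \eqref{eq.mor} is $\Fil^r\AAcr$ by definition. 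The same $p$-torsion freeness makes $0\to\Fil^{r+1}\AAcr^0/p^n\to\Fil^r\AAcr^0/p^n\to\mathrm{gr}^r\AAcr^0/p^n\to0$ exact over $X_{\bk}$; passing to $\varprojlim_n$ and using $\varprojlim^1_n\Fil^{r+1}\AAcr^0/p^n=0$ (surjective transition maps) together with the isomorphisms already obtained yields $0\to\Fil^{r+1}\AAcr\to\Fil^r\AAcr\to\varprojlim_n(\mathrm{gr}^r\AAcr^0/p^n)|_{X_{\bk}}\to0$, and since $\mathrm{gr}^r\AAcr^0|_{X_{\bk}}\simeq\widehat{\cO_X^+}|_{X_{\bk}}$ with $\widehat{\cO_X^+}$ $p$-adically complete, the last term is $\widehat{\cO_X^+}|_{X_{\bk}}$, giving $\mathrm{gr}^r\AAcr|_{X_{\bk}}\simeq\widehat{\cO_X^+}|_{X_{\bk}}$.

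The one genuinely delicate point I anticipate is the divided power bookkeeping of the second paragraph: verifying that forming the PD envelope of $(\xi)$ \emph{compatibly with the divided powers on} $(p)$ does not enlarge $\AAinf[\xi^n/n!:n\ge1]$ — which is exactly where $p$-torsion freeness of $W(R^{\flat+})$ is used — and checking that the passage from this ring-level description to the sheaf $\AAcr^0$ over the basis of affinoid perfectoids is harmless. The remaining steps (the $\varprojlim$ and $\varprojlim^1$ manipulations, and the descent of injectivity along $X_{\bk}\to X$) are routine.
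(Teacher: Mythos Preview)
Your proposal is correct and follows essentially the same approach as the paper's own proof: both reduce to the explicit PD-envelope description using that $\xi$ generates $\ker(\theta)|_{X_{\bk}}$ and is a non-zero-divisor (Lemma~\ref{nonzerodivisor}), compute the graded pieces via $a\xi^r/r!\mapsto\theta(a)$, deduce $p$-torsion freeness of $\AAcr^0/\Fil^r\AAcr^0$ from that of $\widehat{\cO_X^+}$ by induction, and pass to $p$-adic completions. You are more explicit about technical points the paper leaves implicit (descent of injectivity along the cover $X_{\bk}\to X$, the PD-compatibility with $(p)$, the $\varprojlim^1$ step), but the architecture is the same.
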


\begin{proof} The first three statements in (1) are clear from Proposition \ref{nonzerodivisor}.
In particular, for $r\geq 0$ we have the following exact sequence
\begin{equation}\label{eq.sesFilAcris0}
\xymatrix{0\ar[r] &  \Fil^{r+1}\AAcr^0|_{X_{\bk}}\ar[r]& \Fil^r \AAcr^0|_{X_{\bk}}\ar[r] & \widehat{\cO}_X^+|_{X_{\bk}}\ar[r] & 0},
\end{equation}
where the second map sends $a\xi^{r}/r!$ to $\theta(a)$. This gives the last assertion   of (1).

As $\widehat{\cO}_X^+$ has no $p$-torsion, an induction on $r$ shows that the cokernel of the inclusion $\Fil^r\AAcr^0\subset \AAcr^0 $ has no $p$-torsion. As a result, the morphism \eqref{eq.mor} is injective and $\Fil^r\AAcr$ is the $p$-adic completion of $\Fil^r\AAcr^0$. Since $\widehat{\cO}_X^+$ is $p$-adically complete, we deduce from \eqref{eq.sesFilAcris0} also the following short exact sequence after passing to $p$-adic completions:
\begin{equation}\label{eq.sesFilAcris}
\xymatrix{0\ar[r] &  \Fil^{r+1}\AAcr |_{X_{\bk}}\ar[r]& \Fil^r \AAcr|_{X_{\bk}}\ar[r] & \widehat{\cO}_X^+|_{X_{\bk}}\ar[r] & 0}
\end{equation}
giving the last part of (2).
\end{proof}

Let $\epsilon=(\epsilon^{(i)})_{i\geq 0}$ be a sequence of elements of $\bk$ such that $\epsilon^{(0)}=1$, $\epsilon^{(1)}\neq 1$ and $(\epsilon^{(i+1)})^p=\epsilon^{(i)}$ for all $i\geq 0$. Then $1-[\epsilon]$ is a well-defined element of the restriction $\mathbb A_{\inf}|_{X_{\bk}}$ to $X_{\proet}/X_{\bk}$ of $\mathbb A_{\inf}$. Moreover $1-[\epsilon]\in \ker(\theta)|_{X_{\bk}}=\Fil^1\AAcr|_{X_{\bk}}$. Let
\begin{equation}\label{t}
t:=\log([\epsilon])=-\sum_{n=1}^{\infty}\frac{(1-[\epsilon])^n}{n},
\end{equation}
which is well-defined in $\AAcr|_{X_{\bk}}$ since $\Fil^1\AAcr$ is a PD-ideal.

\begin{defn} Let $X$ be a locally noetherian adic space over $\Spa(k,\cO_k)$. Define $\BBcr=\BBcrp[1/t]$. For $r\in \mathbb Z$, set $\Fil^r\BBcr=\sum_{s\in \ZZ}t^{-s}\Fil^{r+s}\BBcrp\subset \BBcr$.
\end{defn}

\begin{rk}\label{rk.element-t} We shall see in Corollary \ref{cor.no-t-torsion} that $t$ is not a zero-divisor in $\AAcr$ and in $\BBcrp$, so $\BBcrp\subset \BBcr$.
\end{rk}

Before investigating these period sheaves in details, we first study them  over a perfectoid affinoid $(\widehat{\bk}, \cO_{\widehat{\bk}})$-algebra $(R,R^+)$. Consider
\[
\mathbb A_{\inf}(R,R^{+}):=W(R^{\flat +}),\quad \mathbb B_{\inf}(R,R^+):=\mathbb A_{\inf}(R,R^+)[1/p],
\]
and define the morphism
\begin{equation*}
\theta_{(R,R^{+})}\colon \mathbb A_{\inf}(R,R^+)\longrightarrow R^+
\end{equation*}
in the same way as in (\ref{thetaU}). It is known to be surjective as $(R,R^+)$ is perfectoid. The element $\xi$ generates $\ker(\theta_{(R,R^+)})$ and is not a zero-divisor in $\mathbb A_{\inf}(R,R^+)$. Let $\mathbb{A}_{\cris}(R,R^{+})$ be the $p$-adic completion of the PD-envelope of $\mathbb A_{\inf}(R,R^+)$ with respect to $\ker(\theta_{(R,R^+)})$. So $\mathbb A_{\cris}(R,R^+)$ is the $p$-adic completion of
\[
\mathbb A_{\cris}^0(R,R^{+}):=\mathbb A_{\inf}(R,R^{+})\left[\frac{\xi^n}{n!} : n\in \mathbb N\right]\subset \mathbb B_{\inf}(R,R^{+}).
\]
For $r$ an integer, let $\Fil^r\AAcr^0(R,R^+)\subset \AAcr^0(R,R^+)$ be the $r$-th PD-ideal, \emph{i.e.,} the ideal generated by $\xi^n/n!$ for $n\geq \mathrm{max}\{r,0\}$.  Let $\Fil^r\AAcr(R,R^+)\subset \mathbb A_{\cris}(R,R^+)$ be the closure (for the $p$-adic topology) of $\Fil^r\AAcr^0(R,R^+)$ inside $\AAcr(R,R^+)$. Finally, put $\mathbb B_{\cris}^{+}(R,R^+):=\mathbb A_{\cris}(R,R^+)[1/p]$, $\mathbb B_{\cris}(R,R^{+}):=\mathbb B_{\cris}^{+}(R,R^+)[1/t]$, and for $r\in \mathbb Z$, set
\begin{eqnarray*}
\Fil^r\BBcrp(R,R^+):=\Fil^ r\AAcr(R,R^+)[1/p] \textrm{ and} \\ \Fil^r\BBcr(R,R^+):=\sum_{s\in \ZZ}t^{-s}\Fil^{r+s}\BBcrp(R,R^+).
\end{eqnarray*}

In particular, taking $R^+=\cO_{\Cp}$ with $\Cp$ the $p$-adic completion of the fixed algebraic closure $\bk$ of $k$, we get Fontaine's rings $A_{\cris}$, $B_{\cris}^+$, $B_{\cris}$ as in \cite{Fon94}. Write $\Cp^{\flat}$ the tilt of $\Cp$, which is an algebraically closed nonarchimedean field of characteristic $p$. The maximal ideal of its ring of integers $\cO_{\Cp}^{\flat}$ is generated by $[p^{\flat}]^{1/p^N}$ for all $N\in \mathbb N$.
Let $\mathcal I\subset A_{\cris}$ be the ideal generated by
\[
\{[\epsilon]^{1/p^N}-1, [p^{\flat}]^{1/p^N}:N\in \mathbb N\}\subset A_{\cris}.
\]
By \cite[Lemme 6.3.1]{Bri}, we have $\mathcal I\subset \mathcal I^2+ p^n\cdot A_{\cris}$ for any $n\in \mathbb N_{>0}$. In particular, $\mathcal I\cdot ( A_{\cris}/p^n)=(\mathcal I\cdot (A_{\cris}/p^n))^2$. In the following, when working with algebras (or modules) over $A_{\cris}/p^n$, we consider the almost-setting with respect to the ideal $\mathcal I\cdot (A_{\cris}/p^n)\subset A_{\cris}/p^n$. When $n=1$, as $\epsilon^{1/p^N}-1\in \cO_{\Cp}^{\flat}$ is contained in the maximal ideal, $\mathcal I\cdot (A_{\cris}/p)$ is the same as the ideal generated by $\{ [p^{\flat}]^{1/p^N}:N\in \mathbb N\}$. So the almost-setting adopted here for $A_{\cris}/p$-modules is the same as the one used by Scholze (see the paragraph before Theorem 6.5 of \cite{Sch} for his convention).

\begin{lemma}\label{lem.technical-I}Let $X$ be a locally noetherian adic space over $(k,\cO_k)$. Let $\mathcal{F}$ be a $p$-adically complete sheaf of $A_{\cris}$-modules on $X_{\proet}$, flat over $\Zp$. Set $\mathcal F_n=\mathcal F/p^n$, $n\in \mathbb Z_{\geq 1}$. Assume that, for any affinoid perfectoid $U$ above $X_{\bk}$,
\begin{enumerate}
\item[(a)] there exists a $p$-adically complete $A_{\cris}$-module $F(U)$, flat over $\mathbb Z_p$, equipped with a morphism of $A_{\cris}$-modules $\alpha_U:F(U) \ra \mathcal F(U)$ such that the composed morphism
\[
\alpha_{U,1}: F(U)/p\stackrel{\alpha_U \textrm{ mod }p}{\lra} \mathcal F(U)/p\lra \cF_1(U)
\]
is an almost isomorphism; and
\item[(b)] the $A_{\cris}/p$-module $H^i(U,\mathcal F_1)$ is almost zero for any $i>0$.
\end{enumerate}
Then, for an affinoid perfectoid $U$ as above, $n\geq 1$ and $i>0$,
\begin{enumerate}
\item[(1)] the composed morphism
\[
\alpha_{U,n}: F(U)/p^n \stackrel{\alpha_U \textrm{ mod }p^n}{\lra} \mathcal F(U)/p^n \lra \mathcal F_n(U)
\]
is an almost isomorphism, and $H^i(U,\mathcal F_n)$ is almost zero;
\item[(2)] we have $\mathcal I\cdot R^1\varprojlim \cF_n(U)=0$, and that $\ker(\alpha_U)$ and $\mathrm{coker}(\alpha_U)$ are killed by $\mathcal I^2$. Furthermore, $\mathcal I^2\cdot R^i\varprojlim \mathcal F_n=0$, and $\mathcal I^{2i+1}\cdot H^i(U,\mathcal F)=0$.
\end{enumerate}
\end{lemma}

\begin{proof} Let $U$ be a affinoid perfectoid lying above $X_{\bk}$. For $n\in \mathbb Z_{\geq 1}$, let $F(U)_n=F(U)/p^n$. Since $\mathcal F$ and $F(U)$ are flat over $\Zp$, we have exact sequences
\[
0\lra \mathcal F_1\stackrel{\cdot p^n}{\lra} \mathcal F_{n+1}\stackrel{\rm can}{\lra} \mathcal F_n\lra 0,
\]
and
\[
0\lra F(U)_{1}\stackrel{\cdot p^n}{\lra} F(U)_{n+1}\stackrel{\rm can}{\lra} F(U)_n\lra 0,
\]
from which we deduce exact sequences
\[
H^i(U,\mathcal F_1)\lra H^i(U,\mathcal F_{n+1})\lra H^i(U,\mathcal F_n), \quad i\geq 0,
\]
and a commutative diagram with exact rows
\[
\xymatrix{0\ar[r] & F(U)_1\ar[r]^{\cdot p^n}\ar[d]^{\alpha_{U,1}} & F(U)_{n+1}\ar[r] \ar[d]^{\alpha_{U,n+1}}& F(U)_n\ar[r]\ar[d]^{\alpha_{U,n}} & 0 \\ 0\ar[r] & \mathcal F_1(U)\ar[r]^{\cdot p^n} & \mathcal F_{n+1}(U)\ar[r] & \mathcal F_n(U).}
\]
So, by induction on $n$, (1) follows from conditions (a) and (b) above and the fact that $\mathcal I \cdot (A_{\cris}/p^n)=\mathcal I^2\cdot (A_{\cris}/p^n)$ for any $n\in \mathbb N$ (\cite[Lemme 6.3.1]{Bri}). In particular, the collection $(\alpha_{U,n})_{n\in \mathbb N}$ gives a morphism of projective systems
\[
(F(U)_n)_{n\in \mathbb N}\lra (\mathcal F_n(U))_{n\in \mathbb N}
\]
whose kernel and cokernel are killed by $\mathcal I$. Passing to limits relative to $n$, we find $\mathcal I\cdot R^1\varprojlim \mathcal F_n(U)=0$, and that $\ker(\alpha_U)$ and $\mathrm{coker}(\alpha_U)$ are killed by $\mathcal I^2$, giving also the first part of (2).

To go further, let $\mathrm{Sh}$ (resp. $\mathrm{PreSh}$) denote the category of sheaves (resp. of presheaves) on $X_{\proet}$, and let $\mathrm{Sh}^{\mathbb N}$ (resp. $\mathrm{PreSh}^{\mathbb N}$) denote the category of projective systems of sheaves (resp. projective systems of presheaves) indexed by $\mathbb N$ on $X_{\proet}$. The projective limit functor $\varprojlim:  \mathrm{Sh}^{\mathbb N}\ra \mathrm{Sh}$ factors as
\[
\mathrm{Sh}^{\mathbb N} \stackrel{\sigma}{\lra} \mathrm{PreSh}^{\mathbb N}\stackrel{\varprojlim'}{\lra} \mathrm{PreSh} \stackrel{a}{\lra} \mathrm{Sh},
\]
where the first functor $\sigma$ is induced from the natural inclusion $\mathrm{Sh}\subset \mathrm{PreSh}$, the second is the projective limit functor of presheaves, and the third takes a presheaf to its associated sheaf. Let $\tau:=\varprojlim' \circ \sigma$. Since the functor $a$ is exact, $R\varprojlim=a\circ R\tau $. In particular, for each $i$, $R^i\varprojlim \mathcal F_n$ is the associated sheaf of the presheaf $R^i\tau (\mathcal F_{\bullet})$, where we
denote by $\mathcal F_{\bullet}$ the projective system $(\mathcal F_n)_{n\in \mathbb N}$. Let
\[
0\lra I(0)_{\bullet}\lra I(1)_{\bullet}\lra \cdots
\]
be an injective resolution of $\mathcal F_{\bullet}$ in $\mathrm{Sh}^{\mathbb N}$, with $I(j)_{\bullet}=(I(j)_n)_{n\in \mathbb N}$. For each $i$, $R^i\sigma (\mathcal F_{\bullet})$ is the $i$-th cohomology of this complex in $\mathrm{PreSh}^{\mathbb N}$. On the other hand, for each $n$, this resolution gives an injective resolution of $\mathcal F_n$ in the category of sheaves on $X_{\proet}$ (\cite[(1.1) Proposition]{Jan})
\[
0\lra I(0)_{n}\lra I(1)_{n}\lra \cdots.
\]
So, for $U$ an affinoid perfectoid lying above $X_{\bk}$, $H^i(U,\mathcal F_n)$ is the $i$-th cohomology group of the induced complex
\[
0\lra I(0)_{n}(U)\lra I(1)_{n}(U)\lra \cdots,
\]
which is annihilated by $\mathcal I$ when $i>0$ by (1). Varying $n$, we find
\begin{equation}\label{eq.IRsigma=0}
\mathcal I\cdot R^i\sigma(\mathcal F_{\bullet})(U)=\mathcal I\cdot H^i(\cU, \cF_{\bullet})=0, \quad \textrm{for } i>0.
\end{equation}
On the other hand, as infinite products exist and are exact functors in $\mathrm{PreSh}$, by \cite[(1.6) Proposition]{Jan}, we have an exact sequence of presheaves for each $i\in \mathbb Z$:
\[
0\lra R^1{\varprojlim}' R^{i-1}\sigma(\mathcal F_{\bullet}) \lra R^i\tau (\mathcal F_{\bullet})\lra {\varprojlim}' R^{i}\sigma(\mathcal F_{\bullet})\lra 0.
\]
The latter gives an exact sequence of abelian groups:
\[
0\lra \left(R^1{\varprojlim}' R^{i-1}\sigma(\mathcal F_{\bullet})\right)(U) \lra R^i\tau (\mathcal F_{\bullet})(U)\lra \left({\varprojlim}' R^{i}\sigma(\mathcal F_{\bullet})\right)(U)\lra 0.
\]
We claim that $\mathcal I^2 \cdot R^i\tau (\mathcal F_{\bullet})(U)=0$ for $i\geq 1$. Indeed,
when $i\geq 2$, our claim follows from \eqref{eq.IRsigma=0}. When $i=1$, by what we have shown in the first paragraph, $\mathcal I\cdot (R^1\varprojlim' \sigma(\mathcal F_{\bullet}))(U)=\mathcal I\cdot R^1\varprojlim (\mathcal F_n(U))=0$.
Combining \eqref{eq.IRsigma=0}, we get $\mathcal I^2\cdot R^1\tau(\mathcal F_{\bullet})(U)=0$, as claimed. Since $R^i\varprojlim\mathcal F_n$ is the associated sheaf of $R^i\tau(\mathcal F_{\bullet})$, we deduce $\mathcal I^2\cdot  R^i\varprojlim \mathcal F_n=0$ when $i>0$. This proves the second part of (2).

Now, because $\mathcal I^2\cdot R^i\varprojlim\mathcal F_n=0$ for $i>0$, for the spectral sequence below
\[
E_2^{i,j}=H^i(U, R^j\varprojlim \mathcal F_n)\Longrightarrow H^{i+j}(U,R\varprojlim \mathcal F_n),
\]
one checks that $\mathcal I^2\cdot E_{\infty}^{i,j}=0$ for $j>0$, $E_{\infty}^{i,0}=E_{i+1}^{i,0}$, and the surjection $E_2^{i,0}\ra E_{\infty}^{i,0}$ has kernel killed by $\mathcal I^{2i-2}$. It follows that the canonical map
\[
H^i(U,\mathcal F)=H^i(U,\varprojlim\mathcal F_n)\lra H^i(U,R\varprojlim \mathcal F_n)
\]
has kernel annihilated by $\mathcal I^{2i-2}$ and cokernel annihilated by $\mathcal I^{2i}$. Using the short exact sequence (see Lemma \ref{jannsen})
\[
0\lra R^1\varprojlim H^{i-1}(U,\mathcal F_n) \lra H^i(U, R\varprojlim\mathcal F_n)\lra \varprojlim H^i(U,\mathcal F_n)\lra 0,
\]
and that $R^1\varprojlim H^{i-1}(U,\mathcal F_n)$ is annihilated by $\mathcal I$,
one deduces that the morphism
\[
H^i(U,\mathcal F)=H^i(U,\varprojlim\mathcal F_n)\lra \varprojlim H^i(U,\mathcal F_n), \quad i\geq 0
\]
is an isomorphism up to $\mathcal I^{2i}$-torsion, i.e., its kernel and cokernel are killed by $\mathcal I^{2i}$. In particular, $H^i(U,\mathcal F)$ is killed by $\mathcal I^{2i+1}$ when $i>0$, as wanted.
\end{proof}

\begin{lemma}\label{vanish}
Let $X$ be a locally noetherian adic space over $(k,\cO_k)$. Let $U\in X_{\proet}$ be an affinoid perfectoid above $X_{\bk}$ with $\widehat{U}=\mathrm{Spa}(R,R^{+})$. Then there is a natural filtered morphism $\mathbb{A}_{\cris}(R, R^+) \ra \mathbb A_{\cris}(U)$ of $A_{\cris}$-algebras, inducing an almost isomorphism $\Fil^r \mathbb{A}_{\cris}(R, R^+)/p^n \ra (\Fil^r\mathbb A_{\cris}/p^n)(U)$ for any $r\geq 0$ and $n\geq 1$. Moreover, $H^i(U,\Fil^r\mathbb A_{\cris}/p^n)^a=0$ for any $i>0$. \end{lemma}

\begin{proof}  As $U$ is affinoid perfectoid, $\widehat{\cO}_X^+(U)=R^+$, $\cO_{X}^{\flat +}(U)=R^{\flat +}$ and $\theta(U)=\theta_{(R,R^+)}$. In particular, $\mathbb A_{\inf}(U)=\mathbb A_{\inf}(R,R^+)$, and the natural morphism
\[
\mathbb A_{\inf}(R,R^+)=\mathbb A_{\inf}(U) \longrightarrow \mathbb A_{\cris}(U)
\]
sends $\ker(\theta_{(R,R^+)})$ into $\Fil^1\AAcr(U)$. As $\Fil^1\AAcr(U)\subset \AAcr(U)$ has a PD-structure, the morphism above induces a map $\AAcr^0(R,R^+)\ra \AAcr(U)$, respecting the filtrations on both sides. Passing to $p$-adic completions, we obtain the required filtered morphism $\AAcr(R,R^+)\ra \AAcr(U)$ of $A_{\cris}$-algebras.

In particular, for each $r\geq 0$, we have a natural morphism
\[
\Fil^r\AAcr(R,R^+)\lra \Fil^r\AAcr(U).
\]
Composing its reduction modulo $p^n$ with $\Fil^r\AAcr(U)/p^n\ra (\Fil^r\AAcr/p^n)(U)$, we get a morphism
\begin{equation}\label{eq.composedmorphism}
\Fil^r\AAcr(R,R^+)/p^n\lra (\Fil^r\AAcr/p^n)(U)
\end{equation}
for all $n\geq 1$. We need to show that this is an almost isomorphism of $A_{\cris}/p^n$-modules, and that $H^i(U,\Fil^r\AAcr/p^n)^a=0$ for $i>0$. Using Lemma \ref{lem.technical-I} (1), one reduces to the case where $n=1$. Then, we claim that it suffices to prove this when $r=0$. Indeed, from the exact sequence \eqref{eq.sesFilAcris0} and the fact that $\widehat{\cO}_X^+$ is $p$-torsion free, we deduce a short exact sequence for each $r\geq 0$:
\begin{equation}\label{eq.sesModp}
0\lra \left(\Fil^{r+1}\AAcr|_{X_{\bk}}\right)/p \lra \left(\Fil^r\AAcr|_{X_{\bk}}\right)/p \lra \left(\cO_{X}^+|_{X_{\bk}}\right)/p\lra 0.
\end{equation}
We have an short exact sequence for $\Fil^r\AAcr(R,R^+)/p$ obtained in a similar way:
\[
0\lra \Fil^{r+1}\AAcr(R,R^+)/p \lra \Fil^r(R,R^+)/p \lra R^+/p\lra 0.
\]
As $U$ is affinoid perfectoid, by \cite[Lemma 4.10]{Sch}, the natural morphism
\[
R^+/p=\widehat{\cO}_X^+(U)/p\lra (\cO_X^+/p)(U)
\]
is an almost isomorphism: recall that the almost-setting adopted here for $A_{\cris}/p$-modules is the same as the one used by Scholze in \cite{Sch}. So we have a commutative diagram with exact rows, such that the right vertical map is an almost isomorphism:
\[
\xymatrix{0\ar[r] & (\Fil^{r+1}\AAcr/p)(U)^a \ar[r] &  (\Fil^r\AAcr/p)(U)^a \ar[r]  & (\cO_{X}^+/p)(U)^{a} &   \\ 0\ar[r] & \Fil^{r+1}\AAcr(R,R^+)^a/p \ar[r] \ar[u]&  \Fil^r(R,R^+)^a/p \ar[r] \ar[u]& (R^+/p)^a\ar[r]\ar[u]^{\approx} & 0.}
\]
In particular, the upper row of the diagram above is right exact. On the other hand, combined with \cite[Lemma 4.10]{Sch}, the long exact sequence associated with \eqref{eq.sesModp} gives an isomorphism
\[
H^i(U,\Fil^{r+1}\AAcr/p)^a\stackrel{\sim}{\lra} H^{i}(U,\Fil^r\AAcr/p)^a, \quad \forall ~ i\geq 1.
\]
Therefore, our claim follows by induction on $r\geq 0$.

So, it remains to prove the second part of our lemma when $r=0$ and $n=1$. Denote by $\alpha_1$ the map \eqref{eq.composedmorphism} in this case. Recall the following identification of $\AAcr(R,R^{+})/p$ (see \cite[Proposition 6.1.2]{Bri})
\[
\AAcr(R,R^+)/p\stackrel{\sim}{\lra} (R^{\flat+}/(p^{\flat})^p)[\delta_i:i \in \mathbb N]/(\delta_i^p:i\in \mathbb N),
\]
with $\delta_i$ being the image of $\xi^{[p^{i+1}]}$. Similarly, restricting to $X_{\proet}/X_{\bk}$, we have
\[
\AAcr/p\stackrel{\sim}{\lra} (\cO_{X}^{\flat+}/(p^{\flat})^p)[\delta_i:i\in \mathbb N]/(\delta_i^p:i\in \mathbb N).
\]
In particular, $\AAcr/p$ is a direct sum of copies of $\cO_{X}^{\flat+}/(p^{\flat})^p$ on $X_{\proet}/X_{\bk}$. Under these identifications, the morphism $\alpha_1$ is induced by
\[
R^{\flat+}/(p^{\flat})^p = \cO_X^{\flat+}(U)/(p^{\flat})^p\lra (\cO_{X}^{\flat+}/(p^{\flat})^p)(U).
\]
Since $U$ is qcqs, to conclude the proof, it suffices to show $H^i(U,\cO_{X}^{\flat+}/(p^{\flat})^p)^a=0$ for $i>0$, and that the morphism above is an almost isomorphism. Both of these two assertions follow from \cite[Lemma 4.10]{Sch}.
\end{proof}

\begin{cor}\label{cor.acyclicityBcris}Keep the notation of Lemma \ref{vanish}. In particular, $U$ is an affinoid perfectoid of $X_{\proet}$ lying above $X_{\bk}$, with $\widehat{U}=\Spa(R,R^+)$.
\begin{enumerate}
\item[(1)] For any $r\in \mathbb N$, there is a natural morphism $\Fil^r\AAcr(R,R^+)\ra \Fil^r\AAcr(U)$ of $A_{\cris}$-modules whose kernel and cokernel are killed by $\mathcal I^2$. Moreover, $\mathcal I^2\cdot R^i\varprojlim_n \Fil^r\AAcr/p^n=0$ and $\mathcal I^{2i+1}\cdot H^i(U,\Fil^r\AAcr)=0$ for $i>0$.
\item[(2)] The natural morphisms in \emph{(1)} induce isomorphisms
\[
\BBcr(R,R^+)\stackrel{\sim}{\lra}\BBcr(U),\quad \textrm{and}\quad \Fil^r \BBcr(R,R^+)\stackrel{\sim}{\lra} \mathcal \Fil^r\BBcr(U)
\]
for all $r\in \mathbb Z$. Moreover, $H^i(U,\BBcr)=H^i(U,\Fil^r \BBcr)=0$ for $i\geq 1$. 
\end{enumerate}
\end{cor}

\begin{proof} (1) This follows directly from Lemma \ref{lem.technical-I} and Lemma \ref{vanish}.

(2) As $U$ is qcqs, inverting $t$, we deduce from (1) a morphism of $B_{\cris}$-modules $\BBcr(R,R^+)\ra \BBcr(U)$, with kernel and cokernel killed by $\mathcal I^2$. Moreover, the $B_{\cris}$-module $H^i(U,\BBcr)$ is annihilated by $\mathcal I^{2i+1}$ for $i>0$. Note that $t$ is divisible by $[\epsilon]-1$ in $A_{\cris}$ (see for example the proof of Theorem \ref{withoutfil}), so the assertions for $\BBcr$ follow as $\mathcal I\cdot B_{\cris}=B_{\cris}$.

To prove our assertions for $\Fil^r\BBcr$, observe first that the following two properties hold. For $s\in  \mathbb N$,  (a) the canonical map $\mathrm{gr}^s\BBcr^+(R,R^+)\ra \mathrm{gr}^s\BBcr^+(U)$ is an isomorphism; and (b) $H^i(U,\mathrm{gr}^s\BBcr^+)=0$. Indeed, over $X_{\proet}/X_{\bk}$, we have $\mathrm{gr}^s\BBcr^+=\widehat{\cO}_X\cdot \xi^{[s]}$ by \eqref{eq.sesFilAcris}. Similarly, $\mathrm{gr}^s\BBcr^+(R,R^+)=R\cdot \xi^{[s]}$. Therefore the two properties above follow from \cite[Lemma 4.10]{Sch}.

Now, let us begin the proof for $\Fil^r\BBcr$. Twisting by $t^{-r}$ if necessary, we shall assume $r=0$. Inverting $p$, we get from (1) a morphism of $B_{\cris}^+$-modules:
\[
\alpha_s: \Fil^s\BBcrp(R,R^+)\ra \Fil^s\BBcrp(U)
\]
whose kernel and cokernel are killed by $\mathcal I^2$. Passing to direct limits (with respect to multiplication-by-$t$), we deduce a natural map of $B_{\cris}^+$-modules, denoted by $\beta$:
\[
\Fil^0\BBcr(R,R^+)=\varinjlim_{s\geq 0} \Fil^{s}\BBcrp(R,R^+)\lra \Fil^0\BBcr(U)=\varinjlim_{s\geq 0} \Fil^{s}\BBcrp(U),
\]
whose kernel and cokernel are killed by $\mathcal I^2$, hence by $t^2$. One needs to show that this map is an isomorphism. The injectivity of $\beta$ is clear as $\ker(\beta)\subset \BBcr(R,R^+)$ is $t$-torsion free.  So it is enough to check its surjectivity. Note that we have the following commutative diagram with exact rows
\[
\xymatrix{0\ar[r] & \Fil^{s+1}\BBcrp(U) \ar[r] &  \Fil^{s}\BBcrp(U) \ar[r]  &  \mathrm{gr}^s\BBcrp(U)&   \\ 0\ar[r] & \Fil^{s+1}\BBcrp(R,R^+) \ar[r] \ar[u]^{\alpha_{s+1}}&  \Fil^{s}\BBcrp(R,R^+) \ar[r] \ar[u]^{\alpha_{s}} & \mathrm{gr}^s\BBcrp(R,R^+) \ar[r]\ar[u]^{\simeq} & 0.}
\]
Here the right vertical map is an isomorphism because of the property (a) above. Then, by the Snake Lemma, the inclusion $\Fil^{s+1}\BBcrp(U)\subset \Fil^s\BBcrp(U)$ induces an isomorphism $\mathrm{coker}(\alpha_{s+1})\stackrel{\sim}{\ra}\mathrm{coker}(\alpha_{s})$. So we get identification $\mathrm{coker}(\alpha_{s})\stackrel{\sim}{\ra}\mathrm{coker}(\alpha_0)=:C$ induced by the inclusion $\Fil^s\BBcrp(U)\subset \Fil^0\BBcrp(U)= \BBcrp(U)$ for all $s\geq 0$. With these identifications, we have
\[
\mathrm{coker}(\beta)=\varinjlim_{s\geq 0} \mathrm{coker}(\alpha_{s})=\varinjlim_{s\geq 0} C
\]
where, in the last direct limit, the transition maps are multiplication-by-$t$. Since $C=\mathrm{coker}(\alpha_0)$ is killed by $t^2$, necessarily $\mathrm{coker}(\beta)=0$. In other words, $\beta$ is surjective, thus is an isomorphism.

Finally, it remains to show $H^i(U,\Fil^0\BBcr)=0$ when $i>0$. For $s\in \mathbb N$, from the commutative diagram
\[
\xymatrix{\Fil^0\BBcrp\ar[r]^{\cdot t^s} & \Fil^{s}\BBcrp \\ \Fil^{s}\BBcrp\ar@{^(->}[u]^{\rm canonical}\ar[ru]_{\cdot t^s} & },
\]
we get a commutative diagram of cohomology groups
\begin{equation}\label{eq.vanish-Fil-Bcris}
\xymatrix{H^i(U,\Fil^0\BBcrp\ar[r]^{\cdot t^s}) & H^i(U,\Fil^{s}\BBcrp) \\H^i(U, \Fil^{s}\BBcrp)\ar[u]\ar[ru]_{\cdot t^s} & }.
\end{equation}
We claim that, for $i>0$, the vertical map above is surjective. To see this, it suffices to check the surjectivity of the map
\[
H^i(U,\Fil^{s+1}\BBcrp)\lra H^i(U,\Fil^s\BBcrp)
\]
induced by the inclusion $\Fil^{s+1}\BBcrp\subset \Fil^s\BBcrp$ for any $s\geq 0$. So, one only needs to show $H^i(U,\mathrm{gr}^s\BBcrp)=0$ for $i>0$, as claimed by the property (b) above.
Thus the vertical map in \eqref{eq.vanish-Fil-Bcris} is surjective. On the other hand, the $B_{\cris}^+$-module $H^i(U,\Fil^{s}\BBcrp)$ is killed by $\mathcal I^{2i+1}$ and $t$ is a multiple of $[\epsilon]-1\in \mathcal I$, so the map
\[
H^{i}(U,\Fil^{s}\BBcrp)\lra H^i(U,\Fil^{s}\BBcrp), \quad x\mapsto t^{s}x
\]
is zero whenever $s\geq 2i+1$. Thus, the horizontal map in \eqref{eq.vanish-Fil-Bcris} is trivial when $s\geq 2i+1$. We conclude $H^i(U,\Fil^0\BBcr)=\varinjlim_s H^i(U,\Fil^{s}\BBcrp)=0$ for $i>0$.
\end{proof}

\subsection{Period sheaves with connections}  In this section, assume that the $p$-adic field $k$ is \emph{absolutely unramified}.
Let $\cX$ be a smooth formal scheme over $\cO_k$. Set $X:=\cX_k$ the generic fiber of $\cX$, viewed as an adic space over $\Spa(k,\cO_k)$. For any \'etale morphism $\mathcal Y\to \cX$, by taking the generic fiber, we obtain an \'etale morphism $\mathcal Y_k\to X$ of adic spaces, hence an object of the pro-\'etale site $X_{\proet}$. In this way, we get a morphism of sites $\cX_{\et}\to X_{\proet}$, with the induced morphism of topoi
\[
w\colon X_{\proet}^{\sim}\longrightarrow \cX_{\et}^{\sim}.
\]
Let $\cO_{\cX_{\et}}$ denote the structural sheaf of the \'etale site $\cX_{\et}$: for any \'etale morphism $\mathcal Y\to \cX$ of formal schemes over $\cO_k$, $\cO_{\cX_{\et}}(\mathcal Y)=\Gamma(\mathcal Y,\cO_{\mathcal Y})$. Define $\cO_{X}^{\ur+}:=w^{-1}\cO_{\cX_{\et}}$  and $\cO_{X}^{\ur}:=w^{-1 }\cO_{\cX_{\et}}[1/p]$.  Thus $\cO_{X}^{\ur+}$ is the associated sheaf of the presheaf $\widetilde{\cO_{X}^{\ur+}}$:
\[
X_{\proet} \ni U \mapsto \varinjlim_{(\mathcal Y,a)}\cO_{\cX_{\et}}(\mathcal Y)=:\widetilde{\cO_{X}^{\ur+}}(U),
\]
where the limit runs through all pairs $(\mathcal Y,a)$ with $\mathcal Y\in \cX_{\et}$ and $a\colon U\to \mathcal Y_k$ a morphism making the following diagram commutative
\begin{equation}\label{eq.factorization}
\xymatrix{U\ar[r]\ar[rd]_a & X=\cX_k \\ & \mathcal Y_k\ar[u]}.
\end{equation}
The morphism $a\colon U\to \mathcal Y_k$ induces a map $\Gamma(\mathcal Y,\cO_{\mathcal Y})\to \cO_{X}(U)$.
There is then a morphism of presheaves $\widetilde{\cO_{X}^{\ur+}}\to \cO_X^+$, whence a morphism of sheaves
\begin{equation}\label{eq.OurAlg}
\cO_{X}^{\ur+}\to \cO_{X}^{+}.
\end{equation}
Recall $\AAinf:=W(\cO_{X}^{\flat+})$. Set $\cO\AAinf:= \cO_{X}^{\rm ur +}\otimes_{\cO_k}\mathbb A_{\inf}$ and
\begin{equation}\label{eq.thetaX}
\theta_{X}\colon \cO\AAinf \longrightarrow  \widehat{\cO}_X^+
\end{equation}
to be the map induced from $\theta\colon \mathbb A_{\inf}\to \widehat{\cO}_X^+$ of \eqref{theta} by extension of scalars.

\begin{defn} Consider the following sheaves on $X_{\proet}$.
\begin{enumerate}
\item Let $\cO\mathbb A_{\cris}$ be the $p$-adic completion of the PD-envelope $\cO\mathbb A_{\cris}^0$ of $\cO\mathbb A_{\inf}$ with respect to the ideal sheaf $\ker(\theta_X)\subset \cO\mathbb{A}_{\inf}$, $\cO\mathbb B_{\cris}^+:=\cO\mathbb A_{\cris}[1/p]$, and $\cO\mathbb B_{\cris}:=\cO\mathbb B_{\cris}^+[1/t]$ with $t=\log([\epsilon])$ defined in \eqref{t}.
\item For $r\geq 0$ an integer, define $\Fil^{r}\cO\AAcr^0\subset \cO\AAcr^ 0$ to be the $r$-th PD-ideal $\ker(\theta_X)^{[r]}$, and $\Fil^r\cO\AAcr$  the image of the canonical map
\[
\varprojlim \Fil^r\cO\AAcr^0/p^n \longrightarrow \varprojlim \cO\AAcr^0/p^n=\cO\AAcr.
\]
Also set $\Fil^{-r}\cO\AAcr=\cO\AAcr$ for $r>0$.
\item For any integer $r$, set $
\Fil^r\cO\BBcrp:=\Fil^r\cO\AAcr[1/p]$ and $\Fil^r\cO\BBcr:=\sum_{s\in \ZZ}t^{-s}\Fil^{r+s}\cO\BBcrp$.
\end{enumerate}
\end{defn}

\begin{rk} As $t^{p}=p!\cdot t^{[p]}$ in $A_{\cris}=\AAcr(\widehat{\bk},\cO_{\widehat{\bk}})$, one can also define $\Fil^r\cO\BBcr$ as $\sum_{s\in \mathbb N}t^{-s}\Fil^{r+s}\cO\AAcr$. A similar observation holds for $\Fil^r\BBcr$.
\end{rk}

\begin{rk}\label{rk.Fil1OAcris} \begin{enumerate}
\item We shall see later that $\cO\AAcr$ has neither $p$-torsion (Corollary \ref{cor.no-p-torsion-for-OAcris}) nor $t$-torsion (Corollary \ref{cor.no-t-torsion}). So $\cO\AAcr\subset \cO\BBcrp\subset \cO\BBcr$.

\item The morphism $\theta_X$ of \eqref{eq.thetaX} extends to a surjective morphism $\cO\AAcr^0\ra \widehat{\cO}_X^+$ with kernel $\Fil^1\cO\AAcr^0$, hence a morphism $\cO\AAcr\ra \widehat{\cO}_X^+$. Let us denote them again by $\theta_X$. As $\widehat{\cO}_X^+$ is $p$-adically complete and has no $p$-torsion, using the snake lemma and passing to limits one can deduce the following short exact sequence
\[
0\longrightarrow \varprojlim_n (\Fil^1\cO\AAcr^0/p^n) \longrightarrow \cO\AAcr \stackrel{\theta_X}{\longrightarrow} \widehat{\cO}_X^+ \longrightarrow 0.
\]
In particular, $\Fil^1\cO\AAcr=\ker(\theta_X)$.
\end{enumerate}
\end{rk}

\begin{defn} Consider the following sheaves on $X_{\proet}$.
\begin{enumerate}
\item Let $\AAcr\{\langle u_1,\ldots, u_d\rangle\}$ be the $p$-adic completion of the sheaf of PD polynomial rings $\AAcr^0\langle u_1,\ldots, u_d\rangle \subset \BBinf[u_1,\ldots, u_d]$. Set $\BBcrp\{\langle u_1,\ldots, u_d\rangle \}:=\AAcr\{\langle u_1,\ldots, u_d\rangle \}[1/p]$ and $\BBcr\{\langle u_1,\ldots, u_d\rangle \}:=\AAcr\{\langle u_1,\ldots, u_d\rangle\}[1/t]$.

\item For $r$ an integer, let $\Fil^{r}\AAcr^0\langle u_1,\ldots, u_d\rangle\subset \AAcr^0\langle u_1,\ldots, u_d\rangle$ be the ideal sheaf $\sum_{i_1,\ldots,i_d\geq 0}\Fil^{r-(i_1+\ldots+i_d)}\mathbb A_{\cris}^0\cdot u_1^{[i_1]}\cdots u_{d}^{[i_d]}\subset \AAcr^0\langle u_1,\ldots, u_d\rangle$,
and $\Fil^r(\mathbb A_{\cris}\{\langle u_1,\ldots, u_d\rangle\})\subset \AAcr\{\langle u_1,\ldots, u_d\rangle \}$ the image of the morphism
\[
\varprojlim_n \left(\Fil^r\AAcr^0\{\langle u_1,\ldots, u_d\rangle \}/p^n\right) \longrightarrow \AAcr\{\langle u_1,\ldots, u_d\rangle\}.
\]
The family $\{\Fil^r(\AAcr\{\langle u_1,\ldots, u_d\rangle \}): r\in \mathbb Z\}$ gives a descending filtration of $\AAcr\{\langle u_1,\ldots, u_d\rangle\}$. Inverting $p$, we obtain $\Fil^r(\BBcrp\{\langle u_1,\ldots, u_d\rangle\})$. Set finally $
\Fil^r(\BBcr\{\langle u_1,\ldots, u_d\rangle \}):=\sum_{s\in \mathbb Z}t^{-s} \Fil^{r+s}(\BBcrp\{\langle u_1,\ldots, u_d\rangle \})$.
\end{enumerate}
\end{defn}

To describe $\cO\AAcr$ more explicitly, assume that $\cX$ is \emph{small}, i.e., there is an \'etale morphism $\cX\rightarrow \Spf(\cO_k\{T_1^{\pm 1},\ldots, T_{d}^{\pm 1}\})=:\mathcal T^d$ of formal schemes over $\cO_k$, where we have used $\{-,\ldots,-\}$ to denote convergent power series. Let $\TT^d$ denote the generic fiber of $\mathcal T^d$ and $\widetilde{\TT^d}$ be obtained from $\TT^d$ by adding a compatible system of $p^{n}$-th roots of $T_i$ for $1\leq i\leq d$ and $n\geq 1$:
\[
\widetilde{\TT^d}:=\Spa(k\{T_{1}^{\pm1/p^{\infty}},\ldots, T_d^{\pm 1/p^{\infty}}\},\cO_k\{T_{1}^{\pm1/p^{\infty}},\ldots, T_d^{\pm 1/p^{\infty}}\}).
\]
Set $\widetilde X:=X\times_{\TT^d}\widetilde{\TT^d}$. Let $T_i^{\flat}\in \cO_{X}^{\flat+}|_{\widetilde X}$ be the element $(T_i, T_i^{1/p},\ldots,T_i^{1/p^n},\ldots)$. Then $\theta_{X}(T_i\otimes 1-1\otimes [T_{i}^{\flat}])=0$, giving an $\mathbb A_{\cris}$-linear morphism
\begin{equation}\label{alpha}
\alpha\colon \mathbb A_{\cris}\{\langle u_1,\ldots, u_d\rangle\}|_{\widetilde{X}}\longrightarrow \cO\mathbb A_{\cris}|_{\widetilde X},\quad u_i\mapsto T_i\otimes 1-1\otimes [T_i^{\flat}].
\end{equation}
Clearly, $\alpha$ respects the filtrations on both sides.
\begin{prop} \label{iso}The morphism $\alpha$ of \eqref{alpha} is an isomorphism. Moreover, $\alpha$ is strictly compatible with the filtrations on both sides, i.e., the inverse of the isomorphism $\alpha$ respects also the filtrations of both sides. 
\end{prop}

\begin{lemma}\label{algebra} Let $\bk$ be an algebraic closure of $k$. Then $\mathbb A_{\cris} \{\langle u_1,\ldots, u_d\rangle \}|_{\widetilde X_{\bk}}$ has an $\cO_X^{\ur+}|_{\widetilde X_{\bk}}$-algebra structure, sending $T_i$ to $u_i+[T_i^{\flat}]$, such that the composition
\[
\cO_X^{\ur+}|_{\widetilde X_{\bk}}\longrightarrow \mathbb A_{\cris}\{\langle u_1,\ldots, u_d\rangle\}|_{\widetilde X_{\bk}}\stackrel{\theta'|_{\widetilde X_{\bk}}}{\longrightarrow} \widehat{\cO}_X^+|_{\widetilde X_{\bk}}
\]
is the map \eqref{eq.OurAlg} composed with $\cO_{X}^{+}\to \widehat{\cO}_X^+$.  Here $\theta'\colon \AAcr\{\langle u_1,\ldots, u_d\rangle\}\to \widehat{\cO}_X^+$ is induced from the map $\mathbb A_{\cris}\stackrel{\theta}{\ra}  \widehat{\cO}_X^+$ by sending $u_i$'s to $0$.
\end{lemma}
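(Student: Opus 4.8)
The plan is to produce the claimed $\cO_X^{\ur+}|_{\widetilde X_{\bk}}$-algebra structure by an infinitesimal lifting argument, using the \'etale structure morphism $\cX\to\cT^d$ and the fact that $\ker(\theta')$ consists of topologically nilpotent elements. Since $\cO_X^{\ur+}=w^{-1}\cO_{\cX_{\et}}$ is the sheaf associated with the presheaf $\widetilde{\cO_X^{\ur+}}$ of \eqref{eq.factorization} while $\AAcr\{\langle u_1,\ldots,u_d\rangle\}$ is already a sheaf, it is enough to give, for every affinoid perfectoid $U$ above $\widetilde X_{\bk}$ with $\widehat U=\Spa(R,R^+)$ and every pair $(\mathcal Y=\Spf B,\,a)$ consisting of an \'etale $\mathcal Y\to\cX$ and a morphism $a\colon U\to\mathcal Y_k$ as in \eqref{eq.factorization}, a ring homomorphism $B\to\AAcr\{\langle u_1,\ldots,u_d\rangle\}(U)$, functorial in $U$ and in $(\mathcal Y,a)$; the reduction to affine $\mathcal Y$ uses the sheaf property of $\cO_{\cX_{\et}}$ and of $\AAcr\{\langle u_1,\ldots,u_d\rangle\}$, and the reduction to the basis of affinoid perfectoids is routine. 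By Lemma \ref{lem1forOBcris} we may replace the target by $\AAcr(R,R^+)\{\langle u_1,\ldots,u_d\rangle\}$, which is $p$-torsion free, $p$-adically complete and separated; here $B$ is a $p$-adically complete $\cO_k\{T_1^{\pm1},\ldots,T_d^{\pm1}\}$-algebra with $B/p^n$ finitely presented \'etale over $\cO_k\{T_1^{\pm1},\ldots,T_d^{\pm1}\}/p^n$ for all $n$, because $\cX$ is \'etale over $\cT^d$ and $\mathcal Y$ over $\cX$.

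First I would write down the base homomorphism
\[
\phi_0\colon \cO_k\{T_1^{\pm1},\ldots,T_d^{\pm1}\}\longrightarrow \AAcr(R,R^+)\{\langle u_1,\ldots,u_d\rangle\},\qquad T_i\longmapsto u_i+[T_i^{\flat}],
\]
and check that it is well defined: since $U$ lies above $\widetilde{\TT^d}$, each $T_i^{1/p^n}$ is a unit in $R^+$, so $T_i^{\flat}$ is a unit in $R^{\flat+}$ and $[T_i^{\flat}]$ is a unit; moreover $u_i+[T_i^{\flat}]=[T_i^{\flat}]\bigl(1+[T_i^{\flat}]^{-1}u_i\bigr)$ with $[T_i^{\flat}]^{-1}u_i$ in the PD-ideal $\Fil^1(\AAcr\{\langle u_1,\ldots,u_d\rangle\})$, whose elements $x$ are topologically nilpotent for the $p$-adic topology ($x^n=n!\,x^{[n]}\to0$), so $1+[T_i^{\flat}]^{-1}u_i$ is invertible and $\phi_0$ makes sense over $T_i^{-1}$. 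Composing $\phi_0$ with $\theta'$ sends $T_i$ to $(T_i^{\flat})^{\sharp}=T_i$; hence $\theta'\circ\phi_0$ is the natural map $\cO_k\{T_1^{\pm1},\ldots,T_d^{\pm1}\}\to R^+$ (coming from $U\to\widetilde{\TT^d}$), which equals the composite $\cO_k\{T_1^{\pm1},\ldots,T_d^{\pm1}\}\to B\xrightarrow{g}R^+$, where $g$ is the composition of the map induced by $a$, of \eqref{eq.OurAlg}, and of $\cO_X^+\to\widehat{\cO_X^+}$.

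The heart of the argument is then the lifting step: I claim $g\colon B\to R^+$ lifts, through $\theta'$, to a \emph{unique} $\cO_k\{T_1^{\pm1},\ldots,T_d^{\pm1}\}$-algebra homomorphism $\phi\colon B\to\AAcr(R,R^+)\{\langle u_1,\ldots,u_d\rangle\}$ extending $\phi_0$. The kernel $J$ of the surjection $\theta'\colon\AAcr(R,R^+)\{\langle u_1,\ldots,u_d\rangle\}\to R^+$ is contained in $\Fil^1(\AAcr\{\langle u_1,\ldots,u_d\rangle\})$, hence all of its elements are topologically nilpotent; therefore for each $n$ the image $J_n$ of $J$ in $\AAcr(R,R^+)\{\langle u_1,\ldots,u_d\rangle\}/p^n$ is a nil ideal with quotient $R^+/p^n$. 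As $B/p^n$ is finitely presented \'etale, hence formally \'etale, over $\cO_k\{T_1^{\pm1},\ldots,T_d^{\pm1}\}/p^n$, the standard infinitesimal lifting argument --- applied after replacing $J_n$ by the finitely generated, hence nilpotent, sub-ideal involved in lifting the finitely many generators of $B/p^n$ --- produces a unique homomorphism $B/p^n\to\AAcr(R,R^+)\{\langle u_1,\ldots,u_d\rangle\}/p^n$ lifting $g\bmod p^n$ and extending $\phi_0\bmod p^n$. By uniqueness these are compatible in $n$, and the $p$-adic limit yields $\phi$, with $\theta'\circ\phi=g$. The uniqueness of $\phi$ forces compatibility with every morphism of pairs $(\mathcal Y,a)$ and with restriction in $U$ --- two such lifts would reduce to the same homomorphism, hence agree --- so the $\phi$'s glue to a morphism of presheaves $\widetilde{\cO_X^{\ur+}}|_{\widetilde X_{\bk}}\to\AAcr\{\langle u_1,\ldots,u_d\rangle\}|_{\widetilde X_{\bk}}$, and thus to the desired $\cO_X^{\ur+}|_{\widetilde X_{\bk}}$-algebra structure. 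It sends $T_i$ to $u_i+[T_i^{\flat}]$ by construction, and $\theta'\circ\phi=g$ on every $B$ is precisely the stated compatibility of $\theta'|_{\widetilde X_{\bk}}$ with \eqref{eq.OurAlg} followed by $\cO_X^+\to\widehat{\cO_X^+}$.

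The main obstacle is this lifting step: one has to find the topology on $\AAcr\{\langle u_1,\ldots,u_d\rangle\}$ along which the formally \'etale algebra $B$ lifts. The ideal $\ker(\theta')$ is not nilpotent but only ``topologically PD-nilpotent'' --- which is exactly where the PD-structure of $\AAcr$, rather than of $\AAinf$, enters --- so one passes modulo $p^n$, where the ideal becomes nil, handles a possibly non--finitely-generated nil ideal by cutting it down to finite nilpotent sub-ideals, and then reassembles via the $p$-adic limit. A secondary point is to perform everything canonically (uniqueness of the lifts) so that it descends to a morphism of sheaves; the almost isomorphism of Lemma \ref{lem1forOBcris}, with which one only composes at the outset, creates no difficulty. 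These constructions feed directly into the proof of Proposition \ref{iso}.
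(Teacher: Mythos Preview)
Your proof is correct and takes essentially the same approach as the paper: both construct the algebra map by infinitesimal lifting along the \'etale morphism $\cY\to\cX\to\mathcal T^d$, using that $\ker(\theta')$ becomes nil modulo $p^n$, and then pass to the $p$-adic limit and sheafify. The paper organizes the mod-$p$ step through an explicit intermediate ring $R^{\flat+}[u_1,\ldots,u_d]/((p^{\flat})^p,u_1^p,\ldots,u_d^p)$ where the relevant ideal is visibly nilpotent (and then lifts by powers of $p$), whereas you lift in one step through the nil ideal $J_n$ using the finite-presentation reduction; this is a cosmetic difference, and your packaging is arguably cleaner.
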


\begin{proof}
Let $U$ be an affinoid perfectoid lying above $\widetilde{X}_{\bk}$, and $\mathcal Y\in \cX_{\et}$, equipped with a map $a\colon U\to \cY_k$ as in \eqref{eq.factorization}. We shall first construct a morphism of $\cO_k$-algebras
\begin{equation}\label{eq.cY}
\cO_{\cY}(\cY)\longrightarrow \left( \AAcr\{\langle u_1,\ldots, u_d\rangle \}\right) (U)
\end{equation}
sending $T_i$ to $u_i+[T_i^{\flat}]$. As our construction is functorial and as $\AAcr\{\langle u_1,\ldots, u_d\rangle\}$ is a sheaf, shrinking $U$ and $\cY$ if necessary, we may and we do assume $\cY=\Spf(A)$ affine. Then, the map $a:U\ra \cY_k$ gives us a morphism of $\cO_k$-algebras $a^{\#}: A\to R^+$. Moreover, $U$ being qcqs, $
\AAcr\{\langle u_1,\ldots, u_d\rangle\}(U)=\varprojlim_n\left(\left(\AAcr/p^n\right)(U)\langle u_1,\ldots, u_d\rangle\right)$. Consequently, the morphisms $\AAcr(R,R^+)/p^n\ra (\AAcr/p^n)(U)$ for all $n\geq 1$ in Lemma \ref{vanish} induce a natural filtered map
\begin{equation}\label{eq.MapBetweenPDPolynomials}
\AAcr(R,R^+)\{\langle u_1,\ldots, u_d\rangle\}\lra \AAcr\{\langle u_1,\ldots, u_d\rangle \}(U).
\end{equation}
Therefore, to obtain \eqref{eq.cY}, it suffices to construct a natural map
\begin{equation}\label{eq.cY1}
A=\cO_{\cY}(\cY)\longrightarrow \AAcr(R,R^+)\{\langle u_1,\ldots, u_d\rangle \}.
\end{equation}
of $\cO_k$-algebras mapping $T_i$ to $u_i+[T_i^{\flat}]$. To do so, composing the map $\cY\to \cX$ with $\cX\to \mathcal T^d$, we obtain an \'etale morphism $b:\mathcal Y\to \mathcal T^d$ of $p$-adic formal schemes, whence an \'etale morphism $b^{\#}:\cO_k\{T_1^{\pm 1},\ldots, T_d^{\pm 1}\}\ra A$ of $\cO_k$-algebras. On the other hand, $u_i$ has divided powers in $\AAcr(R,R^+)\{\langle u_1,\ldots, u_d\rangle\}$, so $[T_{i}^{\flat}]+u_i\in \AAcr(R,R^+)\{\langle u_1,\ldots,u_d\rangle\}$ is invertible. This allows to define a map $f:\cO_k\{T_1^{\pm 1},\ldots, T_d^{\pm 1}\} \ra \AAcr(R,R^+)\{\langle u_1,\ldots,u_d\rangle \}$ of $\cO_k$-algebras, sending each $T_i$ to $u_i+[T_i^{\flat}]$. Let $f_n$ be its reduction modulo $p^n$ for $n\geq 1$. Then, we have the following diagram, which is commutative without the dotted map:
\begin{equation}\label{eq.diagramforgn}
\xymatrix{R^{+}/p^n & A/p^n\ar[l]_{a^{\#} \textrm{ mod }p^n} \ar@{.>}[ld]_{\exists !\  g_n}  \\  \AAcr(R,R^+)\{\langle u_1,\ldots, u_d\rangle \}/p^n\ar[u]^{\theta_{(R,R^+)}' \textrm{ mod }p^n } & \cO_k\{T_1^{\pm 1},\ldots,T_d^{\pm 1}\}/p^n.\ar[l]_<<<<<{f_n}\ar[u]_{b^{\#} \textrm{ mod } p^n}}
\end{equation}
Here, the left vertical map is the reduction modulo $p^n$ of the ring homomorphism
\begin{equation*}
\theta_{(R,R^+)}': \AAcr(R,R^+)\{\langle u_1,\ldots, u_d\rangle\}\lra R^+
\end{equation*}
which sends each $u_i$ to $0$ and extends the usual map $\theta_{(R,R^+)}:\AAcr(R,R^+)\ra R^+$. Since $\ker(\theta_{(R,R^+)}')$ has PD-structure, the left vertical map of \eqref{eq.diagramforgn} has a nilpotent kernel. Then, by the \'etaleness of $b^{\#}$, we get a unique dotted map $g_n$, making the whole diagram \eqref{eq.diagramforgn} commutative. These $g_n$'s are compatible with each other, and the limit $\varprojlim_n g_n$ gives the morphism \eqref{eq.cY1} and thus \eqref{eq.cY}. Since $\widetilde{\cO_{X}^{\ur+}}(U) = \varinjlim \cO_{\mathcal Y}(\cY)$, where the direct limit runs through the diagrams \eqref{eq.factorization}, we get  from \eqref{eq.cY} a morphism of $\cO_k$-algebras, sending $T_i$ to $u_i+[T_i^{\flat}]$:
\[
\widetilde{\cO_{X}^{\ur+}}(U)\longrightarrow \mathbb A_{\cris}\{\langle u_1,\ldots, u_d\rangle\}(U),
\]
whose construction is functorial with respect to affinoid perfectoid $U\in X_{\proet}$ lying above $\widetilde{X}_{\bk}$. As such affinoids perfectoids form a basis of the topology on $X_{\proet}/\widetilde{X}_{\bk}$, by passing to the associated sheaf, we obtain the required morphism of sheaves of $\cO_{k}$-algebras $\cO_{X}^{\ur}|_{\widetilde{X}_{\bk}}\to \mathbb A_{\cris}|_{\widetilde{X}_{\bk}}\{\langle u_1,\ldots, u_d\rangle\}$ sending $T_i$ to $u_i+[T_i^{\flat}]$. The last statement follows from the assignment $\theta'(U_i)=0$ and the fact that $\theta( [T_i^{\flat}])=T_i$.
\end{proof}

\begin{proof}[Proof of Proposition~\ref{iso}] As $\widetilde X_{\bk}\to \widetilde X$ is a covering in $X_{\proet}$, it is enough to show that $\alpha|_{\widetilde{X}_{\bk}}$ is an isomorphism.
By Lemma \ref{algebra}, there exists a morphism of sheaves of $\cO_k$-algebras $
\cO_X^{\ur+}|_{\widetilde{X}_{\bk}}\rightarrow\mathbb A_{\cris}\{\langle u_1,\ldots, u_d\rangle\} |_{\widetilde{X}_{\bk}}
$
sending $T_i$ to $u_i+[T_{i}^{\flat}]$. By extension of scalars, we find the morphism
\[
\beta: \cO\AAinf|_{\widetilde{X}_{\bk}}=\left(\cO_{X}^{\ur+}\otimes_{\cO_k} \AAinf \right)|_{\widetilde X_{\bk}}\longrightarrow \mathbb A_{\cris}\{\langle u_1,\ldots, u_d\rangle\}|_{\widetilde X_{\bk}}
\]
which maps $T_i\otimes 1$ to $u_i+[T_{i}^{\flat}]$. Consider the composite (with $\theta'$ as in Lemma \ref{algebra})
\[
\theta'|_{\widetilde X_{\bk}}\circ \beta \colon \cO\AAinf|_{\widetilde X_{\bk}} \longrightarrow \mathbb A_{\cris}\{\langle u_1,\ldots, u_d\rangle\}|_{\widetilde X_{\bk}}{\longrightarrow} \widehat{\cO}_X^+|_{\widetilde{X}_{\bk}},
\]
which is $\theta_X|_{\widetilde X_{\bk}}$ by Lemma \ref{algebra}. Therefore, $\beta \left(\ker(\theta_X|_{\widetilde X_{\bk}})\right)\subset  \ker( \theta'|_{\widetilde X_{\bk}})$. Since $\ker(\theta'|_{\widetilde{X}_{\bk}})$ has a PD-structure, $\beta$ extends to the PD-envelope $\cO\AAcr^0|_{\widetilde X_{\bk}}$ of the source, and thus to $\cO\AAcr|_{\widetilde X_{\bk}}$ as $\mathbb A_{\cris}\{\langle u_1,\ldots, u_d\rangle\}$ is $p$-adically complete.  Thus we obtain the morphism below, still denoted by $\beta$, sending $T_i\otimes 1$ to $u_i+[T_i^{\flat}]$:
\[
\beta \colon \cO\mathbb{A}_{\cris}|_{\widetilde X_{\bk}}\longrightarrow \mathbb A_{\cris}\{\langle u_1,\ldots, u_d\rangle\}|_{\widetilde X_{\bk}}.
\]
The morphism $\beta$ above preserves $\Fil^1$, hence all the $\Fil^r$'s. One shows that $\beta$ and $\alpha $ are inverse to each other, giving our proposition.
\end{proof}

\begin{cor}\label{BcrisIso} Keep the notations above. There is a natural $\BBcrp$-linear \emph(resp. $\BBcr$-linear\emph) isomorphism sending $u_i$ to $T_i\otimes 1-1\otimes [T_i^{\flat}]$,
\[
\mathbb B_{\cris}^{+}\{ \langle u_1,\ldots, u_d \rangle\}|_{\widetilde{X}}\stackrel{\sim}{\longrightarrow} \cO\mathbb B_{\cris}^{+}|_{\widetilde X},\  \emph(\textrm{resp. }\BBcr\{\langle u_1,\ldots, u_d \rangle \}|_{\widetilde X}\stackrel{\sim}{\longrightarrow} \cO\BBcr|_{\widetilde X}\emph)
\]
which is strictly compatible with filtrations on both sides.
\end{cor}

\begin{cor}\label{cor.no-p-torsion-for-OAcris} Let $\cX$ be a smooth formal scheme over $\cO_k$. Then $\cO\AAcr$ has no $p$-torsion. In particular, $\cO\AAcr\subset \cO\BBcrp$.
\end{cor}

\begin{proof} This is a local question on $X$. Hence we may and do assume there is an \'etale morphism $X\to \Spf(\cO_k\{T_1^{\pm 1},\ldots,T_d^{\pm 1}\})$. So we reduces ourselves to the corresponding statement for $\AAcr\{\langle u_1,\ldots, u_d\rangle\}$. As the latter is the $p$-adic completion of $\AAcr^0\langle u_1,\ldots, u_d\rangle$, one reduces further to the fact that $\AAcr^0$ has no $p$-torsion, as shown in Corollary \ref{cor.DespOfAcris0} (1).
\end{proof}

An important feature of $\cO\mathbb A_{\cris}$ is that it has an $\AAcr$-linear connection on it. To see this, set $\Omega_{X/k}^{1,\ur+}:=w^{-1}\Omega^1_{\mathcal{X}_{\et}/\cO_k}$, which is  locally free of finite rank over $\cO_X^{\ur+ }$.  Let
\[
\Omega^{i,\ur+}_{X/k}:=\wedge_{\cO_{X}^{\ur +}}^i \Omega_{X/k}^{1,\ur+}, \quad \textrm{and}\quad \Omega_{X/k}^{i,\ur}:=\Omega_{X/k}^{1,\ur +}[1/p] \quad \forall i\geq 0.
\]
Then $\cO\AAinf$ admits a unique $\AAinf$-linear connection $
\nabla\colon \cO\AAinf\rightarrow \cO\AAinf\otimes_{\cO_X^{\ur+}}\Omega_{X/k}^{1,\ur+}$
induced from the usual one on $\cO_{\cX_{\et}}$. This connection extends uniquely to $\cO\AAcr$
\[
\nabla\colon \cO\AAcr\longrightarrow \cO\AAcr\otimes_{\cO_X^{\ur+}}\Omega_{X/k}^{1,\ur+},
\]
which is $\AAcr$-linear. Inverting $p$ (resp. $t$), we get  also a $\BBcrp$-linear (resp. $\BBcr$-linear) connection on $\cO\BBcrp$ (resp. on $\cO\BBcr$):
\[
\nabla\colon \cO\BBcrp\longrightarrow \cO\BBcrp\otimes_{\cO_X^{\ur}}\Omega_{X/k}^{1,\ur},\quad  \textrm{and}\quad \nabla\colon \cO\BBcr\longrightarrow \cO\BBcr\otimes_{\cO_X^{\ur}}\Omega_{X/k}^{1,\ur}.
\]
From Proposition \ref{iso}, we obtain

\begin{cor}[Crystalline Poincar\'e lemma]\label{poincare}  Let $\cX$ be a smooth formal scheme of dimension $d$ over $\cO_k$. Then there is  an exact sequence of pro-\'etale sheaves:
\[
0\to \mathbb A_{\cris}\to \cO\mathbb A_{\cris}\stackrel{\nabla}{\to }\cO\mathbb A_{\cris}\otimes_{\cO_{X}^{\ur+}}\Omega^{1,\ur+}_{X/k}\stackrel{\nabla}{\to}\ldots \stackrel{\nabla}{\to} \cO\mathbb A_{\cris}\otimes_{\cO_{X}^{\ur+}}\Omega^{d,\ur+}_{X/k} \to 0,
\]
which is strictly exact with respect to the filtration giving $\Omega^{i,\ur+}_{X/k}$ degree $i$. In particular, the connection $\nabla$ is integrable and satisfies Griffiths transversality with respect to the filtration on $\cO\mathbb A_{\cris}$, i.e. $\nabla (\Fil^i\cO\mathbb A_{\cris})\subset \Fil^{i-1}\cO\mathbb A_{\cris}\otimes_{\cO_{X}^{\ur+}}\Omega^{1,\ur+}_{X/k}$. Moreover, the similar results for $\cO\BBcrp$ and $\cO\BBcr$ hold.
\end{cor}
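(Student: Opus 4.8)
The plan is to reduce immediately to the local situation and transport the statement across the isomorphism of Corollary~\ref{BcrisIso}. The assertion is local on $\cX$, so I may assume there is an \'etale morphism $\cX\to \mathcal T^d$, and hence, by Corollary~\ref{BcrisIso}, a filtered isomorphism $\BBcrp|_{\widetilde X}\{\langle u_1,\ldots,u_d\rangle\}\stackrel{\sim}{\to}\cO\BBcrp|_{\widetilde X}$ sending $u_i$ to $T_i\otimes 1-1\otimes[T_i^{\flat}]$. Under this identification the connection $\nabla$ on $\cO\BBcrp$ becomes the $\BBcrp$-linear connection on $\BBcrp\{\langle u_1,\ldots,u_d\rangle\}$ characterised by $\nabla(u_i)=-\,d\log T_i$ (equivalently $\nabla(u_i^{[n]})=u_i^{[n-1]}\otimes dT_i$ up to the unit $T_i$), because $\nabla(T_i\otimes1)=1\otimes dT_i$ while $\nabla([T_i^{\flat}])=0$. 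So the de Rham complex $DR(\cO\BBcrp)$ over $\BBcrp$ is, locally, the Koszul-type complex of the $d$ commuting operators $\partial/\partial u_i$ acting on the PD-polynomial sheaf $\BBcrp\{\langle u_1,\ldots,u_d\rangle\}$.

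The main computation is then the exactness of this Koszul complex, which I would do one variable at a time. For a single divided-power variable $u$ over a $p$-torsion-free $p$-adically complete ring $B$ (here $B=\BBcrp$), the complex $B\{\langle u\rangle\}\xrightarrow{d/du}B\{\langle u\rangle\}$ has kernel exactly $B$ and is surjective: given $\sum b_n u^{[n]}$ one integrates termwise to $\sum b_n u^{[n+1]}$, and this converges $p$-adically because the $b_n\to0$ and divided powers only improve $p$-adic size — this is where $p$-adic completeness and the absence of $p$-torsion are used, and it is essentially the classical Poincar\'e lemma for $A_{\cris}\{\langle u\rangle\}$. Tensoring (completed) over the $d$ variables and using that each factor is a resolution of $B$ concentrated in degrees $0,1$, a Künneth/induction argument gives that the total complex $DR$ resolves $\BBcrp$. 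Inverting $p$ is harmless throughout since everything was already a $\QQ_p$-algebra after $[1/p]$; and the identification is functorial enough to glue over a cover of $\cX$ by such charts, giving the global exact sequence of pro-\'etale sheaves.

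For the filtration statements I would again argue in the local chart, using the explicit description of $\Fil^r(\BBcrp\{\langle u_1,\ldots,u_d\rangle\})$ from Lemma~\ref{lem1forOBcris} (adapted to $\BBcrp$) together with Proposition~\ref{iso}, which already tells us $\alpha$ is strictly compatible with filtrations. Griffiths transversality $\nabla(\Fil^i\cO\BBcrp)\subset\Fil^{i-1}\cO\BBcrp\otimes\Omega^{1,\ur}_{X/k}$ is visible from $\nabla(u_i^{[n]})=u_i^{[n-1]}\,d\log T_i$ (up to a unit): differentiating drops the divided-power weight by one, hence the filtration index by one, while $\Omega^1$ sits in degree $1$, so the total filtration degree is preserved. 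Integrability is automatic since $\nabla$ comes by extension of scalars from the integrable connection on $\cO_{\cX_{\et}}$ and the $u_i$-directions commute. Strict exactness with respect to the filtration then follows by running the one-variable integration argument filtered piece by filtered piece: the primitive $\sum b_n u^{[n+1]}$ of an element of $\Fil^r$ lies in $\Fil^{r+1}\subset\Fil^r$ of the PD-polynomial ring in the relevant sense, so the filtered pieces of the complex are themselves exact.

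\textbf{Main obstacle.} The delicate point is the convergence and strictness of the termwise integration in the $p$-adically completed PD-polynomial ring: one must check that dividing the coefficient of $u^{[n]}$ and re-indexing genuinely lands back in the completed sheaf and respects $\Fil^r$, i.e. that the almost-isomorphism and explicit series description of Lemma~\ref{lem1forOBcris} are strong enough to control the filtration on nose (not just up to almost isomorphism) after inverting $p$. Packaging the one-variable result into the multivariable strictly-exact statement, and ensuring the local identifications glue compatibly with $\nabla$ and $\Fil$, is the remaining bookkeeping.
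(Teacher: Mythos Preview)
Your proposal is correct and follows essentially the same route as the paper: localize to an \'etale chart, transport the connection across the isomorphism of Proposition~\ref{iso}/Corollary~\ref{BcrisIso} to obtain $\nabla(u_i^{[n]})=u_i^{[n-1]}\otimes dT_i$, and read off strict exactness and Griffiths transversality from the explicit filtered description of the PD-polynomial sheaf (the paper works at the integral level $\cO\AAcr$ in the almost sense rather than directly with $\BBcrp$, but this is only packaging). One small correction: since $u_i=T_i\otimes 1-1\otimes[T_i^{\flat}]$ and $\nabla$ is $\AAcr$-linear, you get $\nabla(u_i)=dT_i$ on the nose, not $-d\log T_i$, so the formula $\nabla(u_i^{[n]})=u_i^{[n-1]}\otimes dT_i$ holds without any unit $T_i$ floating around.
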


\begin{proof} It suffices to prove the assertion for $\cO\AAcr$. The question is local on $\cX$, so we assume there is an \'etale morphism $\cX \ra \Spf(\cO_k\{T_1^{\pm1},\ldots, T_d^{\pm 1}\})$. Under the isomorphism \eqref{alpha} of Proposition \ref{iso}, $\Fil^i\cO\mathbb{A}_{\cris}|_{\widetilde X}$ is the $p$-adic completion of
 \[
 \sum_{i_1,\ldots i_{d}\geq 0}\Fil^{i-(i_0+\ldots +i_{d })}\mathbb A_{\cris}|_{\widetilde X} u_1^{[i_1]}\cdots u_{d}^{[i_{d}]}
 \]
 with $T_i\otimes 1-1\otimes [T_i^{\flat}]$ sent to $u_i$. Moreover $\nabla(u_i^{[n]})=u_i^{[n-1]}\otimes d T_i$ for any $i, n\geq 1$, since the connection $\nabla$ on $\cO\mathbb A_{\cris}$ is $\mathbb A_{\cris}$-linear. The strict exactness and Griffiths transversality then follow.
\end{proof}

Using Proposition~\ref{iso}, we can establish an acyclicity result for $\cO\AAcr$ as in Lemma \ref{vanish}. Let $\cU=\Spf(A)$ be an affine open subset of $\cX$, admitting an \'etale morphism to $\mathcal T^d=\Spf(\cO_k\{T_1^{\pm 1},\ldots, T_d^{\pm 1}\})$. Let $U$ be the generic fiber, and set $
\widetilde{U}:=U\times_{\mathbb T^d}\widetilde{\mathbb T}^d$. Let $V$ be an affinoid perfectoid of $X_{\proet}$ lying above $\widetilde{U}_{\bk}$. Write $\widehat{V}=\Spa(R,R^+)$. Let $\cO\AAcr(R,R^+)$ be the $p$-adic completion of the PD-envelope $\cO\AAcr^0(R,R^+)$ of $A\otimes_{\cO_k}W(R^{\flat+})$ with respect to the kernel of the following morphism  induced from $\theta_{(R,R^+)}$ by extending scalars to $A$:
\[
\theta_{A}\colon A\otimes_{\cO_k}W(R^{\flat+})\longrightarrow R^+.
\]
Set $\cO\BBcrp(R,R^+):=\cO\AAcr(R,R^+)[1/p]$, $\cO\BBcr(R,R^+):=\cO\BBcrp(R,R^+)[1/t]$. For $r\in \mathbb Z$, define $\Fil^r\cO\AAcr(R,R^+)$ to be the closure inside $\cO\AAcr(R,R^+)$ for the $p$-adic topology of the $r$-th PD-ideal of $\cO\AAcr^0(R,R^+)$. Finally, set  $\Fil^r\cO\BBcrp(R,R^+):=\Fil^r\cO\AAcr(R,R^+)[1/p]$ and  $
\Fil^r\cO\BBcr(R,R^+):=\sum_{s\in \ZZ}t^{-s}\Fil^{r+s}\cO\BBcrp(R,R^+)$.

\begin{lemma} \label{2obcris} There is a  natural filtered morphism $\cO\AAcr(R,R^+)\ra \cO\AAcr(V)$ of $R^+\otimes_{\cO_k}A_{\cris}$-algebras, inducing an almost isomorphism $\Fil^r\cO\AAcr(R,R^+)/p^n\stackrel{\approx}{\to}(\Fil^r\cO\AAcr/p^n)(V)$ for each $r,n\geq 0$. Moreover, $H^i(V,\Fil^r\cO\AAcr/p^n)$  is almost zero whenever $i>0$.
\end{lemma}

\begin{proof} Let $\iota$ be the composition of the two natural ring homomorphisms below
\[
A\otimes_{\cO_k}W(R^{\flat +})\lra A\otimes_{\cO_k}\AAcr(V)\lra \cO\AAcr(V).
\]
Then, $\theta_{X}(V)\circ\iota=\theta_{A}$, with $\theta_X(V)$ the map obtained by taking sections at $V$ of $\theta_X:\cO\AAcr\ra \widehat{\cO}_X^+$. So $\iota(\ker(\theta_{A}))\subset \ker(\theta_X(V))$. As $\ker(\theta_X(V))$ has PD-structure, $\iota$ extends to $\cO\AAcr^0(R,R^+)$ and the resulting morphism $\cO\AAcr^0(R,R^+)\ra \cO\AAcr(V)$ respects the filtrations. Passing to $p$-adic completions, we obtain a filtered morphism $\cO\AAcr(R,R^+)\ra \cO\AAcr(V)$ of $A\otimes_{\cO_k}A_{\cris}$-algebras, still noted by $\iota$ in the following.

Let $\alpha_{(R,R^+)}:\AAcr(R,R^+)\{\langle u_1,\ldots, u_d\rangle\}\longrightarrow\cO\AAcr(R,R^+)$ be the $\AAcr(R,R^+)$-linear map, mapping $u_i$ to $T_i\otimes 1-1\otimes [T_i^{\flat}]$.
As in Proposition~\ref{iso}, it is an isomorphism, strictly compatible with the filtrations. Here $\Fil^r(\AAcr(R,R^+)\{\langle u_1,\ldots, u_d\rangle\})$ is defined to be the $p$-adic completion of
\[
\sum_{i_1,\ldots, i_d\geq 0} \Fil^{r-(i_1+\ldots +i_d)}\AAcr^0(R,R^+)u_{1}^{[i_1]}\cdots u_d^{[i_d]}\subset \AAcr(R,R^+)\{\langle u_1,\ldots, u_d\rangle \}.
\]
Therefore, we have the following commutative diagram
\[
\xymatrix{\AAcr(R,R^+)\{\langle u_1,\ldots, u_d\rangle\}\ar[r]^<<<<<{\simeq}_<<<<<{\alpha_{(R,R^+)}}\ar[d]_{\eqref{eq.MapBetweenPDPolynomials}} & \cO\AAcr(R,R^+) \ar[d]^{\iota}\\ \AAcr\{\langle u_1,\ldots, u_d\rangle\}(V)\ar[r]^<<<<<<<{\simeq}_<<<<<<<{\alpha(V)} & \cO\AAcr(V)},
\]
whose horizontal arrows are filtered isomorphisms. To prove the first part of our lemma, it suffices to show that \eqref{eq.MapBetweenPDPolynomials} induces an almost isomorphism
\begin{equation}\label{eq.MapBetweenFilPDPoly}
\Fil^r\AAcr(R,R^+)\{\langle u_1,\ldots, u_d\rangle\}/p^n\lra (\Fil^r\AAcr\{\langle u_1\ldots, u_d\rangle\}/p^n)(V)
\end{equation}
for any $r,n\geq 0$. By definition, the left-hand side of the morphism above is
\[
\bigoplus_{i_1,\ldots, i_d\geq 0} (\Fil^{r-i_1-\ldots-i_d}\AAcr(R,R^+)/p^n)\cdot u_1^{[i_1]}\cdots u_d^{[i_d]},
\]
while the right-hand side is given by (recall $V$ is qcqs)
\[
\bigoplus_{i_1,\ldots, i_d\geq 0}(\Fil^{r-i_1-\ldots-i_d}\AAcr/p^n)(V)\cdot u_1^{[i_1]}\cdots u_d^{[i_d]}.
\]
Under these descriptions, the map \eqref{eq.MapBetweenFilPDPoly} is induced from the natural maps
\[
\Fil^{r-i_1-\ldots -i_d}\AAcr(R,R^+)/p^n\lra (\Fil^{r-i_1-\ldots -i_d}\AAcr/p^n)(V), \quad i_1,\ldots, i_d\geq 0.
\]
So, that \eqref{eq.MapBetweenFilPDPoly} is an almost isomorphism follows from Lemma \ref{vanish}.

It remains to prove $H^i(V,\Fil^r\cO\AAcr/p^n)^a=0$ for $i>0$ and $n,r\geq 0$. Using the isomorphism $\alpha$ of Proposition \ref{iso}, we are reduced to the similar statement for $\Fil^r \AAcr\{\langle u_1,\ldots, u_d\rangle\}/p^n$. As $V$ is qcqs, we have
\[
H^i(V,\Fil^r\AAcr\{\langle u_1,\ldots, u_d\rangle \}/p^n)\simeq
\bigoplus_{i_1,\ldots, i_d\geq 0}H^i(V, \Fil^{r-i_1-\ldots-i_d}\AAcr/p^r),
\]
which vanishes by Lemma \ref{vanish}.
\end{proof}

\begin{cor}\label{cor.acyclicityOBcris} Keep the notation of Lemma \ref{2obcris}. Then the filtered morphism $\cO\AAcr(R,R^+)\ra \cO\AAcr(V)$ of Lemma \ref{2obcris} induces an isomorphism of $R^+\otimes_{\cO_k}B_{\cris}$-modules $\cO\BBcr(R,R^+)\stackrel{\sim}{\ra}\cO\BBcr(V)$, strictly compatible with filtrations. Moreover, $H^i(V,\Fil^r\cO\BBcr)=H^i(V,\cO\BBcr)=0$ for any $i>0$ and $r\in \mathbb Z$. 
\end{cor}

\begin{proof} The proof is the same as that of Corollary \ref{cor.acyclicityBcris} (2). Indeed, as $\cO\AAcr(R,R^+)$ and $\cO\AAcr$ are $p$-adically complete and flat over $\Zp$, by Lemma \ref{lem.technical-I} and Lemma \ref{2obcris}, the filtered morphism $\cO\AAcr(R,R^+)\ra \cO\AAcr(V)$ induces a filtered morphism $\cO\BBcr(R,R^+)\ra \cO\BBcr(V)$ of $A\otimes_{\cO_k}B_{\cris}$-modules, with kernel and cokernel killed by $\mathcal I^2$. Therefore, the latter is an isomorphism since $\mathcal I \cdot  (A\otimes_{\cO_k}B_{\cris})=A\otimes_{\cO_k}B_{\cris}$. To prove the statements for $\Fil^r\cO\BBcr$, as in the proof of Corollary \ref{cor.acyclicityBcris}, it suffices to establish the similar properties (a) and (b) hold for $\cO\BBcrp$. Let $s\geq 0$ be an integer. By Corollary \ref{BcrisIso}, one checks that, over $X_{\proet}/\widetilde{U}_{\bk}$, $\mathrm{gr}^s\cO\BBcr^+$ is a free module over $\widehat{\cO}_X$, with a basis given by the images of the elements
\[
\xi^{i_1}u_1^{i_1}\cdots u_d^{i_d}, \quad \textrm{ where } i_0,\ldots,i_d\in \mathbb N \textrm{  and } i_0+\ldots +i_d=s.
\]
Similar observation holds for $\mathrm{gr}^s\cO\BBcr^+(R,R^+)$. Consequently, by \cite[Lemma 4.10]{Sch}, the canonical map $\mathrm{gr}^s\cO\BBcr^+(R,R^+)\ra \mathrm{gr}^s\cO\BBcr^+(V)$ is an isomorphism and $H^i(V,\mathrm{gr}^r\cO\BBcr^+)=0$ for $i>0$, as wanted.
\end{proof}

\subsection{Frobenius on crystalline period sheaves}\label{localfrob}
We keep the notations in the previous subsection. So $k$ is \emph{absolutely unramified} and $\cX$ is a smooth formal scheme of dimension $d$ over $\cO_k$. We want to endow Frobenius endomorphisms on the crystalline period sheaves.

On $\AAinf=W(\cO_X^{\flat+})$, we have the Frobenius map
\[
\varphi\colon \AAinf\longrightarrow \AAinf, \quad (a_0, a_1,\ldots, a_n,\ldots)\mapsto (a_0^p,a_1^p,\ldots, a_n^p,\ldots).
\]
Then for any $a\in \AAinf$, we have $\varphi(a)\equiv a^p \textrm{ mod }p$. Thus, $\varphi(\xi)=\xi^p+p \cdot b$ with $b\in \AAinf|_{X_{\bk}}$. In particular $\varphi(\xi)\in \AAcr^0|_{X_{\bk}}$ has all divided powers. As a consequence we obtain a Frobenius $\varphi$ on $\AAcr^0$ extending that on $\AAinf$. By continuity, $\varphi$ extends to $\AAcr$ and thus to $\BBcrp$. Note that $\varphi(t)=\log([\epsilon^p])=pt$. Consequently $\varphi$ is extended to $\BBcr$ by setting $\varphi(\frac{1}{t})=\frac{1}{pt}$.

To endow a Frobenius on $\cO\AAcr$, we first assume that the Frobenius of $\cX_0=\cX\otimes_{\cO_k}\kappa$ lifts to a morphism $\sigma$ on $\cX$, which is compatible with the Frobenius on $\cO_k$. Then for $\mathcal Y\in \cX_{\et}$, consider the following diagram:
\[
\xymatrix{\mathcal Y_{\kappa}\ar@{^(->}[rrr] & & &  \mathcal Y\ar[d]^{\textrm{\'etale}} \\ \mathcal Y_{\kappa}\ar@{^(->}[r]\ar[u]^{\textrm{absolute Frobenius}} & \mathcal Y \ar[r]\ar@{.>}[urr]^{\exists \sigma_{\cY}} & \cX\ar[r]^{\sigma} & \cX.}
\]
As the right vertical map is \'etale, there is a unique dotted morphism above making the diagram commute. When $\cY$ varies in $\cX_{\et}$, the $\sigma_{\cY}$'s give rise to a $\sigma$-semilinear endomorphism on $\cO_{\cX_{\et}}$  whence a $\sigma$-semilinear endomorphism $\varphi$ on $\cO_X^{\ur+}$.

\begin{rk} In general $\cX$ does not admit a lifting of Frobenius. But as $\cX$ is smooth over $\cO_k$, for each open subset $\cU\subset \cX$ admitting an \'etale morphism $\cU\to \Spf(\cO_k\{T_1^{\pm 1},\ldots, T_d^{\pm 1}\})$, a similar argument as above shows that there exists a unique lifting of Frobenius on $\cU$ mapping $T_i$ to $T_i^p$.
\end{rk}

We deduce from above a Frobenius on $\cO\mathbb A_{\inf}=\cO_{X}^{\ur+}\otimes_{\cO_k}\AAinf$ given by $\varphi\otimes \varphi$. Abusing notation, we will denote it again  by $\varphi$. A similar argument as in the previous paragraphs shows that $\varphi$ extends to $\cO\AAcr^0$, hence to $\cO\AAcr$ by continuity, and finally to $\cO\mathbb B_{\cris}^{+}$ and $\cO\BBcr$. Moreover, under the isomorphism \eqref{alpha}, the Frobenius on $\AAcr\{\langle u_1,\ldots, u_d\rangle \}\stackrel{\sim}{\to}\cO\AAcr$ sends $u_i$ to $\varphi(u_i)=\sigma(T_i)-[T_i^{\flat}]^p$.

\begin{lemma}\label{FrobHorizontal} Assume as above that the Frobenius of $\cX_0=\cX\otimes_{\cO_k}\kappa$ lifts to a morphism $\sigma$ on $\cX$ compatible with the Frobenius on $\cO_k$. The Frobenius $\varphi$ on $\cO\AAcr$ is horizontal with respect to the connection $\nabla \colon \cO\AAcr\to \cO\AAcr\otimes \Omega^{1,\ur+}_{X/k}$. Similar assertions hold for $\cO\BBcrp$ and for $\cO\BBcr$.
\end{lemma}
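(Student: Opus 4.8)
The plan is to reduce the statement to a local computation using the explicit description of $\cO\BBcrp$ provided by Proposition~\ref{iso} (and its consequence Corollary~\ref{BcrisIso}), and then to check horizontality on a set of topological generators. Since being horizontal, i.e. the commutativity of the square
\[
\xymatrix{
\cO\BBcrp \ar[r]^-{\varphi}\ar[d]_{\nabla} & \cO\BBcrp \ar[d]^{\nabla}\\
\cO\BBcrp\otimes_{\cO_X^{\ur}}\Omega^{1,\ur}_{X/k} \ar[r]^-{\varphi\otimes\varphi} & \cO\BBcrp\otimes_{\cO_X^{\ur}}\Omega^{1,\ur}_{X/k},
}
\]
is a local condition on $X$ and both $\varphi$ and $\nabla$ are already defined before any local choice, I may assume that there is an \'etale morphism $\cX\to \mathcal T^d=\Spf(\cO_k\{T_1^{\pm1},\ldots,T_d^{\pm1}\})$ and that $\sigma$ is the Frobenius lift sending $T_i$ to $T_i^p$ (the remark after the definition of $\varphi$ on $\cO_X^{\ur+}$ guarantees such a $\sigma$ exists locally, and the global $\sigma$ we are given restricts to it up to the usual comparison of Frobenius lifts, which only changes things by a horizontal automorphism; alternatively one simply works with the given $\sigma$ and the chart, the argument being identical).

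First I would record what $\nabla$ and $\varphi$ do to the relevant elements. Under the filtered isomorphism $\BBcrp|_{\widetilde X}\{\langle u_1,\ldots,u_d\rangle\}\simto \cO\BBcrp|_{\widetilde X}$ of Corollary~\ref{BcrisIso}, with $u_i=T_i\otimes1-1\otimes[T_i^\flat]$, we have from the proof of Corollary~\ref{poincare} that $\nabla(u_i^{[n]})=u_i^{[n-1]}\otimes dT_i$, that $\nabla$ is $\BBcrp$-linear, and that $\nabla(T_i)=1\otimes dT_i$ while $\nabla$ kills $[T_i^\flat]$ and all elements of $\BBcrp$ (these lie in $\cO\AAinf$ pulled back from $\AAinf$, on which the connection is $\AAinf$-linear by construction). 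On the Frobenius side, $\varphi$ is $\varphi$-semilinear over $\BBcrp$ with $\varphi(T_i)=\sigma(T_i)=T_i^p$ and $\varphi([T_i^\flat])=[T_i^\flat]^p$, hence $\varphi(u_i)=T_i^p\otimes 1-1\otimes[T_i^\flat]^p$.

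Then the verification is a direct computation on generators. On an element $b\in\BBcrp$ we have $\nabla(\varphi(b))=0=\varphi(\nabla(b))$ since $\nabla$ vanishes on $\BBcrp$ and $\varphi$ preserves $\BBcrp$. By the Leibniz rule and $\BBcrp$-semilinearity of $\varphi$ it then suffices to check the identity on each $T_i$, equivalently on each $u_i$ together with the $[T_i^\flat]$. For $T_i$: $\nabla(\varphi(T_i))=\nabla(T_i^p)=pT_i^{p-1}\otimes dT_i$, while $(\varphi\otimes\varphi)(\nabla(T_i))=(\varphi\otimes\varphi)(1\otimes dT_i)=\varphi(1)\otimes\varphi(dT_i)=1\otimes d(T_i^p)=pT_i^{p-1}\otimes dT_i$, where $\varphi$ on $\Omega^{1,\ur}_{X/k}$ is induced by $\sigma$, i.e. $\varphi(dT_i)=d\sigma(T_i)=d(T_i^p)$; the two agree. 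Since the $T_i$ (together with $1/p$, $1/t$ and the PD-structure, on all of which $\varphi$ and $\nabla$ act compatibly by construction and continuity) topologically generate $\cO\BBcrp|_{\widetilde X}$ as noted above, and since both $\nabla\circ\varphi$ and $(\varphi\otimes\varphi)\circ\nabla$ are additive, continuous, and satisfy the compatible (semi)linearity and Leibniz rules, the two composites agree everywhere.

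The only genuine point requiring care — and the step I would expect to be the main obstacle — is the bookkeeping of semilinearity: $\nabla$ is linear over $\cO_X^{\ur}$ but $\varphi$ is only $\varphi$-semilinear, so one must be careful that $(\varphi\otimes\varphi)\circ\nabla$ and $\nabla\circ\varphi$ are both $\varphi$-semilinear over $\cO_X^{\ur}$ in the same sense (using $\nabla(fs)=f\nabla(s)+s\otimes df$ and the fact that $\varphi$ commutes with $d$ on $\cO_X^{\ur+}$, which is exactly the construction of $\varphi$ on $\Omega^{1,\ur}_{X/k}$ from $\sigma$) so that checking on the topological generators $T_i$ really does suffice. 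Once that reduction is in place the computation above is immediate, and passing from the $\widetilde X_{\bk}$-level (where Corollary~\ref{BcrisIso} lives) back to $X$ is harmless since $\widetilde X_{\bk}\to X$ is a pro-\'etale cover and the assertion is local; moreover the identity, being an equality of maps of sheaves, descends. Inverting $p$ and $t$ poses no problem since $\varphi(1/p)=1/p$ and $\varphi(1/t)=1/(pt)$ are horizontal.
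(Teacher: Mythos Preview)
Your proof is correct and follows essentially the same route as the paper: reduce to the local situation with an \'etale chart to a torus, use the explicit description of $\cO\BBcrp|_{\widetilde X}$ from Corollary~\ref{BcrisIso}, and verify the identity $\nabla\circ\varphi=(\varphi\otimes d\sigma)\circ\nabla$ on generators. The paper checks directly on the $\AAcr$-basis elements $u_i^{[n]}$ (using $\nabla(\varphi(u_i))=d\sigma(T_i)$ since $\varphi(u_i)-\sigma(T_i)=-[T_i^{\flat}]^p\in\AAcr$), whereas you check on $T_i$ and propagate by Leibniz and continuity; the content is the same.

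One cautionary remark: your attempted reduction to the special lift $\sigma(T_i)=T_i^p$ is unnecessary, and the justification you sketch (``only changes things by a horizontal automorphism'') would be circular---the comparison of Frobenii in Lemma~\ref{sigma12} is proved \emph{after} this lemma and does not assert that different lifts differ by a horizontal automorphism. Fortunately your ``alternatively one simply works with the given $\sigma$'' is exactly right and is what the paper does; just drop the first reduction and keep the general $\sigma$ throughout. Also, when you say ``the $T_i$ topologically generate $\cO\BBcrp|_{\widetilde X}$'', what you really need (and use) is that $\cO_X^{\ur+}$ together with $\BBcrp$ generate, and you have indeed checked horizontality on both pieces.
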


\begin{proof}We need to check $\nabla\circ\varphi=(\varphi\otimes d\sigma)\circ\nabla$ on $\cO\mathbb{A}_{\cris}$. It is enough to do this locally. Thus we assume there exists an \'etale morphism $\cX\to \Spf(\cO_k\{T_1^{\pm 1},\ldots, T_d^{\pm 1}\})$. Recall the isomorphism \eqref{alpha}. By $\mathbb{A}_{\cris}$-linearity, it suffices to check the equality on the $u_i^{[n]}$'s.  We have
\begin{eqnarray*}
(\nabla\circ\varphi)(u_i^{[n]})&=&\nabla(\varphi(u_i)^{[n]})= \varphi(u_i)^{[n-1]}\nabla(\varphi(u_i))
\end{eqnarray*}
Meanwhile, $\varphi(u_i)-\sigma(T_i)=-[T_i^{\flat}]^p\in \AAcr$, hence $\nabla(\varphi(u_i))=d\sigma(T_i)$. Thus
\begin{eqnarray*}
\left((\varphi\otimes d\sigma)\circ\nabla\right)(u_i^{[n]})
&=& (\varphi\otimes d\sigma)(u_i^{[n-1]}\otimes dT_i)\\ & =&\varphi(u_i)^{[n-1]}\otimes d \sigma(T_i) \\
&=&(  \nabla\circ\varphi )(u_i^{[n]}),
\end{eqnarray*}
as desired.
\end{proof}

The Frobenius on $\cO\AAcr$ above depends on the initial lifting of Frobenius on $\cX$. For different choices of liftings of Frobenius on $\cX$, it is possible to compare explicitly the resulting Frobenius endomorphisms on $\cO\mathbb A_{\cris}$ with the help of the connection on it, at least when the formal scheme $\cX$ admits an \'etale morphism to $\Spf(\cO_k\{T_1^{\pm 1}, \ldots, T_d^{\pm 1}\})$.

\begin{lemma} \label{sigma12}Assume there is an \'etale morphism $\cX\to \Spf(\cO_k\{T_1^{\pm 1}, \ldots, T_d^{\pm 1}\})$. Let $\sigma_1,\sigma_2$ be two Frobenius liftings on $\cX$, and let $\varphi_1$ and $\varphi_2$ be the induced Frobenius maps on $\cO\mathbb A_{\cris}$, respectively. Then 
we have the following relation on $\cO\mathbb A_{\cris}$:
\begin{equation}\label{phi2}
\varphi_2=\sum_{(n_1,\ldots, n_d)\in \NN^d}(\prod_{i=1}^d(\sigma_2(T_i)-\sigma_1(T_i))^{[n_i]})(\varphi_1\circ(\prod_{i=1}^dN_i^{n_i})) \end{equation}
where the $N_i$'s are the endomorphisms of $\cO \mathbb{A}_{\cris}$ such that $\nabla=\sum_{i=1}^d N_i\otimes d T_i$.
\end{lemma}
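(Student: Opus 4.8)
The plan is to let $\Phi$ denote the operator on $\cO\BBcrp(U)$ given by the right-hand side of \eqref{phi2}, to show that $\Phi$ is a ring endomorphism which is semilinear over $\BBcrp$ with respect to the canonical Frobenius $\varphi$ of $\BBcrp$ and which agrees with $\varphi_2$ on $\BBcrp$ and on the coordinates $T_i$, and then to conclude by a uniqueness statement for such endomorphisms. Throughout write $\mathbf n=(n_1,\ldots,n_d)\in\NN^d$, $N^{\mathbf n}:=\prod_{i=1}^d N_i^{n_i}$, $b_i:=\sigma_2(T_i)-\sigma_1(T_i)$, and $u^{[\mathbf m]}:=\prod_i u_i^{[m_i]}$ for $u_i=T_i\otimes 1-1\otimes[T_i^\flat]$ as in \eqref{alpha}. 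Since both $\varphi_2$ and $\Phi$ (built from $\varphi_1$, the $N_i$'s, and the global sections $b_i\in\cO_X^{\ur+}(X)$) are compatible with restriction, and since $\widetilde X_{\bk}\to X$ is a pro-\'etale cover with affinoid perfectoids above $\widetilde X_{\bk}$ forming a basis of $X_{\proet}/\widetilde X_{\bk}$, it suffices to prove \eqref{phi2} after restricting $U$ to such an affinoid perfectoid $V$; there, by Corollary \ref{BcrisIso} and Lemma \ref{2obcris}, we have (almost) $\cO\BBcrp(V)\cong\BBcrp(R,R^+)\{\langle u_1,\ldots,u_d\rangle\}$, every element being a series $\sum_{\mathbf m}a_{\mathbf m}u^{[\mathbf m]}$ with $a_{\mathbf m}\in\BBcrp(R,R^+)$ tending $p$-adically to $0$. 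As $\sigma_1,\sigma_2$ both reduce to the absolute Frobenius modulo $p$, one has $b_i\in p\,\cO_X^{\ur+}\subset\cO\AAcr$, so the divided powers $b_i^{[n]}$ lie in $\cO\AAcr$ and are $p$-adically bounded; and $N_i$ acts on this model as the shift $u^{[\mathbf m]}\mapsto u^{[\mathbf m-e_i]}$ (because $N_i(T_j)=\delta_{ij}$ while $N_i$ annihilates $[T_j^\flat]$ and all of $\BBcrp(R,R^+)$, by \eqref{alpha} and the $\BBcrp$-linearity of $\nabla$), so $N^{\mathbf n}(x)\to 0$ $p$-adically as $|\mathbf n|\to\infty$ for every $x$. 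Hence the series defining $\Phi(x)$ converges in $\cO\BBcrp(V)$, and $\Phi$ carries $\cO\AAcr(V)$ into itself, so is continuous.

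Next I would show $\Phi$ is a ring homomorphism. By Corollary \ref{poincare} the connection $\nabla$ is integrable and $dT_1,\ldots,dT_d$ is a basis of $\Omega^{1,\ur}_{X/k}$, so the $N_i$ are commuting $\BBcrp$-linear derivations of $\cO\BBcrp$; thus the multivariate Leibniz rule $N^{\mathbf n}(fg)=\sum_{\mathbf 0\le\mathbf m\le\mathbf n}\binom{\mathbf n}{\mathbf m}N^{\mathbf m}(f)\,N^{\mathbf n-\mathbf m}(g)$ holds, with $\binom{\mathbf n}{\mathbf m}=\prod_i\binom{n_i}{m_i}$. Applying the ring homomorphism $\varphi_1$ and the divided-power identity $\binom{m+\ell}{m}b^{[m+\ell]}=b^{[m]}b^{[\ell]}$, the $p$-adically convergent double sum for $\Phi(fg)$ rearranges into $\Phi(f)\Phi(g)$, and $\Phi(1)=\varphi_1(1)=1$. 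Moreover $N_i$ vanishes on $\BBcrp=\cO\BBcrp^{\,\nabla=0}$ (Corollary \ref{poincare}), so only the $\mathbf n=0$ term survives there, whence $\Phi|_{\BBcrp}=\varphi_1|_{\BBcrp}=\varphi_2|_{\BBcrp}$ is the canonical Frobenius; in particular $\Phi$ is $\varphi$-semilinear over $\BBcrp$.

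It then remains to compare the two maps on coordinates and conclude. Since $N_j(T_i)=\delta_{ij}$ and $N_kN_j(T_i)=0$, only the terms with $|\mathbf n|\le 1$ contribute to $\Phi(T_i)$, so $\Phi(T_i)=\varphi_1(T_i)+\sum_j b_j\,\varphi_1(N_jT_i)=\sigma_1(T_i)+b_i=\sigma_2(T_i)=\varphi_2(T_i)$, and likewise $\Phi(u_i)=\sigma_2(T_i)-\varphi_1([T_i^\flat])=\sigma_2(T_i)-[T_i^\flat]^p=\varphi_2(u_i)$ by the description of the Frobenius on $u_i$ recalled after \eqref{alpha}. Thus $\Phi$ and $\varphi_2$ are both continuous, $\varphi$-semilinear (over $\BBcrp$) ring endomorphisms of $\BBcrp(R,R^+)\{\langle u_1,\ldots,u_d\rangle\}$ coinciding on $\BBcrp(R,R^+)$ and on each $u_i$. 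But such an endomorphism $f$ is determined by these data: $f(u_i^{[n]})=f(u_i^n/n!)=f(u_i)^n/n!$ since $1/n!\in\QQ_p$ is fixed by $\varphi$, and then $f\big(\sum_{\mathbf m}a_{\mathbf m}u^{[\mathbf m]}\big)=\sum_{\mathbf m}\varphi(a_{\mathbf m})\prod_i f(u_i)^{m_i}/m_i!$ by continuity. Hence $\Phi=\varphi_2$ on $V$, and therefore on $U$, which is \eqref{phi2}.

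The main technical point I expect is the convergence bookkeeping: one must notice that the $p$-adic decay ensuring convergence of the series comes from the factor $N^{\mathbf n}(x)$ rather than from the divided powers $b_i^{[n_i]}$ (which need only be bounded, as $b_i$ may be divisible by $p$ but by no higher power), and one must keep track of the almost-isomorphisms when passing to the explicit model over affinoid perfectoids. Granting this, the ring-homomorphism property and the final uniqueness argument are formal.
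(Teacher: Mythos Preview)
Your proof is correct and your convergence analysis matches the paper's. The route, however, differs from the paper's in an interesting way.

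The paper does not prove that the right-hand side $\Phi$ is a ring homomorphism, nor does it invoke a uniqueness principle. After the same convergence argument you give, it simply checks the identity directly on the basis elements $u^{[\underline m]}$ by a short divided-power binomial computation: using $\sigma_2(T_i)-\sigma_1(T_i)=\varphi_2(u_i)-\varphi_1(u_i)$ and $N^{\underline n}(u^{[\underline m]})=u^{[\underline m-\underline n]}$, one gets
\[
\sum_{\underline n\le\underline m}(\varphi_2(\underline u)-\varphi_1(\underline u))^{[\underline n]}\,\varphi_1(\underline u)^{[\underline m-\underline n]}
=\bigl(\varphi_2(\underline u)-\varphi_1(\underline u)+\varphi_1(\underline u)\bigr)^{[\underline m]}
=\varphi_2(\underline u)^{[\underline m]},
\]
and then extends by $\varphi$-semilinearity over $\BBcrp$ and continuity. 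This is a three-line calculation once the convergence is in place.

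Your approach trades that explicit identity for a structural argument: you first establish that $\Phi$ is multiplicative via the multivariate Leibniz rule and the divided-power identity $\binom{m+\ell}{m}b^{[m+\ell]}=b^{[m]}b^{[\ell]}$, and then pin $\Phi$ down by its values on $\BBcrp$ and on the $u_i$, exploiting that $u_i^{[n]}=u_i^n/n!$ in $\cO\BBcrp$. This is longer but has the virtue of making transparent \emph{why} such a Taylor-type formula must hold for any pair of Frobenius lifts, and the same Leibniz mechanism is exactly what drives the companion result for $F$-isocrystals (Lemma~\ref{fsigma12}). The paper's direct computation is shorter and avoids the bookkeeping of rearranging the double sum, but your argument would transport more readily to other PD-envelope situations where an explicit basis is less convenient.
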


\begin{proof}
To simplify the notations, we will use the multi-index: for $\underline{m}=(m_1,\ldots, m_d)\in \mathbb N^d$, set $\underline N^{\underline m}:=\prod_{i=1}^d N_i^{m_i}$ and $|\underline m|:=\sum_im_i $.  Let us first observe that the series on the right-hand side of \eqref{phi2} gives a well-defined map on $\cO\AAcr$. As $\cO\AAcr$ is $p$-adically complete, it suffices to show that this is the case for $\cO\AAcr/p^n$ for any $n\geq 1$. Identify $\cO\AAcr/p^n$ with $(\AAcr/p^n)\langle u_1,\ldots, u_d\rangle$ using Proposition \ref{iso}. Thus, a local section $a$ of $\cO\AAcr/p^n$ can locally be written as a \emph{finite} sum
\[
a=\sum_{\underline m\in \mathbb N^d} b_{\underline m} \cdot \underline{u}^{[\underline m]}, \quad b_{\underline m}\in \AAcr/p^n. 
\]
A calculation shows
\[
\underline N^{\underline l}(a)=\sum_{\underline m\geq \underline l}b_{\underline m}\underline{u}^{[\underline m-\underline l]}=\sum_{\underline m \in \mathbb N^d} b_{\underline m+\underline l} \underline{u}^ {[\underline{m}]}.
\]
As there are only finitely many non-zero coefficients $b_{\underline m}$, $\underline N^{\underline l}(a)=0$ in $\cO\AAcr/p^n$ when $|\underline l|\gg 0$. Meanwhile, note that $\sigma_2(T_i)-\sigma_1(T_i)\in p\cO_X^{\ur+}$, hence  their divided powers lie in $\cO_X^{\ur+}$. Therefore the series of the right-hand side of \eqref{phi2} applied to $a$ does make sense and gives a well-defined additive map on $\cO\AAcr/p^n$. Consequently, the series on the right-hand side of \eqref{phi2} gives a well-defined additive map on $\cO\AAcr$, which is also semilinear relative to the Frobenius on $\AAcr$.

It remains to verify the formula \eqref{phi2}. Since both sides of \eqref{phi2} are semilinear relative to the Frobenius of $\AAcr$, it suffices to check the equality for the $\underline{u}^{[\underline{m}]}$'s. In fact, we have
\[
\begin{array}{cl}
& \left(\sum_{(n_1,\ldots, n_d)\in \NN^d}(\prod_{i=1}^d(\sigma_2(T_i)-\sigma_1(T_i))^{[n_i]})(\varphi_1\circ(\prod_{i=1}^dN_i^{n_i}))\right)(u^{[\underline{m}]}) \\ = & \sum_{\underline n\in \mathbb N^d} (\sigma_2(\underline T)-\sigma_1(\underline T))^{[\underline n]}(\varphi_1(\underline N^{\underline n}(\underline{u}^{[\underline{m}]}))) \\ = & \sum_{\underline n\in \mathbb N^d} (\varphi_2(\underline{u})-\varphi_1(\underline{u}))^{[\underline n]}(\varphi_1(\underline N^{\underline n}(\underline{u}^{[\underline m]}))) \\ = & \sum_{\underline n\in \mathbb N^d \text{ s.t. }\underline n\leq \underline m} (\varphi_2(\underline{u})-\varphi_1(\underline{u}))^{[\underline n]}\cdot \varphi_1(\underline{u})^{[\underline m-\underline n]} \\ =& (\varphi_2(\underline{u})-\varphi_1(\underline{u})+\varphi_1(\underline{u}))^{[\underline m]} \\ = & \varphi_2(u^{[\underline m]}).
\end{array}
\]
This finishes the proof.
\end{proof}

\subsection{Comparison with de Rham period sheaves}

Let $X$ be a locally noetherian adic space over $\mathrm{Spa}(k,\cO_k)$ and recall the map $\theta$ in \eqref{theta}. Set
$\BBdrp=\varprojlim W(\cO_X^{\flat+})[1/p]/(\ker\theta)^n$, and $\BBdr=\BBdrp[1/t]$. For $r\in \mathbb Z$, let $\Fil^r\BBdr=t^r\BBdrp$. By its very definition, the filtration on $\BBdr$ is decreasing, separated and exhaustive. On the other hand, we can define the de Rham period sheaves with connection $\cO\BBdrp$ and $\cO\BBdr$ (see the erratum to \cite[Definition 6.8 (iii)]{Sch} available on Scholze's website). The filtration on $\cO\BBdrp$ is decreasing, separated and exhaustive. Moreover, as in \cite[5.2.8, 5.2.9]{Bri}, one shows that
\[
\cO\BBdrp\cap \Fil^r\cO\BBdr=\Fil^r\cO\BBdrp,
\]
implying that the filtration on $\cO\BBdr$ is also decreasing, separated and exhaustive.

In the rest of this subsection, assume that $k/\Qp$ is absolutely unramified.

\begin{prop}\label{BcrisBdR} Let $\cX$ be a smooth formal scheme over $\cO_k$.
\begin{enumerate}
\item There are injective filtered morphisms $\mathbb{B}_{\rm cris}^+\hookrightarrow \BBdr^+$ and $\cO\mathbb{B}_{\rm cris}^+\hookrightarrow \cO\BBdr^+$.
In this way, we view $\mathbb B_{\cris}^{+}$ and $\cO\mathbb B_{\cris}^{+}$ respectively as a subsheaf of rings of $\mathbb B_{\dR}^{+}$ and $\cO\mathbb B_{\dR}^{+}$.

\item For  any $i\in \mathbb N$, one has $
\Fil^i \mathbb B_{\cris}^{+}=\Fil^i\mathbb B_{\dR}^{+}\cap \mathbb B_{\cris}^{+}$ and $\Fil^i\cO\mathbb B_{\cris}^{+}=\Fil^i\cO\mathbb B_{\dR}^{+}\cap \cO\mathbb B_{\cris}^{+}$. In particular, the filtrations on $\BBcrp$ and on $\cO\BBcrp$ are decreasing, separated and exhaustive. Furthermore, the filtered morphisms in \emph{(1)} induce isomorphisms $
\mathrm{gr}^i \mathbb B_{\cris}^{+}\stackrel{\sim}{\ra}\mathrm{gr}^i\mathbb B_{\dR}^{+}$, and $\mathrm{gr}^i\cO\mathbb B_{\cris}^{+}\stackrel{\sim}{\ra}\mathrm{gr}^i\cO\mathbb B_{\dR}^{+}$.
\end{enumerate}
\end{prop}

\begin{proof} (1) Recall that $\mathbb B_{\dR}^{+}$ is a sheaf of $\Qp$-algebras, so the natural morphism $
\mathbb A_{\inf}=W(\cO_{X}^{\flat+})\rightarrow \mathbb B_{\dR}^{+}
$
extends to the PD-envelope $\AAcr^0$ of $\mathbb A_{\inf}$ with respect to the kernel of the map $\theta$ in \eqref{theta}. The resulting map $\AAcr^0\ra \mathbb B_{\dR}^+$ respects the filtrations. On the other hand, for each $n\in \mathbb N$, the composite
\[
\mathbb A_{\cris}^{0}\longrightarrow \mathbb B^{+}_{\dR}\longrightarrow \mathbb B_{\dR}^{+}/\Fil^n \mathbb B_{\dR}^{+}=W(\cO_{X}^{\flat+})[1/p]/(\ker(\theta))^n
\]
extends to the $p$-adic completion $\mathbb A_{\cris}$ of $\mathbb A_{\cris}^{0}$. Indeed, this is because the image of the composite above is contained in $\frac{1}{p^n}(W(\cO_{X}^{\flat+})/(\ker(\theta))^n)\subset \mathbb B_{\dR}^+/\Fil^n\mathbb B_{\dR}^+$ and the latter is $p$-adically complete. On passing to the projective limit relative to $n$, we obtain a filtered morphism $\mathbb A_{\cris}\to \mathbb B_{\dR}^{+}$, whence the required filtered morphism $\mathbb B_{\cris}^{+}\to \mathbb B_{\dR}^{+}$ by inverting $p\in \mathbb A_{\cris}$.

To define a natural filtered morphism $\cO\mathbb B_{\cris}^+\to \cO\mathbb B_{\dR}^{+}$, observe that $\cO\mathbb B_{\dR}^+$ is an algebra over $\cO_{X}\otimes_{\cO_k}W(\cO_{X}^{\flat+})$, so we have a natural morphism
\[
\cO\mathbb A_{\inf}=\cO_{X}^{\ur+}\otimes_{\cO_k}W(\cO_{X}^{\flat+}) \lra \cO\mathbb B_{\dR}^+,
\]
which extends to the PD-envelope $\cO\mathbb A_{\cris}^0$ of $\cO\mathbb A_{\inf}$ relative to the kernel of the map $\theta_X$ in \eqref{eq.thetaX}. For $n\in \mathbb N$, consider the composed morphism
\begin{equation}\label{eq.composite-OBcris-modulo-Filn}
\cO\mathbb A_{\cris}^0\lra \cO\mathbb B_{\dR}^+\lra \cO\mathbb B_{\dR}^+/\Fil^n\cO\mathbb B_{\dR}^+.
\end{equation}
As above, it extends to $\cO\mathbb A_{\cris}$, the $p$-adic completion of $\cO\mathbb A_{\cris}^0$.
To check this assertion, assume that $\cX$ is affine and \'etale over $\Spf(\cO_k\{T_1^{\pm 1}, \ldots, T_d^{\pm 1}\})$, and let $\widetilde{X}$ be the affinoid perfectoid obtained by joining to $X$ a compatible family of $p^n$-th roots of $T_i$ for $n\in \mathbb N$ and $1\leq i\leq d$. It suffices to show that, for any affinoid perfectoid $V$ above $\widetilde{X}_{\bk}$, the restriction to $V$ of \eqref{eq.composite-OBcris-modulo-Filn} extends in a uniquely way to a morphism $\cO\mathbb A_{\cris}|_{V}\ra (\cO\BBdrp/\Fil^n\cO\BBdrp)|_{V}$, with image contained in a $W(\cO_X^{\flat+})|_{V}$-submodule of finite type. By \cite[Proposition 6.10]{Sch}, we have $\cO\mathbb B_{\dR}^+|_{V}=\mathbb B_{\dR}^+|_{V}[[u_1,\ldots, u_d]]$, with $u_i=T_i\otimes 1-1\otimes[T^{\flat}_i]$. So
\[
\left(\cO\mathbb B_{\dR}^+/\Fil^n\cO\mathbb B_{\dR}^+\right)|_{V}=\bigoplus_{\underline m\in \mathbb N^d,|\underline m|\leq n}\left(W(\cO_X^{\flat+})[1/p]/(\xi)^{n-|m|}\right)|_{V}\cdot \underline u^{\underline m}.
\]
Through this identification, the image of \eqref{eq.composite-OBcris-modulo-Filn} (restricted to $V$) is contained in
\[
\frac{1}{p^a}\left(\bigoplus_{\underline m\in \mathbb N^d,|\underline m|\leq n}\left(W(\cO_{X}^{\flat+})/(\xi)^{n-|m|}\right)|_{V}\cdot \underline u^{\underline m}\right)\subset \left(\cO\mathbb B_{\dR}^+/\Fil^n\cO\mathbb B_{\dR}^+\right)|_{V}
\]
for some $a\in \mathbb N$. Since the latter is $p$-adically complete, the restriction to $V$ of \eqref{eq.composite-OBcris-modulo-Filn} extends to $\cO\mathbb A_{\cris}|_V$, and the image of this extension is contained in the $W(\cO_X^{\flat+})$-submodule of finite type above. If we have two such extensions, the images of both extensions are contained in some $W(\cO_X^{\flat+})$-submodule of finite type of the form above for some $a$ (large enough). As the latter is $p$-adically complete, these two extensions must coincide. This proves our assertion. So \eqref{eq.composite-OBcris-modulo-Filn} extends to a morphism $\cO\mathbb A_{\cris}\ra \cO\mathbb B_{\dR}^+/\Fil^n\cO\BBdrp$. Passing to the projective limit relative to $n$, we obtain the required filtered morphism $\cO\mathbb A_{\cris}\ra \cO\mathbb B_{\dR}^+$.

The two morphisms constructed above are compatible with the isomorphisms in Corollary \ref{BcrisIso} and its de Rham analogue \cite[Proposition 6.10]{Sch}. To finish the proof of (1), we only need to show the morphism $\mathbb B_{\cris}^{+}\to \mathbb B_{\dR}^+$ constructed above is injective. This can be done in the same way as \cite[Proposition 6.2.1]{Bri}, and we omit the detail here.

(2) By (1), the corresponding statement for $\mathbb B_{\cris}^+$ follows from the fact that the natural induced map
\[
\mathrm{gr}^r\mathbb B_{\cris}^+|_{X_{\bk}}=\widehat{\cO}_{X_{\bk}}\cdot (\xi^r/r!)\lra \mathrm{gr}^r\BBdrp|_{X_{\bk}}=\widehat{\cO}_{X_{\bk}}\cdot \xi^r
\]
is an isomorphism. To show the statements for $\cO\BBcrp$, assume that $\cX$ admits an \'etale map to $\Spf(\cO_k\{T_1^{\pm 1},\ldots, T_d^{\pm 1}\})$. Then we conclude by Corollary~\ref{BcrisIso} and its de Rham analogue, and by what we have just shown for $\BBcrp$.
\end{proof}

\begin{cor}\label{cor.no-t-torsion} Let $\cX$ be a smooth formal scheme over $\cO_k$. Then, over $X_{\proet}/X_{\bk}$, the sheaves of $A_{\cris}$-modules $\AAcr,\BBcrp, \cO\AAcr$ and $\cO\BBcrp$ have no $t$-torsion.
\end{cor}

\begin{proof} As $\AAcr$  and $\cO\AAcr$ have no $p$-torsion, they are included respectively in $\BBcrp$ and $\cO\BBcrp$. Hence, to prove our corollary, by Proposition \ref{BcrisBdR}, it is enough to show that, over $X_{\proet}/X_{\bk}$, $\BB_{\dR}^+$ and $\cO\BB_{\dR}^+$ have no $t$-torsion. These two statements are contained in \cite[Remark 6.2, Remark 6.9]{Sch}.
\end{proof}

\begin{cor}\label{GradedOfBcris}Let $\cX$ be a smooth formal scheme over $\cO_k$.
\begin{enumerate}
\item There are natural inclusions $
\mathbb B_{\cris}\hookrightarrow \mathbb B_{\dR}$ and $ \cO\mathbb B_{\cris}\hookrightarrow \cO\mathbb B_{\dR}$.

\item For any $i\in \mathbb Z$, we have $\Fil^i \mathbb B_{\cris}=\mathbb B_{\cris}\bigcap \Fil^i\mathbb B_{\dR}$ and $\Fil^i \cO\mathbb B_{\cris}=\cO\mathbb B_{\cris}\bigcap \Fil^i\cO\mathbb B_{\dR}$. In particular, the filtrations on $\BBcr$ and on $\cO\BBcr$ are decreasing, separated and exhaustive. Furthermore, the inclusions in \emph{(1)} induce isomorphisms $\mathrm{gr}^i\mathbb B_{\cris}\stackrel{\sim}{\ra} \mathrm{gr}^i\mathbb B_{\dR}$ and $\mathrm{gr}^i\cO\mathbb B_{\cris}\stackrel{\sim}{\ra}\mathrm{gr}^i\cO\mathbb B_{\dR}$.
\end{enumerate}
\end{cor}


\begin{cor}\label{higherox} Let $\cX$ be a smooth formal scheme over $\cO_k$, with $X$ its generic fiber. Then $w_*\cO\BBcr\simeq\cO_{\cX_{\et}}[1/p]$.
\end{cor}

\begin{proof}
Let $\nu: X_{\proet}^{\sim}\ra X_{\et}^{\sim}$ and $\nu' \colon X_{\et}^{\sim}\to \cX_{\et}^{\sim}$ the natural morphisms of topoi. Then $w=\nu'\circ \nu$. Therefore $
\cO_{\cX_{\et}}[1/p]\simto\nu_{\ast}'\cO_{X_{\et}}\simto \nu_{\ast}'\nu_{\ast}\cO_X=w_{\ast}\cO_{X}$. By \cite[Corollary 6.19]{Sch}, the natural map $\cO_{X_{\et}}\to \nu_{\ast}\cO\mathbb B_{\dR} $ is an isomorphism. Thus, $w_{\ast}\cO\mathbb B_{\dR}=\nu_{\ast}'(\nu_{\ast}\cO\mathbb B_{\dR})\simeq \nu_*'\cO_{X_{\et}}\simeq \cO_{\cX_{\et}}[1/p]$.
On the other hand, we have the injection of $\cO_{\cX_{\et}}[1/p]$-algebras $w_{\ast}\cO\mathbb B_{\cris}\hookrightarrow w_{\ast}\cO\mathbb B_{\dR}$. Thereby $\cO_{\cX_{\et}}[1/p]\simto w_{\ast}\cO\BBcr$.
\end{proof}

\section{Crystalline cohomology and pro-\'etale cohomology}

In this section, assume that $k$ is \emph{absolutely unramified}. Let $\sigma$ denote the Frobenius on $\cO_k$ and on $k$, lifting the Frobenius of the residue field $\kappa$. The ideal $(p)\subset \cO_k$ is endowed with a PD-structure and $\cO_k$ becomes a PD-ring in this way.

\subsection{A reminder on convergent $F$-isocrystals}
Let $\cX_0$ be a $\kappa$-scheme of finite type. 
Let us begin with some general definitions about crystals on the small crystalline site $\left(\mathcal{X}_0/\cO_k\right)_{\cris}$ endowed with \'etale topology. For basics of crystals, we refer to \cite{Ber}, \cite{BO}. Recall that a \emph{crystal of $\cO_{\cX_0/\cO_k}$-modules} is an $\cO_{\cX_0/\cO_k}$-module $\mathbb E$ on $(\cX_0/\cO_k)_{\cris}$ such that (i) for any object $(U,T)\in (\cX_0/\cO_k)_{\cris}$, the restriction $\mathbb E_{T}$ of $\mathbb E$ to the \'etale site of $T$ is a coherent $\cO_T$-module; and (ii) for any morphism $u:(U',T')\ra (U,T)$ in $\left(\mathcal{X}_0/\cO_k\right)_{\cris}$, the canonical morphism $u^*\mathbb{E}_{T}\simto \mathbb{E}_{T'}$ is an isomorphism.

\begin{rk}\label{rk.Crystals} Let $\cX_0$ be the closed fiber of a smooth formal scheme $\cX$ over $\cO_k$. The category of crystals on $(\cX_0/\cO_k)_{\cris}$ is equivalent to that of coherent $\cO_{\cX}$-modules $\mathcal M$ equipped with an integrable and quasi-nilpotent connection $\nabla\colon \mathcal M\to \mathcal M\otimes_{\cO_{\cX}}\Omega^1_{\cX/\cO_k}$. Here the connection $\nabla$ is said to be \emph{quasi-nilpotent} if its reduction modulo $p$ is quasi-nilpotent in the sense of \cite[Definition 4.10]{BO}. \if false The correspondence between these two categories is given as follows. For $\mathbb E$ a crystal on $\cX_0/\cO_k$, as $\cX_0\hookrightarrow \cX$ is a $p$-adic PD-thickening, we can evalue $\mathbb E$ at it: set $
\mathbb E_{\cX}:=\varprojlim_n \mathbb E_{\cX\otimes \cO_k/p^n}$. Let $\Delta_1\hookrightarrow \cX\times \cX$ be the PD-thickening of order $1$ of the diagonal embedding $\cX\hookrightarrow \cX\times \cX$. The two projections $p_i\colon \Delta_1\to \cX$ are PD-morphisms. So we have two isomorphisms $p_{i}^{\ast}\mathbb E_{\cX}\simto \mathbb E_{\Delta_1}:=\varprojlim_n \mathbb E_{\Delta_1\otimes \cO_k/p^n}$, whence a natural isomorphism $p_{2}^{\ast}\mathbb E_{\cX}\simto p_{1}^{\ast} \mathbb E_{\cX}$. The latter gives a connection $\nabla\colon \mathbb E_{\cX}\rightarrow \mathbb E_{\cX}\otimes \Omega^1_{\cX/\cO_k}$ on $\mathbb E_{\cX}$. Together with a limit argument, that $\nabla$ is integrable and quasi-nilpotent is due to \cite[Theorem 6.6]{BO}.\fi
\end{rk}

The absolute Frobenius $F\colon \cX_0\to \cX_0$ is a morphism over the Frobenius $\sigma$ on $\cO_k=W(\kappa)$, hence it induces a morphism of topoi, still denoted by $F$:
\[
F \colon \left(\cX_0/\cO_k\right)_{\cris}^{\sim}\longrightarrow \left(\cX_0/\cO_k\right)_{\cris}^{\sim}.\]
An \emph{$F$-crystal} on $(\cX_0/\cO_k)_{\cris}$ is a crystal $\mathbb E$  equipped with a morphism $\varphi\colon F^{\ast}\mathbb E\ra \mathbb E$ of $\cO_{\cX_0/\cO_k}$-modules, which is nondegenerate, i.e. there exists a map $V:\mathbb{E}\ra F^{\ast}\mathbb E$ of $\cO_{\cX_0/\cO_k}$-modules such that $\varphi V=V\varphi=p^m$ for some $m\in \N$. In the following, we will denote by $F\textrm{-Cris}(\cX_0,\cO_k)$ the category of $F$-crystals on $\cX_0/\cO_k$.

Before discussing isocrystals, let us observe the following facts.

\begin{rk}\label{O[1/p]} (1) Let $\cX$ be a quasi-compact smooth formal scheme over $\cO_k$. Let $X^{\rm rig}$ be its rigid generic fiber, which is a rigid analytic variety over $k$. Let $\mathrm{Coh}(\cO_{\cX}[1/p])$ denote the category of coherent $\cO_{\cX}[1/p]$-modules on $\cX$, or equivalently, the category of coherent sheaves on $\cX$ up to isogeny. Denote by $\mathrm{Coh}(X^{\rm rig})$ the category of coherent sheaves on $X^{\rm rig}$. Then, we have the functor below, obtained by taking the rigid generic fiber of a coherent $\cO_{\cX}[1/p]$-module
\[
\mathrm{Coh}(\cO_{\cX}[1/p]) \lra \mathrm{Coh}(X^{\rm rig}).
\]
This is an equivalence of categories. Indeed, it is a consequence of the fact that any coherent sheaf on $X^{\rm rig}$ extends to a coherent sheaf on $\cX$ (\cite[Lemma 2.2]{Lut}).

(2) Let $Y$ be a rigid analytic variety. In \cite[Proposition 4.3]{Hub94}, Huber constructed from $Y$ an adic space $Y^{\ad}$, together with a locally coherent morphism  $\rho:(|Y^{\ad}|,\cO_{Y^{\ad}})\ra (|Y|,\cO_{Y})$ of ringed sites, satisfying some universal property. We call $Y^{\ad}$ the \emph{associated adic space} of $Y$. The morphism $\rho$ gives rise to an equivalence between the category of sheaves on the Grothendieck site associated to $Y$ and that of sheaves on the sober topological space $Y^{\ad}$ (\cite[1.1.11]{Hub96}). Moreover, under this equivalence, the notion of coherent sheaves on the ringed site $Y=(|Y|,\cO_Y)$ is the same as the one on $Y^{\ad}=(|Y^{\ad}|,\cO_{Y^{\ad}})$ since in the case of $Y=\Sp A$, with $A$ a complete topologically finitely generated Tate algebra over $k$, both of them are naturally equivalent to that of finite $A$-modules. (Cf. \cite[Theorem 9.1]{Sch} and the references therein.)
\end{rk}

Let $\cX_0$ be a $\ka$-scheme of finite type, embedded as a closed subscheme into a smooth formal scheme $\mathcal P$ over $\cO_k$. Let $P$ be the adic generic fiber of $\mathcal P$ and $]\cX_0[_{\mathcal P}\subset P$ the pre-image of the closed subset $\cX_0\subset \mathcal P$ under the specialization map. Following \cite[2.3.2 (i)]{Ber} (with Remark \ref{O[1/p]} (2) in mind), the \emph{realization on $\mathcal P$ of a convergent isocrystal} on $\cX_0/\cO_k$ is a coherent $\cO_{]\cX_0[_{\mathcal P}}$-module $\mathcal E$ equipped with an integrable and \emph{convergent} connection $\nabla\colon \mathcal E\rightarrow \mathcal E\otimes_{\cO_{]\cX_0[_{\mathcal P}}} \Omega^{1}_{]\cX_0[_{\mathcal P}/k}$ (we refer to \cite[2.2.5]{Ber} for the definition of convergent connections). Being a coherent $\cO_{]\cX_0[_{\mathcal P}}$-module with integrable connection, $\cE$ is locally free of finite rank by \cite[2.2.3 (ii)]{Ber}. The category of realizations on $\cP$ of convergent isocrystals on $\cX_0/\cO_k$ is denoted by $\mathrm{Isoc}(\cX_0/\cO_k,\cP)$, where the morphisms are morphisms of $\cO_{]\cX_0[_{\mathcal P}}$-modules which commute with connections.

Let $\cX_0\hookrightarrow \cP' $ be a second embedding of $\cX_0$ into a smooth formal scheme $\cP' $ over $\cO_k$, and assume there exists a morphism $u\colon \cP'\to \cP $ of formal schemes inducing identity on $\cX_0$. The generic fiber of $u$ gives a morphism of adic spaces $u_k\colon ]\cX_0[_{\mathcal P'}\to ]\cX_0[_{\mathcal P}$, hence a natural functor
\[
u_k^{\ast}\colon \mathrm{Isoc}(\cX_0/\cO_k,\cP) \longrightarrow \mathrm{Isoc}(\cX_0/\cO_k,\cP'), \quad (\cE,\nabla)\mapsto (u_k^{\ast}\cE, u_k^{\ast}\nabla).
\]
By \cite[2.3.2 (i)]{Ber}, the functor $u_k^{\ast}$ is an equivalence of categories. Furthermore, for a second morphism $v\colon \cP'\to\cP$ of formal schemes inducing identity on $\cX_0$, the two equivalence $u_k^{\ast},v_k^{\ast}$ are canonically isomorphic (\cite[2.2.17 (i)]{Ber}). The category of \emph{convergent isocrystal on $\cX_0/\cO_k$}, denoted by $\mathrm{Isoc}(\cX_0/\cO_k)$, is defined as
\[
\mathrm{Isoc}(\cX_0/\cO_k):=\textrm{2-}\!\varinjlim_{\cP} \mathrm{Isoc}(\cX_0/\cO_k,\cP),
\]
where the limit runs through all smooth formal embeddings $\cX_0\hookrightarrow \cP$ of $\cX_0$.

\begin{rk} In general, $\cX_0$ does not necessarily admit a global formal embedding. In this case, the category of convergent isocrystals on $\cX_0/\cO_k$ can still be defined by a gluing argument (see \cite[2.3.2(iii)]{Ber}). But the definition recalled above will be enough for our purpose.
\end{rk}

As for the category of crystals on $\cX_0/\cO_k$, the Frobenius morphism $F\colon \cX_0\to \cX_0$ induces a natural functor (see \cite[2.3.7]{Ber} for the construction):
\[
F^{\ast}\colon \mathrm{Isoc}(\cX_0/\cO_k)\longrightarrow \mathrm{Isoc}(\cX_0/\cO_k).
\]
A \emph{convergent $F$-isocrystal} on $\cX_0/\cO_k$ is a convergent isocrystal $\mathcal E$ on $\cX_0/\cO_k$ equipped with an isomorphism  $F^*\mathcal E \simto \mathcal E$ in $\mathrm{Isoc}(\cX_0/\cO_k)$. The category of convergent $F$-isocrystals on $\cX_0/\cO_k$ will be denoted in the following by $F\textrm{-Isoc}(\cX_0/\cO_k)$.

\begin{rk} \label{rk.CrystalVSIsoc}The category $F\textrm{-Isoc}(\cX_0/\cO_k)$ has as a full subcategory the isogeny category $F\textrm{-Cris}(\cX_0/\cO_k)\otimes\mathbb Q$ of $F$-crystals $\mathbb E$ on $\left(\mathcal{X}_0/\cO_k\right)_{\cris}$. To explain this, assume for simplicity that $\cX_0$ is the closed fiber of a smooth formal scheme $\cX$ over $\cO_k$. So $]\cX_0[_{\cX}=X$, the generic fiber of $\cX$. Let $(\cM,\nabla)$ be the $\cO_{\cX}$-module with integrable and quasi-nilpotent connection associated to the $F$-crystal $\mathbb E$ (Remark \ref{rk.Crystals}). Let $\mathbb E^{\an}$ denote the generic fiber of $\cM$, which is a coherent (hence locally free by \cite[2.3.2 (ii)]{Ber}) $\cO_{X_{\an}}$-module equipped with an integrable connection $\nabla^{\rm an}\colon \mathbb E^{\rm an}\longrightarrow \mathbb E^{\rm an}\otimes \Omega^{1}_{X_{\an}/k}$, which is nothing but the generic fiber of $\nabla$. Because of the $F$-crystal structure on $\mathbb E$, the connection $\nabla^{\rm an}$ is convergent (\cite[2.4.1]{Ber}). In this way we obtain an $F$-isocrystal $\mathbb E^{\mathrm{an}}$ on $\cX_0/\cO_k$, whence a natural functor
\begin{equation*}
(-)^{\rm an}\colon F\textrm{-Cris}(\cX_0/\cO_k) \otimes \mathbb Q\longrightarrow F\textrm{-Isoc}(\cX_0/\cO_k), \quad \mathbb E\mapsto \mathbb E^{\rm an}.
\end{equation*}
By \cite[2.4.2]{Ber}, this analytification functor is fully faithful, and for $\mathcal E$ a convergent $F$-isocrystal on $\cX_0/\cO_k$, there exists an integer $n\geq 0$ and an $F$-crystal $\mathbb E$ such that $\mathcal E\simto \mathbb E^{\rm  an}(n)$, where for $\mathcal F=(\mathcal F, \nabla, \varphi\colon F^{\ast}\mathcal F\simto \mathcal F)$ an $F$-isocrystal on $\cX_0/\cO_k$, $\mathcal F(n)$ denotes the Tate twist of $\mathcal F$, given by $(\mathcal F, \nabla, \frac{\varphi}{p^n}\colon F^{\ast}\mathcal F\simto \mathcal F)$  (\cite[2.3.8 (i)]{Ber}).
\end{rk}

Our next goal is to give a more explicit description of the Frobenius morphisms on convergent $F$-isocrystals on $\cX_0/\cO_k$. From now on, assume for simplicity that \emph{$\cX_0$ is the closed fiber of a smooth formal scheme $\cX$} and we identify convergent isocrystals on $\cX_0/\cO_k$ with their realizations on $\cX$. Let $X$ be the adic generic fiber of $\cX$. The proof of the following lemma is obvious.

\begin{lemma}\label{lem.FCrystal} Assume that the Frobenius $F\colon \cX_0\to \cX_0$  can be lifted to a morphism $\sigma \colon \cX\to \cX$ compatible with the Frobenius on $\cO_k$.  Still denote by $\sigma$ the endomorphism on $X$ induced by $\sigma$. Then there is an equivalence of categories between
\begin{enumerate}
\item the category $F\textrm{-}\mathrm{Isoc}(\cX_0/\cO_k)$ of convergent $F$-isocrystals on $\cX_0/\cO_k$; and
\item the category $\mathbf{Mod}_{\cO_{X}}^{\sigma,\nabla}$ of  $\cO_{X_{\an}}$-vector bundles $\mathcal E$ equipped with an integrable and convergent connection $\nabla$ and an $\cO_{X_{\an}}$-linear horizontal isomorphism $\varphi\colon \sigma^*\cE\to \cE$.
\end{enumerate}
\end{lemma}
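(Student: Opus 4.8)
The plan is to exhibit mutually inverse functors between $F\textrm{-}\mathrm{Isoc}^{\dagger}(\cX_0/\cO_k)$ and $\mathbf{Mod}_{\cO_{X}}^{\sigma,\nabla}$, the main point being a careful bookkeeping of \emph{which} Frobenius one uses on the analytic side once the lift $\sigma$ has been fixed. First I would recall, via the realization on $\cX$, that a convergent isocrystal on $\cX_0/\cO_k$ is the same thing as a locally free $\cO_{]\cX_0[_{\cX}}=\cO_{X_{\an}}$-module $\cE$ with an integrable convergent connection $\nabla$; this is exactly the identification used just before the statement, together with \cite[2.2.3, 2.3.2]{Ber} for local freeness and Remark \ref{rk.CohSheafOnAdicSpace} to pass between rigid and adic coherent sheaves. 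So the content is entirely about the Frobenius structure: I must show that giving an isomorphism $F^{\ast}\cE\simto\cE$ in the $2$-colimit category $\mathrm{Isoc}^{\dagger}(\cX_0/\cO_k)$ is equivalent, once $\sigma$ is fixed, to giving an $\cO_{X_{\an}}$-linear horizontal isomorphism $\varphi\colon\sigma^{\ast}\cE\to\cE$.

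The key step is the comparison of pullback functors. On the level of formal schemes $\sigma\colon\cX\to\cX$ lifts $F\colon\cX_0\to\cX_0$ and is compatible with the Frobenius on $\cO_k$, so it induces a morphism of adic spaces $\sigma_k\colon ]\cX_0[_{\cX}\to]\cX_0[_{\cX}$, i.e.\ $\sigma\colon X\to X$. By the functoriality recalled in the text, the realization on $\cX$ of the abstract functor $F^{\ast}$ on $\mathrm{Isoc}^{\dagger}(\cX_0/\cO_k)$ is computed, using \emph{any} lift of $F$ (in particular $\sigma$), as $\cE\mapsto\sigma_k^{\ast}\cE$ with the pulled-back connection $\sigma_k^{\ast}\nabla$; the point is precisely \cite[2.3.7]{Ber}, where $F^{\ast}$ is defined by choosing a local lift of Frobenius and the independence of the choice is built into the $2$-colimit. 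Moreover pulling back by $\sigma_k$ sends convergent connections to convergent connections (the relevant convergence being automatic in the presence of an $F$-structure by \cite[2.4.1]{Ber}, but here it is simply preserved under pullback along a morphism of formal schemes). Hence an isomorphism $F^{\ast}\cE\simto\cE$ of convergent isocrystals, realized on $\cX$, is literally a horizontal $\cO_{X_{\an}}$-linear isomorphism $\varphi\colon\sigma_k^{\ast}\cE\to\cE$: this gives the functor from (1) to (2), and it is an equivalence on objects because the realization-on-$\cX$ functor $\mathrm{Isoc}^{\dagger}(\cX_0/\cO_k)\to\mathrm{Isoc}^{\dagger}(\cX_0/\cO_k,\cX)$ is an equivalence (the $2$-colimit is filtered and $\cX$ is an object of the indexing system), and it is an equivalence on Hom-sets because morphisms of $F$-isocrystals are, by definition, morphisms of the underlying isocrystals commuting with the Frobenius structure, which on realizations is exactly $\cO_{X_{\an}}$-linearity plus horizontality plus commutation with $\varphi$.

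For the quasi-inverse, given $(\cE,\nabla,\varphi)\in\mathbf{Mod}_{\cO_{X}}^{\sigma,\nabla}$ I would regard $(\cE,\nabla)$ as the realization on $\cX$ of a convergent isocrystal, transport it to an object of $\mathrm{Isoc}^{\dagger}(\cX_0/\cO_k)$, and use $\varphi$ — which under the above identification is an isomorphism of realizations $F^{\ast}\cE\simto\cE$ — to equip it with an $F$-isocrystal structure; one then checks that the two constructions are inverse up to natural isomorphism, which is formal given that all the identifications involved (realization on $\cX$, comparison of $F^{\ast}$ with $\sigma_k^{\ast}$) are themselves natural. The only real subtlety — and the step I expect to be the main obstacle — is to pin down that the abstract $F^{\ast}$ on $\mathrm{Isoc}^{\dagger}(\cX_0/\cO_k)$, when realized on $\cX$, agrees \emph{canonically} (not merely up to non-canonical isomorphism) with $\sigma_k^{\ast}$; this is where one must invoke the canonical isomorphism between the equivalences $u_k^{\ast}$ and $v_k^{\ast}$ for different lifts (\cite[2.2.17 (i)]{Ber}) and the construction of $F^{\ast}$ in \cite[2.3.7]{Ber}. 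Once that compatibility is in place, everything else is bookkeeping, which is why the lemma is asserted to have an obvious proof.
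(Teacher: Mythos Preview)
Your proposal is correct and is precisely the unpacking of the definitions that the paper leaves implicit: the paper does not give a proof at all, stating only that ``the proof of the following lemma is obvious.'' Your argument---identifying convergent isocrystals with their realizations on $\cX$ as $\cO_{X_{\an}}$-bundles with integrable convergent connection, and then observing that the abstract $F^{\ast}$ is computed on realizations by $\sigma_k^{\ast}$ once the lift $\sigma$ is fixed---is exactly the content behind that word ``obvious,'' and your identification of the one genuine subtlety (the canonical agreement of $F^{\ast}$ with $\sigma_k^{\ast}$ via \cite[2.2.17(i), 2.3.7]{Ber}) is apt.
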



Consider two liftings of Froebnius $\sigma_i$ ($i=1,2$) on $\cX$. By the lemma above, for $i=1,2$, both categories $\mathbf{Mod}_{\cO_{X}}^{\sigma_i,\nabla}$ are naturally equivalent to the category of convergent $F$-iscrystals on $\cX_0/\cO_k$:
\[
\xymatrix{\mathbf{Mod}_{\cO_{X}}^{\sigma_1,\nabla} & F\textrm{-}\mathrm{Isoc}(\cX_0/\cO_k)\ar[r]^<<<<<{\sim}\ar[l]_<<<<<{\sim} & \mathbf{Mod}_{\cO_{X}}^{\sigma_2,\nabla}}.
\]
Therefore we deduce an equivalence of categories
\begin{equation}\label{eq.FunctorF}
F_{\sigma_1,\sigma_2}\colon \mathbf{Mod}_{\cO_{X}}^{\sigma_1,\nabla} \longrightarrow \mathbf{Mod}_{\cO_{X}}^{\sigma_2,\nabla}.
\end{equation}
When our formal scheme $\cX$ is small, we can explicitly describe this equivalence.  Assume there is an \'etale morphism $\cX\to \mathcal T^d=\Spf(\cO_k\{T_1^{\pm 1},\ldots, T_d^{\pm 1}\})$. So $\Omega_{X_{\an}/k}^{1}$ is a free $\cO_{X_{\an}}$-module with a basis given by $dT_i$ ($i=1,\ldots, d$). In the following, for $\nabla$ a connection on an $\cO_{X_{\an}}$-module $\mathcal E$, let $N_i$ be the endomorphism of $\cE$ (as an abelian sheaf) such that  $\nabla=\sum_{i=1}^{d}N_i\otimes dT_i$.

\begin{lemma}[\cite{Bri}, Proposition 7.2.3] \label{fsigma12}
Assume that $\cX=\Spf(A)$ is affine, admitting an \'etale morphism $\cX\to \mathcal T^d$ as above. Let $(\mathcal E,\nabla, \varphi_1)\in \mathbf{Mod}_{\cO_{X}}^{\sigma_1,\nabla}$, with $(\mathcal E,\nabla, \varphi_2)$ the corresponding object of $\mathbf{Mod}_{\cO_{X}}^{\sigma_2,\nabla}$ under the equivalence $F_{\sigma_1,\sigma_2}$. Then on $\cE(X)$ we have
\begin{equation*}
\varphi_2=\sum_{(n_1,\ldots, n_d)\in \NN^d}\left(\prod_{i=1}^d(\sigma_2(T_i)-\sigma_1(T_i))^{[n_i]}\right)\left(\varphi_1\circ \left(\prod_{i=1}^dN_i^{n_i}\right)\right).
\end{equation*}
Furthermore, $\varphi_1$ and $\varphi_2$ coincide on $\cE(X)^{\nabla=0}$.
\end{lemma}

More generally, i.e., without assuming the existence of Frobenius lifts to $\cX$,  for $(\cE,\nabla)$ an $\cO_{X_{\an}}$-module with integrable and convergent connection, \emph{a compatible system of Frobenii on $\cE$} consists of, for any open subset $\cU\subset \cX$ equipped with a lifting of Frobenius $\sigma_{\cU}$, a horizontal isomorphism $\varphi_{(\cU,\sigma_{\cU})}\colon \sigma_{\cU}^{\ast} \cE|_{\cU_k}\to \cE|_{\cU_k}$ satisfying the following condition: for $\cV\subset \cX$ another open subset equipped with a lifting of Frobenius $\sigma_{\cV}$, the functor
\[
F_{\sigma_{\cU},\sigma_{\cV}} \colon \mathbf{Mod}_{\cO_{\cU_k \bigcap \cV_k}}^{\sigma_{\cU},\nabla} \longrightarrow \mathbf{Mod}_{\cO_{\cU_k\bigcap \cV_k}}^{\sigma_{\cV},\nabla}
\]
sends $(\cE|_{\cU_k\bigcap \cV_k}, \nabla, \varphi_{(\cU,\sigma_{\cU})}|_{\mathcal U_k\bigcap \cV_k})$ to $(\cE|_{\mathcal U_k\bigcap \cV_k}, \nabla, \varphi_{(\cV,\sigma_{\cV})}|_{\mathcal U_k\bigcap \cV_k})$. We denote a compatible system of Frobenii on $\cE$ by the symbol $\varphi$, when no confusion arises.
Let $\mathbf{Mod}_{\cO_{X}}^{\sigma,\nabla}$ be the category of $\cO_{X_{\an}}$-vector bundles equipped with an integrable and convergent connection, and with a compatible system of Frobenii. The morphism in $\mathbf{Mod}_{\cO_{X}}^{\sigma,\nabla}$ are the morphisms of $\cO_{X_{\an}}$-modules which commute with the connections, and with the Frobenius morphisms on any open subset $\mathcal U\subset \cX$ equipped with a lifting of Frobenius.

\begin{rk} Let $\cE$ be a convergent isocrystal on $\cX_0/\cO_k$. To define a compatible system of Frobenii on $\cE$, we only need to give, for a cover $\cX=\bigcup_i \cU_i$ of $\cX$ by open subsets $\cU_i$ equipped with a lifting of Frobenius $\sigma_{i}$, a family of horizontal isomorphisms $\varphi_{i}\colon \sigma_i^{\ast}\cE|_{U_i} \stackrel{\sim}{\to} \cE|_{U_i}$ such that $\varphi_i|_{U_{i}\bigcap U_{j}}$ corresponds to $\varphi_j|_{U_{i}\bigcap U_{j}}$ under the functor $F_{\sigma_i,\sigma_j}\colon \mathbf{Mod}_{\cO_{U_{i}\bigcap U_{j}}}^{\sigma_i,\nabla}\to  \mathbf{Mod}_{\cO_{U_{i}\bigcap U_{j}}}^{\sigma_j,\nabla}$ (Here $U_{\bullet}:=\cU_{\bullet,k}$). Indeed, for $\cU$ any open subset equipped with a lifting of Frobenius $\sigma_{\cU}$, one can first use the functor $F_{\sigma_i,\sigma_{\cU}}$ of \eqref{eq.FunctorF} applied to $(\cE|_{U_i}, \nabla|_{U_i}, \varphi_i)|_{U_i\bigcap U}$ to obtain a horizontal isomorphism $\varphi_{\cU,i}\colon (\sigma_{\cU}^{\ast}(\cE|_{\cU}))|_{U_i\bigcap U} \to \cE|_{U_i\bigcap U}$. From the compatibility of the $\varphi_i$'s, we deduce $
\varphi_{\cU,i}|_{U\bigcap U_{i}\bigcap U_{j}}=\varphi_{\cU,j}|_{U\bigcap U_{i}\bigcap U_{j}}$.
Consequently we can glue the $\varphi_{\cU,i}$'s ($i\in I$) to get a horizontal isomorphism $\varphi_{\cU}\colon \sigma_{\cU}^{\ast}(\cE|_{U})\to \cE|_{U}$. One checks that these $\varphi_{\cU}$'s give the desired compatible system of Frobenii on $\cE$.
\end{rk}

Let $\cE$ be a convergent $F$-isocrystal on $\cX_0/\cO_k$. For $\cU\subset \cX$ an open subset equipped with a lifting of Frobenius $\sigma_{\cU}$, the restriction $\cE|_{\cU_k}$ gives rise to  a convergent $F$-isocrystal on $\cU_0/k$. Thus there exists a $\nabla$-horizontal isomorphism $\varphi_{(\cU, \sigma_{\cU})}\colon \sigma_{\cU}^{\ast}\cE|_{\cU_k}\to \cE|_{\cU_k}$. Varying $(\cU,\sigma_{\cU})$ we obtain a compatible system of Frobenii $\varphi$ on $\cE$. In this way, $(\cE,\nabla,\varphi)$ becomes an object of $\mathbf{Mod}_{\cO_{X}}^{\sigma, \nabla}$. Directly from the definition, we have the following

\begin{cor} The natural functor $F\textrm{-}\mathrm{Isoc}(\cX_0/\cO_k)\to \mathbf{Mod}_{\cO_{X}}^{\sigma,\nabla}$ is an equivalence of categories.
\end{cor}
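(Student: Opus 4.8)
The plan is to construct an explicit quasi-inverse to the functor in the statement. Fullness and faithfulness are immediate: a morphism in either category is a morphism of the underlying $\cO_{X_{\an}}$-vector bundles commuting with the connections, and the extra condition — commuting with the compatible system of Frobenii, resp.\ with the Frobenius structure of the $F$-isocrystal — can be checked Zariski-locally on $\cX$, where by Lemma~\ref{lem.FCrystal} the two notions coincide. So the functor is fully faithful, and everything comes down to essential surjectivity: given an object $(\cE,\nabla,\varphi)$ of $\mathbf{Mod}_{\cO_{X}}^{\sigma,\nabla}$, I must upgrade it to a convergent $F$-isocrystal on $\cX_0/\cO_k$.

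First, since $\cE$ is an $\cO_{X_{\an}}$-vector bundle with an integrable and convergent connection and $\cX$ is itself a smooth formal embedding of $\cX_0$, the pair $(\cE,\nabla)$ is the realization on $\cX$ of a (unique) convergent isocrystal on $\cX_0/\cO_k$; it remains to produce an isomorphism $\alpha\colon F^{\ast}\cE\simto\cE$ in $\mathrm{Isoc}^{\dagger}(\cX_0/\cO_k)$. Choose a covering $\cX=\bigcup_i\cU_i$ by opens admitting liftings of Frobenius $\sigma_i$ (e.g.\ small opens étale over a torus). On each $\cU_i$ the horizontal isomorphism $\varphi_{(\cU_i,\sigma_i)}\colon\sigma_i^{\ast}(\cE|_{\cU_{i,k}})\simto\cE|_{\cU_{i,k}}$, since $\sigma_i$ lifts the absolute Frobenius of $\cU_{i,0}$, realizes on $\cU_i$ a morphism $\alpha_i\colon F^{\ast}(\cE|_{\cU_{i,0}})\simto\cE|_{\cU_{i,0}}$ in $\mathrm{Isoc}^{\dagger}(\cU_{i,0}/\cO_k)$ (Berthelot, \cite[2.3.7]{Ber}).

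The crux is that these local Frobenius structures glue. Unwinding the construction of the equivalence $F_{\sigma_i,\sigma_j}$ of \eqref{eq.FunctorF}, which factors through the intrinsic category $F\textrm{-}\mathrm{Isoc}^{\dagger}((\cU_i\cap\cU_j)_0/\cO_k)$, the compatibility built into the definition of a compatible system of Frobenii — namely that $F_{\sigma_i,\sigma_j}$ carries $(\cE,\nabla,\varphi_{(\cU_i,\sigma_i)})|_{\cU_i\cap\cU_j}$ to $(\cE,\nabla,\varphi_{(\cU_j,\sigma_j)})|_{\cU_i\cap\cU_j}$ — is precisely the assertion that $\alpha_i$ and $\alpha_j$ restrict to the same morphism $F^{\ast}(\cE|_{(\cU_i\cap\cU_j)_0})\to\cE|_{(\cU_i\cap\cU_j)_0}$ in $\mathrm{Isoc}^{\dagger}((\cU_i\cap\cU_j)_0/\cO_k)$. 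Since morphisms of convergent isocrystals form a sheaf for the Zariski topology on $\cX_0$, the $\alpha_i$ glue to a single $\alpha\colon F^{\ast}\cE\to\cE$ in $\mathrm{Isoc}^{\dagger}(\cX_0/\cO_k)$, which is an isomorphism because each $\alpha_i$ is. Then $(\cE,\nabla,\alpha)$ is a convergent $F$-isocrystal whose image under the functor of the statement is canonically $(\cE,\nabla,\varphi)$, and one checks routinely that this assignment is functorial and quasi-inverse to that functor.

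The main obstacle will be the bookkeeping of the previous paragraph: one has to chase through the definition of $F_{\sigma_i,\sigma_j}$ and Berthelot's comparison of an isocrystal morphism with its realization relative to a given Frobenius lift carefully enough to see that the compatibility condition defining $\mathbf{Mod}_{\cO_{X}}^{\sigma,\nabla}$ is exactly the descent datum needed to glue the $\alpha_i$ into $\alpha$. Everything else — realizing $(\cE,\nabla)$ as a convergent isocrystal, Zariski descent for morphisms of isocrystals, and checking functoriality — is formal and uses only the results of Berthelot recalled above.
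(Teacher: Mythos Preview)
Your proposal is correct and follows the same idea as the paper, which simply declares the corollary to hold ``directly from the definition'' with no further argument. You have correctly unpacked that tautology: the compatible system of Frobenii is precisely the Zariski descent datum for a morphism $F^{\ast}\cE\to\cE$ of convergent isocrystals, so gluing the local $\alpha_i$ gives essential surjectivity, and full faithfulness is immediate from Lemma~\ref{lem.FCrystal} applied locally.
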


 In the following, denote by $\mathbf{FMod}_{\cO_{X}}^{\sigma,\nabla}$ the category of  quadruples $(\cE, \nabla, \varphi, \Fil^{\bullet}(\cE))$ with $(\cE, \nabla, \varphi)\in \mathbf{Mod}_{\cO_{X}}^{\sigma,\nabla}$ and  a decreasing, separated and exhaustive filtration $\Fil^{\bullet}(\cE)$ on $\cE$ by locally free direct summands, such that $\nabla$ satisfies Griffiths transversality with respect to $\Fil^{\bullet}(\cE)$, i.e., $
\nabla (\Fil^{i}(\cE))\subset \Fil^{i-1}(\cE)\otimes_{\cO_{X_{\an}}}\Omega_{X_{\an}/k}^{1}$.
The morphisms are the morphisms in $\mathbf{Mod}_{\cO_{X}}^{\sigma,\nabla}$ which respect the filtrations. We call the objects in $\mathbf{FMod}_{\cO_{X}}^{\sigma,\nabla}$ \emph{filtered (convergent) $F$-isocrystals} on $\cX_0/\cO_k$. By analogy with the category $F\textrm{-}\mathrm{Isoc}(\cX_0/\cO_k)$ of $F$-isocrystals, we also denote the category of filtered $F$-isocrystals on $\cX_0/\cO_k$ by $FF\textrm{-}\mathrm{Isoc}(\cX_0/\cO_k)$.

\subsection{Lisse $\widehat{\ZZ}_p$-sheaves and filtered $F$-isocystals}

Let $\cX$ be a smooth formal scheme over $\cO_k$ with $X$ its adic generic fiber. Define $\widehat{\ZZ}_p:=\varprojlim \ZZ/p^n$ and $\widehat{\QQ}_p:=\widehat{\ZZ}_p[1/p]$ as sheaves on $X_{\proet}$. Recall that
a \emph{lisse $\Zp$-sheaf} on $X_{\et}$ is an inverse system of abelian sheaves $\LL_{\bullet}=(\LL_n)_{n\in \mathbb{N}}$ on $X_{\et}$ such that each $\LL_n$ is locally a constant sheaf associated to a finitely generated $\ZZ/p^n$-modules, and such that $\LL_{\bullet}$ is isomorphic in the pro-category to such an inverse system for which $\LL_{n+1}/p^n\simeq \LL_n$. A \emph{lisse $\widehat{\ZZ}_p$-sheaf} on $X_{\proet}$ is a sheaf of $\widehat{\ZZ}_p$-modules on $X_{\proet}$, which  is locally isomorphic to $\widehat{\ZZ}_p\otimes_{\Zp} M$ where $M$ is a finitely generated $\Zp$-module. By \cite[Proposition 8.2]{Sch} these two notions are equivalent via the functor $\nu^{-1}\colon X_{\et}^{\sim}\to X_{\proet}^{\sim}$. In the following, we use frequently the natural  morphism of topoi
\[
w\colon X_{\proet}^{\sim}\stackrel{\nu}{\ra} X_{\et}^{\sim}\rightarrow \cX_{\et}^{\sim}.
\]
Before defining crystalline sheaves, let us make the following observation.

\begin{rk}\label{rk.wE} (1) Let $\mathcal M$ be a crystal on $\cX_0/\cO_k$, viewed as a coherent $\cO_{\cX}$-module admitting an integrable connection. Then $w^{-1}\mathcal M$ is a coherent $\cO_X^{\ur+}$-module with an integrable connection $w^{-1}\mathcal M\to w^{-1}\mathcal{M}\otimes_{\cO_X^{\ur+}}\Omega_{X/k}^{1,\ur+}$. If furthermore $\cM$ is an $F$-crystal, then $w^{-1}\mathcal M$ inherits a system of Frobenii: for any open subset $\cU\subset \cX$ equipped with a lifting of Frobenius $\sigma_{\cU}$, there is naturally an endomorphism of $w^{-1}\cM|_{U}$ which is semilinear with respect to the Frobenius $w^{-1}\sigma_{\cU}$ on $\cO_X^{\ur +}|_{U}$ (here $U:=\cU_k$). Indeed, the Frobenius structure on $\cM$ gives a horizontal $\cO_{\cU}$-linear morphism $ \sigma_{\cU}^{\ast}\cM|_{\cU}\to \cM|_{\cU}$, or equivalently, a $\sigma_{\cU}$-semilinear morphism $\varphi_{\cU}\colon \cM|_{\cU}\to \cM|_{\cU}$ (as $\sigma_{\cU}$ is the identity map on the underlying topological space). So we obtain a natural endomorphism $w^{-1}\varphi_{\cU}$ of $w^{-1}\cM|_{U}$, which is $w^{-1}\sigma_{\cU}$-semilinear.

(2) Let $\cE$ be a convergent $F$-isocrystal on $\cX_0/\cO_k$. By Remark \ref{rk.CrystalVSIsoc}, there exists an $F$-crystal $\mathcal M$ on $\cX_0/\cO_k$ and $n\in \mathbb N$ such that $\cE\simeq \mathcal M^{\rm an}(n)$. By (1), $w^{-1}\cM$ is a coherent $\cO_{X}^{\ur+}$-module equipped with an integrable connection and a compatible system of Frobenii $\varphi$. Inverting $p$, we get an $\cO_{X}^{\ur}$-module $w^{-1}\cM[1/p]$ equipped with an integrable connection and a system of Frobenii $\varphi /p^n$, which does not depend on the choice of the formal model $\cM$ or the integer $n$. For this reason, abusing notation, let us denote $w^{-1}\cM[1/p]$ by $w^{-1}\cE$, which is equipped with an integrable connection and a system of Frobenii inherited from $\cE$. If furthermore $\cE$ has a descending filtration $\{\Fil^i \cE\}$ by locally direct summands, by Remark \ref{O[1/p]} (1), each $\Fil^i\cE$ has a coherent formal model $\cE_i^+$ on $\cX$. Then $\{w^{-1}\cE_i^{+}[1/p]\}$ gives a descending filtration by locally direct summands on $w^{-1}\cE$.
\end{rk}

\begin{defn}\label{associated}
 We say a lisse $\widehat{\ZZ}_p$-sheaf $\LL$  on $X_{\proet}$  is \emph{crystalline} if there exists a filtered $F$-isocrystal $\cE$ on $\cX_0/\cO_k$, together with an isomorphism of $\cO\BBcr$-modules
\begin{equation}\label{eq.associated}
w^{-1}\cE\otimes_{\cO_X^{\ur}} \cO\BBcr\simeq \LL\otimes_{\widehat{\ZZ}_p}\cO\BBcr
\end{equation}
which is compatible with connection, filtration and Frobenius. In this case, we say that the lisse $\widehat{\ZZ}_p$-sheaf $\LL$ and the filtered $F$-isocrystal $\cE$ are \emph{associated}.
\end{defn}

\begin{rk} \label{abuseem} The Frobenius compatibility of the isomorphism \eqref{eq.associated} means the following. Take any open subset $\mathcal U\subset \cX$ equipped with a lifting of Frobenius $\sigma \colon \mathcal U\to \mathcal U$. By the discussion in \S \ref{localfrob}, we know that $\cO\mathbb B_{\cris}|_{\mathcal U_k}$ is naturally endowed with a Frobenius $\varphi$. Meanwhile, as $\mathcal E$ is an $F$-isocrystal, by Remark \ref{rk.wE} $w^{-1}\cE|_{\cU_k}$ is endowed with a $w^{-1}\sigma$-semilinear Frobenius, still denoted by $\varphi$. Now the required Frobenius compatibility means that when restricted to any such $\mathcal U_k$, we have $\varphi\otimes \varphi=\mathrm{id}\otimes \varphi$ via the isomorphism \eqref{eq.associated}.
\end{rk}

\begin{defn} For $\LL$ a lisse $\widehat{\ZZ}_p$-sheaf and $i\in \mathbb Z$, set
\[
\mathbb D_{\cris}(\LL):=w_{\ast}(\LL\otimes_{\widehat{\ZZ}_p}\cO\BBcr), \quad \text{and} \quad \Fil^i\mathbb D_{\cris}(\LL):=w_{\ast}(\LL\otimes_{\widehat{\ZZ}_p}\Fil^i\cO\BBcr).
\]
All of them are $\cO_{\cX}[1/p]$-modules, and the $\Fil^i\mathbb D_{\cris}(\LL)$'s give a separated exhaustive decreasing filtration on $\mathbb D_{\cris}(\LL)$ (as the same holds for the filtration on $\cO\BBcr$; see Corollary \ref{GradedOfBcris}).
\end{defn}

Next we shall compare the notion of crystalline sheaves with other related notions considered in \cite[Chapitre 8]{Bri}, \cite{Fal} and \cite{Sch}. We begin with the following characterization of crystalline sheaves, which is more closely related to the classical definition of crystalline representations by Fontaine (see also \cite[Chapitre 8]{Bri}).

\begin{prop}\label{prop.CrysSheaf} Let $\LL$ be a lisse $\widehat{\Z}_p$-sheaf on $X_{\proet}$. Then $\LL$ is crystalline if and only if the following two conditions are verified:
\begin{enumerate}
\item the $\cO_{\cX}[1/p]$-modules $\mathbb D_{\cris}(\LL)$ and $\Fil^i\mathbb D_{\cris}(\LL)$, $i\in \mathbb Z$, are all coherent.
\item the adjunction morphism $w^{-1}\mathbb D_{\cris}(\LL)\otimes_{\cO_X^{\ur}} \cO\BBcr\to \LL\otimes_{\widehat{\ZZ}_p}\cO\BBcr$ is an isomorphism of $\cO\BBcr$-modules.
\end{enumerate}
\end{prop}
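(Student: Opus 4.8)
The plan is to prove the two implications separately, using the comparison isomorphisms between crystalline and de Rham period sheaves (Lemma~\ref{BcrisBdR}, Corollary~\ref{BcrisIso}, Corollary~\ref{GradedOfBcris}) and the acyclicity results of \S\ref{acy} (Lemma~\ref{vanish}, Lemma~\ref{2obcris}).

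\smallskip

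First, suppose $\LL$ is crystalline, associated to a filtered $F$-isocrystal $\cE$ via an isomorphism \eqref{eq.associated}. Applying $w_\ast$ to \eqref{eq.associated} and to its filtered pieces, I want to identify $\mathbb D_{\cris}(\LL)$ with $\cE$ (and $\Fil^i\mathbb D_{\cris}(\LL)$ with $\Fil^i\cE$), which gives coherence immediately since $\cE$ and its filtration steps are coherent $\cO_{\cX}[1/p]$-modules by definition of a filtered $F$-isocrystal. Concretely: the left side of \eqref{eq.associated} is $w^{-1}\cE\otimes_{\cO_X^{\ur}}\cO\BBcr$, and I would compute $w_\ast$ of this using the projection formula together with $R^0w_\ast\cO\BBcr\simeq\cO_{\cX_{\et}}[1/p]$ (Corollary~\ref{higherox}); since $\cE$ is a locally free $\cO_X^{\ur}$-module of finite rank pulled back from $\cX$, one gets $w_\ast(w^{-1}\cE\otimes\cO\BBcr)\simeq\cE\otimes_{\cO_{\cX}[1/p]}w_\ast\cO\BBcr\simeq\cE$. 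The filtered version works the same way once one knows $w_\ast\Fil^i\cO\BBcr$ behaves correctly, which follows from Lemma~\ref{2obcris} (the acyclicity and base-change for $\Fil^r\cO\BBcr$) plus the filtered splitting $\cO\BBcr\simeq\BBcr\{\langle u_1,\dots,u_d\rangle\}$ of Corollary~\ref{BcrisIso}, reducing to the structure of $w_\ast\Fil^i\BBcr$. This gives (1), and then (2) is just \eqref{eq.associated} rewritten: the adjunction morphism $w^{-1}\mathbb D_{\cris}(\LL)\otimes_{\cO_X^{\ur}}\cO\BBcr\to\LL\otimes_{\widehat\ZZ_p}\cO\BBcr$ is identified, under $\mathbb D_{\cris}(\LL)\simeq\cE$, with the isomorphism \eqref{eq.associated} itself, hence is an isomorphism.

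\smallskip

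Conversely, suppose (1) and (2) hold. I would set $\cE:=\mathbb D_{\cris}(\LL)$ with its filtration $\Fil^i\mathbb D_{\cris}(\LL)$; by (1) these are coherent $\cO_{\cX}[1/p]$-modules. The task is to equip $\cE$ with the structure of a filtered $F$-isocrystal --- that is, an integrable convergent connection, a compatible system of Frobenii, locally free filtration steps satisfying Griffiths transversality --- and to check \eqref{eq.associated}. The connection comes from the connection $\nabla$ on $\cO\BBcr$ (Corollary~\ref{poincare}): $\mathbb D_{\cris}(\LL)=w_\ast(\LL\otimes\cO\BBcr)$ inherits $\nabla$ because $\LL$ is horizontal for the trivial connection, and $w_\ast(\LL\otimes\cO\BBcr\otimes\Omega^{1,\ur}_{X/k})=\mathbb D_{\cris}(\LL)\otimes_{\cO_{\cX}[1/p]}\Omega^1_{\cX/\cO_k}[1/p]$ by the same projection-formula computation; integrability is inherited. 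Similarly the Frobenius on $\cO\BBcr$ over each small open $\cU$ with a Frobenius lift (\S\ref{localfrob}) induces a Frobenius on $\cE|_{\cU_k}$, and Lemma~\ref{sigma12} (comparison of Frobenii for different lifts) guarantees these glue into a compatible system of Frobenii in the sense of the definition preceding the statement; one must check the Frobenius is an isomorphism after pullback, which follows from (2) since $\varphi$ is an isomorphism on $\cO\BBcr[1/t]$ and hence on the base-changed module. That the connection is convergent, and that $\cE$ is a vector bundle with locally-free-direct-summand filtration satisfying Griffiths transversality: convergence and local freeness should follow from the isomorphism (2) --- pulling back, $w^{-1}\cE$ becomes a direct summand (étale-locally a free summand) of $\LL\otimes\cO\BBcr$, which forces $\cE$ to be locally free; Griffiths transversality for $\Fil^i\mathbb D_{\cris}$ is inherited from Griffiths transversality of $\nabla$ on $\Fil^i\cO\BBcr$ (Corollary~\ref{poincare}). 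Finally \eqref{eq.associated} is exactly condition (2) with $\cE=\mathbb D_{\cris}(\LL)$, and its compatibility with filtration requires checking $\Fil^i(w^{-1}\mathbb D_{\cris}(\LL)\otimes\cO\BBcr)\to\Fil^i(\LL\otimes\cO\BBcr)$ is an isomorphism, which one reduces to the graded pieces via $\mathrm{gr}^i\cO\BBcr\simeq\mathrm{gr}^i\cO\BBdr$ (Corollary~\ref{GradedOfBcris}) and the de Rham analogue in \cite{Sch13}.

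\smallskip

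\textbf{The main obstacle} I anticipate is the converse direction's verification that $\cE=\mathbb D_{\cris}(\LL)$ is genuinely a \emph{convergent} $F$-isocrystal with \emph{locally free} underlying module and locally-free-direct-summand filtration --- coherence alone (condition (1)) is much weaker, and the local freeness must be extracted from condition (2) by a faithfully-flat-descent type argument along $\cO_X^{\ur}\to\cO\BBcr$, which requires knowing enough faithful flatness (or at least the relevant injectivity/splitting) of the period sheaf; the convergence of the connection is then automatic from the $F$-isocrystal structure by \cite[2.4.1]{Ber}, but reaching that point cleanly is the delicate part. A secondary subtlety is making the Frobenius-compatibility bookkeeping of Remark~\ref{abuseem} precise when gluing over a cover of $\cX$ by small opens with different Frobenius lifts, where Lemma~\ref{sigma12} and Lemma~\ref{fsigma12} must be invoked in tandem to see that the induced Frobenii on $\mathbb D_{\cris}(\LL)$ form a compatible system.
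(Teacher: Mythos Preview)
Your forward direction is essentially the paper's: apply $w_\ast$ to \eqref{eq.associated}, use the projection formula and Corollary~\ref{higherox} to get $\mathbb D_{\cris}(\LL)\simeq\cE$, which gives (1) and (2) at once.

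For the converse, your outline has the right shape but the paper takes a different route to the point you yourself flag as the obstacle. You propose to extract local freeness of $\cE=\mathbb D_{\cris}(\LL)$ and its filtration from condition~(2) by a descent-type argument along $\cO_X^{\ur}\to\cO\BBcr$; this flatness is not available in any useful form, and the paper does not attempt it. Instead the paper passes through a \emph{Galois-theoretic interpretation}: for a small connected affine $\cU=\Spf(R^+)\subset\cX$ with universal pro-finite-\'etale cover and fundamental group $G_U$, Lemma~\ref{twodcris} identifies $\mathbb D_{\cris}(\LL)(\cU)$ with $D_{\cris}(V_U(\LL)):=(V_U(\LL)\otimes\cO\BBcr(S,S^+))^{G_U}$, and Corollary~\ref{CrysRepAff} shows that conditions~(1)+(2) force each $V_U(\LL)\otimes\Qp$ to be a crystalline $G_U$-representation in Brinon's sense. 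Once in that local setting, the projectivity of $D_{\cris}(V_U(\LL))$ and of its graded pieces are results of Brinon \cite[8.3.1, 8.3.2]{Bri}, and the paper builds an honest $F$-crystal model $D^+$ (with finiteness proved via an argument from \cite[Prop.~3.6]{AI}), so that convergence of the connection then follows from \cite[2.4.1]{Ber}. This is the content of Lemma~\ref{IsoCrysOnDcris}. The gluing of local Frobenii uses exactly the pair Lemma~\ref{sigma12}/Lemma~\ref{fsigma12} as you say.

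Finally, the filtration compatibility of the isomorphism in (2) is also handled locally: the paper reduces (via Corollary~\ref{easyfact}) to the graded isomorphism for crystalline $G_U$-representations, which is \cite[8.4.3]{Bri}, and then extends to general affinoid perfectoids $V$ by base change along $\mathrm{gr}^j\cO\BBcr(X^{\rm univ})\otimes_S S_1\simto\mathrm{gr}^j\cO\BBcr(V)$. Your reduction to $\mathrm{gr}^i\cO\BBcr\simeq\mathrm{gr}^i\cO\BBdr$ alone does not give this without the Brinon input. In short: the missing ingredient in your plan is the local Galois-invariant description and the appeal to \cite{Bri}; these replace the descent you were hoping for.
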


Before proving this proposition, let us express locally the sheaf $\mathbb D_{\cris}(\LL)=w_{\ast}(\LL\otimes \cO\BBcr)$ as the Galois invariants of some Galois module. Consider $\cU=\Spf(A)\subset \cX$ a connected affine open subset admitting an \'etale map $\cU\to \Spf(\cO_k\{T_1^{\pm 1},\ldots, T_d^{\pm 1}\})$. Write $U$ the generic fiber of $\cU$. As $\cU$ is smooth and connected, $A$ is an integral domain. Fix an algebraic closure $\Omega$ of $\mathrm{Frac}(A)$, and let $\overline{A}$ be the union of finite and normal $A$-algebras $B$ contained in $\Omega$ such that $B[1/p]$ is \'etale over $A[1/p]$. Write $G_U:=\mathrm{Gal}(\overline{A}[1/p]/A[1/p])$, which is nothing but the fundamental group of $U=\cU_k$. Let $U^{\mathrm{univ}}$ be the profinite \'etale cover of $U$ corresponding to $(\overline A[1/p], \overline A)$. One checks that $U^{\mathrm{univ}}$ is affinoid perfectoid (over the completion of $\overline k$). As $\LL$ is a lisse $\widehat{\ZZ}_p$-sheaf on $X$, its restriction to $U$ corresponds to a continuous $\Zp$-representation $V_{U}(\LL):=\LL(U^{\mathrm{univ}})$ of $G_U$. Write $\widehat{U^{\rm univ}}=\Spa(R,R^+)$, where $(R,R^+)$ is the $p$-adic completion of $(\overline A[1/p], \overline A)$.

\begin{lemma} \label{twodcris} Keep the notation above. Let $\LL$ be a lisse $\widehat{\ZZ}_p$-sheaf on $X$.
Then there exist natural isomorphisms of $A[1/p]$-modules
\[
\mathbb D_{\cris}(\LL)(\cU)\stackrel{\sim}{\longrightarrow} \left(V_{U}(\LL)\otimes_{\Zp}\cO\BBcr(R,R^+)\right)^{G_U}=:D_{\cris}(V_{U}(\LL))
\]
and, for any $r\in \ZZ$,
\[
\left(\Fil^r \mathbb D_{\cris}(\LL)\right)(\cU)\stackrel{\sim}{\longrightarrow} \left(V_{U}(\LL)\otimes_{\Zp}\Fil^r\cO\BBcr(R,R^+)\right)^{G_U}.
\]
Moreover, the $A[1/p]$-module $\mathbb D_{\cris}(\LL)(\cU)$ is projective of rank at most that of $V_{U}(\LL)\otimes \mathbb Q_p$ over $\mathbb Q_p$.
\end{lemma}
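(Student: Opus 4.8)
The plan is to work locally on the small affine $\cU = \Spf(R^+)$ admitting an étale map to $\mathcal T^d$, and to use the affinoid perfectoid cover $U^{\mathrm{univ}} = \varprojlim U_i$ corresponding to $(\overline R, \overline R^+)$, which is affinoid perfectoid with $\widehat{U^{\mathrm{univ}}} = \Spa(S,S^+)$. The first step is to establish the two claimed isomorphisms. Since $\cU_k = U$ and the morphism $w$ factors as $X_{\proet}^{\sim} \xrightarrow{\nu} X_{\et}^{\sim} \to \cX_{\et}^{\sim}$, computing $\mathbb D_{\cris}(\LL)(\cU) = (\LL\otimes_{\widehat{\ZZ}_p}\cO\BBcr)(U^{\mathrm{univ}})^{G_U}$ will follow once one checks that $R\Gamma(U, \LL\otimes\cO\BBcr)$ is computed by the Čech–Galois complex associated to the pro-étale Galois cover $U^{\mathrm{univ}} \to U$; this is the standard descent along a pro-(finite étale) covering (as in \cite{Sch13}), combined with the identification $(\LL\otimes\cO\BBcr)(U^{\mathrm{univ}}) = V_U(\LL)\otimes_{\Zp}\cO\BBcr(S,S^+)$, which comes from $\LL(U^{\mathrm{univ}}) = V_U(\LL)$ (lisseness) together with the almost identification $\cO\BBcr(U^{\mathrm{univ}})^a \approx \cO\BBcr(S,S^+)^a$ of Lemma \ref{2obcris} — and here one must pass from the almost isomorphism to an honest one, which is legitimate after tensoring with the finite free (or finitely generated projective) $\Zp$-module $V_U(\LL)$ and taking $G_U$-invariants, since the "almost zero'' discrepancy is killed. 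The filtered version is identical, using $\Fil^r\cO\BBcr$ in place of $\cO\BBcr$ and the corresponding part of Lemma \ref{2obcris}.

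The second step — the projectivity and rank bound — is the substantive one. Set $V := V_U(\LL)\otimes_{\Zp}\QQ_p$, a continuous $\QQ_p$-representation of $G_U$ of dimension $n$, and $D_{\cris}(V) := (V\otimes_{\QQ_p}\BBcr(S,S^+)\otimes\cdots)^{G_U}$; more precisely $D_{\cris}(V_U(\LL)) = (V_U(\LL)\otimes_{\Zp}\cO\BBcr(S,S^+))^{G_U}$ as in the statement. The plan is to invoke the theory developed by Brinon \cite[Chapitre 8]{Bri} (which works verbatim for the perfectoid pair $(S,S^+)$, the relevant rings $\cO\BBcr(S,S^+)$ having all the formal properties of Fontaine's $B_{\cris}$ — $t$ not a zero-divisor by Corollary \ref{cor.tNotZeroDiv}, the fundamental exact sequences, and $\cO\BBcr(S,S^+)^{\nabla=0,\varphi=1}$-type statements): the $R$-module $D_{\cris}(V_U(\LL))$ is finitely generated projective, and the natural "comparison'' map
\[
D_{\cris}(V_U(\LL))\otimes_{R}\cO\BBcr(S,S^+) \longrightarrow V_U(\LL)\otimes_{\Zp}\cO\BBcr(S,S^+)
\]
is always injective, which forces $\operatorname{rank}_R D_{\cris}(V_U(\LL)) \le \dim_{\QQ_p} V$. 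Concretely: one reduces to the $\QQ_p$-coefficient statement, uses that $\cO\BBcr(S,S^+)$ is an integral domain (or at least that localizing at a generic point one can compare dimensions over the fraction field), and deduces the rank inequality by counting dimensions after the injective base-change map; projectivity over $R$ follows from finite generation plus flatness/local-freeness arguments — over the (not necessarily nice) ring $R$ one gets projectivity because $D_{\cris}(V_U(\LL))$ is a direct summand of a suitable induced module, exactly as in Brinon's treatment.

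The main obstacle I anticipate is the passage from almost mathematics to honest statements, and the verification that Brinon's arguments — originally written for a base $R$ with a fixed étale coordinate tower and for the ring $B_{\cris}$ over $\cO_{\CC_p}$ — transport without change to our $\cO\BBcr(S,S^+)$ and survive the sheafification/descent. I would isolate this as: (a) an almost-to-exact lemma (tensoring an almost isomorphism of $\AAcr$-modules with a finite projective $\Zp$-module and taking $G_U$-invariants yields an isomorphism, because $H^i_{\cont}(G_U, M)$ for $M$ almost zero and $p$-adically complete is almost zero, hence zero after the module is already honest); and (b) a remark that all the structural inputs for $\cO\BBcr(S,S^+)$ needed by Brinon are exactly Corollary \ref{cor.tNotZeroDiv}, Corollary \ref{BcrisIso}, and Lemma \ref{2obcris}. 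Granting these, the finiteness, projectivity, and the rank bound are then formal consequences of \cite[Chapitre 8]{Bri}, and the proof concludes.
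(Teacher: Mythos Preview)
Your proposal is correct and follows essentially the same route as the paper: Cartan--Leray (i.e., \v{C}ech--Galois) descent along the profinite Galois cover $U^{\mathrm{univ}}\to U$, identification of sections over $U^{\mathrm{univ}}$ via Lemma~\ref{2obcris}, and then a direct citation of \cite[Proposition~8.3.1]{Bri} for the projectivity and rank bound. Your concern about the almost-to-honest passage is unnecessary here: the almost isomorphisms of Lemma~\ref{2obcris} for $\cO\BBcr$ and $\Fil^r\cO\BBcr$ are already honest isomorphisms because $p$ is inverted, so no separate almost-to-exact lemma is needed; likewise, the paper simply cites Brinon without further adaptation, as the relevant arguments in \cite[Chapitre~8]{Bri} apply to the pair $(S,S^+)$ directly.
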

\begin{proof} As $\LL$ is a lisse $\widehat{\ZZ}_p$-sheaf, it becomes constant restricted  to $U^{\mathrm{univ}}$. In other words, we have $
\LL|_{U^{\mathrm{univ}}}\simeq V_{U}(\LL)\otimes_{\Zp}\widehat{\ZZ}_p|_{U^{\mathrm{univ}}}$.
For $i\geq 0$ an integer, denote by $U^{\mathrm{univ},i}$ the (i+1)-fold product of $U^{\mathrm{univ}}$ over $U$. Then $
U^{\mathrm{univ},i}\simeq U^{\mathrm{univ}}\times G_U^i$, and it is again an affinoid perfectoid.
We claim that there exists a natural identification
\[
H^0(U^{\mathrm{univ},i},\LL\otimes_{\widehat{\ZZ}_p}\cO\BBcr)=\mathrm{Map}_{\mathrm{cont}}\left(G_U^i, V_{U}(\LL)\otimes_{\Zp}\cO\AAcr(R,R^+)\right)[1/t],
\]
where for $T,T'$ two topological spaces, $\mathrm{Map}_{\rm cont}(T,T')$ denotes the set of continuous maps from $T$ to $T'$. To see this, write $\widehat{U^{\mathrm{univ},i}}=\Spa(R_i,R_i^+)$. Then, by Corollary \ref{cor.acyclicityOBcris}, $H^0(U^{\mathrm{univ},i}, \LL\otimes_{\widehat{\ZZ}_p}\cO\BBcr)=V_{U}(\LL)\otimes_{\Zp}\cO\BBcr(R_i,R_i^+)$, which is also
\[
\left(\varprojlim_n V_{U}(\LL)\otimes_{\Zp}\cO\AAcr(R_i,R_i^+)/p^n\right)[1/t].
\]
Since $V_U(\LL)$ is of finite type over $\Zp$, it suffices to show that, for all $n\in \mathbb N$, $\cO\AAcr(R_i,R_i^+)/p^n$ can be identified with
\[
\mathrm{Map}_{\rm cont}(G_U^i, \cO\AAcr(R,R^+)/p^n)=\varinjlim_{N} \mathrm{Map}_{\rm cont}(G_U^i/N,\cO\AAcr(R,R^+)/p^n)
\]
where $N$ runs through the set of open normal subgroups of $G_U^i$. Since both $\cO\AAcr(R,R^+)$ and $\cO\AAcr(R_i,R_i^+)$ are flat over $\Zp$, one reduces to the case where $n=1$, and thus to showing $R_i^{\flat +}/(\underline p^p)=\mathrm{Map}_{\rm cont}(G_U^i, R^{\flat +}/(\underline p^p))$ by the explicit descriptions of $\cO\AAcr(R,R^+)/p$ and $\cO\AAcr(R_i,R_i^+)/p$. As $R_i^{\flat+}$ and $R^{\flat +}$ are flat over $\cO_{\Cp}^{\flat}$, we finally reduces to showing $R_i^+/p=\mathrm{Map}_{\rm cont}(G_U^i,R^+/p)$. But this last assertion is clear, giving our claim.

Consider the following spectral sequence associated to the cover  $U^{\rm univ}\to U$:
\[
E_{1}^{i,j}=H^j(U^{\mathrm{univ},i},\LL\otimes_{\widehat{\ZZ}_p}\cO\BBcr) \Longrightarrow H^{i+j}(U,\LL\otimes_{\widehat{\ZZ}_p}\cO\BBcr).
\]
As $E_{1}^{i,j}= 0$ for $j\geq 1$ (Corollary \ref{cor.acyclicityOBcris}), we have $
E_{2}^{n,0}=E_{\infty}^{n,0}\simeq H^n(U,\LL\otimes_{\widehat{\ZZ}_p}\cO\BBcr)$.
Thus, by the discussion in the paragraph above, we deduce a natural isomorphism
\[
H^j(U,\LL\otimes_{\widehat{\ZZ}_p}\cO\BBcr)\stackrel{\sim}{\longrightarrow} H^j_{\mathrm{cont}}(G_U,V_{U}(\LL)\otimes_{\Zp}\cO\AAcr(R,R^+))[1/t]
\]
where the right-hand side is the continuous group cohomology. Taking $j=0$, we obtain our first assertion. The isomorphism concerning $\Fil^r\cO\BBcr$ can be proved in the same way. The last assertion follows from the first isomorphism and  \cite[Proposition 8.3.1]{Bri}, which gives the assertion for the right-hand side.
\end{proof}

\begin{cor}\label{easyfact} Let $\LL$ be a lisse $\widehat{\Z}_p$-sheaf on $X_{\proet}$ which satisfies the condition (1) of Proposition \ref{prop.CrysSheaf}. Let $\cU=\Spf(A)$ be a small connected affine open subset of $\cX$. Write  $U=\cU_k$. Then for any $V\in X_{\proet}/U$, we have
\[\mathbb D_{\cris}(\LL)(\cU)\otimes_{A[1/p]}\cO\BBcr\left(V\right)\stackrel{\sim}{\longrightarrow} (w^{-1}\mathbb D_{\cris}(\LL)\otimes_{\cO_X^{\ur}} \cO\BBcr)(V).\]
\end{cor}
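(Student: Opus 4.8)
The plan is to reduce the statement to the tautological case of a free module, the essential input being the fact — supplied by Lemma~\ref{twodcris} — that $M:=\mathbb D_{\cris}(\LL)(\cU)$ is a \emph{finitely generated projective} $R$-module, and not merely a coherent one.

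First I would fix the local picture. Since $\cU=\Spf(R^{+})$ is affine with $R^{+}$ noetherian and $\mathbb D_{\cris}(\LL)$ is a coherent $\cO_{\cX}[1/p]$-module by hypothesis~(1) of Proposition~\ref{prop.CrysSheaf}, the restriction $\mathbb D_{\cris}(\LL)|_{\cU}$ is the $\cO_{\cX_{\et}}[1/p]|_{\cU}$-module on $\cU_{\et}$ associated to $M$ (using also that $\Gamma(\cU,-)$ commutes with inverting $p$). Invoking the projectivity of $M$, I would pick a finite free module $R^{m}$ and an idempotent $e\in\End_{R}(R^{m})$ with image $M$, so that $\mathbb D_{\cris}(\LL)|_{\cU}$ is the direct summand of $(\cO_{\cX_{\et}}[1/p]|_{\cU})^{m}$ cut out by $e$. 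From this point everything is formal. The morphism of topoi $w$ restricts to a morphism $X_{\proet}/U\to\cX_{\et}/\cU$ with $U=\cU_{k}$, and $w^{-1}$ commutes with this restriction, is exact, commutes with inverting $p$, and sends $\cO_{\cX_{\et}}$ to $\cO_{X}^{\ur+}$ by definition; hence $w^{-1}\mathbb D_{\cris}(\LL)|_{U}$ is the direct summand of $(\cO_{X}^{\ur}|_{U})^{m}$ cut out by the image $\bar e$ of $e$ under the ring homomorphism $\End_{R}(R^{m})\to\End_{\cO_{X}^{\ur}(U)}(\cO_{X}^{\ur}(U)^{m})$ induced by the unit of adjunction $\cO_{\cX_{\et}}(\cU)\to\cO_{X}^{\ur+}(U)$ (localized at $p$). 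Tensoring with $\cO\BBcr$ over $\cO_{X}^{\ur}$, which preserves finite direct sums, transports this to the direct summand of $(\cO\BBcr|_{U})^{m}$ cut out by the image of $e$ in $\End_{\cO\BBcr(U)}(\cO\BBcr(U)^{m})$, and evaluating at any $V\in X_{\proet}/U$ — again additive, hence preserving direct summands — identifies $(w^{-1}\mathbb D_{\cris}(\LL)\otimes_{\cO_{X}^{\ur}}\cO\BBcr)(V)$ with the summand of $\cO\BBcr(V)^{m}$ cut out by the image $\bar e_{V}$ of $e$ under the composite $R\to\cO_{X}^{\ur}(U)\to\cO_{X}^{\ur}(V)\to\cO\BBcr(V)$.

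On the other side, $\mathbb D_{\cris}(\LL)(\cU)\otimes_{R}\cO\BBcr(V)=(eR^{m})\otimes_{R}\cO\BBcr(V)$ is manifestly the summand of $\cO\BBcr(V)^{m}$ cut out by the same $\bar e_{V}$, with respect to the same $R$-algebra structure on $\cO\BBcr(V)$. It then remains only to check that the canonical map of the statement is exactly this identification, which by naturality may be verified on the free module $R^{m}$, where it is visibly the identity (for $M=R$ both sides are simply $\cO\BBcr(V)$). Conceptually, the point is that both $N\mapsto N\otimes_{R}\cO\BBcr(V)$ and $N\mapsto(w^{-1}\widetilde N\otimes_{\cO_{X}^{\ur}}\cO\BBcr)(V)$, with $\widetilde N$ the associated $\cO_{\cX_{\et}}[1/p]|_{\cU}$-module, are additive functors on finite projective $R$-modules between which the canonical map is a natural transformation that is an isomorphism on $R$; an additive functor respecting an isomorphism on $R$ respects it on all finite frees and hence on their direct summands. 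I do not expect a genuine obstacle: the substantive input is Lemma~\ref{twodcris}, and the one place where projectivity of $\mathbb D_{\cris}(\LL)(\cU)$ — rather than mere coherence — is really used is that it lets one commute the left-exact functor $\Gamma(V,-)$ past the base change $-\otimes_{\cO_{X}^{\ur}}\cO\BBcr$, something that would fail for a general coherent module (one would be commuting $\Gamma(V,-)$ past a cokernel). Everything else is bookkeeping with idempotents together with the exactness and additivity of $w^{-1}$, $-\otimes_{\cO_{X}^{\ur}}\cO\BBcr$, and $\Gamma(V,-)$.
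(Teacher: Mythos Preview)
Your proposal is correct and follows essentially the same approach as the paper: both use Lemma~\ref{twodcris} to get that $\mathbb D_{\cris}(\LL)(\cU)$ is a finite projective $R$-module, realize $\mathbb D_{\cris}(\LL)|_{\cU}$ as a direct summand of a finite free $\cO_{\cX}[1/p]|_{\cU}$-module, and reduce to the tautological free case. You have simply spelled out the idempotent bookkeeping and the functoriality in more detail than the paper does.
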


\begin{proof}By Lemma \ref{twodcris}, the $A[1/p]$-module $\mathbb D_{\cris}(\LL)(\cU)$ is projective of finite type, hence it is a direct summand of a finite free $A[1/p]$-module.  As $\mathbb D_{\cris}(\LL)$ is coherent over $\cO_{\cX}[1/p]$ and as $\cU$ is affine, $\mathbb D_{\cris}(\LL)|_{\cU}$ is then a direct summand of a finite free $\cO_{\cX}[1/p]|_{\cU}$-module. The isomorphism in our corollary then follows, since we have similar isomorphism when $\mathbb D_{\cris}(\LL)|_{\cU}$ is replaced by a free $\cO_{\cX}[1/p]|_{\cU}$-module.
\end{proof}

\begin{cor}\label{CrysRepAff} Let $\LL$ be a lisse $\widehat{\ZZ}_p$-sheaf verifying the condition (1) of Proposition \ref{prop.CrysSheaf}. Then the condition (2) of Proposition \ref{prop.CrysSheaf} holds for $\LL$ if and only if for any small affine connected open subset $\cU=\Spf(A)\subset \cX$ (with $U:=\cU_k$), the $G_U$-representation $V_{U}(\LL)\otimes_{\Zp}\Qp$ is crystalline in the sense that the following natural morphism is an isomorphism (\cite[Chapitre 8]{Bri})
\[
D_{\cris}(V_U(\LL))\otimes_{A[1/p]} \cO\BBcr(R,R^+) \stackrel{\sim}{\longrightarrow} V_U(\LL)\otimes_{\Zp}\cO\BBcr(R,R^+),
\]
where $G_U, U^{\rm univ}, \widehat{U^{\rm univ}}=\Spa(R,R^+)$ are as in the paragraph before Lemma \ref{twodcris}.
\end{cor}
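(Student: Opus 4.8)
The plan is to reduce condition (2) of Proposition~\ref{prop.CrysSheaf} to a statement about sections over a single well-chosen pro-\'etale cover, on which $\LL$ trivializes, and then invoke Lemma~\ref{twodcris} and Corollary~\ref{easyfact} to recognize the resulting map as the classical comparison morphism. Fix a small connected affine open $\cU=\Spf(R^+)\subset\cX$ with $U=\cU_k$, and keep the notation $G_U$, $U^{\mathrm{univ}}$, $(S,S^+)$, $V_U(\LL)$ of the paragraph preceding Lemma~\ref{twodcris}. The key point is that the adjunction morphism
\[
\iota\colon w^{-1}\mathbb D_{\cris}(\LL)\otimes_{\cO_X^{\ur}}\cO\BBcr\longrightarrow \LL\otimes_{\widehat{\ZZ}_p}\cO\BBcr
\]
of condition (2), when evaluated on $U^{\mathrm{univ}}$, \emph{is} the morphism in the statement. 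Indeed, $U^{\mathrm{univ}}$ is an affinoid perfectoid lying above $\widetilde U_{\bk}$, so Lemma~\ref{2obcris} gives $\cO\BBcr(U^{\mathrm{univ}})=\cO\BBcr(S,S^+)$; since $\LL$ restricts to the constant sheaf attached to $V_U(\LL)$ on $X_{\proet}/U^{\mathrm{univ}}$, the target of $\iota(U^{\mathrm{univ}})$ becomes $V_U(\LL)\otimes_{\Zp}\cO\BBcr(S,S^+)$; Corollary~\ref{easyfact} (applied to $V=U^{\mathrm{univ}}$, using condition (1)) identifies the source with $\mathbb D_{\cris}(\LL)(\cU)\otimes_R\cO\BBcr(S,S^+)$; and Lemma~\ref{twodcris} identifies $\mathbb D_{\cris}(\LL)(\cU)$ with $D_{\cris}(V_U(\LL))$. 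Unwinding $\iota$ (the counit of $w^{-1}\dashv w_{\ast}$ followed by extension of scalars along $\cO_X^{\ur}\to\cO\BBcr$) then shows $\iota(U^{\mathrm{univ}})$ is exactly the morphism $D_{\cris}(V_U(\LL))\otimes_R\cO\BBcr(S,S^+)\to V_U(\LL)\otimes_{\Zp}\cO\BBcr(S,S^+)$.

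Granting this identification, one implication is immediate: if condition (2) holds, then $\iota$ is an isomorphism of sheaves, hence an isomorphism on $U^{\mathrm{univ}}$-sections, which is precisely the crystallinity of $V_U(\LL)\otimes_{\Zp}\Qp$; and $\cU$ was arbitrary.

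For the converse I would argue by descent. The opens $U=\cU_k$ form a covering of $X$ in $X_{\proet}$, so it suffices to prove $\iota|_U$ is an isomorphism of sheaves on $X_{\proet}/U$ for each such $\cU$; since $U^{\mathrm{univ}}\to U$ is itself a covering in $X_{\proet}$ (a pro-(finite \'etale) surjection), it suffices to prove $\iota|_{U^{\mathrm{univ}}}$ is an isomorphism of sheaves on $X_{\proet}/U^{\mathrm{univ}}$, and this may be checked on the basis of affinoid perfectoids $W$ lying above $U^{\mathrm{univ}}$. For such a $W$ (with $\widehat W=\Spa(S_W,S_W^+)$), the sheaf $\LL|_W$ is again constant and $\cO\BBcr(W)$ is an $\cO\BBcr(S,S^+)$-algebra via the restriction map attached to $W\to U^{\mathrm{univ}}$, so Corollary~\ref{easyfact} and Lemma~\ref{2obcris} show that both sides of $\iota(W)$ are obtained from the corresponding sides of $\iota(U^{\mathrm{univ}})$ by $-\otimes_{\cO\BBcr(S,S^+)}\cO\BBcr(W)$, and that $\iota(W)$ itself is the base change of $\iota(U^{\mathrm{univ}})$ along $\cO\BBcr(S,S^+)\to\cO\BBcr(W)$. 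Hence the hypothesis that $\iota(U^{\mathrm{univ}})$ is an isomorphism forces $\iota(W)$ to be one for every $W$, and we conclude.

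I do not expect a genuine obstacle: the corollary is a formal consequence of Lemma~\ref{twodcris}, Corollary~\ref{easyfact}, and the fact that being an isomorphism of sheaves is a local condition. The points requiring care are bookkeeping ones — checking that $U^{\mathrm{univ}}$ and the auxiliary $W$'s lie above $\widetilde U_{\bk}$ so that Lemma~\ref{2obcris} applies, and verifying that $\iota(W)$ really is the base change of $\iota(U^{\mathrm{univ}})$, which follows from the $\cO\BBcr$-linearity and naturality of $\iota$ together with the explicit section computations above. If anything is delicate it is the reduction ``isomorphism of sheaves on $X_{\proet}/U$ $\iff$ isomorphism after restriction to the cover $U^{\mathrm{univ}}$ $\iff$ isomorphism on affinoid perfectoids over $U^{\mathrm{univ}}$'', but this is standard for the pro-\'etale site once one knows that $U^{\mathrm{univ}}\to U$ is a covering and that affinoid perfectoids form a basis.
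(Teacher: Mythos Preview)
Your proof is correct and follows essentially the same approach as the paper's: both directions are handled by evaluating the adjunction map $\iota$ on $U^{\mathrm{univ}}$ via Lemma~\ref{twodcris}, Lemma~\ref{2obcris}, and Corollary~\ref{easyfact}, and the converse is obtained by the same base-change/descent argument over the cover $U^{\mathrm{univ}}\to U$ (the paper phrases this as ``the similar isomorphism after replacing $U^{\rm univ}$ by any $V\in X_{\proet}/U^{\rm univ}$'', which is exactly your observation that $\iota(W)$ is the base change of $\iota(U^{\mathrm{univ}})$). Your write-up is, if anything, slightly more explicit about why the descent step is legitimate.
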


\begin{proof} If $\LL$ satisfies in addition the condition (2) of Proposition \ref{prop.CrysSheaf}, combining it with Corollary \ref{easyfact}, we find
\begin{eqnarray*}
\mathbb D_{\cris}(\LL)(\cU)\otimes_{A[1/p]}\cO\BBcr\left(U^{\rm univ}\right) &\stackrel{\sim}{\longrightarrow} & (w^{-1}\mathbb D_{\cris}(\LL)\otimes_{\cO_X^{\ur}}\cO\BBcr)(U^{\rm univ}) \\ & \stackrel{\sim}{\longrightarrow} & (\LL\otimes_{\widehat{\ZZ}_p}\cO\BBcr)(U^{\rm univ})\\ & =  & V_{U}(\LL)\otimes_{\Zp}\cO\BBcr(U^{\rm univ}).
\end{eqnarray*}
So, by Corollary \ref{cor.acyclicityOBcris} and Lemma \ref{twodcris}, the $G_U$-representation $V_{U}(\LL)\otimes \mathbb Q_p$ is crystalline.

Conversely, assume that for any small connected affine open subset $\cU=\Spf(A)$ of $\cX$, the $G_{U}$-representation $V_{U}(\LL)\otimes_{\Zp}\Qp$ is crystalline. Together with  Lemmas \ref{2obcris} and \ref{twodcris}, we get $
\mathbb{D}_{\cris}(\LL)(\cU)\otimes_{A[1/p]}\cO\BBcr(U^{\rm univ})\stackrel{\sim}{\rightarrow} V_{U}(\LL)\otimes_{\Zp}\cO\BBcr(U^{\rm univ})$ and the similar isomorphism after replacing $U^{\rm univ}$ by any $V\in X_{\proet}/U^{\rm univ}$. Using Corollary \ref{easyfact}, we deduce $
(w^{-1}\mathbb D_{\cris}(\LL)\otimes_{\cO_{X}^{\rm ur}}\cO\BBcr)(V) \stackrel{\sim}{\rightarrow} (\LL\otimes_{\widehat{\Z}_p}\cO\BBcr)(V)$ for any $V\in X_{\proet}/U^{\rm univ}$, \emph{i.e.,} $(w^{-1}\mathbb D_{\cris}(\LL)\otimes_{\cO_{X}^{\rm ur}}\cO\BBcr)|_{U^{\rm univ}}\stackrel{\sim}{\to} (\LL\otimes_{\widehat{\Z}_p}\cO\BBcr)|_{U^{\rm univ}}$. When the small opens $\cU$'s run through a cover of $\cX$, the $U^{\rm univ}$'s form a cover of $X$ in $X_{\proet}$. Therefore, $w^{-1}\mathbb D_{\cris}(\LL)\otimes \cO\BBcr\stackrel{\sim}{\to} \LL\otimes \cO\BBcr$, as desired.
\end{proof}

\begin{lemma}\label{IsoCrysOnDcris} Let $\LL$ be a lisse $\widehat{\Z}_p$-sheaf on $X$ satisfying the two conditions of Proposition \ref{prop.CrysSheaf}. Then (the analytification of) $\mathbb D_{\cris}(\LL)$ has a natural structure of filtered convergent $F$-isocrystal on $\cX_0/\cO_k$.
\end{lemma}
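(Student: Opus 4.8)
The goal is to equip $\mathbb{D}_{\cris}(\LL) = w_*(\LL \otimes_{\widehat{\ZZ}_p} \cO\BBcr)$ with the structure of a filtered convergent $F$-isocrystal, namely a coherent $\cO_{\cX}[1/p]$-module with integrable convergent connection, a compatible system of Frobenii, and a Griffiths-transverse filtration. The filtration is already in hand via $\Fil^i\mathbb{D}_{\cris}(\LL) = w_*(\LL\otimes\Fil^i\cO\BBcr)$; by the coherence hypothesis (condition (1) of Proposition~\ref{prop.CrysSheaf}) and Lemma~\ref{twodcris}, these are coherent $\cO_{\cX}[1/p]$-modules, locally projective, hence locally free direct summands after restricting to small connected affine opens. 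So the plan is: first construct the connection, then the Frobenii, then check compatibilities (integrability, convergence, Griffiths transversality), and finally that everything glues.

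The connection comes from the connection $\nabla$ on $\cO\BBcr$: the identity $\cO\BBcr^{\nabla = 0} = \BBcr$ (equivalently, the Poincar\'e lemma, Corollary~\ref{poincare}) together with the fact that $\LL$ is a sheaf of $\widehat{\ZZ}_p$-modules (so $\nabla$ acts only on the $\cO\BBcr$-factor) gives a connection on $\LL\otimes_{\widehat{\ZZ}_p}\cO\BBcr$; pushing forward along $w$ and using that $w_*$ applied to the de Rham complex computes the connection on $w_*$, we obtain $\nabla\colon \mathbb{D}_{\cris}(\LL)\to\mathbb{D}_{\cris}(\LL)\otimes_{\cO_{\cX}[1/p]}\Omega^1_{\cX/\cO_k}[1/p]$. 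Here I would use Corollary~\ref{higherox} ($w_*\cO\BBcr \simeq \cO_{\cX_{\et}}[1/p]$) to identify the target, and that by condition (2) the adjunction $w^{-1}\mathbb{D}_{\cris}(\LL)\otimes\cO\BBcr\simeq\LL\otimes\cO\BBcr$ is an isomorphism — this is what lets one transport $\nabla$ back and forth and see that $\nabla$ on $\mathbb{D}_{\cris}(\LL)$ recovers $\nabla$ on $\LL\otimes\cO\BBcr$ after base change. Integrability follows because the connection on $\cO\BBcr$ is integrable (Corollary~\ref{poincare}). For Frobenius: for each open $\cU\subset\cX$ with a Frobenius lift $\sigma$, $\cO\BBcr|_{\cU_k}$ carries $\varphi$ (\S\ref{localfrob}), so $\LL\otimes\cO\BBcr$ carries $\mathrm{id}\otimes\varphi$; pushing forward gives a $\sigma$-semilinear endomorphism of $\mathbb{D}_{\cris}(\LL)|_{\cU}$, and one checks it is an isomorphism $\sigma^*\mathbb{D}_{\cris}(\LL)\to\mathbb{D}_{\cris}(\LL)$ by base-changing along the faithfully flat (pro-\'etale) cover and using $D_{\cris}$ of a crystalline representation (Corollary~\ref{CrysRepAff}, \cite[Proposition 8.3.1]{Bri}). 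Horizontality of $\varphi$ with respect to $\nabla$ follows from Lemma~\ref{FrobHorizontal} (Frobenius on $\cO\BBcr$ is horizontal), and independence of the choice of $\sigma$, i.e. that these $\varphi_{\cU}$ form a \emph{compatible system} in the sense defined before Lemma~\ref{fsigma12}, follows from the comparison formula in Lemma~\ref{sigma12} matched against Lemma~\ref{fsigma12}. Griffiths transversality $\nabla(\Fil^i)\subset\Fil^{i-1}\otimes\Omega^1$ is inherited from the corresponding property of $\cO\BBcr$ in Corollary~\ref{poincare}, again via the local identification of Lemma~\ref{twodcris}.

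Convergence of the connection is the one genuinely non-formal point, and I expect it to be the main obstacle. The cleanest route is to reduce to the known case: by condition (2) and Lemma~\ref{twodcris}, locally on a small affine $\cU = \Spf(R^+)$ the module $\mathbb{D}_{\cris}(\LL)(\cU) = D_{\cris}(V_U(\LL))$ is the classical crystalline $D_{\cris}$ of a crystalline $G_U$-representation, and for such modules Brinon (\cite[Chapitre 8]{Bri}, e.g. 8.3ff., cf. also the discussion around \cite[2.4.1]{Ber} relating $F$-structures to convergence) shows the resulting connection is convergent; alternatively, once one has the full package of integrable connection plus a horizontal Frobenius isomorphism $\sigma^*\cE\simto\cE$, convergence is automatic by \cite[2.4.1]{Ber}, which is exactly how $F$-isocrystals acquire convergent connections in Remark~\ref{rk.CrystalVSIsoc}. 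So after establishing the Frobenius structure, convergence is free. Finally, all these local constructions are canonical (independent of the chosen \'etale coordinates and Frobenius lift, by Lemmas~\ref{sigma12} and~\ref{fsigma12}), so they glue to give a global object of $\mathbf{FMod}_{\cO_X}^{\sigma,\nabla} = FF\textrm{-}\mathrm{Isoc}^{\dagger}(\cX_0/\cO_k)$, which is the assertion.
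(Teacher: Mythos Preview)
Your outline matches the paper's proof closely: connection via $w_*(\mathrm{id}\otimes\nabla)$ and the projection formula, Griffiths transversality from Corollary~\ref{poincare}, local Frobenii from \S\ref{localfrob}, horizontality from Lemma~\ref{FrobHorizontal}, and gluing via Lemmas~\ref{sigma12} and~\ref{fsigma12} are all exactly what the paper does. The one substantive difference is the treatment of convergence. The paper does \emph{not} appeal directly to a ``Frobenius implies convergence'' principle at the rational level; instead it constructs an explicit integral lattice $D^+:=(V_X(\LL)\otimes_{\Zp}t^{-n}\cO\AAcr(S,S^+))^{G_X}$ inside $D=D_{\cris}(V_X(\LL))$, proves (via a nontrivial argument borrowed from \cite[Proposition~3.6]{AI}) that $D^+$ is of finite type over $R^+$, verifies quasi-nilpotence of $\nabla^+$ on $D^+$, and thereby obtains an honest $F$-crystal $(D^+,\nabla^+,p^n\varphi)$; convergence of $\nabla$ on $D=D^+[1/p]$ then follows from \cite[2.4.1]{Ber} applied to this $F$-crystal, and the $F$-isocrystal is identified as $\mathcal D^{+,\mathrm{an}}(n)$. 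Your shortcut is valid in principle (Dwork's trick works once $\sigma^*\cE\to\cE$ is known to be an isomorphism), but two of your citations are imprecise: \cite[2.4.1]{Ber} as invoked in Remark~\ref{rk.CrystalVSIsoc} is stated for $F$-crystals rather than arbitrary coherent modules with Frobenius, and \cite[Proposition~8.3.1]{Bri} is a projectivity statement, not the bijectivity of Frobenius on $D_{\cris}$ that you need. The paper's integral-lattice route makes both of these points concrete at the cost of the extra finiteness argument. One further small point: that each $\Fil^i\mathbb{D}_{\cris}(\LL)$ is locally a direct summand does not follow from Lemma~\ref{twodcris} alone (which gives projectivity of $\mathbb{D}_{\cris}(\LL)$ itself); the paper invokes \cite[8.3.2]{Bri} for projectivity of the graded quotients $\mathrm{gr}^i D$.
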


\begin{proof} First of all, the $\Fil^i\mathbb D_{\cris}(\LL)$'s ($i\in \mathbb Z$) endow a separated exhaustive decreasing filtration on $\mathbb D_{\cris}(\LL)$ by Corollary \ref{GradedOfBcris}, and the connection on $\mathbb D_{\cris}(\LL)=w_{\ast}(\LL\otimes \cO\BBcr)$ can be given by the composite of
\begin{eqnarray*}
w_{\ast}(\LL\otimes \cO\BBcr)&\stackrel{w_{\ast}(\mathrm{id}\otimes \nabla)}{\longrightarrow} & w_{\ast}(\LL\otimes \cO\BBcr\otimes_{\cO_{X}^{\ur}}\Omega_{X/k}^{1,\ur}) \\ & \stackrel{\sim}{\longrightarrow} &  w_{\ast}(\LL\otimes \cO\BBcr)\otimes_{\cO_{\cX}[1/p]}\Omega^1_{\cX/\cO_k}[1/p]
\end{eqnarray*}
where the last isomorphism is the projection formula. That the connection satisfies the Griffiths transversality with respect to the filtration $\Fil^{\bullet}\mathbb D_{\cris}$  follows from the analogous assertion for $\cO\BBcr$ (Proposition \ref{poincare}).

Now consider the special case where $\cX=\Spf(A)$ is affine connected admitting an \'etale map $\cX\to \Spf(\cO_k\{T_1^{\pm 1}, \ldots, T_d^{\pm 1}\})$, such that $\cX$ is equipped with a lifting of Frobenius $\sigma$.  As in the paragraph before Lemma \ref{twodcris}, let $X^{\rm univ}$ be the univeral profinite \'etale cover of $X$ (which is an affinoid perfectoid). Write $\widehat{X^{\rm univ}}=\Spa(R,R^+)$ and $G_X$ the fundamental group of $X$. As $\cX$ is affine, the category $\mathrm{Coh}(\cO_{\cX}[1/p])$ is equivalent to the category of finite type $A[1/p]$-modules. Under this equivalence, $\mathbb D_{\cris}(\LL)$ corresponds to $D_{\cris}(V_{X}(\LL)):=(V_X(\LL)\otimes \cO\BBcr(R,R^+))^{G_X}$, denoted by $D$ for simplicity. So $D$ is a projective $A[1/p]$-module of finite type (Lemma \ref{twodcris}) equipped with a connection $\nabla \colon D\to D\otimes \Omega^{1}_{A[1/p]/k}$. Under the same equivalence, $\Fil^i\mathbb D_{\cris}(\LL)$ corresponds to $
\Fil^i D:=(V_{X}(\LL)\otimes \Fil^i \cO\BBcr(R,R^+))^{G_X}$, by Lemma \ref{twodcris} again.
By the same proof as in  \cite[8.3.2]{Bri}, the graded quotient $\mathrm{gr}^i(D)$ is a projective module. In particular, $\Fil^i D\subset D$ is a direct summand. Therefore, each $\Fil^i\mathbb D_{\cris}(\LL)$ is a direct summand of $\mathbb D_{\cris}(\LL)$. Furthermore, since $\cX$ admits a lifting of Frobenius $\sigma$, we get from \S~\ref{localfrob} a $\sigma$-semilinear endomorphism $\varphi$ on $\cO\BBcr(X^{\rm univ})\simeq \cO\BBcr(R,R^+)$, whence a $\sigma$-semilinear endomorphism on $D$, still denoted by $\varphi$. Via Lemma  \ref{FrobHorizontal}, one checks that the Frobenius $\varphi$ on $D$ is horizontal with respect to its connection. Thus $\mathbb D_{\cris}(\LL)$ is endowed with a horizontal $\sigma$-semilinear morphism $\mathbb D_{\cris}(\LL)\to \mathbb D_{\cris}(\LL)$, always denoted by $\varphi$ in the following.

To finish the proof in the special case, one needs to show that $(\mathbb D_{\cris}(\LL),\nabla,\varphi)$ gives an $F$-isocrystal on $\cX_0/\cO_k$.  As $D$ is of finite type over $A[1/p]$, there is some $n\in \mathbb N$ such that $D=D^+[1/p]$ with $D^+:=(V_{X}(\LL)\otimes_{\Zp} t^{-n}\cO\AAcr(R,R^+))^{G_X}$.
The connection on $t^{-n}\cO\AAcr(R,R^+)$ induces a connection $\nabla^+ \colon D^+\to D^{+}\otimes_{A}\Omega^{1}_{A/\cO_k}$ on $D^+$, compatible with that of $D_{\cris}(V_{X}(\LL))$. Moreover, let $N_i$ be  the endomorphism of $D^+$ so that $\nabla^+=\sum_{i=1}^{d} N_i\otimes dT_i$. Then for any $a\in D^+$, $\underline N^{\underline m}(a)\in p\cdot D^+$ for all but finitely many $\underline m\in \mathbb N^d$ (as this holds for the connection on $t^{-n}\cO\AAcr$, seen in the proof of Lemma \ref{sigma12}). Similarly, the Frobenius on $\cO\BBcr(R,R^+)$ induces a map (note that the Frobenius on $\cO\BBcr(R,R^+)$ sends $t$ to $p\cdot t$)
\[
\varphi\colon D^+\longrightarrow (V_{X}(\LL)\otimes p^{-n} t^{-n}\cO\AAcr(R,R^+))^{G_X}.
\]
Thus $\psi:=p^n\varphi$ gives a well-defined $\sigma$-semilinear morphism on $D^+$. One checks that $\psi$ is horizontal with respect to the connection $\nabla^+$ on $D^+$ and it induces an $R^+$-linear isomorphism $\sigma^{\ast} D^+\stackrel{\sim}{\to} D^+$.
As a result, the triple $(D^+,\nabla, \psi)$ will define an $F$-crystal on $\cU_0/\cO_k$, once we know $D^+$ is of finite type over $A$. The required finiteness of $D^+$ is explained in \cite[Proposition 3.6]{AI}, and for the sake of completeness we recall briefly their proof here. As $D$ is projective of finite type (Lemma \ref{twodcris}), it is a direct summand of a finite free $A[1/p]$-module $T$. Let $T^+\subset T$ be a finite free $A$-submodule of $T$ such that $T^{+}[1/p]=T$. Then we have the inclusion $D\otimes_{A[1/p]}\cO\BBcr(R,R^+)\hookrightarrow T^+\otimes_{A} \cO\BBcr(R,R^+).$
As $V_{X}(\LL)$ is of finite type over $\Zp$ and $\cO\BBcr(R,R^+)=\cO\AAcr(R,R^+)[1/t]$, there exists $m\in \mathbb N$ such that the $\cO\AAcr(R,R^+)$-submodule $V_{X}(\LL)\otimes t^{-n}\cO\AAcr(R,R^+)$ of $V_X(\LL)\otimes \cO\BBcr(R,R^+)\simeq D\otimes \cO\BBcr(R,R^+)$ is contained in $T^{+}\otimes_{A}t^{-m}\cO\AAcr(R,R^+)$. By taking $G_{U}$-invariants and using the fact that $A$ is noetherian, we are reduced to showing that $A':=(t^{-m}\cO\AAcr(R,R^+))^{G_{X}}$ is of finite type over $A$. From the construction,  $A'$ is $p$-adically separated and $A\subset A' \subset A[1/p]=(\cO\BBcr(R,R^+))^{G_X}$.  As $A$ is normal, we deduce $p^NA'\subset A$ for some $N\in \mathbb N$. Thus $p^NA'$ and  hence $A'$ are of finite type over $A$. As a result, $(D^+,\nabla, \psi)$ defines an $F$-crystal $\mathcal D^+$ on $\cU_0/\cO_k$. As $D=D^+[1/p]$ and $\nabla=\nabla^+[1/p]$, the connection $\nabla$ on $\mathbb D_{\cris}(\LL)$ is convergent; this is standard and we refer to \cite[2.4.1]{Ber} for detail. Consequently, the triple $(\mathbb D_{\cris}(\LL), \nabla, \varphi )$ is an $F$-isocrystal on $\cX_0/\cO_k$, which is isomorphic to  $\mathcal D^{+,{\rm an}}(n)$. This finishes the proof in the special case.

In the general case, consider a covering $\cX=\bigcup_i \cU_i$ of $\cX$ by connected small affine open subsets such that each $\cU_i$ admits a lifting of Frobenius $\sigma_i$ and an \'etale morphism to some torus over $\cO_k$. By the special case, each $\Fil^i\mathbb D_{\cris}(\LL)\subset \mathbb D_{\cris}(\LL)$ is locally a direct summand, and the connection on $\mathbb D_{\cris}(\LL)$ is convergent (\cite[2.2.8]{Ber}). Furthermore, each $\mathbb D_{\cris}(\LL)|_{\cU_i}$ is equipped with a Frobenius $\varphi_i$, and over $\cU_i\cap \cU_j$, the two Frobenii $\varphi_i,\varphi_j$ on $\mathbb D_{\cris}(\LL)|_{\cU_{i}\bigcap \cU_{j}}$ are related by the formula in Lemma \ref{fsigma12} as it is the case for $\varphi_i,\varphi_j$ on $\cO\BBcr|_{U_i\bigcap U_j}$ (Lemma \ref{sigma12}). So these local Frobenii glue together to give a compatible system of Frobenii $\varphi$ on $\mathbb D_{\cris}(\LL)$ and the analytification of the quadruple $(\mathbb D_{\cris}(\LL),\Fil^{\bullet}\mathbb D_{\cris}(\LL), \nabla, \varphi)$ is a filtered $F$-isocrystal on $\cX_0/\cO_k$, as wanted.
\end{proof}

\begin{proof}[Proof of Proposition~\ref{prop.CrysSheaf}]  If a lisse $\widehat{\ZZ}_p$-sheaf $\LL$ on $X$ is associated to a filtered $F$-isocrystal $\cE$ on $X$, then we just have to show $\cE\simeq \DD_{\cris}(\LL)$. By assumption, we have $\mathbb L\otimes_{\widehat{\ZZ}_p}\cO\BBcr\simeq w^{-1}\cE\otimes_{\cO_X^{\ur}}\cO\BBcr$. Then
\[
w_*(\mathbb L\otimes_{\widehat{\ZZ}_p}\cO\BBcr)\simeq w_*(w^{-1}\cE\otimes_{\cO_{X}^{\ur}}\cO\BBcr)\simeq \cE\otimes_{\cO_{\cX_{\et}}[1/p]}w_*\cO\BBcr\simeq\cE
\] where the second isomorphism has used Remark \ref{abuseem}, and the last isomorphism is by the isomorphism  $w_*\cO\BBcr\simeq \cO_{\cX_{\et}}[1/p]$ from Corollary \ref{higherox}.

Conversely, let $\LL$ be a lisse $\widehat{\ZZ}_p$-sheaf verifying the two conditions of our proposition. By Lemma \ref{IsoCrysOnDcris}, $\mathbb D_{\cris}(\LL)$ is naturally a filtered $F$-isocrystal. To finish the proof, we need to show that the isomorphism in (2) is compatible with the extra structures. Only the compatibility with filtrations needs  verification. This is a local question, hence we shall assume $\cX=\Spf(A)$ is a small connected affine formal scheme. As $\Fil^i\mathbb D_{\cris}(\LL)$ is coherent over $\cO_{\cX}[1/p]$ and is a direct summand of $\mathbb D_{\cris}(\LL)$, the same proof as that of Corollary \ref{easyfact} gives a natural isomorphism
\[
\Fil^i\mathbb D_{\cris}(\LL)(\cX)\otimes_{A[1/p]} \Fil^j\cO\BBcr(V)\stackrel{\sim}{\longrightarrow} (w^{-1}\Fil^i \mathbb D_{\cris}(\LL)\otimes_{\cO_X^{\ur}} \Fil^j\cO\BBcr)(V)
\]
for any $V\in X_{\proet}$. Consequently, the isomorphism in Corollary \ref{easyfact} is strictly compatible with filtrations. Thus, we reduce to showing that, for an affinoid perfectoid $V\in X_{\proet}/X^{\rm univ} $, the isomorphism $D_{\cris}(V_X(\LL))\otimes_{A[1/p]} \cO\BBcr(V)\stackrel{\sim}{\to} V_{X}(\LL)\otimes \cO\BBcr(V)$ is strictly compatible with the filtrations in the sense that its inverse respects also the filtrations on both sides, or equivalently, the induced morphisms between the gradeds quotients are isomorphisms:
\begin{equation}\label{eq.gradediso}
\bigoplus_{i+j=n}\left(\mathrm{gr}^iD_{\cris}(V_X(\LL))\otimes_{A[1/p]} \mathrm{gr}^j\cO\BBcr(V)\right)\longrightarrow V_X(\LL)\otimes \mathrm{gr}^n\cO\BBcr(V).
\end{equation}
When $V=X^{\rm univ}$, this follows from \cite[8.4.3]{Bri}. For the general case, write $\widehat{X^{\rm univ}}=\Spa(R,R^+)$ and $\widehat{V}=\Spa(R_1,R_1^+)$. By \cite[Corollary 6.15]{Sch} and Corollary \ref{GradedOfBcris},
\[
\mathrm{gr}^j\cO\BBcr\simeq \xi^j\hat{\cO}_{X}[U_1/\xi,\ldots, U_d/\xi]\subset \mathrm{gr}^{\bullet }\cO\BBcr\simeq \hat{\cO}_X[\xi^{\pm 1},U_1,\ldots, U_d],
\]
where $\xi$ and all $U_i$ have degree $1$. So $\mathrm{gr}^j\cO\BBcr(X^{\rm univ})\simeq \xi^j R[U_1/\xi,\ldots, U_d/\xi]$ and  $\mathrm{gr}^j\cO\BBcr(V)\simeq \xi^j R_1[U_1/\xi,\ldots, U_d/\xi]$. As a result, the natural morphism $\mathrm{gr}^j\cO\BBcr(X^{\rm univ})\otimes_RR_1\stackrel{\sim}{\to}\mathrm{gr}^j\cO\BBcr(V) $ is an isomorphism. The required isomorphism \eqref{eq.gradediso} for general $V$ then follows from the special case for $X^{\rm univ}$.
\end{proof}

Let $\mathrm{Lis}^{\cris}_{\widehat{\ZZ}_p}(X)$ denote the category of lisse crystalline $\widehat{\ZZ}_p$-sheaves on $X$, and $\mathrm{Lis}^{\cris}_{\widehat{\QQ}_p}(X)$ the corresponding isogeny category. The functor
\[
\mathbb D_{\cris}\colon \mathrm{Lis}^{\cris}_{\widehat{\QQ}_p}(X)\longrightarrow FF\textrm{-Iso}(\cX_0/\cO_k), \quad  \LL\mapsto \mathbb D_{\cris}(\LL)
\]
allows us to relate $\mathrm{Lis}^{\cris}_{\widehat{\QQ}_p}(X)$ to the category $FF\textrm{-}\mathrm{Iso}(\cX_0/\cO_k)$  of filtered convergent $F$-isocrystals on $\cX_0/\cO_k$, thanks to Proposition \ref{prop.CrysSheaf}.
A filtered $F$-isocrystal $\cE$ on $\cX_0/\cO_k$ is called \emph{admissible} if it lies in the essential image of the functor above. The full subcategory of admissible filtered $F$-isocrystals on $\cX_0/\cO_k$ will be denoted by $FF\textrm{-Iso}(\cX_0/\cO_k)^{\textrm{adm}}$.

\begin{thm}
The functor $\DD_{\cris}$ above induces an equivalence of categories
\[
\mathbb D_{\cris}\colon \mathrm{Lis}^{\cris}_{\widehat{\QQ}_p}(X)\stackrel{\sim}{\longrightarrow} FF\textrm{-}\mathrm{Iso}(\cX_0/\cO_k)^{\mathrm{adm}}.
\]
A quasi-inverse of $\mathbb D_{\cris}$ is given by
\[
\VV_{\cris}: \cE\mapsto \Fil^0(w^{-1}\cE\otimes_{\cO_{X}^{\ur}}\cO\BBcr)^{\nabla=0, \varphi=1}
\]
where $\varphi$ denotes the compatible system of Frobenii on $\cE$ as before.
\end{thm}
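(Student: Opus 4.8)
Here is a proof plan for the concluding theorem.

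The plan is to establish two statements: that $\mathbb D_{\cris}$ is fully faithful, and that (the restriction of) $\VV_{\cris}$ is a two-sided quasi-inverse on the essential image of $\mathbb D_{\cris}$. Essential surjectivity onto $FF\textrm{-}\mathrm{Iso}^{\dagger}(\cX_0/\cO_k)^{\mathrm{adm}}$ is then automatic, since by definition this subcategory \emph{is} the essential image of $\mathbb D_{\cris}$. Both points will be deduced from one input, a \emph{reconstruction statement}: for every lisse crystalline $\widehat{\ZZ}_p$-sheaf $\LL$ on $X$ there is a canonical isomorphism of pro-\'etale sheaves
\[
\Fil^{0}\bigl(\LL\otimes_{\widehat{\ZZ}_p}\cO\BBcr\bigr)^{\nabla=0,\ \varphi=1}\ \xrightarrow{\ \sim\ }\ \LL[1/p].
\]

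To prove the reconstruction statement I would argue in three steps. Since $\LL$ carries the trivial connection, the connection on $\LL\otimes\cO\BBcr$ is $\mathrm{id}\otimes\nabla$, so the Crystalline Poincar\'e lemma (Corollary \ref{poincare}), which identifies $\cO\BBcr^{\nabla=0}$ with $\BBcr$, gives $(\LL\otimes\cO\BBcr)^{\nabla=0}=\LL\otimes\BBcr$. Next, Griffiths transversality together with the identifications of the filtrations in Corollary \ref{GradedOfBcris} (giving $\Fil^{0}\cO\BBcr\cap\BBcr=\Fil^{0}\BBcr$) yields $\Fil^{0}(\LL\otimes\cO\BBcr)^{\nabla=0}=\LL\otimes\Fil^{0}\BBcr$. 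Finally one passes to $\varphi=1$, which amounts to the sheaf-theoretic form of Fontaine's fundamental exact sequence, $(\Fil^{0}\BBcr)^{\varphi=1}=\widehat{\QQ}_p$ on $X_{\proet}$. As affinoid perfectoids above $X_{\bk}$ form a basis of $X_{\proet}$, this last point reduces — after trivializing $\LL$ pro-\'etale-locally and using the identifications of Lemma \ref{vanish} — to the classical fact $\bigl(\Fil^{0}\BBcr(R,R^{+})\bigr)^{\varphi=1}=\QQ_p$ for a perfectoid affinoid $(\widehat{\bk},\cO_{\widehat{\bk}})$-algebra $(R,R^{+})$, i.e. to Fontaine's theorem (\cite{Fon94}) in the relative form of \cite{Bri}. \textbf{This is the main obstacle of the whole proof}: the rest is formal, but here one must verify that the fundamental exact sequence holds at the level of pro-\'etale sheaves and that the three left-exact operations $\Fil^{0}$, $(-)^{\nabla=0}$, $(-)^{\varphi=1}$ commute with the functors in play.

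Granting the reconstruction statement, $\VV_{\cris}\circ\mathbb D_{\cris}\simeq\mathrm{id}$ is immediate: for $\LL$ crystalline, $\mathbb D_{\cris}(\LL)$ is admissible, and by Definition \ref{associated} and Proposition \ref{prop.CrysSheaf} the adjunction isomorphism $w^{-1}\mathbb D_{\cris}(\LL)\otimes_{\cO_X^{\ur}}\cO\BBcr\simeq\LL\otimes_{\widehat{\ZZ}_p}\cO\BBcr$ is compatible with $\nabla$, $\Fil^{\bullet}$ and $\varphi$, so $\VV_{\cris}(\mathbb D_{\cris}(\LL))=\Fil^{0}(\LL\otimes\cO\BBcr)^{\nabla=0,\varphi=1}\simeq\LL[1/p]$, functorially in $\LL$. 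For full faithfulness I would argue directly. Injectivity of $\Hom(\LL_1,\LL_2)\to\Hom(\mathbb D_{\cris}(\LL_1),\mathbb D_{\cris}(\LL_2))$: if $\mathbb D_{\cris}(f)=0$ then $f\otimes\mathrm{id}_{\cO\BBcr}=0$ after transporting along the adjunction isomorphisms, hence $f=0$ because $\LL_2[1/p]\hookrightarrow\LL_2\otimes\cO\BBcr$ is injective. Surjectivity: a morphism $g\colon\mathbb D_{\cris}(\LL_1)\to\mathbb D_{\cris}(\LL_2)$ in $FF\textrm{-}\mathrm{Iso}^{\dagger}$ gives, after $-\otimes\cO\BBcr$ and the adjunction isomorphisms, an $\cO\BBcr$-linear $G\colon\LL_1\otimes\cO\BBcr\to\LL_2\otimes\cO\BBcr$ respecting $\nabla$, $\Fil^{\bullet}$ and $\varphi$; applying the reconstruction statement to source and target, $G$ restricts to a morphism $f\colon\LL_1[1/p]\to\LL_2[1/p]$, and since $\LL_1[1/p]$ generates $\LL_1\otimes\cO\BBcr$ over $\cO\BBcr$ (trivialize $\LL_1$ pro-\'etale-locally) one gets $f\otimes\mathrm{id}_{\cO\BBcr}=G$; applying $w_{\ast}$, the projection formula, and $w_{\ast}\cO\BBcr=\cO_{\cX_{\et}}[1/p]$ (Corollary \ref{higherox}), exactly as in the proof of Proposition \ref{prop.CrysSheaf}, gives $\mathbb D_{\cris}(f)=g$. (Equivalently, one may phrase this via internal $\mathcal{H}om$: $\mathbb D_{\cris}$ commutes with duals and tensor products, and $\Hom_{FF\textrm{-}\mathrm{Iso}^{\dagger}}(\mathbf 1,\mathbb D_{\cris}(\mathcal M))=H^{0}(X_{\proet},\mathcal M[1/p])$ for $\mathcal M$ crystalline, again by the reconstruction statement.)

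Finally, being fully faithful and essentially surjective onto $FF\textrm{-}\mathrm{Iso}^{\dagger}(\cX_0/\cO_k)^{\mathrm{adm}}$, the functor $\mathbb D_{\cris}$ is an equivalence onto that subcategory. A fully faithful functor $F$ equipped with a functor $G$ and a natural isomorphism $GF\simeq\mathrm{id}$ automatically satisfies $FG\simeq\mathrm{id}$ on the essential image of $F$: every morphism there is obtained from a unique morphism in the source by conjugating with chosen isomorphisms, along which the unit transports. Applying this to $F=\mathbb D_{\cris}$ and $G=\VV_{\cris}$ shows that the restriction of $\VV_{\cris}$ to $FF\textrm{-}\mathrm{Iso}^{\dagger}(\cX_0/\cO_k)^{\mathrm{adm}}$ is a quasi-inverse of $\mathbb D_{\cris}$; in particular $\VV_{\cris}$ carries admissible filtered $F$-isocrystals back to lisse crystalline $\widehat{\QQ}_p$-sheaves.
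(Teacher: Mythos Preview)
Your proposal is correct and follows essentially the same route as the paper: both reduce everything to the reconstruction statement $\Fil^{0}(\LL\otimes\cO\BBcr)^{\nabla=0,\varphi=1}\simeq\LL[1/p]$, obtained by combining the Poincar\'e lemma (Corollary~\ref{poincare}) with the sheaf version of the fundamental exact sequence (Lemma~\ref{vanish} and \cite[Corollaire 6.2.19]{Bri}). The paper is terser on the categorical side --- it dispatches full faithfulness in one sentence once $\VV_{\cris}\circ\mathbb D_{\cris}\simeq\mathrm{id}$ is known --- but adds one point you leave implicit: before even starting, it invokes Lemma~\ref{fsigma12} to check that the local Frobenii on $(w^{-1}\cE\otimes\cO\BBcr)^{\nabla=0}$ glue, so that the condition $\varphi=1$ in the definition of $\VV_{\cris}(\cE)$ is globally well-posed.
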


\begin{proof} Observe first that, for $\cE$ a filtered convergent $F$-isocrystal, the local Frobenii on $\cE^{\nabla=0}$ glue to give a unique $\sigma$-semilinear morphism on $\mathcal E^{\nabla=0}$ (Lemma \ref{fsigma12}). In particular, the abelian sheaf $\mathbb V_{\cris}(\cE)$ is well-defined. Assume moreover $\cE$ is admissible, and let $\LL$ be a lisse $\widehat{\Z}_p$-sheaf such that $\cE\simeq \mathbb D_{\cris}(\LL)$. So $\LL$ and $\cE$ are associated by Proposition \ref{prop.CrysSheaf}. Hence $
\LL\otimes_{\widehat{\ZZ}_p}\cO\BBcr\simeq w^{-1}\cE\otimes_{\cO_X^{\ur}}\cO\BBcr$, and we find
\begin{eqnarray*}
\LL\otimes_{\widehat{\ZZ}_p}\widehat{\QQ}_p & \stackrel{\sim}{\longrightarrow} &  \LL\otimes_{\widehat{\ZZ}_p} \Fil^0(\cO\BBcr)^{\nabla=0, \varphi=1} \\ & \stackrel{\sim}{\longrightarrow} & \Fil^0(\LL\otimes_{\widehat{\ZZ}_p}\cO\BBcr)^{\nabla=0,\varphi=1} \\ & \stackrel{\sim}{\lra} & \Fil^0(w^{-1}\cE\otimes_{\cO_{X}^{\ur}}\cO\BBcr)^{\nabla=0,\varphi=1}
\\ & =& \mathbb V_{\cris}(\cE),
\end{eqnarray*}
where the first isomorphism following from the the fundamental exact sequence  (by Lemma \ref{vanish} and \cite[Corollary 6.2.19]{Bri})
\[
0\lra \Qp\lra \Fil^0\BBcr\stackrel{1-\varphi}{\lra}\BBcr\lra 0.
\]
In particular, $\mathbb V_{\cris}(\cE)$ is the associated $\widehat{\mathbb Q}_p$-sheaf of a lisse $\widehat{\Z}_p$-sheaf. Thus $\mathbb V_{\cris}(\cE)\in \mathrm{Lis}^{\rm cris}_{\widehat{\mathbb Q}_p}(X)$ and the functor $\mathbb V_{\cris}$ is well-defined. Furthermore, as we can recover the the lisse $\widehat{\Z}_p$-sheaf up to isogeny, it follows that $\mathbb D_{\cris}$ is fully faithful, and a quasi-inverse on its essential image is given by $\mathbb V_{\cris}$.
\end{proof}

\begin{rk} Using \cite[Theorem 8.5.2]{Bri}, one can show that the equivalence above is an equivalence of tannakian category.
\end{rk}

Next we compare Definition \ref{associated} with the ``associatedness" defined in \cite{Fal}. Let $\cE$ be a filtered convergent $F$-isocrystal $\cE$ on $\cX_0/\cO_k$, and $\cM$ an $F$-crystal on $\cX_0/\cO_k$ such that $\cM^{\an}=\cE(-n)$ for some $n\in \N$ (see Remark \ref{rk.CrystalVSIsoc} for the notations). Let $\cU=\Spf(A)$ be a small connected affine open subset of $\cX$, equipped with a lifting of Frobenius $\sigma$. Write $U=\Spa(A[1/p],A)$ the generic fiber of $\cU$. As before, let $\overline A$ be the union of all finite normal $A$-algebras (contained in some fixed algebraic closure of $\mathrm{Frac}(A)$) which are \'etale over $A[1/p]$. Let $G_U:=\mathrm{Gal}(\overline A[1/p]/A[1/p])$ and $(R,R^+)$ the $p$-adic completion of $(\overline A[1/p], \overline A)$. Then $(R,R^+)$ is an perfectoid affinoid algebra over $\mathbb C_p=\hat{\bk}$. So we can consider the period ring $\mathbb A_{\cris}(R,R^+)$. Moreover the composite of the following two natural morphisms
\begin{equation}\label{PDThickening}
\AAcr \left(R,R^+\right) \stackrel{\theta}{\longrightarrow} R^+ \stackrel{\textrm{can}}{\longrightarrow} R^+/pR^+,
\end{equation}
defines a $p$-adic PD-thickening of $\Spec(R^+/pR^+)$. Evaluate our $F$-crystal $\cM$ at it and write $\cM(\mathbb A_{\cris}(R,R^+))$ for the resulting $\mathbb A_{\cris}(R,R^+)$-module. As an element of $G_U$ defines a morphism of the PD-thickening \eqref{PDThickening} in the big crystalline site of $\cX_0/\cO_k$ and $\cM$ is a crystal, $\cM(\mathbb A(R,R^+))$ is endowed naturally with an action of $G_U$. Similarly, the Frobenius on the crystal $\cM$ gives a Frobenius $\psi$ on $\cM(\AAcr(R,R^+))$.
Set $\cE(\mathbb B_{\cris}(R,R^+)):=\cM(\mathbb A_{\cris}(R,R^+))[1/t]$, which is a $\mathbb B_{\cris}(R,R^+)$-module of finite type endowed with a Frobenius $\varphi=\psi/p^n$ and an action of $G_U$.

On the other hand, as $\cU$ is small, there exists a morphism $\alpha\colon A\to \mathbb A_{\cris}(R,R^+)$ of $\cO_k$-algebras, whose composite with $\theta:\AAcr(R,R^+)\to R^+$ is the inclusion $A\subset R^+$. For example, consider an \'etale morphism $\cU\to \Spf(\cO_k\{T_1^{\pm 1}, \ldots, T_d^{\pm 1}\})$. Let $(T_i^{1/p^{n}})$ be a compatible system of $p^n$-th roots of $T_i$ inside $\overline A\subset R^+$, and $T_i^{\flat}$ the corresponding element of $R^{\flat +}:=\varprojlim_{x\mapsto x^p} R^+/pR^+$. Then one can take $\alpha$ as the unique morphism of $\cO_k$-algebras $A\to \mathbb A_{\cris}(R,R^+)$ sending $T_i$ to $[T_i^{\flat}]$, such that its composite with the projection $\AAcr(R,R^+)\to R^+/pR^+$ is just the natural map $A\to R^+/pR^+$ (such a morphism exists as $A$ is \'etale over $\cO_k\{T_1^{\pm 1},\ldots, T_d^{\pm 1}\}$ and because of \eqref{PDThickening}; see the proof of Lemma \ref{algebra} for a similar situation). Now we fix  such a morphism $\alpha$. So we obtain a morphism of PD-thickenings from  $\cU_0\hookrightarrow \cU$ to the one defined by \eqref{PDThickening}. Consequently we get a natural isomorphism
$
\mathcal M(\mathbb A_{\cris}(R,R^+))\simeq \mathcal M(\cU)\otimes_{A,\alpha}\mathbb A_{\cris}(R,R^+)$, whence
\[
\mathcal E(\mathbb B_{\cris}(R,R^+))\simeq \mathcal E(\cU)\otimes_{A[1/p],\alpha}\mathbb B_{\cris}(R,R^+),
\]
here $\cE(\cU):=\cM(\cU)[1/p]$.
Using this isomorphism, we define the filtration on $\cE(\BBcr(R,R^+))$ as the tensor product of the filtration on $\cE(\cU)$ and that on $\BBcr(R,R^+)$.

\begin{rk}\label{rk.compfiltrations} It is well-known that the filtration on $\cE(\BBcr(R,R^+))$ does not depend on the choice of $\alpha$. More precisely, let $\alpha'$ be a second morphism $A\to \AAcr(R,R^+)$ of $\cO_k$-algebras whose composite with $\AAcr(R,R^+)\to R^+$ is the inclusion $A\subset R^+$. Fix an \'etale morphism $\cU\to \Spf(\cO_k\{T_1^{\pm 1},\ldots, T_d^{\pm 1}\})$. Denote $\beta=(\alpha, \alpha'):A\otimes_{\cO_k}A\to \AAcr(R,R^+)$ and by the same notation the corresponding map on schemes, and write $p_1, p_2: \Spec A\times \Spec A\ra \Spec A$ the two projections. We have a canonical isomorphism $(p_2\circ \beta)^*\cE\simto (p_1\circ \beta)^*\cE$, as $\cE$ is  a crystal.  In terms of  the connection $\nabla$ on $\cE$, this gives (cf. \cite[2.2.4]{Ber}) the following $\BBcr(R,R^+)$-linear isomorphism
\[
\eta\colon \cE(\cU)\otimes_{A[1/p],\alpha}\mathbb B_{\cris}(R,R^+) \longrightarrow \cE(\cU)\otimes_{A[1/p],\alpha' }\BBcr(R,R^+)
\]
sending $e\otimes 1$ to $\sum_{\underline n\in \mathbb N^d} \underline{N}^{\underline n}(e)\otimes (\alpha(\underline T)-\alpha'(\underline T))^{[\underline n]}$, with $\underline N$ the endomorphism of $\cE$ such that $\nabla=\underline N\otimes d\underline T$. Here we use the multi-index to simplify the notations, and note that $\alpha(T_i)-\alpha'(T_i)\in \Fil^1 \AAcr(R,R^+)$ hence the divided power $(\alpha(T_i)-\alpha'(T_i))^{[n_i]}$ is well-defined. Moreover, the series converge since the connection on $\cM$ is quasi-nilpotent. Now as the filtration on $\cE$ satisfies Griffiths transversality, the isomorphism $\eta$ is compatible with the tensor product filtrations on both sides. Since the inverse $\eta^{-1}$ can be described by a similar formula (one just switches $\alpha$ and $\alpha'$), it is also compatible with filtrations on both sides. Hence the isomorphism $\eta$ is strictly compatible with the filtrations, and the filtration on $\cE(\BBcr(R,R^+))$ does not depend on the choice of $\alpha$.
\end{rk}

Let $\LL$ be a lisse $\widehat{\Z}_p$-sheaf on $X$, and write as before $V_U(\LL)$ the $\Zp$-representation of $G_U$ corresponding to the lisse sheaf $\LL|_U$. Following \cite{Fal}, we say  a filtered convergent $F$-isocrystal \emph{$\cE$ on $\cX_0/\cO_k$ is associated to $\LL$ in the sense of Faltings} if, for all small open subset $\cU\subset \cX$, there is a functorial filtered isomorphism:
\begin{equation}\label{eq.Faltings}
\cE\left(\BBcr\left(R,R^+\right)\right) \stackrel{\sim}{\longrightarrow}  V_{U}(\LL)\otimes_{\Qp}\BBcr\left(R,R^+\right),
\end{equation}
which is compatible with $G_U$-action and Frobenius.

\begin{prop}\label{prop.faltings} If $\cE$ is associated to $\LL$ in the sense of Faltings then $\LL$ is crystalline (not necessarily associated to $\cE$) and there is an isomorphism $\mathbb D_{\cris}(\LL)\simeq \cE$ compatible with filtration and Frobenius.
Conversely,  if  $\LL$ is crystalline and if there is an isomorphism $\mathbb D_{\cris}(\LL)\simeq \cE$ of $\cO_{X^{\an}}$-modules compatible with filtration and Frobenius, then $\LL$ and $\cE$ are associated  in the sense of Faltings.
\end{prop}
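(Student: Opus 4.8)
The plan is to compare the two period-sheaf-theoretic constructions — $\mathbb D_{\cris}(\LL)=w_\ast(\LL\otimes\cO\BBcr)$ on one side and the crystal evaluation $\cE(\BBcr(S,S^+))$ on the other — by pulling everything back to the affinoid perfectoid $U^{\mathrm{univ}}$ over a small connected affine $\cU=\Spf(R^+)\subset\cX$ and working with the $G_U$-representation $V_U(\LL)$. Over such a $\cU$, equipped with an \'etale map to a torus and a Frobenius lift $\sigma$, Lemma \ref{twodcris} already gives $\mathbb D_{\cris}(\LL)(\cU)\simeq D_{\cris}(V_U(\LL))=(V_U(\LL)\otimes_{\Zp}\cO\BBcr(S,S^+))^{G_U}$, compatibly with filtration. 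The key observation I would exploit is that, via the morphism $\alpha\colon R^+\to\AAcr(S,S^+)$ and Corollary \ref{BcrisIso} (realizing $\cO\BBcr|_{\widetilde X}\simeq\BBcr|_{\widetilde X}\{\langle u_1,\dots,u_d\rangle\}$ with $u_i\mapsto T_i\otimes 1-1\otimes[T_i^\flat]$), there is a natural $G_U$-equivariant, filtered, Frobenius-compatible isomorphism $\cO\BBcr(S,S^+)\simeq\BBcr(S,S^+)\{\langle u_1,\dots,u_d\rangle\}$ of $R$-algebras (sending $T_i\mapsto u_i+[T_i^\flat]$), and that the crystal structure of $\cE$ together with the chosen $\alpha$ identifies $\cE(\BBcr(S,S^+))$ with $\cE(\cU)\otimes_{R,\alpha}\BBcr(S,S^+)$; Remark \ref{rk.compfiltrations} ensures this is independent of $\alpha$.

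First I would prove the \emph{forward} direction. Assume \eqref{eq.Faltings} holds for all small $\cU$. Tensoring the isomorphism $\cE(\BBcr(S,S^+))\simeq V_U(\LL)\otimes_{\Qp}\BBcr(S,S^+)$ up along $\BBcr(S,S^+)\to\cO\BBcr(U^{\mathrm{univ}})$ — using $\cO\BBcr(U^{\mathrm{univ}})\simeq\cO\BBcr(S,S^+)$ from Lemma \ref{2obcris} and the crystal identification $\cM(\AAcr(S,S^+))\simeq\cM(\cU)\otimes_{R^+,\alpha}\AAcr(S,S^+)$ — yields a $G_U$-equivariant, filtered, Frobenius-compatible isomorphism $\cE(\cU)\otimes_R\cO\BBcr(U^{\mathrm{univ}})\simeq V_U(\LL)\otimes_{\Zp}\cO\BBcr(U^{\mathrm{univ}})$, and similarly after replacing $U^{\mathrm{univ}}$ by any $V\in X_{\proet}/U^{\mathrm{univ}}$. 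Taking $G_U$-invariants (and using Lemma \ref{twodcris}) gives $D_{\cris}(V_U(\LL))\simeq\cE(\cU)$, i.e. $V_U(\LL)\otimes\Qp$ is crystalline in Brinon's sense with $D_{\cris}(V_U(\LL))\simeq\cE(\cU)$ compatibly with filtration and Frobenius; by Corollary \ref{CrysRepAff} this means $\LL$ satisfies condition (2) of Proposition \ref{prop.CrysSheaf}, while condition (1) follows from coherence of $\cE(\cU)$ over $R$ and the gluing. Hence $\LL$ is crystalline and $\mathbb D_{\cris}(\LL)(\cU)\simeq\cE(\cU)$; one checks these local isomorphisms are compatible on overlaps (both sides transform under change of Frobenius lift by the formula of Lemma \ref{fsigma12} / Lemma \ref{sigma12}), so they glue to an isomorphism $\mathbb D_{\cris}(\LL)\simeq\cE$ of filtered $F$-isocrystals.

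For the \emph{converse}, suppose $\LL$ is crystalline and $\mathbb D_{\cris}(\LL)\simeq\cE$ compatibly with filtration and Frobenius. By Proposition \ref{prop.CrysSheaf}, $w^{-1}\mathbb D_{\cris}(\LL)\otimes_{\cO_X^{\ur}}\cO\BBcr\simeq\LL\otimes_{\widehat{\ZZ}_p}\cO\BBcr$; evaluating on $V\in X_{\proet}/U^{\mathrm{univ}}$ and using Corollary \ref{easyfact} gives $\mathbb D_{\cris}(\LL)(\cU)\otimes_R\cO\BBcr(V)\simeq V_U(\LL)\otimes_{\Zp}\cO\BBcr(V)$, $G_U$-equivariantly, filtered (strictness by the graded computation as in the proof of Proposition \ref{prop.CrysSheaf}) and Frobenius-compatible. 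It remains to descend this from $\cO\BBcr(S,S^+)\simeq\BBcr(S,S^+)\{\langle u_1,\dots,u_d\rangle\}$ back to $\BBcr(S,S^+)$ and identify the left side with $\cE(\BBcr(S,S^+))=\cE(\cU)\otimes_{R,\alpha}\BBcr(S,S^+)$: this is exactly the content of the identification $\cM(\AAcr(S,S^+))\simeq\cM(\cU)\otimes_{R^+,\alpha}\AAcr(S,S^+)$, and compatibility of filtrations is Remark \ref{rk.compfiltrations}. Functoriality in $\cU$ is automatic from the functoriality of all constructions involved.

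The main obstacle I expect is the bookkeeping of the \emph{filtration} compatibility through the two different presentations of $\cO\BBcr$: on the $\mathbb D_{\cris}$ side the filtration comes from $\Fil^\bullet\cO\BBcr$ and is controlled via the explicit polynomial description of Lemma \ref{lem1forOBcris}/Corollary \ref{BcrisIso}, whereas on the $\cE(\BBcr(S,S^+))$ side it is a tensor-product filtration built from $\Fil^\bullet\cE(\cU)$ and $\Fil^\bullet\BBcr(S,S^+)$ via $\alpha$; matching these requires checking that the isomorphism $\cO\BBcr(S,S^+)\simeq\BBcr(S,S^+)\{\langle u_i\rangle\}$ carries the $u_i$-filtration to the crystal's connection-induced filtration shift, which is precisely where Griffiths transversality (Proposition \ref{poincare}, Remark \ref{rk.compfiltrations}) enters and must be used carefully. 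Everything else is a matter of assembling results already established — Lemmas \ref{2obcris}, \ref{twodcris}, \ref{fsigma12}, \ref{sigma12}, Corollaries \ref{easyfact}, \ref{CrysRepAff}, and Proposition \ref{prop.CrysSheaf}.
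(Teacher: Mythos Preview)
Your proposal is correct and follows essentially the same route as the paper: for the forward direction you extend scalars along $\BBcr(S,S^+)\to\cO\BBcr(S,S^+)$ via the crystal identification (the paper's \eqref{identification}) and then invoke Lemma~\ref{twodcris} and Corollary~\ref{CrysRepAff}; for the converse you pull back along a section $\cO\BBcr(S,S^+)\to\BBcr(S,S^+)$, which the paper makes explicit as the PD-morphism $s$ sending each $u_i\mapsto 0$ (yielding the specific $\alpha_0\colon R^+\to\AAcr(S,S^+)$). The filtration bookkeeping you flag as the main obstacle is handled exactly as you anticipate, via Remark~\ref{rk.compfiltrations} applied to \eqref{identification}.
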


Before giving the proof of Proposition \ref{prop.faltings}, we observe first the following commutative diagram in which  the left vertical morphisms are all PD-morphisms:
\[
\xymatrix{A\ar[r]\ar[d]_{\mathrm{can}} & A/pA \ar[d] \\ \cO\AAcr\left(R,R^+\right) \ar[r]^{\theta_A} & R^+/pR^+ \ar@{=}[d]\\ \AAcr\left(R,R^+\right)\ar[r]^{\theta} \ar[u]^{\mathrm{can}}& R^+/pR^+.}
\]
Therefore, we have isomorphisms
\begin{eqnarray*}
\cM(\cU)\otimes_{A}\cO\AAcr\left(R,R^+\right) & \stackrel{\sim}{\longrightarrow} & \cM\left(\cO\AAcr\left(R,R^+\right)\right) \\ & \stackrel{\sim}{\longleftarrow} &  \cM\left(\AAcr\left(R,R^+\right)\right)\otimes_{\AAcr\left(R,R^+\right)}\cO\AAcr\left(R,R^+\right),
\end{eqnarray*}
where the second term in the first row denotes the evaluation of the crystal $\cM$ at the PD-thickening defined by the PD-morphism $\theta_A$ in the commutative diagram above. Inverting $t$, we obtain a natural isomorphism
\begin{equation}\label{identification}
\cE(\cU)\otimes_{A[1/p]}\cO\BBcr\left(R,R^+\right)  \stackrel{\sim}{\longrightarrow}   \cE\left(\BBcr\left(R,R^+\right)\right)\otimes\cO\BBcr\left(R,R^+\right),
\end{equation}
where the last tensor product is taken over $\BBcr(R,R^+) $. This isomorphism is clearly compatible with Galois action and Frobenius. By a similar argument as in Remark \ref{rk.compfiltrations} one checks that \eqref{identification} is also strictly compatible with the filtrations. Furthermore, using the identification
\[
\AAcr\left(R,R^+\right)\{\langle u_1,\ldots, u_d\rangle \}\stackrel{\sim}{\longrightarrow} \cO\AAcr\left(R,R^+\right),\quad u_i\mapsto T_i\otimes 1-1\otimes [T_i^{\flat}]
\]
we obtain a section $s$ of the canonical map $\AAcr\left(R,R^+\right) \to \cO\AAcr\left(R,R^+\right) $:
\[
s\colon \cO\AAcr\left(R,R^+\right) \to \AAcr\left(R,R^+\right),\quad u_i\mapsto 0,
\]
which is again a PD-morphism. Composing it with the inclusion $A\subset \cO\AAcr(R,R^+)$, we get a morphism $\alpha_0\colon R^+\to \AAcr(R,R^+)$ whose composite with the projection $\AAcr(R,R^+)\to R^+$ is the inclusion $A\subset R^+$.

\begin{proof}[Proof of Proposition \ref{prop.faltings}]Now assume that $\cE$ is associated with $\LL$ in the sense of Faltings. Extending scalars to $\cO\BBcr(R,R^+) $ of the isomorphism \eqref{eq.Faltings} and using the identification \eqref{identification}, we obtain a functorial isomorphism, compatible with filtration, $G_U$-action, and Frobenius:
\[
V_{U}(\LL)\otimes_{\Zp}\cO\BBcr\left(R,R^+\right) \stackrel{\sim}{\longrightarrow}\cE(\cU)\otimes_{A[1/p]} \cO\BBcr\left(R,R^+\right).
\]
Therefore, $V_{U}(\LL)\otimes_{\Zp}\Qp$ is a crystalline $G_U$-representation (Corollary \ref{CrysRepAff}), and we get by Lemma  \ref{twodcris} an isomorphism $\cE(\cU)\simto\mathbb D_{\cris}(\LL)(\cU)$ compatible with filtrations and Frobenius. As such small open subsets $\cU$ form a basis for the Zariski topology of $\cX$, we find an isomorphism $\cE\simto \mathbb D_{\cris}(\LL)$ compatible with filtrations and Frobenius, and that $\LL$ is crystalline in the sense of Definition \ref{associated} (Corollary \ref{CrysRepAff}).

Conversely, assume $\LL$ is crystalline with $\mathbb D_{\cris}(\LL)\simeq \cE$ compatible with filtrations and Frobenius. As in the proof of Corollary \ref{CrysRepAff}, we have a functorial isomorphism
\[
\cE(\cU)\otimes_{A[1/p]}\cO\BBcr \left(R,R^+\right) \stackrel{\sim}{\longrightarrow}V_{U}(\LL)\otimes_{\Zp}\cO\BBcr \left(R,R^+\right)
\]
which is compatible with filtration, Galois action and Frobenius. Pulling it back via the section $\cO\BBcr\left(R,R^+\right) \to \BBcr\left(R,R^+\right)$ obtained from $s$ by inverting $p$, we obtain a functorial isomorphism
\[
\cE(\mathbb B_{\cris}(R,R^+))\simeq \cE(\cU)\otimes_{A[1/p],\alpha_0} \BBcr(R,R^+)\stackrel{\sim}{\longrightarrow} V_U(\LL)\otimes_{\Zp}\BBcr(R,R^+),
\]
which is again compatible with Galois action, Frobenius and filtrations. Therefore $\LL$ and $\cE$ are associated in the sense of Faltings.
\end{proof}

Finally we compare Definition \ref{associated} with its de Rham analogue considered in \cite{Sch}.

\begin{prop}\label{prop.crisdr} Let $\LL$ be a lisse $\widehat{\Z}_p$-sheaf on $X$ and $\cE$ a filtered convergent $F$-isocrystal on $\cX_0/\cO_k$. Assume that $\LL$ and $\cE$ are associated as defined in Definition \ref{associated}, then $\LL$ is de Rham in the sense of \cite[Definition 8.3]{Sch} . More precisely, if we view $\cE$ as a filtered module with integrable connection on $X$ (namely we forget the Frobenius), there exists a natural filtered isomorphism compatible with connections:
\[
\LL\otimes_{\widehat{\Z}_p}\cO\BB_{\rm dR}\lra \cE\otimes_{\cO_X} \cO\BB_{\rm dR}.
\]
\end{prop}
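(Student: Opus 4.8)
The plan is to deduce the de Rham comparison from the crystalline one by a base change from $\cO\BBcr$ to $\cO\BBdr$. First I would recall from Lemma \ref{BcrisBdR} (and Corollary \ref{GradedOfBcris}) that there is a natural filtered injection $\cO\BBcr \hookrightarrow \cO\BBdr$ which is strictly compatible with filtrations, induces isomorphisms on graded pieces, and is horizontal for the connections $\nabla$ on both sides. (Horizontality of the inclusion $\cO\BBcr \hookrightarrow \cO\BBdr$ is a local check: reduce via Corollary \ref{BcrisIso} and its de Rham analogue \cite[Proposition 6.10]{Sch13} to an \'etale chart $\cX \to \mathcal T^d$, where both sides become polynomial constructions in the variables $u_i = T_i\otimes 1 - 1\otimes[T_i^\flat]$ over $\BBcr|_{\widetilde X}$, respectively $\BBdr|_{\widetilde X}$, with $\nabla(u_i^{[n]}) = u_i^{[n-1]}\otimes dT_i$ in both; the inclusion matches these.) Then, tensoring the given associatedness isomorphism \eqref{eq.associated}
\[
w^{-1}\cE \otimes_{\cO_X^{\ur}} \cO\BBcr \;\simeq\; \LL \otimes_{\widehat{\ZZ}_p} \cO\BBcr
\]
over $\cO\BBcr$ with $\cO\BBdr$, I obtain a filtered isomorphism
\[
w^{-1}\cE \otimes_{\cO_X^{\ur}} \cO\BBdr \;\simeq\; \LL \otimes_{\widehat{\ZZ}_p} \cO\BBdr
\]
which is automatically compatible with connections, since the isomorphism \eqref{eq.associated} is $\nabla$-compatible and the inclusion $\cO\BBcr \hookrightarrow \cO\BBdr$ is horizontal. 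The only point requiring care here is filtered strictness: the tensor product $\cO\BBcr \to \cO\BBdr$ is strictly filtered with graded pieces preserved (Corollary \ref{GradedOfBcris} and \cite[Corollary 6.13]{Sch13}), and the filtration on both sides of \eqref{eq.associated} is the tensor-product filtration with that on the locally free modules $w^{-1}\Fil^i\cE$, so strictness is inherited.

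Next I would identify $w^{-1}\cE \otimes_{\cO_X^{\ur}} \cO\BBdr$ with $\cE \otimes_{\cO_X} \cO\BBdr$ as filtered modules with connection, matching the shape of \cite[Definition 8.3]{Sch13}. Here one uses that $\cO_X^{\ur} = w^{-1}\cO_{\cX_{\et}}[1/p]$ maps to $\cO_X$ (via \eqref{eq.OurAlg} after inverting $p$), and that $\cO\BBdr$ is naturally an $\cO_X$-algebra; the module $\cE$ in the statement is, by convention, the $\cO_{X_{\an}}$-vector bundle underlying the filtered $F$-isocrystal, pulled back to $X_{\proet}$, so the two tensor products agree once one checks the transition maps are compatible (this is the content of viewing $\cE$ as a filtered module with integrable connection on $X$, forgetting Frobenius, as in Remark \ref{rk.wE}). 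Finally, since $\cE$ is a \emph{convergent} $F$-isocrystal, it is in particular locally free of finite rank with an integrable connection, so the resulting filtered horizontal isomorphism
\[
\LL \otimes_{\widehat{\ZZ}_p} \cO\BBdr \;\lra\; \cE \otimes_{\cO_X} \cO\BBdr
\]
exhibits $\LL$ as de Rham with associated filtered module-with-connection $\cE$, in the sense of \cite[Definition 8.3]{Sch13}.

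The main obstacle, I expect, is \textbf{not} the base change itself but pinning down the strict filtered compatibility and, especially, the horizontality of the inclusion $\cO\BBcr \hookrightarrow \cO\BBdr$ carefully enough to transport the connection-compatibility of \eqref{eq.associated}; this forces the local reduction to an \'etale chart and an explicit comparison of the two descriptions in Corollary \ref{BcrisIso} versus \cite[Proposition 6.10]{Sch13}. A secondary but routine point is bookkeeping the two filtrations (the PD-filtration on $\cO\BBcr$ inherited from $\AAcr$ versus the $(\ker\theta_X)$-adic filtration on $\cO\BBdr$) and checking via the graded isomorphisms of Lemma \ref{BcrisBdR}(3) that no filtration jump is lost under $\cO\BBcr \hookrightarrow \cO\BBdr$; once the graded pieces are identified this is immediate. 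Everything else is formal manipulation of tensor products of sheaves on $X_{\proet}$.
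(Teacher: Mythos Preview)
Your approach is essentially the same as the paper's: both obtain the de Rham isomorphism by base-changing the crystalline associatedness isomorphism along $\cO\BBcr \to \cO\BBdr$. The paper phrases the argument locally---it takes a small connected affine $\cU=\Spf(R^+)\subset \cX$, evaluates \eqref{eq.associated} at an affinoid perfectoid $V$ above the universal cover $U^{\rm univ}$ (using projectivity of the $R$-module $\cE(\cU)$ to identify sections of tensor products with tensor products of sections), then tensors $-\otimes_{\cO\BBcr(V)}\cO\BBdr(V)$ and varies $\cU,V$---whereas you run the same base change at the sheaf level and handle the bookkeeping (horizontality, strict filtrations, the identification $w^{-1}\cE\otimes_{\cO_X^{\ur}}\cO\BBdr\simeq \cE\otimes_{\cO_X}\cO\BBdr$) more abstractly; both are valid and the underlying idea is identical.
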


\begin{proof} Let $\cU=\Spf(R^+)\subset \cX$ be a connected affine open subset, and denote $U$ (resp. $U^{\rm univ}$) the generic fiber of $\cU$ (resp. the universal \'etale cover of $U$). Let $V$ be a affinoid perfectoid lying above $U^{\rm univ}$. As $\LL$ and $\cE$ are associated, there exits a filtered isomorphism compatible with connections and Frobenius
\[
\LL\otimes_{\widehat{\Z}_p}\cO\BBcr\stackrel{\sim}{\lra} w^{-1}\cE\otimes_{\cO_{X}^{\rm ur}} \cO\BBcr.
\]
Evaluate this map at $V\in X_{\proet}$ and use the fact that the $A[1/p]$-module $\cE(\cU)$ is projective, we deduce a filtered isomorphism compatible with all extra structures:
\[
V_U(\LL)\otimes_{\Zp}\cO\BBcr(V)\stackrel{\sim}{\lra} \cE(\cU)\otimes_{A[1/p]} \cO\BBcr(V).
\]
Taking tensor product $-\otimes_{\cO\BBcr(V)}\cO\BB_{\rm dR}(V)$ on both sides, we get a filtered isomorphism compatible with connection:
\[
V_U(\LL)\otimes_{\Zp}\cO\BB_{\rm dR}(V)\stackrel{\sim}{\lra} \cE(\cU)\otimes_{A[1/p]} \cO\BB_{\rm dR}(V).
\]
Again, as $\cE(\cU)$ is a projective $A[1/p]$-module and as $\cE$ is coherent, the isomorphism above can be rewritten as
\[
(\LL\otimes_{\widehat{\Z}_p}\cO\BB_{\rm dR})(V)\stackrel{\sim}{\lra} (\cE\otimes_{\cO_X}\cO\BB_{\rm dR})(V),
\]
which is clearly functorial in $\cU$ and in $V$. Varying $\cU$ and $V$, we deduce that $\LL$ is de Rham, giving  our proposition.
\end{proof}

\subsection{From pro-\'etale site to \'etale site} Let $\cX$ be a smooth formal scheme over $\cO_k$.
For $\cO=\cO_{\cX}, \cO_{\cX}[1/p], \cO_X^{\ur+},\cO_X^{\ur}$ and a sheaf of $\cO$-modules $\cF$ with connection, we denote the de Rham complex of $\cF$ as:
\[
DR(\cF)=(0\ra \cF
\stackrel{\nabla}{\lra}\cF\otimes_{\cO} \Omega^1\stackrel{\nabla}{\lra}\cdots).
\]
Let $\overline{w}$ be the composite of natural morphisms of topoi (here we use the same notation to denote the object in $X_{\proet}^{\sim}$ represented by $X_{\bk}\in X_{\proet}$):
\[
X_{\proet}^{\sim}\slash X_{\bk} \lra X_{\proet}^{\sim}\stackrel{w}{\lra} \mathcal X_{\et}^{\sim}.
\]
The following lemma is a global reformulation of the main results of \cite{AB}. As we shall prove a more general result later (Lemma \ref{quasirelative}), let us omit the proof here.

\begin{lemma}\label{quasi}
Let $\cX$ be smooth formal scheme over $\cO_k$. Then the natural morphism below is an isomorphism in the filtered derived category:
\[
\cO_{\cX}\widehat{\otimes}_{\cO_k}B_{\cris}\lra R\overline{w}_{*}(\cO\BBcr).
\]
Here $\cO_{\cX}\widehat{\otimes}_{\cO_k}B_{\cris}:=\left(\cO_{\cX}\widehat{\otimes}_{\cO_k}A_{\cris}\right)[1/t]$ with
\[
\cO_{\cX}\widehat{\otimes}_{\cO_k}A_{\cris}:=\varprojlim_{n\in \mathbb N} \cO_{\cX}\otimes_{\cO_k}A_{\cris}/p^n,
\]
and $\cO_{\cX}\widehat{\otimes}_{\cO_k}B_{\cris}$ is filtered by the subsheaves
\[
\cO\widehat{\otimes}_{\cO_k}\Fil^r B_{\cris}:=\varinjlim_{n\in \mathbb N} t^{-n}(\cO_{\cX}\widehat{\otimes}_{\cO_k} \Fil^{r+n}A_{\cris}), \quad r\in \mathbb Z,
\]
with $\cO_{\cX}\widehat{\otimes}_{\cO_k} \Fil^{r+n}A_{\cris}:=\varprojlim_n \cO_{\cX}\otimes_{\cO_k} \Fil^{r+n}A_{\cris}/p^n$.
\end{lemma}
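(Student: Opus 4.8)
The plan is to reduce the statement to the small affine case, and there to convert the computation of $R\overline{w}_{*}\cO\BBcr$ into a continuous $\Zp^{d}$-cohomology computation over a perfectoid base; that last computation is precisely the globalized form of the main results of \cite{AB}. (Alternatively one simply invokes the relative statement Lemma \ref{quasirelative} with $\cY=\Spf\cO_k$.) First I would record that the natural morphism in the statement is built from the inclusion $\cO_X^{\ur+}\hookrightarrow\cO\AAcr$ together with the tautological $A_{\cris}$-algebra structure on $\AAinf|_{X_{\bk}}$ — which jointly give $\cO_{\cX_{\et}}\otimes_{\cO_k}A_{\cris}\to\overline{w}_{*}\cO\AAcr$ — by $p$-adically completing and inverting $t$, and that it is strictly compatible with filtrations. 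Since $R^{i}\overline{w}_{*}\mathcal F$ is the sheaf on $\cX_{\et}$ associated to $\cU\mapsto H^{i}(U_{\bk},\mathcal F)$ (with $U=\cU_k$), and since $\cO_{\cX}\widehat{\otimes}_{\cO_k}B_{\cris}$ with its filtration is acyclic on affine opens and compatible with restriction along open immersions, it suffices to check everything over a basis of small affine opens. As every affine open \'etale over a small formal scheme is again small, I would thereby reduce to showing that for $\cX=\Spf(R^{+})$ small affine (i.e. admitting an \'etale map to $\mathcal T^{d}=\Spf(\cO_k\{T_1^{\pm1},\ldots,T_d^{\pm1}\})$) the natural map
\[
R^{+}\widehat{\otimes}_{\cO_k}B_{\cris}\longrightarrow R\Gamma(X_{\bk},\cO\BBcr)
\]
is a filtered quasi-isomorphism with target concentrated in degree $0$, and likewise for each $\Fil^{r}$.

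Next I would exploit the pro-(finite \'etale) Galois covering $\widetilde{X}_{\bk}\to X_{\bk}$, where $\widetilde{X}=X\times_{\TT^{d}}\widetilde{\TT^{d}}$, with group $\Delta:=\Gal(\widetilde{\TT^{d}}/\TT^{d})$; fixing $\epsilon$ as in \eqref{t}, $\Delta\simeq\Zp^{d}$ with topological generators $\gamma_i$ acting by $\gamma_i(T_j^{1/p^{n}})=[\epsilon^{(n)}]^{\delta_{ij}}T_j^{1/p^{n}}$. Each iterated fibre product $\widetilde{X}_{\bk}^{(i)}\simeq\widetilde{X}_{\bk}\times\Delta^{i}$ is affinoid perfectoid (since $\widetilde{\TT^{d}}$ is perfectoid and $X$ is \'etale over $\TT^{d}$); writing $\widehat{\widetilde{X}_{\bk}}=\Spa(S,S^{+})$, Lemma \ref{2obcris} gives $H^{j}(\widetilde{X}_{\bk}^{(i)},\Fil^{r}\cO\BBcr)^{a}=0$ for $j>0$ and $\Fil^{r}\cO\BBcr(\widetilde{X}_{\bk})^{a}\simeq\Fil^{r}\cO\BBcr(S,S^{+})^{a}$, so the Cartan--Leray spectral sequence of the covering collapses and yields
\[
R\Gamma(X_{\bk},\Fil^{r}\cO\BBcr)\simeq R\Gamma_{\cont}\bigl(\Delta,\ \Fil^{r}\cO\BBcr(S,S^{+})\bigr).
\]
I would upgrade these almost-isomorphisms to genuine ones by the usual bookkeeping, or, more robustly, by passing through the honest acyclicity statements of Corollary \ref{cor.tNotZeroDiv} for $\BBcr$ and $\Fil^{r}\BBcr$ together with the filtered identification $\cO\BBcr|_{\widetilde{X}}\simeq\BBcr|_{\widetilde{X}}\{\langle u_1,\ldots,u_d\rangle\}$ of Corollary \ref{BcrisIso}.

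The final step is the group-cohomology computation, and it is the main obstacle. Using Corollary \ref{BcrisIso} I would rewrite $\cO\BBcr(S,S^{+})\simeq\BBcr(S,S^{+})\{\langle u_1,\ldots,u_d\rangle\}$ with $u_i=T_i\otimes1-1\otimes[T_i^{\flat}]$, on which $\gamma_i$ acts by $\gamma_i(u_i)=u_i+(1-[\epsilon])[T_i^{\flat}]$, fixing the remaining $u_j$ and all of $\BBcr(S,S^{+})$, compatibly with the filtration; in particular $R^{+}$ maps into the $\Delta$-invariants via $T_i\mapsto u_i+[T_i^{\flat}]$ (Lemma \ref{algebra}). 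Then I would run the Poincar\'e-lemma/Koszul argument of \cite{AB}: up to a filtered $\Delta$-equivariant automorphism the operators $\gamma_i-1$ become $t$-multiples of $\partial_{u_i}$, so the Koszul complex computing $R\Gamma_{\cont}(\Delta,\Fil^{r}\cO\BBcr(S,S^{+}))$ is filtered quasi-isomorphic to the de Rham-type complex in the $u_i$ of $\Fil^{\bullet}\bigl(\BBcr(S,S^{+})\{\langle u_1,\ldots,u_d\rangle\}\bigr)$ described in Lemma \ref{lem1forOBcris}; since $t$ is invertible this complex is acyclic outside degree $0$, where its value is $R^{+}\widehat{\otimes}_{\cO_k}B_{\cris}$ with the expected filtration. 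The positive-degree vanishing is exactly where inverting $t$ is essential, and keeping the whole computation strictly filtered relies on Griffiths transversality of $\nabla$ (Corollary \ref{poincare}). I expect this last paragraph — importing and filtering the \cite{AB} computation over the base $R^{+}$ — to be the genuine difficulty; it is precisely what the later Lemma \ref{quasirelative} subsumes, which is why the proof is omitted here.
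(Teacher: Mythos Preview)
Your proposal is correct and follows essentially the same approach as the paper: the paper omits the proof here, pointing to Lemma~\ref{quasirelative} (the relative version with $\cY=\Spf\cO_k$), whose proof proceeds exactly as you outline---localize to small affines, pass to the affinoid perfectoid Galois cover $\widetilde{X}_{\bk}\to X_{\bk}$ with group $\Zp(1)^d$, collapse Cartan--Leray via Lemma~\ref{2obcris}, and then invoke the group-cohomology computation of \cite{AB} (reproduced in the appendix as Theorem~\ref{withoutfil} and the filtered results of \S\ref{acy}). The only cosmetic difference is that the appendix carries out the $\Gamma$-cohomology computation via the explicit basis decomposition of $\cO\AAcr(S_\infty,S_\infty^+)/p^n$ rather than the Koszul/de Rham reformulation you sketch, but you correctly flag this step as the crux and defer to \cite{AB}/Lemma~\ref{quasirelative}.
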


\begin{cor}\label{quasicor} Let $\cX$ be a smooth formal scheme over $\cO_k$. Let $\LL$ be a crystalline lisse $\widehat{\Z}_p$-sheaf associated with a filtered convergent $F$-isocrystal $\cE$. Then there exists a natural quasi-isomorphism in the filtered derived category
\[
R\overline{w}_{\ast}(\LL\otimes_{\widehat{\Z}_p} \BBcr)\stackrel{\sim}{\lra} DR(\cE\otimes_{\cO_{\cX}}\cO_{\cX}\widehat{\otimes}_{\cO_k} B_{\cris}).
\]
If moreover $\cX$ is endowed with a lifting of Frobenius $\sigma$, then the isomorphism above is also compatible with the Frobenii deduced from $\sigma$ on both sides.
\end{cor}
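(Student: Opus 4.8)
The plan is to chain together three results already in hand: the crystalline Poincar\'e lemma (Corollary~\ref{poincare}), the associatedness hypothesis (Definition~\ref{associated}), and the Andreatta--Brinon computation of $R\overline{w}_{\ast}\cO\BBcr$ (Lemma~\ref{quasi}). Everything below takes place in the filtered derived category of abelian sheaves on $X_{\proet}/X_{\bk}$. First I would invert $t$ in Corollary~\ref{poincare} to obtain a strictly filtered quasi-isomorphism $\BBcr\xrightarrow{\sim}DR(\cO\BBcr)$, the de Rham complex being placed with $\Omega^{i,\ur}_{X/k}$ in degree $i$. Since $\LL$ is a lisse $\widehat{\Z}_p$-sheaf and $\BBcr$ is a $\widehat{\Q}_p$-algebra, $-\otimes_{\widehat{\Z}_p}\LL$ applied to sheaves of $\BBcr$-modules is exact and preserves strictness of filtered complexes, and because $\nabla$ on $\cO\BBcr$ is $\widehat{\Z}_p$-linear, tensoring produces a strictly filtered quasi-isomorphism $\LL\otimes_{\widehat{\Z}_p}\BBcr\xrightarrow{\sim}DR(\LL\otimes_{\widehat{\Z}_p}\cO\BBcr)$, the target carrying the connection $\mathrm{id}\otimes\nabla$ and the tensor-product filtration.

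Next I would feed in the associatedness isomorphism $\LL\otimes_{\widehat{\Z}_p}\cO\BBcr\simeq w^{-1}\cE\otimes_{\cO_X^{\ur}}\cO\BBcr$ of Definition~\ref{associated}, which is compatible with connections and filtrations, to identify the complex above with $DR(w^{-1}\cE\otimes_{\cO_X^{\ur}}\cO\BBcr)$. Its $i$-th term is $w^{-1}\!\bigl(\cE\otimes_{\cO_{\cX}[1/p]}\Omega^i_{\cX/\cO_k}[1/p]\bigr)\otimes_{\cO_X^{\ur}}\cO\BBcr$, and here $\cE$ --- being a convergent isocrystal --- as well as every $\Fil^a\cE$ is a locally free coherent $\cO_{\cX}[1/p]$-module (Berthelot \cite{Ber}, and the definition of filtered $F$-isocrystal), hence so are $\cE\otimes\Omega^i$ and $\Fil^a\cE\otimes\Omega^i$.

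Then I would apply $R\overline{w}_{\ast}$. Lemma~\ref{quasi} says $R\overline{w}_{\ast}\cO\BBcr$ is concentrated in degree $0$ and equals $\cO_{\cX}\widehat{\otimes}_{\cO_k}B_{\cris}$, strictly filtered; combining this with the projection formula (as in the proof of Corollary~\ref{easyfact}), for any locally free coherent $\cO_{\cX}[1/p]$-module $\cM$ one gets $R\overline{w}_{\ast}(w^{-1}\cM\otimes_{\cO_X^{\ur}}\cO\BBcr)\simeq \cM\otimes_{\cO_{\cX}[1/p]}(\cO_{\cX}\widehat{\otimes}_{\cO_k}B_{\cris})$, again concentrated in degree $0$, and strictly compatible with the filtrations --- for the latter one decomposes $\Fil^r$ of the $i$-th term into pieces $\Fil^a\cE\otimes\Fil^b\cO\BBcr\otimes\Omega^i$ and applies the statement termwise. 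Since every term of $DR(w^{-1}\cE\otimes\cO\BBcr)$ is $\overline{w}_{\ast}$-acyclic of this shape, $R\overline{w}_{\ast}$ of this bounded complex is computed term by term, the differentials become the de Rham differentials, and assembling the identifications yields the asserted filtered quasi-isomorphism
\[
R\overline{w}_{\ast}\bigl(\LL\otimes_{\widehat{\Z}_p}\BBcr\bigr)\ \xrightarrow{\ \sim\ }\ DR(\cE)\,\widehat{\otimes}_{k}B_{\cris}.
\]

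Finally, for the Frobenius assertion, suppose $\cX$ carries a lift of Frobenius $\sigma$; then $\cO\BBcr$ acquires a Frobenius ($\S\ref{localfrob}$) and $\cE$ an $F$-isocrystal Frobenius (Lemma~\ref{lem.FCrystal}), and each of the three isomorphisms above is Frobenius-equivariant: the Poincar\'e quasi-isomorphism because $\varphi$ is horizontal for $\nabla$ on $\cO\BBcr$ (Lemma~\ref{FrobHorizontal}), so it carries a Frobenius on $DR(\cO\BBcr)$ matching the one on $\BBcr$; the associatedness isomorphism by hypothesis (Remark~\ref{abuseem}); and Lemma~\ref{quasi} together with the projection-formula argument above by inspection, the only subtlety being the twist by $d\sigma$ on the factors $\Omega^i$ in the de Rham differentials. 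I expect this Frobenius bookkeeping --- together with checking that the projection formula is \emph{strictly} filtered and that $R\overline{w}_{\ast}$ genuinely commutes termwise with the formation of the de Rham complex --- to be the main obstacle; the deep geometric input, namely the degree-zero acyclicity and filtered description of $R\overline{w}_{\ast}\cO\BBcr$, is already supplied by Lemma~\ref{quasi}.
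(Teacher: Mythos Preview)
Your proposal is correct and follows essentially the same route as the paper: Poincar\'e lemma to pass from $\LL\otimes\BBcr$ to $DR(\LL\otimes\cO\BBcr)$, associatedness to replace this by $DR(w^{-1}\cE\otimes\cO\BBcr)$, then Lemma~\ref{quasi} plus the projection formula (using that $\cE$ and its filtered pieces are locally direct summands of finite free $\cO_{\cX}[1/p]$-modules) to identify $R\overline{w}_{\ast}$ of this complex with $DR(\cE)\widehat{\otimes}_k B_{\cris}$. The paper's proof is slightly terser on the filtered and Frobenius bookkeeping you flag at the end, but the argument is the same.
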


\begin{proof}
Using the Poincar\'e lemma (Corollary \ref{poincare}), we get first a quasi-isomorphism which is strictly compatible with filtrations:
\[
\LL\otimes \BBcr\stackrel{\sim}{\lra} \LL\otimes DR(\cO\BBcr)=DR(\LL\otimes \cO\BBcr).
\]
As $\LL$ and $\cE$ are associated, there is a filtered isomorphism $\LL\otimes \cO\BBcr\simto w^{-1}\cE\otimes_{\cO_X^{\ur}} \cO\BBcr$ compatible with connection and Frobenius, from which we get the quasi-isomorphisms in the filtered derived category
\begin{equation}\label{eq.quasicor1}
\LL\otimes \BBcr\stackrel{\sim}{\lra}
DR(\LL\otimes \cO\BBcr)\stackrel{\sim}{\lra} DR(w^{-1}\cE\otimes \cO\BBcr).
\end{equation}
On the other hand, as $R^j\overline{w}_{\ast}\cO\BBcr=0$ for $j>0$ (Lemma \ref{quasi}), we obtain using projection formula that $R^j\overline{w}_{\ast}(w^{-1}\cE\otimes \cO\BBcr)=\cE\otimes R^j\overline w_{\ast}\cO\BBcr=0$ (note that $\cE$ is locally a direct factor of a finite free $\cO_{\cX}[1/p]$-module, hence one can apply projection formula here). In particular, each component of $DR(w^{-1}\cE\otimes \cO\BBcr)$ is $\overline{w}_{\ast}$-acyclic. Therefore,
\[
DR(\cE\otimes \overline{w}_{\ast}\cO\BBcr)\stackrel{\sim}{\lra}\overline w_{\ast}(DR(w^{-1}\cE\otimes \cO\BBcr))\stackrel{\sim}{\lra} R\overline{w}_{\ast} (DR(w^{-1}\cE\otimes \cO\BBcr)).
\]
Combining this with Lemma \ref{quasi}, we deduce the following quasi-isomorphisms in the filtered derived category
\begin{equation}\label{eq.quasicor2}
DR(\cE\otimes \cO_{\cX}\widehat{\otimes} B_{\cris})\stackrel{\sim}{\lra} DR(\cE\otimes\overline{w}_{\ast}\cO\BBcr )\stackrel{\sim}{\lra} R\overline{w}_{\ast} (DR(w^{-1}\cE\otimes \cO\BBcr)).
\end{equation}

The desired quasi-isomorphism follows from \eqref{eq.quasicor1} and \eqref{eq.quasicor2}. When moreover $\cX$ admits a lifting of Frobenius $\sigma$, one checks that both quasi-isomorphisms are compatible with Frobenius, hence the last part of our corollary.
\end{proof}

\begin{rk} Recall that $G_k$ denotes the absolute Galois group of $k$. Each element of $G_k$ defines a morphism of $U_{\bk}$ in the pro-\'etale site $X_{\proet}$ for any $\cU\in \cX_{\proet}$ with $U:=\cU_k$. Therefore, the object $R\overline{w}_{\ast}(\LL\otimes \BBcr)$ comes with  a natural Galois action of $G_k$. With this Galois action, one checks that the quasi-isomorphism in Corollary \ref{quasicor} is also Galois equivariant.
\end{rk}

Assume moreover that $\cX$ is proper over $\cO_k$. Let $\cE$ be a filtered convergent $F$-isocrystal on $\cX_0/\cO_k$, and $\cM$ an $F$-crystal on $\cX_0/\cO_k$  (viewed as a coherent $\cO_{\cX}$-module equipped with an integrable connection) such that $\mathcal E\simeq \mathcal M ^{\rm  an}(n)$ for some $n\in \mathbb N$ (Remark \ref{rk.CrystalVSIsoc}). The crystalline cohomology group $H^{i}_{\cris}(\cX_0/\cO_k,\cM)$ is an $\cO_k$-module of finite type endowed with a Frobenius $\psi$. In the following, the crystalline cohomology (or more appropriately, the rigid cohomology) of the convergent $F$-isocrystal $\cE$ is defined as
\[
H^{i}_{\cris}(\cX_0/\cO_k, \cE):=H^i_{\cris}(\cX_0/\cO_k, \cM)[1/p].
\]
It is a finite dimensional $k$-vector space equipped with the Frobenius $\psi/p^n$. Moreover, let $u=u_{\cX_0/\cO_k}$ be the morphism of topoi
\[
(\cX_0/\cO_k)_{\cris}^{\sim}\lra \cX_{\et}^{\sim}
\]
such that $u_{\ast}(\mathcal F)(\cU)=H^0_{\cris}(\cU_0/\cO_k,\mathcal F)$ for $\cU\in \cX_{\et}$. With the \'etale topology replaced by the Zariski topology, this is precisely the morphism $u_{\cX_0/\hat{S}}$ (with $\hat{S}=\Spf(\cO_k)$) considered in \cite[Theorem 7.23]{BO}. By \emph{loc.cit.}, there exists a natural quasi-isomorphism in the derived category
\begin{equation}\label{eq.derivedBO}
Ru_{\ast} \cM\stackrel{\sim}{\lra} DR(\cM),
\end{equation}
which induces an isomorphism $H^i_{\cris}(\cX_0/\cO_k,\cM)\simto H^i(\cX,DR(\cM))$. Thereby
\begin{equation}\label{eq.BO}
H^i_{\cris}(\cX_0/\cO_k,\cE)\stackrel{\sim}{\lra} H^i(\cX,DR(\cE)).
\end{equation}
On the other hand,  the de Rham complex $DR(\cE)$ of $\cE$ is filtered by its subcomplexes
\[
\Fil^r DR(\cE):=(\Fil^r \cE\stackrel{\nabla}{\lra} \Fil^{r-1}\cE\otimes \Omega_{X/k}^{1}\stackrel{\nabla}{\lra}\ldots).
\]
So the hypercohomology $ H^i(\cX,DR(\cE))$ has a descending filtration given by
\[
\Fil^r  H^i(\cX,DR(\cE)):=\mathrm{Im}\left(H^i(\cX,\Fil^r DR(\cE))\lra H^i(\cX,DR(\cE))\right).
\]
Consequently, through the isomorphism \eqref{eq.BO}, the $k$-space $H^i_{\cris}(\cX_0/\cO_k,\cE)$ is endowed naturally with a decreasing filtration.

\begin{thm} \label{main1} Assume that the smooth formal scheme $\cX$ is proper over $\cO_k$. Let $\mathcal E$ be a filtered convergent $F$-isocrystal on $\cX_0/\cO_k$ and $\LL$ a lisse $\widehat{\ZZ}_p$-sheaf on $X_{\proet}$. Assume that $\mathcal E$ and $\LL$ are associated. Then there is a natural filtered isomorphism
\begin{equation}\label{eq.iso}
H^i(X_{\bk,\proet},\LL\otimes_{\widehat{\ZZ}_p}\BB_{\cris}) \stackrel{\sim}{\longrightarrow} H_{\cris}^i(\cX_0/\cO_k, \cE)\otimes_{k} B_{\cris}
\end{equation}
 of $B_{\cris}$-modules, which is compatible with Frobenius and Galois action.
\end{thm}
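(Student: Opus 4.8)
The strategy is to compute $R\Gamma(X_{\bk,\proet},\LL\otimes_{\widehat{\ZZ}_p}\BBcr)$ in two different ways, factoring the computation through the morphism of topoi $\overline w\colon X_{\proet}^{\sim}/X_{\bk}\to \cX_{\et}^{\sim}$. First I would apply the Leray spectral sequence for $\overline w$ together with Corollary~\ref{quasicor}, which gives a natural quasi-isomorphism in the filtered derived category
\[
R\overline w_{\ast}(\LL\otimes_{\widehat{\ZZ}_p}\BBcr)\xto{\sim} DR(\cE)\widehat\otimes_k B_{\cris},
\]
compatible with Galois action, and with Frobenius whenever $\cX$ carries a lifting of Frobenius. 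Taking $R\Gamma(\cX_{\et},-)$ of both sides yields
\[
R\Gamma(X_{\bk,\proet},\LL\otimes_{\widehat{\ZZ}_p}\BBcr)\simeq R\Gamma(\cX_{\et}, DR(\cE)\widehat\otimes_k B_{\cris}).
\]

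The second step is to identify the right-hand side with $H^i_{\cris}(\cX_0/\cO_k,\cE)\otimes_k B_{\cris}$, using properness of $\cX$. Since $B_{\cris}=A_{\cris}[1/t]$ and $A_{\cris}=\varprojlim_n A_{\cris}/p^n$, I would first work modulo $p^n$: because $\cX$ is proper over $\cO_k$ (so that the terms of $DR(\cE)$, being coherent $\cO_{\cX}[1/p]$-modules, have finite-dimensional, hence finite, cohomology), the hypercohomology commutes with the finite tensor product $-\otimes_{\cO_k}A_{\cris}/p^n$; then one passes to the limit over $n$ (controlling $R^1\varprojlim$ by the finiteness of the cohomology groups, which makes the system Mittag-Leffler) and inverts $t$. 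Combined with the comparison isomorphism \eqref{eq.BO} of Berthelot--Ogus, $H^i_{\cris}(\cX_0/\cO_k,\cE)\simeq \mathbb H^i(\cX,DR(\cE))$, this gives the isomorphism \eqref{eq.iso}, and the filtration on the target is exactly the one transported from $\mathbb H^i(\cX,DR(\cE))$ via the same Berthelot--Ogus isomorphism, so filtration-compatibility is built in by construction. Galois equivariance follows from the remark after Corollary~\ref{quasicor}, since every $g\in G_k$ acts on the pro-\'etale side through automorphisms of $X_{\bk}$ over $X$.

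The remaining — and I expect the most delicate — point is Frobenius-compatibility, because the Frobenius on $\cO\BBcr$ (hence the one appearing in Corollary~\ref{quasicor}) is only defined locally, after choosing a lifting of Frobenius on an open $\cU\subset\cX$, whereas the Frobenius on crystalline cohomology $H^i_{\cris}(\cX_0/\cO_k,\cE)$ is canonical. To handle this I would cover $\cX$ by small opens $\cU_i$ each equipped with a lift $\sigma_i$, invoke Lemma~\ref{sigma12} and Lemma~\ref{fsigma12} to see that the local Frobenii on $DR(\cE)\widehat\otimes_k B_{\cris}$ differ over overlaps by the explicit transition operators built from the connection, hence glue (after applying $R\Gamma$) to a well-defined Frobenius on $R\Gamma(\cX_{\et},DR(\cE)\widehat\otimes_k B_{\cris})$; on the crystalline side, this glued Frobenius is identified with $\psi/p^n$ via \eqref{eq.derivedBO} and the crystal property of $\cM$, which says precisely that the evaluations of $\cM$ at different PD-thickenings (coming from the different $\sigma_i$) are canonically identified. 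Matching these two descriptions of the Frobenius, together with the Frobenius-compatibility already recorded in Corollary~\ref{quasicor} over each $\cU_i$, completes the proof. One should also note that $\BBcr|_{X_{\bk}}$ is only locally generated by the canonical element $t$, but this causes no trouble since all constructions are compatible with restriction and $t$ exists globally over $X_{\bk}$.
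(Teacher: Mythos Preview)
Your overall architecture matches the paper's: apply Corollary~\ref{quasicor} to compute $R\overline w_*(\LL\otimes\BBcr)$, take $R\Gamma(\cX_{\et},-)$, and then identify $R\Gamma(\cX_{\et},DR(\cE)\widehat\otimes_k B_{\cris})$ with $R\Gamma(\cX_{\et},DR(\cE))\otimes_k B_{\cris}\simeq H^i_{\cris}(\cX_0/\cO_k,\cE)\otimes_k B_{\cris}$ via properness and \eqref{eq.BO}. Two points deserve comment.

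For the base-change step, the paper argues slightly differently: it invokes flatness of $A_{\cris}$ over $\cO_k$ to get the isomorphism in the derived category, and then checks strictness on gradeds, where one is reduced (after the naive filtration) to the statement $R\Gamma(\cX,\mathcal A)\otimes_{\cO_k}\cO_{\mathbb C_p}\simeq R\Gamma(\cX,\mathcal A\widehat\otimes_{\cO_k}\cO_{\mathbb C_p})$ for a coherent $\cO_{\cX}$-module $\mathcal A$. Your mod-$p^n$ plus Mittag--Leffler argument is fine in spirit, but be careful: the terms of $DR(\cE)$ are $\cO_{\cX}[1/p]$-modules, so $-\otimes_{\cO_k}A_{\cris}/p^n$ literally gives zero; you must first pass to an integral model $\cM$ (with $\cE=\cM^{\mathrm{an}}(n)$), run the argument for $DR(\cM)$, and only then invert $p$ and $t$.

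For Frobenius compatibility, the paper's route is cleaner than your gluing-of-local-Frobenii plan. The key observation is that both the isomorphism \eqref{eq.iso} and the Frobenii on its source and target are \emph{globally} defined: the Frobenius on $\LL\otimes\BBcr$ is $\mathrm{id}\otimes\varphi$ with $\varphi$ the canonical Frobenius on $\BBcr$ (no lifting required), and the Frobenius on $H^i_{\cris}$ is the crystalline one. So there is nothing to glue; one only has to check that the global composite $\theta\colon Ru_*\cM\to DR(\cM)\to DR(\cE)\widehat\otimes_k B_{\cris}\to R\overline w_*(\LL\otimes\BBcr)$ satisfies $\varphi\circ\theta=p^{-n}\theta\circ\psi$. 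Since $\theta$, $\varphi$, $\psi$ are all global, this equation can be verified locally on a small open $\cU$ equipped with a lifting $\sigma$, where Corollary~\ref{quasicor} supplies the needed compatibility and the twist by $p^{-n}$ comes from $\cE\simeq\cM^{\mathrm{an}}(n)$. Your approach via Lemmas~\ref{sigma12} and~\ref{fsigma12} would eventually arrive at the same place, but it introduces an auxiliary ``glued'' Frobenius on the intermediate object that is not actually needed.
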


\begin{proof} By Corollary \ref{quasicor}, we have the natural Galois equivariant quasi-isomorphism in the filtered derived category:
\[
R\Gamma(X_{\bk,\proet},\LL\otimes \BBcr)=R\Gamma(\cX,R\overline{w}_{\ast}(\LL\otimes \BBcr))\stackrel{\sim}{\lra}R\Gamma(\cX,DR(\cE\otimes \cO_{\cX}\widehat{\otimes} B_{\cris})).
\]
We claim that the natural morphism in the filtered derived category
\begin{equation}\label{eq:iso-for-the-cE}
R\Gamma(\cX,DR(\cE))\otimes_{\cO_k} A_{\cris}\lra R\Gamma(\cX_{\et}, DR(\cE\otimes_{\cO_{\cX}}\cO_{\cX}\widehat{\otimes}_{\cO_k}A_{\cris}))
\end{equation}
is an isomorphism. Let $\cM$ be an $F$-crystal on $\cX_0/\cO_k$ with $\cE=\cM^{\an}(n)$. Then, the similar natural morphism below is an isomorphism:
\begin{equation}\label{eq:iso-for-the-cM}
R\Gamma(\cX,DR(\cM))\otimes_{\cO_k} A_{\cris}\lra R\Gamma(\cX_{\et}, DR(\cM\otimes_{\cO_{\cX}}\cO_{\cX}\widehat{\otimes}_{\cO_k}A_{\cris})).
\end{equation}
Indeed, as $A_{\cris}$ is flat over $\cO_k$, $\cM\otimes_{\cO_{\cX}}\cO_{\cX}\widehat{\otimes}_{\cO_k}A_{\cris}\simeq \cM\widehat{\otimes}_{\cO_k}A_{\cris}$. So $DR(\cM\otimes_{\cO_{\cX}}\cO_{\cX}\widehat{\otimes}_{\cO_k}A_{\cris})=DR(\cM\widehat{\otimes}_{\cO_k}A_{\cris})$, and \eqref{eq:iso-for-the-cM} is an isomorphism by Lemma \ref{lem:flat-base-change}. Thus, to prove our claim, it suffices to check that \eqref{eq:iso-for-the-cE} induces quasi-isomorphisms on gradeds. Further filtering the de Rham complex by its naive filtration, we are reduced to checking the following isomorphism for $\mathcal A$ a coherent $\cO_{\cX}$-module:
\[
R\Gamma(\cX,\mathcal A)\otimes_{\cO_k} \cO_{\mathbb C_p} \stackrel{\sim}{\lra} R\Gamma(\cX,\mathcal A\otimes_{\cO_{\cX}}\widehat{\otimes}_{\cO_k}\cO_{\mathbb C_p})\simeq R\Gamma(\cX, \mathcal A\widehat{\otimes}_{\cO_k}\cO_{\mathbb C_p}),
\]
which holds because again $\cO_{\mathbb{C}_p}$ is flat over $\cO_k$ (Lemma \ref{lem:flat-base-change}). Consequently, inverting $t$ we obtain an isomorphism in the filtered derived category
\begin{equation*}
R\Gamma(\cX,DR(\cE))\otimes_{k} B_{\cris}\lra R\Gamma(\cX_{\et}, DR(\cE\otimes_{\cO_{\cX}} \cO_{\cX}\widehat{\otimes}_{\cO_k}B_{\cris})).
\end{equation*}
Thus we get a Galois equivariant quasi-isomorphism in the filtered derived category
\[
R\Gamma(X_{\bk,\proet},\LL\otimes \BBcr) \stackrel{\sim}{\lra} R\Gamma(\cX,DR(\cE))\otimes_k B_{\cris}.
\]
Combining it with \eqref{eq.BO}, we obtain the isomorphism \eqref{eq.iso} verifying the required properties except for the Frobenius compatibility.

To check the Frobenius compatibility, it suffices to check that the restriction to $H^i_{\cris}(\cX_0/\cO_k,\cE)\hookrightarrow H^i_{\cris}(\cX_0/\cO_k,\cE)\otimes_{k} B_{\cris}$ of the inverse of \eqref{eq.iso} is Frobenius-compatible. Let $\cM$ be an $F$-crystal on $\cX_0/\cO_k$ with $\cE=\cM^{\an}(n)$. Via the identification $H^i_{\cris}(\cX_0/\cO_k,\cE)=H^i_{\cris}(\cX_0/\cO_k,\cM)[1/p]$, the restriction map in question is induced from the following composed morphism at the level of derived category:
\begin{eqnarray*}
Ru_{\ast} \cM[1/p] \stackrel{\sim}{\lra} DR(\cM)[1/p]\stackrel{\sim}{\lra}  DR(\cE)\lra DR(\cE\otimes \cO_{\cX}\widehat{\otimes}_{k}B_{\cris})
\stackrel{\sim}{\lra} R\overline{w}_{\ast} (\LL\otimes \BBcr),
\end{eqnarray*}
where the first morphism is \eqref{eq.derivedBO}, and the last one is the inverse in the derived category of the quasi-isomorphism in Corollary \ref{quasicor}. Let us denote by $\theta$ the composite of these morphisms.
Let $\psi$ (resp. $\varphi$) be the induced Frobenius on $Ru_{\ast}\cM$ (resp. on $R\overline{w}_{\ast}(\LL\otimes \BBcr)$). One needs to check that $\varphi \circ \theta=\frac{1}{p^n}\theta\circ \psi$. This can be done locally on $\cX$. So let $\cU\subset \cX$ be a small open subset equipped with a lifting of Frobenius $\sigma$. Thus $\cM|_{\cU}$ (resp. $\cE|_{\cU}$) admits naturally a Frobenius, which we denote by $\psi_{\cU}$ (resp.  $\varphi_{\cU}$). Then all the morphisms above except the second one are Frobenius-compatible (see Corollary \ref{quasicor} for the last quasi-isomorphism). But by definition, under the identification $\cM[1/p]|_{\cU}\simeq \cE|_{\cU}$, the Frobenius $\varphi_{\cU}$ on $\cE$ corresponds exactly to $\psi_{\cU}/p^n$ on $\cM[1/p]$. This gives the desired equality $\varphi\circ \theta=\frac{1}{p^n}\theta\circ \psi$ on $\cU$, from which  the  Frobenius compatibility in \eqref{eq.iso} follows.
\end{proof}

\section{Primitive comparison on the pro-\'etale site}
\setcounter{equation}{0}
Let $\cX$ be a proper smooth formal scheme over $\cO_k$, with $X$ (resp. $\cX_0$) its generic (resp. closed) fiber. Let $\LL$ be a lisse $\widehat{\ZZ}_p$-sheaf on $X_{\proet}$.
In this section, we will construct a primitive  comparison isomorphism for any lisse $\widehat{\ZZ}_p$-sheaf $\LL$ on the pro-\'etale site $X_{\proet}$ (Theorem \ref{inout}). In particular, this primitive comparison isomorphism also holds for non-crystalline  lisse $\widehat{\ZZ}_p$-sheaves, which may lead to interesting arithmetic applications. On the other hand, in the case that $\LL$ is crystalline, such a result and Theorem \ref{main1} together give rise to the crystalline comparison isomorphism between \'etale cohomology and crystalline cohomology (Theorem \ref{thm.comp}).

We shall begin with some preparations. 

\begin{lemma}\label{jannsen}Let $(\mathcal F_n)_{n\in \mathbb N}$ be a projective system of abelian sheaves on a site $T$. Then for any object $Y\in T$ and any $i\in \mathbb Z$, there exists a natural exact sequence:
\[
0\lra R^1\varprojlim H^{i-1}(Y,\mathcal F_n)\lra H^i(Y,R\varprojlim \mathcal F_n)\lra \varprojlim H^i(Y,\mathcal F_n)\lra 0.
\]
\end{lemma}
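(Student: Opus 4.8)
The plan is to derive the exact sequence from the Grothendieck spectral sequence associated to the composite functor $\Gamma(Y,-)\circ \varprojlim$, applied to the projective system $(\mathcal F_n)$ viewed as an abelian sheaf on the product site $T^{\mathbb N}$ (equivalently, as an object of the category of projective systems of abelian sheaves on $T$). First I would recall that for a projective system of abelian groups $(A_n)$, the derived functors of $\varprojlim$ vanish in degrees $\geq 2$, so $R\varprojlim$ is concentrated in degrees $0$ and $1$; in particular, for any such system one has the short exact sequence $0\to R^1\varprojlim A_n^{[-1]}\to \mathbb H^0(R\varprojlim)\to \varprojlim A_n\to 0$ trivially, but the content here is the \emph{sheaf-level} hypothesis $R^j\varprojlim \mathcal F_n=0$ for $j>0$, which says that the functor $\varprojlim$ on projective systems of sheaves is ``acyclic'' on this particular system.

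\textbf{Key steps.} (1) Consider the two functors: $F = \Gamma(Y,-)$ from abelian sheaves on $T$ to abelian groups, and $G = \varprojlim$ from projective systems of abelian sheaves on $T$ to abelian sheaves on $T$. Their composite, applied to $(\mathcal F_n)$, computes $\Gamma(Y,\varprojlim \mathcal F_n)$, and the Grothendieck spectral sequence reads $E_2^{p,q} = R^pF(R^qG((\mathcal F_n))) \Rightarrow R^{p+q}(F\circ G)((\mathcal F_n))$, provided $G$ sends injectives to $F$-acyclics (which holds because $\varprojlim$ of an injective projective system of sheaves is flasque/injective, hence $\Gamma(Y,-)$-acyclic). (2) By hypothesis $R^qG((\mathcal F_n)) = R^q\varprojlim \mathcal F_n = 0$ for $q>0$, so the spectral sequence degenerates to show $R^{p}(F\circ G)((\mathcal F_n)) = R^pF(\varprojlim\mathcal F_n) = H^p(Y,\varprojlim\mathcal F_n)$. (3) Independently, there is a second Grothendieck spectral sequence for the same composite, factoring instead through $F' = \varprojlim$ on projective systems of abelian groups and $G' = (n\mapsto \Gamma(Y,-))$: namely $E_2^{p,q} = R^p\varprojlim\, H^q(Y,\mathcal F_n) \Rightarrow R^{p+q}(F\circ G)((\mathcal F_n))$. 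Since $R^p\varprojlim$ of a projective system of abelian groups vanishes for $p\geq 2$, this spectral sequence has only two nonzero columns $p=0,1$, hence degenerates into the short exact sequences
\[
0\lra R^1\varprojlim H^{i-1}(Y,\mathcal F_n)\lra R^i(F\circ G)((\mathcal F_n))\lra \varprojlim H^i(Y,\mathcal F_n)\lra 0.
\]
(4) Combining steps (2) and (3), $R^i(F\circ G)((\mathcal F_n)) = H^i(Y,\varprojlim\mathcal F_n)$, which yields the claimed exact sequence.

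\textbf{Main obstacle.} The only delicate point is the justification of the two Grothendieck spectral sequences, which requires that the category of projective systems of abelian sheaves on $T$ has enough injectives and that $\varprojlim$ (resp. the evaluation-type functor) carries injectives to acyclic objects for the outer functor; this is standard (cf.\ Jannsen's continuous \'etale cohomology paper, or Roos), and amounts to checking that an injective projective system is componentwise injective with surjective, split transition maps, hence its limit is flasque. I would cite this rather than reprove it, since the lemma is explicitly labelled well-known. A minor alternative, avoiding the two-spectral-sequences formalism, is to pick an injective resolution $(\mathcal F_n)\to (\mathcal I_n^\bullet)$ in projective systems, note $\varprojlim$ is exact on it by the vanishing hypothesis so that $\varprojlim\mathcal I_n^\bullet$ is an injective (in particular $\Gamma(Y,-)$-acyclic) resolution of $\varprojlim\mathcal F_n$, and then compute $H^i(Y,\varprojlim\mathcal F_n) = H^i(\Gamma(Y,\varprojlim\mathcal I_n^\bullet)) = H^i(\varprojlim \Gamma(Y,\mathcal I_n^\bullet))$, to which one applies the classical $R^1\varprojlim$ short exact sequence for the complex of projective systems $(\Gamma(Y,\mathcal I_n^\bullet))_n$ — using that each $\Gamma(Y,\mathcal I_n^q)$ is $\varprojlim$-acyclic and the Mittag-Leffler-type bookkeeping. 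I expect to present this second, more hands-on argument as it keeps the proof self-contained.
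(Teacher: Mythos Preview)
Your proposal is correct and follows essentially the same route as the paper: compute the derived functors of $(\mathcal G_n)\mapsto\varprojlim\Gamma(Y,\mathcal G_n)$ via two Grothendieck spectral sequences—one factored through $\varprojlim$ on sheaves (degenerating by the hypothesis), the other through termwise $\Gamma(Y,-)$ (degenerating since $R^i\varprojlim$ on abelian groups vanishes for $i\geq 2$)—and compare. The paper's only cosmetic difference is that it justifies ``$\varprojlim$ sends injectives to injectives'' via the exact left adjoint (constant system) rather than flasqueness, and it cites Jannsen for the structure of injective projective systems exactly as you anticipated.
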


\begin{proof} This is essentially \cite[(1.6) Proposition]{Jan}. Let $\mathrm{Sh}$ denote the category of abelian sheaves on $T$, and $\mathrm{Sh}^{\mathbb N}$ the category of projective systems of abelian sheaves indexed by $\mathbb N$. Let $\mathrm{Ab}$ denote the category of abelian groups. Consider the functor
\[
\tau \colon \mathrm{Sh}^{\mathbb N}\longrightarrow \mathrm{Ab}, \quad (\mathcal G_n) \mapsto \varprojlim \Gamma(Y,\mathcal G_n).
\]
Then $\tau$ is left exact, and we can consider its right derived functor $R\tau (\mathcal F_n)$. By \cite[(1.6) Proposition]{Jan}, we have a short exact sequence for each $i\in \mathbb Z$
\[
0\lra R^1\varprojlim H^{i-1}(Y,\mathcal F_n)\lra R^i\tau (\mathcal F_n)\lra \varprojlim H^i(Y,\mathcal F_n)\lra 0.
\]
One the other hand, write $\tau$ as the composite of the following two functors
\[
\xymatrix{\mathrm{Sh}^{\mathbb N}\ar[r]^{\varprojlim} &  \mathrm{Sh}\ar[r]^{\Gamma(Y,-)}\ar[r] & \mathrm{Ab}}.
\]
The functor $\varprojlim \colon \mathrm{Sh}^{\mathbb N}\to \mathrm{Sh}$ admits an exact left adjoint given by sending a sheaf to its associated constant projective system, so it sends injectives to injectives. Thus $R\Gamma(Y,R\varprojlim \mathcal F_n)\simeq R\tau (\mathcal F_n)$, and $H^i(Y,R\varprojlim \mathcal F_n)\simeq R^{i}\tau (\mathcal F_n)$ for each $i$. Together with the short exact sequence above, we obtain our lemma.
\end{proof}

\begin{lemma}\label{lem.finiteness} Let $\LL$ be a lisse $\widehat{\ZZ}_p$-sheaf on $X_{\bk,\proet}$, and $\LL_n:=\LL/p^n$ for $n\in \mathbb N$. Then, for $i\in \mathbb Z$, $H^i(X_{\bk,\proet},\LL)\stackrel{\sim}{\ra}\varprojlim_n H^i(X_{\bk,\proet},\LL_n)$. Moreover, $H^i(X_{\bk, \proet},\LL)$ is a $\Zp$-module of finite type, and $H^i(X_{\bk,\proet},\LL)=0$ whenever $i\notin [0,2 \dim(X)]$.
\end{lemma}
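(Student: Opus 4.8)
The plan is to reduce everything to Scholze's finiteness theorem for $\FF_p$-local systems, using the identification of lisse $\widehat{\ZZ}_p$-sheaves with inverse systems of $\ZZ/p^n$-sheaves. First I would treat the case where $\LL$ is killed by some power of $p$, say $\LL = \LL_n$ is a sheaf of $\ZZ/p^n$-modules, locally constant with finitely generated stalks. By d\'evissage along the exact sequences $0\to p\LL'\to \LL'\to \LL'/p\to 0$ (for the various subquotients $\LL'$ of $\LL$, each still lisse), the finiteness and the vanishing range for such $\LL$ follow from the case $n=1$, i.e.\ from the finiteness of $H^i(X_{\bk,\proet},\LL')$ for $\LL'$ an $\FF_p$-local system. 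For the latter one invokes \cite[Theorem 5.1]{Sch13} (properness of $\cX$ gives properness of $X$), together with the comparison $H^i(X_{\bk,\proet},\LL')\simeq H^i(X_{\bk,\et},\LL')$ of \'etale and pro-\'etale cohomology for torsion coefficients \cite[Corollary 3.17]{Sch13} or \cite[Proposition 8.6]{Sch13}. The vanishing outside $[0,2\dim X]$ is then the standard bound on \'etale cohomological dimension of a proper variety of dimension $\dim X$ over the algebraically closed field $\bk$.

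Next I would pass to the general lisse $\widehat{\ZZ}_p$-sheaf $\LL$, writing $\LL = \varprojlim_n \LL/p^n\LL =: \varprojlim_n \LL_n$, where each $\LL_n$ is a lisse $\ZZ/p^n$-sheaf. The key point is to apply Lemma \ref{jannsen} to the projective system $(\LL_n)_n$ on the site $X_{\bk,\proet}$ with $Y = X_{\bk}$; for this one needs $R^j\varprojlim_n \LL_n = 0$ for $j>0$, which holds because the transition maps $\LL_{n+1}\to \LL_n$ are (locally, hence globally) surjective with kernel a lisse $\ZZ/p$-sheaf, so the system satisfies the Mittag-Leffler-type hypothesis of \cite[Lemma 3.18]{Sch13}. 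Lemma \ref{jannsen} then gives the short exact sequence
\[
0\lra R^1\varprojlim_n H^{i-1}(X_{\bk,\proet},\LL_n)\lra H^i(X_{\bk,\proet},\LL)\lra \varprojlim_n H^i(X_{\bk,\proet},\LL_n)\lra 0.
\]
By the torsion case, each $H^{i-1}(X_{\bk,\proet},\LL_n)$ is a finite $\ZZ/p^n$-module, so the system $\{H^{i-1}(X_{\bk,\proet},\LL_n)\}_n$ consists of finite groups and hence satisfies the Mittag-Leffler condition; therefore $R^1\varprojlim_n H^{i-1}(X_{\bk,\proet},\LL_n)=0$. This identifies $H^i(X_{\bk,\proet},\LL)$ with $\varprojlim_n H^i(X_{\bk,\proet},\LL_n)$, a projective limit of finite $\ZZ/p^n$-modules whose orders are uniformly bounded (again by the torsion case applied inductively, the length of $H^i(X_{\bk,\proet},\LL_n)$ grows at most linearly in $n$), which is a finitely generated $\Zp$-module. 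The vanishing for $i\notin[0,2\dim X]$ is inherited from the torsion case through this identification.

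The main obstacle is really bookkeeping rather than a deep new input: everything rests on Scholze's finiteness theorem and the $\varprojlim$-formalism, but one must be careful that (i) the transition maps in the system $(\LL_n)$ are genuinely surjective so that the hypotheses of Lemma \ref{jannsen} and of \cite[Lemma 3.18]{Sch13} apply — this uses that $\LL$ is locally of the form $\widehat{\ZZ}_p\otimes_{\Zp} M$ for a finitely generated $M$ — and (ii) the bound on the orders of $H^i(X_{\bk,\proet},\LL_n)$ is uniform in $n$, which one gets by noting $H^i(X_{\bk,\proet},\LL_n)$ has a finite filtration with graded pieces subquotients of $H^i(X_{\bk,\proet},\LL/p\LL)$ tensored appropriately, or more directly from the universal coefficient/d\'evissage argument. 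Once these two points are in place, the finiteness and the vanishing range follow formally.
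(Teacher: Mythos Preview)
Your overall strategy --- reduce to Scholze's finiteness for $\FF_p$-local systems, then use Lemma~\ref{jannsen} to identify $H^i(X_{\bk},\LL)$ with $\varprojlim_n H^i(X_{\bk},\LL_n)$ --- is exactly the paper's approach, and your treatment of the torsion case and of the $R^1\varprojlim$ term is fine. The gap is in the last step, where you conclude finite generation of $\varprojlim_n H^i(X_{\bk},\LL_n)$ from the assertion that the orders are ``uniformly bounded'' or that the lengths ``grow at most linearly in $n$''. The first statement is simply false (for $\LL$ locally free of positive rank the orders grow), and the second, while true, is not enough: the projective system $M_n=(\ZZ/p)^n$ with the obvious projections has linearly growing lengths and surjective transition maps, yet $\varprojlim M_n=\prod_{\NN}\ZZ/p$ is not finitely generated over $\Zp$. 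So you need an additional structural input beyond the size estimate.

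The paper supplies this by first reducing to the case where $\LL$ is locally free (splitting off the torsion subsheaf $\LL_{\rm tor}$, which is handled by your torsion argument). In the locally free case the tautological sequence $0\to\LL\stackrel{p^n}{\to}\LL\to\LL_n\to 0$ yields
\[
0\lra H^i(X_{\bk},\LL)/p^n\lra H^i(X_{\bk},\LL_n)\lra H^{i+1}(X_{\bk},\LL)[p^n]\lra 0.
\]
Since you have already shown $H^{i+1}(X_{\bk},\LL)\simeq\varprojlim_n H^{i+1}(X_{\bk},\LL_n)$ is a pro-$p$ group, it contains no nonzero element infinitely divisible by $p$; as the transition maps on the system $\{H^{i+1}(X_{\bk},\LL)[p^n]\}_n$ are multiplication by $p$, this forces $\varprojlim_n H^{i+1}(X_{\bk},\LL)[p^n]=0$. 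Passing to the limit then gives $H^i(X_{\bk},\LL)\simeq\varprojlim_n H^i(X_{\bk},\LL)/p^n$, so $H^i(X_{\bk},\LL)$ is $p$-adically complete; since $H^i(X_{\bk},\LL)/p$ injects into the finite group $H^i(X_{\bk},\LL_1)$, topological Nakayama gives finite generation. Your parenthetical mention of a ``universal coefficient'' argument is pointing in the right direction, but you need it to control $H^i(\LL)/p^n$, not just the length of $H^i(\LL_n)$.
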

\begin{proof} All the cohomology groups below are computed in the pro-\'etale site, so we shall omit the subscript {\textquotedblleft pro\'et\textquotedblright} from the notations.

Thanks to \cite[Proposition 8.2]{Sch}, $R^j\varprojlim_n \LL_n=0$ for $j>0$. So $\LL\stackrel{\sim}{\ra}R\varprojlim_n \LL_n$. Furthermore, by \cite[Theorem 5.1]{Sch}, $H^{i-1}(X_{\bk},\LL_n)$ is finite for all $n\in \mathbb N$. So $R^1\varprojlim H^{i-1}(X_{\bk},\LL_n)=0$. Consequently, by Lemma \ref{jannsen}, the morphism
\begin{equation*}
H^i(X_{\bk},\LL)\lra \varprojlim H^i(X_{\bk},\LL_n)
\end{equation*}
is an isomorphism, giving the first part of our lemma. In particular, $H^i(X_{\bk},\LL)=0$ whenever $i\notin[0,2\dim(X)]$ according to \cite[Theorem 5.1]{Sch}.

Let $\LL_{\textrm{tor}}$ be the torsion subsheaf of $\LL$. The remaining part of our lemma follows from the corresponding statements for $\LL_{\textrm{tor}}$ and for $\LL/\LL_{\textrm{tor}}$. Therefore, we assume that $\LL$ is either  of torsion or locally free of finite rank. In the first case, we reduce to the finiteness statement of Scholze (\cite[Theorem 5.1]{Sch}). So it suffices to consider the case where $\LL$ is locally free. Then, we have the exact sequence
\[
0\longrightarrow  \LL \stackrel{p^n}{\longrightarrow} \LL\longrightarrow \LL_n\longrightarrow 0,
\]
inducing the following short exact sequence
\begin{equation}\label{eq.exact0}
0\longrightarrow H^i(X_{\bk},\LL)/p^n \longrightarrow H^i(X_{\bk},\LL_n)\longrightarrow H^{i+1}(X_{\bk},\LL)[p^n]\longrightarrow 0.
\end{equation}
By the first part of our lemma, $H^{i+1}(X_{\bk},\LL)\stackrel{\sim}{\to} \varprojlim H^{i+1}(X_{\bk},\LL_n)$ is a pro-$p$ abelian group, hence it does not contain any element infinitely divisible by $p$. Thus, $\varprojlim  (H^{i+1}(X_{\bk},\LL)[p^n])=0$ (the transition map is multiplication by $p$). From the exactness of \eqref{eq.exact0}, we deduce a canonical isomorphism
\[
\varprojlim \left(H^i(X_{\bk},\LL)/p^n\right)\simto \varprojlim H^{i}(X_{\bk},\LL_n).
\]
So $H^i(X_{\bk},\LL)\simto \varprojlim_n H^i(X_{\bk},\LL)/p^n$, and $H^i(X_{\bk},\LL)$ is $p$-adically complete. Thus it can be generated as a $\Zp$-module by a family of elements whose images in $H^i(X_{\bk},\LL)/p$ generate it as an $\Fp$-vector space. The latter is finite dimensional over $\Fp$: recall the inclusion $H^i(X_{\bk},\LL)/p\hookrightarrow H^i(X_{\bk},\LL_1)$ by \eqref{eq.exact0}. So the $\Zp$-module $H^i(X_{\bk},\LL)$ is of finite type, as desired.
\end{proof}

The primitive form of the crystalline comparison isomorphism on the pro-\'etale site is as follows.

\begin{thm} \label{inout}Let $\LL$ be a lisse $\widehat{\ZZ}_p$-sheaf on $X_{\proet}$. Then the natural morphism of $B_{\cris}$-modules below is a filtered isomorphism
\begin{equation}\label{eq.PrimitiveIso}
H^i(X_{\bk,\proet},\LL)\otimes_{\Zp}B_{\cris}\simto H^i(X_{\bk,\proet},\LL\otimes_{\widehat{\ZZ}_p}\mathbb B_{\cris}),
\end{equation}
compatible with Galois action and Frobenius. 
\end{thm}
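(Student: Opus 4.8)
The strategy is to follow Scholze's proof of the primitive comparison theorem in the de Rham case (\cite[Theorem 8.4]{Sch13}), but with the structure sheaf $\hOX$ replaced by $\BBcrp$ (or rather its mod $p$ truncations). First I would reduce to a statement modulo $p$. Writing $\mathbb B_{\cris}^{+}/p^n$ for the appropriate truncation and using that $\AAcr$ is $p$-torsion free (Corollary~\ref{cor.DespOfAcris0}) together with the acyclicity results of Lemma~\ref{vanish} and Lemma~\ref{jannsen}, the cohomology $H^i(X_{\bk,\proet},\LL\otimes\BBcrp)$ can be recovered as an inverse limit of the $H^i(X_{\bk,\proet},\LL\otimes\AAcr/p^n)$ up to the usual $R^1\varprojlim$ correction. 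So it suffices to prove an almost isomorphism
\[
H^i(X_{\bk,\proet},\LL/p^n)\otimes_{\ZZ/p^n}\AAcr/p^n \stackrel{\approx}{\lra} H^i(X_{\bk,\proet},\LL\otimes_{\widehat{\ZZ}_p}\AAcr/p^n)
\]
for each $n$, compatibly in $n$, and then pass to the limit and invert $p$.

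For the mod $p$ statement I would first treat $n=1$. Here $\AAcr/p$ is, by the description recalled in the proof of Lemma~\ref{algebra}, a filtered colimit (indeed a direct sum as an abelian sheaf, via the $\widetilde{\AAcr^0}$ presentation in Lemma~\ref{vanish}) of sheaves that are, up to almost isomorphism, built out of $\cO_X^{\flat+}/(p^\flat)^p\cong\cO_X^+/p$. Since $\LL/p$ is an $\FF_p$-local system and cohomology commutes with the relevant filtered colimits on the qcqs space $X_{\bk}$ (using properness and Scholze's finiteness, Lemma~\ref{lem.finiteness}), the problem reduces to the primitive comparison isomorphism for $\LL/p\otimes\cO_X^+/p$, which is exactly \cite[Theorem 5.1]{Sch13} (the almost isomorphism $H^i(X_{\bk,\et},\LL/p)\otimes_{\FF_p}\cO_K/p\stackrel{\approx}{\to}H^i(X_{\bk,\et},\LL/p\otimes\cO_X^+/p)$ quoted in the introduction). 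One must check that the algebra structure of $\AAcr/p$ over $\cO_X^+/p$ is flat enough that tensoring the comparison isomorphism through is legitimate; this is where the explicit presentation of $\AAcr/p$ as a free module (after base change) over the perfection is used. The passage from $n=1$ to general $n$ is then a dévissage along the exact sequences $0\to\AAcr/p\to\AAcr/p^n\to\AAcr/p^{n-1}\to0$ (valid by $p$-torsion freeness) and the corresponding five-lemma argument, using the finiteness of all the groups involved.

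Having the mod $p^n$ comparison, I would take $\varprojlim_n$, invoking Lemma~\ref{jannsen} and the almost vanishing of $R^1\varprojlim$ (the transition maps are almost surjective since the groups are finite, cf. the argument in Lemma~\ref{vanish}), to get an almost isomorphism $H^i(X_{\bk,\proet},\LL)\otimes_{\Zp}\AAcr\stackrel{\approx}{\to}H^i(X_{\bk,\proet},\LL\otimes\AAcr)$; here Lemma~\ref{lem.finiteness} guarantees the left side is a finitely generated $\Zp$-module so the completed tensor product is the honest one. Inverting $p$ kills the ``almost'', yielding the isomorphism \eqref{eq.PrimitiveIso} over $B_{\cris}^+=\AAcr[1/p]$. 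Finally, compatibility with the Galois action of $G_k$ is automatic from functoriality (each $g\in G_k$ acts on $X_{\bk}$ as an object of $X_{\proet}$); compatibility with Frobenius follows because the Frobenius on $\BBcrp$ constructed in \S\ref{localfrob} is functorial and commutes with all the maps used, and compatibility with the filtration follows from the strict exactness of the filtered pieces $\Fil^r\AAcr$ established in Corollary~\ref{cor.DespOfAcris0}, together with the acyclicity of $\Fil^r\AAcr$ in Lemma~\ref{vanish}, which lets one run the same dévissage on the filtered level.

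\textbf{Main obstacle.} The delicate point is the $n=1$ step: identifying $\AAcr/p$, as a sheaf of $\cO_X^+/p$-algebras on $X_{\bk,\proet}$, precisely enough to reduce the comparison to Scholze's $\FF_p$-coefficient theorem. One cannot simply say $\AAcr/p$ is flat over $\cO_X^+/p$ (it is not, as a module); instead one uses that after the faithfully flat cover trivializing $\cO_X^{\flat+}$ it becomes a filtered colimit of finite free modules over $\cO_X^+/p$-algebras of a controlled shape (the divided-power variables $\delta_i$, which are nilpotent), and that cohomology on the qcqs $X_{\bk}$ commutes with these colimits and the local structure is almost acyclic by Lemma~\ref{vanish}. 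Making the bookkeeping of the almost-mathematics and the colimits uniform in $n$ — so that the dévissage and the final $\varprojlim$ go through — is the technical heart of the argument.
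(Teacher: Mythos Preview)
Your overall architecture is close to the paper's, but there is a gap precisely at the point you flag as the main obstacle. The identification $\cO_X^{\flat+}/(p^\flat)^p\cong\cO_X^+/p$ is false: what holds is $\cO_X^{\flat+}/p^\flat\cong\cO_X^+/p$, and $(p^\flat)^p$ sits much deeper. More importantly, the $\widetilde{\AAcr^0}$ presentation you invoke exhibits $\widetilde{\AAcr^0}$ as a direct sum of copies of $\AAinf$, not of $\cO_X^+/p$. So the input one actually needs is the $\AAinf$-comparison $H^i(X_{\bk},\LL)\otimes_{\Zp} A_{\inf}\stackrel{\approx}{\to}H^i(X_{\bk},\LL\otimes\AAinf)$, which is \cite[Theorem~8.4]{Sch13}, not the $\FF_p$-coefficient Theorem~5.1. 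Passing from $\cO_X^+/p$ up to $\AAinf$ is itself a nontrivial inverse-limit argument (this is essentially the content of Theorem~8.4), so your reduction as written does not close.

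The paper takes \cite[Theorem~8.4]{Sch13} as the starting point, deduces the almost isomorphism for $\widetilde{\AAcr^0}$ immediately (direct sum of $\AAinf$'s), and then uses the short exact sequence $0\to\widetilde{\AAcr^0}\xrightarrow{X_0-\xi}\widetilde{\AAcr^0}\to\AAcr^0\to 0$ to get the almost isomorphism for $\AAcr^0$ itself --- integrally, before any reduction mod $p^n$. Only afterwards does it reduce mod $p^n$ and complete; the completion step is handled not by your proposed five-lemma d\'evissage from $n=1$ upward, but by directly bounding the kernel and cokernel of $H^i(X_{\bk},\LL\otimes\AAcr^0)/p^n\to H^i(X_{\bk},\LL\otimes\AAcr^0/p^n)$ by $p^{2N}$, where $p^N$ kills both $\LL_{\mathrm{tor}}$ and all $H^i(X_{\bk},\LL)_{\mathrm{tor}}$ (via Lemma~\ref{lem.finiteness}). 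This fixed power of $p$ then also controls the $R^1\varprojlim$ term --- note that your claim that ``the groups are finite'' is wrong for $H^i(X_{\bk},\LL\otimes\AAcr/p^n)$, so the $R^1\varprojlim$ vanishing needs this argument instead. Your treatment of the filtration compatibility (pass to graded pieces and invoke the $\widehat{\cO}_X$-comparison) matches the paper; that part is Lemma~\ref{primitiveofstructuresheaf}.
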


\begin{proof}

In the following, all the cohomologies are computed on the pro-\'etale site, so we  omit the subscript {\textquotedblleft \textrm{pro\'et}\textquotedblright} from the notations.

If $\LL$ is of torsion, our theorem is obvious since both sides of \eqref{eq.PrimitiveIso} are trivial. Therefore, it suffices to consider the case where $\LL$ is locally a free lisse $\widehat{\ZZ}_p$-module. Let $\LL_n=\LL/p^n$, $n\in \mathbb N$. So, we have the short exact sequence \eqref{eq.exact0}, from which we deduce a short exact sequence of projective systems since $A_{\cris}$ is flat over $\Zp$
\begin{equation*}
\begin{array}{rcl}
0\longrightarrow \left(H^i(X_{\bk},\LL)\otimes A_{\cris}/p^n\right)_n & \longrightarrow & \left(H^i(X_{\bk},\LL_n)\otimes A_{\cris}\right)_n \\ & \longrightarrow & \left(\left(H^{i+1}(X_{\bk},\LL)[p^n]\right)\otimes A_{\cris}\right)_n\longrightarrow 0.
\end{array}
\end{equation*}
Because $H^{i+1}(X_{\bk},\LL)$ is a finite $\Zp$-modules, $\varprojlim_n (H^{i+1}(X_{\bk},\LL)[p^n]\otimes A_{\cris})=0$ (the transition map is multiplication by $p$). Thus, we get
\begin{equation}\label{eq.limit-tensor-Acris-iso}
H^i(X_{\bk},\LL)\otimes A_{\cris}\simeq \varprojlim_n H^i(X_{\bk},\LL)\otimes A_{\cris}/p^n\stackrel{\sim}{\lra} \varprojlim_n \left(H^i(X_{\bk},\LL_n)\otimes A_{\cris}\right).
\end{equation}
Here we have the first identification since $H^i(X_{\bk},\LL)\otimes A_{\cris}$ is $p$-adically complete thanks to the fact that $H^i(X_{\bk},\LL)$ is a finite $\Zp$-module.

Next, we claim that, for all $i\geq 0$, the canonical map of $A_{\cris}/p^n$-modules
\begin{equation}\label{eq.isomorphism-Acris-mod-pn}
H^i(X_{\bk}, \LL_n)\otimes_{\Zp}A_{\cris}\lra H^i(X_{\bk},\LL_n\otimes_{\widehat{\ZZ}_p}\mathbb A_{\cris})
\end{equation}
is an almost isomorphism. Since $A_{\cris}$ and $\mathbb A_{\cris}$ are flat respectively over $\Zp$ and $\widehat{\ZZ}_p$, by induction on $n$, it suffices to show that the natural map of $A_{\cris}/p$-modules
\[
H^i(X_{\bk},\mathbb K)\otimes_{\Zp}A_{\cris}\lra H^i(X_{\bk}, \mathbb K\otimes_{\widehat{\ZZ}_p}\mathbb A_{\cris})
\]
is an almost isomorphism, where $\mathbb K$ is an $\mathbb F_p$-local system on $X_{\proet}$. In this case, one can rewrite the morphism above as
\begin{equation}\label{eq.isomorphism-Acris-mod-p}
H^i(X_{\bk},\mathbb K)\otimes_{\mathbb F_p}A_{\cris}/p\lra H^i(X_{\bk}, \mathbb K\otimes_{\mathbb F_p}\mathbb A_{\cris}/p).
\end{equation}
Recall the following identification of $A_{\cris}/p$ (see \cite[Proposition 6.1.2]{Bri})
\[
A_{\cris}/p\stackrel{\sim}{\lra} (\cO_{\Cp}^{\flat}/(p^{\flat})^p)[\delta_i:i \in \mathbb N]/(\delta_i^p:i\in \mathbb N),
\]
with $\delta_i$ the image of $\xi^{[p^{i+1}]}$. Similarly, on $X_{\proet}/X_{\bk}$, we have
\[
\AAcr/p\stackrel{\sim}{\lra} (\cO_{X}^{\flat+}/(p^{\flat})^p)[\delta_i:i\in \mathbb N]/(\delta_i^p:i\in \mathbb N).
\]
As $X_{\bk}$ is qcqs, to show that \eqref{eq.isomorphism-Acris-mod-p} is an almost isomorphism, it suffices to check that it is the case for the map
\[
H^i(X_{\bk},\mathbb K)\otimes_{\mathbb F_p}\cO_{\Cp}^{\flat}/(p^{\flat})^p\lra H^i(X_{\bk}, \mathbb K\otimes_{\mathbb F_p}\cO_{X}^{\flat+}/(p^{\flat})^p).
\]
Using the $p^{\flat}$-adic filtration on $\cO_{\Cp}^{\flat}/(p^{\flat})^p$ and on $\cO_X^{\flat+}/(p^{\flat})^p$, one reduces further to showing that the natural map
\[
H^i(X_{\bk},\mathbb K)\otimes_{\mathbb F_p}\cO_{\Cp}^{\flat}/(p^{\flat})\lra H^i(X_{\bk}, \mathbb K\otimes_{\mathbb F_p}\cO_{X}^{\flat+}/(p^{\flat})),
\]
is an almost isomorphism. But this is proved in \cite[Theorem 5.1]{Sch} since $\cO_{\Cp}^{\flat}/p^{\flat}\simeq \cO_{\Cp}/p$ and $\cO_X^{\flat +}/p^{\flat}\simeq \cO_X^+/p$: recall that the almost-setting adopted here for $A_{\cris}/p$-modules is the same as the one used by Scholze. Consequently, the map \eqref{eq.isomorphism-Acris-mod-pn} is an almost isomorphism. Varying $n$ in \eqref{eq.isomorphism-Acris-mod-pn}, we obtain a morphism of projective systems of $A_{\cris}$-modules, with kernel and cokernel killed by $\mathcal I$. Passing to limits relative to $n$ and using \eqref{eq.limit-tensor-Acris-iso}, one deduces a natural morphism of $A_{\cris}$-modules
\[
H^i(X_{\bk},\LL)\otimes_{\Zp}A_{\cris}\simeq \varprojlim_n H^i(X_{\bk},\LL_n)\otimes_{\Zp}A_{\cris}\lra \varprojlim_n H^i(X_{\bk},\LL_n\otimes_{\widehat{\ZZ}_p}\AAcr),
\]
with kernel and cokernel killed by $\mathcal I^2$. Moreover, we have $\mathcal I\cdot R^1\varprojlim_n H^i(X_{\bk},\LL_n\otimes \AAcr)=0$ since $R^1\varprojlim \left(H^i(X_{\bk},\LL_n)\otimes A_{\cris}\right)=0$.

Then, we claim that the $A_{\cris}$-module $R^j\varprojlim (\LL_n\otimes_{\widehat{\ZZ}_p}\AAcr)$ is killed by $\mathcal I^2$ for $j>0$. The question being local on $X_{\proet}$, we may and do assume $\LL=M\otimes_{\Zp}\widehat{\ZZ}_p$ with $M$ a finitely generated free $\Zp$-module (recall that we have assumed that $\LL$ is locally free over $\widehat{\ZZ}_{p}$). So our claim in this case follows from Lemma \ref{lem.technical-I}. As a result, in the spectral sequence below
\[
E_2^{i,j}=H^i(X_{\bk},R^j\varprojlim (\LL_n\otimes \AAcr))\Longrightarrow H^{i+j}(X_{\bk}, R\varprojlim (\LL_n\otimes\AAcr)),
\]
we have $\mathcal I^2\cdot E_{2}^{i,j}=0$ for $j>0$ and $E_{\infty}^{i,0}=E_{i+1}^{i,0}$. Moreover, the natural surjection $E_{2}^{i,0}\ra E_{\infty}^{i,0}$ has kernel killed by $\mathcal I^{2i-2}$. It follows that the canonical map
\[
H^i(X_{\bk},\LL\otimes_{\widehat{\ZZ}_p}\AAcr) \lra H^i(X_{\bk},R\varprojlim (\LL_n\otimes_{\widehat{\ZZ}_p}\AAcr))
\]
has kernel killed by $\mathcal I^{2i-2}$ and cokernel killed by $\mathcal I^{2i}$. On the other hand, by Lemma \ref{jannsen}, the kernel of the canonical surjective morphism
\[
H^i(X_{\bk},R\varprojlim_n (\LL_n\otimes_{\widehat{\ZZ}_p}\AAcr))\lra \varprojlim_n H^i(X_{\bk},\LL_n\otimes_{\widehat{\ZZ}_p}\AAcr)
\]
is $R^1\varprojlim_n H^{i-1}(X_{\bk},\LL_n\otimes_{\widehat{\ZZ}_p}\AAcr)$, thus is killed by $\mathcal I$ by what we have shown above. Therefore, the kernel and cokernel of the composed map
\[
H^i(X_{\bk},\LL\otimes \AAcr)\lra H^i(X_{\bk}, R\varprojlim_n (\LL_n\otimes \AAcr))\lra \varprojlim_n H^i(X_{\bk},\LL_n\otimes \AAcr)
\]
are killed by $\mathcal I^{2i}$. Finally, from the commutative square
\[
\xymatrix{H^i(X_{\bk},\LL)\otimes A_{\cris}\ar[rr]^<<<<<<<<<<{\sim}_<<<<<<<<<<{\eqref{eq.limit-tensor-Acris-iso}}\ar[d]&  & \varprojlim \left(H^i(X_{\bk},\LL_n)\otimes A_{\cris}\right)\ar[d]^{\textrm{iso. up to }\mathcal I^2}_{\varprojlim_n \eqref{eq.isomorphism-Acris-mod-pn}}  \\ H^i(X_{\bk},\LL\otimes \AAcr)\ar[rr]^{\textrm{iso. up to }\mathcal I^{2i}} & & \varprojlim H^i(X_{\bk}, \LL_n\otimes\AAcr)},
\]
we deduce that the natural map below has kernel and cokernel killed by $\mathcal I^{2i+2}$, hence by $t^{2i+2}$:
\[
H^i(X_{\bk},\LL)\otimes_{\Zp} A_{\cris} \lra H^i(X_{\bk}, \LL\otimes_{\widehat{\ZZ}_p}\AAcr).
\]
Inverting $t$, we get the required isomorphism \eqref{eq.PrimitiveIso}.

We still need to check that \eqref{eq.PrimitiveIso} is compatible with the extra structures. Clearly only the strict compatibility with filtrations needs verification, and it suffices to check this on gradeds. So we reduce to showing that the natural morphism is an isomorphism:
\[
H^i(X_{\bk},\LL)\otimes_{\Zp} \mathbb C_p(j) \lra H^i(X_{\bk},\LL\otimes \widehat{\cO}_X(j)).
\]
Twisting, one reduces to $j=0$, which is given by the following lemma.
\end{proof}
\begin{lemma}\label{primitiveofstructuresheaf} Let $\LL$ be a lisse $\widehat{\ZZ}_p$-sheaf on $X_{\bk,\proet}$. Then the following natural morphism is an isomorphism:
\[
H^i(X_{\bk,\proet}, \LL)\otimes_{\Zp}\mathbb C_p \stackrel{\approx}{ \lra} H^i\left(X_{\bk,\proet}, \LL\otimes_{\widehat{\Z}_p} \widehat{\cO}_X\right),
\]
where $\widehat{\cO}_{X}$ is the completed structural sheaf of $X_{\bk,\proet}$ and $\mathbb C_p=\widehat{\bk}$.
\end{lemma}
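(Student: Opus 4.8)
The plan is to deduce the statement from Scholze's primitive comparison theorem for $\AAinf$ by ``dividing by'' the element $\xi=[p^{\flat}]-p$. First I would reduce to the case that $\LL$ is locally free over $\widehat{\ZZ}_p$. Let $\LL_{\rm tor}\subset\LL$ be the torsion subsheaf and $\LL'=\LL/\LL_{\rm tor}$; then $\LL_{\rm tor}$ is killed by some $p^N$ and $\LL'$ is a locally free $\widehat{\ZZ}_p$-sheaf. Since $\widehat{\cO}_X=\widehat{\cO_X^+}[1/p]$ annihilates $\LL_{\rm tor}$ and $\LL'$ is $\widehat{\ZZ}_p$-flat, the sequence $0\to\LL_{\rm tor}\to\LL\to\LL'\to 0$ gives $\LL\otimes_{\widehat{\ZZ}_p}\widehat{\cO}_X\simeq\LL'\otimes_{\widehat{\ZZ}_p}\widehat{\cO}_X$; moreover $H^\bullet(X_{\bk,\proet},\LL_{\rm tor})$ is finite by Lemma~\ref{lem.finiteness}, hence killed by $-\otimes_{\Zp}\Cp$, so the long exact cohomology sequence yields $H^i(X_{\bk,\proet},\LL)\otimes_{\Zp}\Cp\simeq H^i(X_{\bk,\proet},\LL')\otimes_{\Zp}\Cp$, compatibly with the morphisms in the statement. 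So from now on $\LL$ may be assumed locally free.

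By Lemma~\ref{nonzerodivisor}, $\ker(\theta)|_{X_{\bk}}=(\xi)$ with $\xi$ a non-zero-divisor on $\AAinf|_{X_{\bk}}$, so $\widehat{\cO_X^+}|_{X_{\bk}}=\AAinf|_{X_{\bk}}/\xi$; tensoring with the locally free sheaf $\LL$ produces a short exact sequence of sheaves on $X_{\proet}/X_{\bk}$,
\[
0\longrightarrow \LL\otimes_{\widehat{\ZZ}_p}\AAinf \stackrel{\xi}{\longrightarrow} \LL\otimes_{\widehat{\ZZ}_p}\AAinf \longrightarrow \LL\otimes_{\widehat{\ZZ}_p}\widehat{\cO_X^+}\longrightarrow 0,
\]
whose cohomology long exact sequence has connecting maps equal to multiplication by $\xi$. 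Next I would invoke \cite[Theorem 8.4]{Sch13}: the natural $A_{\inf}$-linear maps $H^j(X_{\bk,\proet},\LL)\otimes_{\Zp}A_{\inf}\stackrel{\approx}{\to}H^j(X_{\bk,\proet},\LL\otimes_{\widehat{\ZZ}_p}\AAinf)$ are almost isomorphisms, and they are compatible with multiplication by $\xi$ (acting as $1\otimes\xi$ on the source). Here $A_{\inf}=W(\cO_{\Cp}^{\flat})$; since $A_{\inf}/pA_{\inf}=\cO_{\Cp}^{\flat}$ is a domain and $\xi\equiv[p^{\flat}]\not\equiv 0\bmod p$, the element $\xi$ is a non-zero-divisor on every $A_{\inf}/p^mA_{\inf}$, hence — as $H^j(X_{\bk,\proet},\LL)$ is a finitely generated $\Zp$-module by Lemma~\ref{lem.finiteness} — also on $H^j(X_{\bk,\proet},\LL)\otimes_{\Zp}A_{\inf}$.

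A short almost-algebra argument then upgrades this to: $\xi$ is an almost-non-zero-divisor on each $H^j(X_{\bk,\proet},\LL\otimes_{\widehat{\ZZ}_p}\AAinf)$ — one uses that multiplication by an element of $\mathfrak{m}_{\cO_{\Cp}}$ factors any class through the almost-isomorphic ``model'' $H^j\otimes_{\Zp}A_{\inf}$, on which $\xi$ is genuinely injective, together with $\mathfrak{m}_{\cO_{\Cp}}^2=\mathfrak{m}_{\cO_{\Cp}}$. Therefore the long exact sequence splits, in the category of almost $\cO_{\Cp}$-modules, into short exact sequences, and taking cokernels gives almost isomorphisms
\[
H^i(X_{\bk,\proet},\LL\otimes_{\widehat{\ZZ}_p}\widehat{\cO_X^+})\approx \big(H^i(X_{\bk,\proet},\LL)\otimes_{\Zp}A_{\inf}\big)\big/\xi = H^i(X_{\bk,\proet},\LL)\otimes_{\Zp}(A_{\inf}/\xi) = H^i(X_{\bk,\proet},\LL)\otimes_{\Zp}\cO_{\Cp}.
\]
Inverting $p$ turns this almost isomorphism into an honest one (an almost-zero module is killed by $p\in\mathfrak{m}_{\cO_{\Cp}}$), and since $X_{\bk}$ is qcqs, $H^i$ commutes with the localization $\widehat{\cO_X^+}\rightsquigarrow\widehat{\cO_X^+}[1/p]=\widehat{\cO}_X$; so $H^i(X_{\bk,\proet},\LL)\otimes_{\Zp}\Cp\stackrel{\sim}{\to}H^i(X_{\bk,\proet},\LL\otimes_{\widehat{\ZZ}_p}\widehat{\cO}_X)$, and unwinding the construction (Scholze's map is $A_{\inf}$-linear, hence compatible with reduction modulo $\xi$) identifies it with the natural morphism of the statement.

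The main obstacle is precisely the bookkeeping in the third step: \cite[Theorem 8.4]{Sch13} only determines $H^j(X_{\bk,\proet},\LL\otimes\AAinf)$ up to almost isomorphism, so $\xi$ is \emph{a priori} a genuine non-zero-divisor only on $H^j(X_{\bk,\proet},\LL)\otimes_{\Zp}A_{\inf}$, and one has to verify with care that this still suffices to split the long exact sequence after passing to almost modules. Everything else — the reduction to the locally free case, the flatness inputs, and the identification of the resulting map with the natural one — is routine.
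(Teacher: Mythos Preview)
Your argument is correct, and it takes a genuinely different route from the paper's own proof.

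The paper proves the lemma by working modulo $p^n$: it sets $\LL_n=\LL/p^n\LL$, invokes Scholze's primitive comparison for $\Fp$-local systems \cite[Theorem 5.1]{Sch13} (together with a d\'evissage along $\{p^m\LL_n\}_m$) to get almost isomorphisms
\[
H^i(X_{\bk},\LL_n)\otimes_{\Zp}\cO_{\Cp}\stackrel{\approx}{\lra} H^i\bigl(X_{\bk},\LL_n\otimes\widehat{\cO}_X^{+}\bigr),
\]
and then passes to the inverse limit over $n$ using Lemma~\ref{jannsen} and the $p^N$-torsion bookkeeping familiar from the proof of Theorem~\ref{inout}.

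You instead feed in the stronger input \cite[Theorem 8.4]{Sch13} (the $\AAinf$-comparison, which the paper already uses in Theorem~\ref{inout}) and reduce modulo $\xi$: the short exact sequence $0\to\AAinf\stackrel{\xi}{\to}\AAinf\to\widehat{\cO_X^+}\to 0$ lets you read off the $\cO_{\Cp}$-statement in one stroke once you know $\xi$ is an almost non-zero-divisor on $H^j(X_{\bk},\LL\otimes\AAinf)$. Your reduction to the locally free case is fine (and in fact not strictly necessary: $\widehat{\cO_X^+}$ is $p$-torsion free, hence $\widehat{\ZZ}_p$-flat, so the tensored sequence stays exact regardless), and your almost-algebra verification that $\xi$ remains an almost non-zero-divisor after transporting along an almost isomorphism is exactly the right point to check and goes through using $\mathfrak m^2=\mathfrak m$ as you indicate.

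What each approach buys: the paper's route is more self-contained, relying only on the $\Fp$-level finiteness from \cite{Sch13}, and runs in parallel with its proof of Theorem~\ref{inout}. Your route is shorter and more conceptual --- it makes transparent that Lemma~\ref{primitiveofstructuresheaf} is literally the $\AAinf$-comparison modulo $\xi$ --- at the cost of invoking the heavier \cite[Theorem 8.4]{Sch13}, which is however already on the table.
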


\begin{proof} It suffices to show that the natural morphism of $\cO_{\Cp}$-modules
\[
H^i(X_{\bk,\proet}, \LL)\otimes_{\Zp}\cO_{\Cp}\lra H^i(X_{\bk,\proet}, \LL\otimes_{\widehat{\ZZ}_p}\widehat{\cO}_X^+)
\]
has kernel and cokernel annihilated by some power of $\mathcal I\cO_{\Cp}$. The proof is similar to that of the first part of Theorem \ref{inout}, so we omit the details here.
\end{proof}

Recall that the notion of lisse $\Zp$-sheaf on $X_{\et}$ and lisse $\widehat{\ZZ}_p$-sheaf on $X_{\proet}$ are equivalent. Combining Theorem \ref{main1} and Theorem \ref{inout}, we finally deduce the following crystalline comparison theorem:
\begin{thm}\label{thm.comp}Let $\cX$ be a proper smooth formal scheme over $\cO_k$, with $X$ (resp. $\cX_0$) its generic (resp. closed) fiber. Let $\LL$ be a lisse $\widehat{\ZZ}_p$-sheaf on $X_{\proet}$, associated to a filtered $F$-isocrystal $\mathcal E$ on $\cX_0/\cO_k$. Then there exists a functorial filtered isomorphism
\[
H^i(X_{\bk,\et},\LL)\otimes_{\Zp}B_{\cris}\simto H^i_{\cris}(\cX_0/\cO_k, \mathcal E)\otimes_{\cO_k}B_{\cris}
\]
of $B_{\cris}$-modules, compatible with Galois action and Frobenius.
\end{thm}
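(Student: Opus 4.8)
The plan is to combine the two main inputs already established: the ``primitive'' crystalline comparison isomorphism of Theorem~\ref{inout} and the description of the pro-\'etale cohomology of $\LL\otimes\BBcr$ in terms of crystalline cohomology from Theorem~\ref{main1}. First I would invert $t$ in Theorem~\ref{inout}: since $B_{\cris}=B_{\cris}^+[1/t]$ and $\BBcr=\BBcrp[1/t]$, and since cohomology commutes with the filtered colimit along multiplication by $t$ (the complexes involved are bounded and $X_{\bk}$ is qcqs, so $H^i$ commutes with this colimit), \eqref{eq.PrimitiveIso} yields a canonical isomorphism
\[
H^i(X_{\bk,\proet},\LL)\otimes_{\Zp}B_{\cris}\;\simto\;H^i(X_{\bk,\proet},\LL\otimes_{\widehat{\ZZ}_p}\BBcr)
\]
compatible with Galois action, filtration and Frobenius. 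Here I also use Lemma~\ref{lem.finiteness} to know that $H^i(X_{\bk,\proet},\LL)$ is a finite $\Zp$-module, so that the tensor product on the left is well-behaved and the Frobenius on the left factor is simply the identity.

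Next I would invoke Theorem~\ref{main1} directly: since $\LL$ and $\cE$ are associated, it provides a natural filtered isomorphism of $B_{\cris}$-modules
\[
H^i(X_{\bk,\proet},\LL\otimes_{\widehat{\ZZ}_p}\BBcr)\;\simto\;H^i_{\cris}(\cX_0/\cO_k,\cE)\otimes_k B_{\cris}
\]
compatible with Frobenius and Galois action (the Galois action on the target being through $B_{\cris}$ only). Composing the two displayed isomorphisms gives the desired comparison isomorphism. The compatibility with Galois action and with the filtration is then immediate by transitivity, since each of the two isomorphisms has the stated compatibilities; for the filtration one should note that both filtrations in sight are the ones induced from the filtration on $\BBcr$ (resp.\ on $B_{\cris}$ and on $DR(\cE)$), so the composite is strictly filtered.

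The one point requiring a little care is the \emph{Frobenius} compatibility, because the Frobenius structures enter the two halves of the argument differently. On the primitive comparison side the Frobenius acts diagonally on $H^i(X_{\bk,\proet},\LL)\otimes_{\Zp}B_{\cris}$ as $\mathrm{id}\otimes\varphi$, while on the crystalline side $H^i_{\cris}(\cX_0/\cO_k,\cE)$ carries the Frobenius $\psi/p^n$ coming from an $F$-crystal model $\cM$ with $\cE\simeq\cM^{\mathrm{an}}(n)$, and one tensors with $\varphi$ on $B_{\cris}$. Theorem~\ref{main1} has already been proved to be Frobenius-compatible precisely with these conventions (its proof checks $\varphi\circ\theta=\frac{1}{p^n}\theta\circ\psi$ locally on $\cX$ using liftings of Frobenius $\sigma$ on small opens, together with Lemma~\ref{sigma12} and Lemma~\ref{fsigma12} to glue), so no new work is needed; one just has to make sure that the identification $B_{\cris}=B_{\cris}^+[1/t]$ used to invert $t$ in Theorem~\ref{inout} is compatible with the Frobenius on $B_{\cris}$, which holds by construction since $\varphi(t)=pt$ is invertible in $B_{\cris}$. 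I expect this bookkeeping of the Frobenius conventions across the two theorems to be the main (though minor) obstacle; everything else is a formal concatenation. Finally, I would record that, by \cite[Proposition~8.2]{Sch13}, a lisse $\Zp$-sheaf on $X_{\et}$ and a lisse $\widehat{\ZZ}_p$-sheaf on $X_{\proet}$ are the same datum and $H^i(X_{\bk,\et},\LL)\simeq H^i(X_{\bk,\proet},\LL)$, so the statement may be phrased with \'etale cohomology as in the theorem.
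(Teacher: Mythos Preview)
Your proposal is correct and follows exactly the paper's approach: the paper's proof is the single sentence ``This is just the composition of the isomorphisms in Theorem~\ref{main1} and Theorem~\ref{inout}.'' You have simply made explicit the implicit step of inverting $t$ (to pass from the $B_{\cris}^+$-statement of Theorem~\ref{inout} to the $B_{\cris}$-statement needed to chain with Theorem~\ref{main1}) and the bookkeeping of the Frobenius and filtration compatibilities, all of which the paper leaves to the reader.
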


\section{Comparison isomorphism in the relative setting}\label{sec.relative}
\setcounter{equation}{0}

Let $f\colon \cX\to \cY$ be a  smooth morphism between two smooth formal schemes over $\Spf(\cO_k)$ of relative dimension $d\geq 0$. The induced morphism between the generic fibers will be denoted by $f_k\colon X\to Y$. We shall denote by $w_{\cX}$ (resp. $w_{\cY}$) the natural morphism of topoi $X_{\proet}^{\sim}\to \cX_{\et}^{\sim}$ (resp. $Y_{\proet}^{\sim}\to \cY_{\et}^{\sim}$). By abuse of notation, the morphism of topoi $X_{\proet}^{\sim}\to Y_{\proet}^{\sim}$ will be still denoted by $f_k$.

Let $\nabla_{X/Y}: \cO\mathbb A_{\cris,X}\ra \cO\mathbb A_{\cris,X}\otimes_{\cO_X^{\ur+}}\Omega_{X/Y}^{1,\ur+}$ be the natural relative derivation, where $\Omega_{X/Y}^{1,\ur+}:=w_{\cX}^*\Omega_{\cX/\cY}^1$.

\begin{prop}\label{relativepoincare}\begin{enumerate}
\item \emph(Relative Poincar\'e lemma\emph) The following sequence of pro-\'etale sheaves is exact and strict with respect to the filtration giving $\Omega^{i,\ur+}_{X/Y}$ degree $i$:
\[
\begin{array}{c}
0\lra \mathbb A_{\cris,X}\widehat{\otimes}_{f_k^{-1}\mathbb A_{\cris,Y}}f_k^{-1}\cO\mathbb A_{\cris,Y}\ra\cO\mathbb A_{\cris,X}\stackrel{\nabla_{X/Y}}{\lra}\cO\mathbb A_{\cris,X}\otimes_{\cO_X^{\ur+}}\Omega_{X/Y}^{1,\ur+} \\
\stackrel{\nabla_{X/Y}}{\lra}\cdots\stackrel{\nabla_{X/Y}}{\lra}\cO\mathbb A_{\cris,X}^+\otimes_{\cO_X^{\ur+}}\Omega^{d,\ur+}_{X/Y}\lra 0.
\end{array}
\]
In particular, the connection $\nabla_{X/Y}$ is integrable and satisfies Griffiths transversality with respect to the filtration, i.e., $
\nabla_{X/Y} (\Fil^i\cO\mathbb A_{\cris,X})\subset \Fil^{i-1}\cO\mathbb A_{\cris,X}\otimes \Omega_{X/Y}^{1,\ur+}$.

\item Suppose the Frobenius on $\cX_0$ (resp.  $\cY_0$) lifts to a Frobenius $\sigma_X$ (resp. $\sigma_Y$) on the formal scheme $\cX$ (resp. $\cY$) and they commute with $f$. Then the induced Frobenius  $\varphi_{X}$ on $\cO\mathbb A_{\cris,X}$ is horizontal with respect to $\nabla_{X/Y}$.
\end{enumerate}

\end{prop}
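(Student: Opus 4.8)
The strategy is to reduce everything, just as in the proof of Corollary~\ref{poincare}, to the explicit local description of $\cO\AAcr$ provided by Proposition~\ref{iso}, and then to follow verbatim the formal pattern already established in the absolute case, keeping track of the relative differentials. Since both statements are local on $\cX$ and $\cY$, I would first choose compatible \'etale charts: an \'etale morphism $\cY\to \Spf(\cO_k\{S_1^{\pm1},\ldots,S_e^{\pm1}\})$ and an \'etale morphism $\cX\to \Spf(\cO_k\{S_1^{\pm1},\ldots,S_e^{\pm1},T_1^{\pm1},\ldots,T_d^{\pm1}\})$ over it (such charts exist locally since $f$ is smooth), so that $\Omega^{1,\ur}_{X/Y}$ is free on $dT_1,\ldots,dT_d$ and $\Omega^{1,\ur}_{X/k}\cong f_k^{-1}\Omega^{1,\ur}_{Y/k}\oplus\bigoplus_i\cO_X^{\ur}\,dT_i$. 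Under these charts Corollary~\ref{BcrisIso} identifies $\cO\BBcrp_{,X}$ with $\BBcrp_{,X}\{\langle u_1,\ldots,u_d\rangle\}$ where $u_i=T_i\otimes1-1\otimes[T_i^\flat]$, and similarly (pulling back the chart of $\cY$) $f_k^{-1}\cO\BBcrp_{,Y}$ with $\BBcrp_{,X}\{\langle v_1,\ldots,v_e\rangle\}$ where $v_j=S_j\otimes1-1\otimes[S_j^\flat]$; note $\BBcrp_{,X}\widehat\otimes_{f_k^{-1}\BBcrp_{,Y}}f_k^{-1}\cO\BBcrp_{,Y}\cong\BBcrp_{,X}\{\langle v_1,\ldots,v_e\rangle\}$. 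Thus $\cO\BBcrp_{,X}\cong\big(\BBcrp_{,X}\widehat\otimes_{f_k^{-1}\BBcrp_{,Y}}f_k^{-1}\cO\BBcrp_{,Y}\big)\{\langle u_1,\ldots,u_d\rangle\}$, and $\nabla_{X/Y}$ becomes the ``constant-coefficient'' PD-derivation sending $u_i^{[n]}$ to $u_i^{[n-1]}\otimes dT_i$ and killing the $v_j$'s and $\BBcrp_{,X}$.

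With this in hand, part (1) is the Poincar\'e lemma for the PD-polynomial algebra in the variables $u_1,\ldots,u_d$ over the base ring $\BBcrp_{,X}\widehat\otimes_{f_k^{-1}\BBcrp_{,Y}}f_k^{-1}\cO\BBcrp_{,Y}$: exactly the computation carried out in the proof of Corollary~\ref{poincare}, where one checks termwise that the Koszul-type complex $\big(\cO\AAcr^a\to\cO\AAcr^a\otimes\Omega^{1,\ur}_{X/Y}\to\cdots\big)$ is a resolution of the $u_i$-free part, i.e.\ of $\AAcr^a_{,X}\widehat\otimes_{f_k^{-1}\AAcr^a_{,Y}}f_k^{-1}\cO\AAcr^a_{,Y}$. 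Integrability is immediate from the constant-coefficient form, and Griffiths transversality follows because $\Fil^i\cO\AAcr^a|_{\widetilde X}$ is the $p$-adic completion of $\sum\Fil^{i-(i_1+\cdots+i_d)}\big(\AAcr_{,X}\widehat\otimes_{f_k^{-1}\AAcr_{,Y}}f_k^{-1}\cO\AAcr_{,Y}\big)^a\,u_1^{[i_1]}\cdots u_d^{[i_d]}$ (this description of the relative filtration in the chart must be recorded as in Lemma~\ref{2obcris}), so $\nabla_{X/Y}$ lowers the filtration degree by exactly one on each monomial; strictness with respect to this filtration is then checked degree by degree just as before. Finally I would pass from the almost statement for $\cO\AAcr$ to the honest statement for $\cO\BBcrp$ by inverting $p$, which kills the almost ambiguity, exactly as in Corollary~\ref{poincare}.

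For part (2), assuming $\sigma_X$, $\sigma_Y$ lift Frobenius compatibly with $f$, I would repeat the argument of Lemma~\ref{FrobHorizontal}: it suffices to verify $\nabla_{X/Y}\circ\varphi_X=(\varphi_X\otimes d\sigma_X)\circ\nabla_{X/Y}$ on $\cO\AAcr_{,X}$ almost, and by $\AAcr_{,X}$-semilinearity and the fact that $\varphi_X$ restricted to $f_k^{-1}\cO\AAcr_{,Y}$ lands in $f_k^{-1}\cO\AAcr_{,Y}$ (compatibility of the lifts), one reduces to the monomials $u_i^{[n]}$. There $\varphi_X(u_i)=\sigma_X(T_i)-[T_i^\flat]^p$ (as recorded in \S\ref{localfrob}), whose image under $\nabla_{X/Y}$ is $d\sigma_X(T_i)\in\cO\AAcr_{,X}\otimes\Omega^{1,\ur}_{X/Y}$ since $\nabla_{X/Y}$ kills $[T_i^\flat]$ and the $f_k^{-1}$-part; comparing with $(\varphi_X\otimes d\sigma_X)(u_i^{[n-1]}\otimes dT_i)$ gives the identity. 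The one genuine point requiring care — the main obstacle — is the bookkeeping in the mixed chart: making sure that the ``extra'' PD-variables $v_j$ coming from the base $\cY$ are correctly absorbed into the coefficient ring of the relative PD-polynomial algebra, that $\BBcrp_{,X}\widehat\otimes_{f_k^{-1}\BBcrp_{,Y}}f_k^{-1}\cO\BBcrp_{,Y}$ is indeed the $\nabla_{X/Y}$-horizontal subsheaf (and not something larger), and that the relative filtration on $\cO\BBcrp_{,X}$ is the convolution of the filtration pulled back from $\cY$ with the PD-filtration in the $u_i$'s. Once this identification is pinned down, both (1) and (2) are formal consequences of the absolute-case computations.
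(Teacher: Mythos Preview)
Your proposal is correct and follows exactly the approach the paper has in mind: the paper's proof consists of the single sentence ``The proof is routine (cf.\ Proposition~\ref{iso}), so we omit the detail here,'' and what you have written is precisely the routine unwinding of that reference---choosing compatible toric charts, using the local PD-polynomial description of $\cO\AAcr$ to split off the relative variables $u_i$ from the base variables $v_j$, and then repeating verbatim the arguments of Corollary~\ref{poincare} and Lemma~\ref{FrobHorizontal}. The bookkeeping point you flag (that the horizontal subsheaf is exactly $\BBcrp_{,X}\widehat{\otimes}_{f_k^{-1}\BBcrp_{,Y}}f_k^{-1}\cO\BBcrp_{,Y}$ and that the filtration is the convolution filtration) is indeed the only thing one must check, and your description of it is accurate.
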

\begin{proof} The proof is routine (cf. Proposition \ref{iso}), so we omit the detail here.
\end{proof}

For the relative version of the crystalline comparison, we shall need the following primitive comparison in the relative setting.

\begin{prop}\label{part1relative} Let $f\colon \cX\to \cY$ be a proper smooth morphism between two smooth formal schemes over $\cO_k$.
 Let $\LL$ be a lisse $\widehat{\ZZ}_p$-sheaf on $X_{\proet}$. Suppose that  $R^if_{k*}\LL$ is a lisse $\widehat{\ZZ}_p$-sheaf for all $i\geq 0$. Then, the canonical morphism
\begin{equation}\label{eq.relativeprimitive}
(R^if_{k*}\LL)\otimes_{\widehat{\ZZ}_p}\BB_{\cris,Y}\lra R^if_{k*}(\LL\otimes_{\widehat{\ZZ}_p}\BB_{\cris,X})
\end{equation}
is a filtered isomorphism, compatible with Frobenius. Similarly, the natural morphism
\begin{equation}\label{eq.relativeprimitiveObcris}
(R^if_{k*}\LL)\otimes_{\widehat{\ZZ}_p}\cO\BB_{\cris,Y}\lra R^if_{k*}(\LL\otimes_{\widehat{\ZZ}_p}\mathbb A_{\cris,X}\widehat{\otimes}_{f_k^{-1}\mathbb A_{\cris,Y}}f_k^{-1}\cO\mathbb A_{\cris,Y}[1/t])
\end{equation}
is a filtered isomorphism, compatible with Frobenius and connections.
 \end{prop}

\begin{proof}
The proof is similar to that of Theorem \ref{inout}, so we shall only give a sketch here. As in the proof of Theorem \ref{inout}, to show our proposition, it suffices to consider the case where $\LL$ is locally free over $\widehat{\ZZ}_p$. Let $\LL_n=\LL/p^n$, $n \in \mathbb N$. Since $\LL$ has no $p$-torsion, we have short exact sequences of lisse $\widehat{\ZZ}_p$-sheaves on $Y$:
\[
0\lra R^if_{k*}(\LL)/p^n\lra R^if_{k*}(\LL_n)\lra R^{i+1}f_{k*}(\LL)[p^n]\lra 0,\quad n\in \mathbb N,
\]
inducing an exact sequence of projective systems as $\mathbb A_{\cris,Y}$ is flat over $\widehat{\ZZ}_p$:
\[
\begin{array}{rcl}
0\lra \left(R^if_{k*}(\LL)\otimes \mathbb A_{\cris, Y}/p^n\right)_n & \lra & \left(R^if_{k*}(\LL_n)\otimes \mathbb A_{\cris, Y}\right)_n \\ & \lra &  \left(R^{i+1}f_{k*}(\LL)[p^n]\otimes \mathbb A_{\cris, Y}\right)_n\lra 0.
\end{array}
\]
Because $R^{i+1}f_{k*}(\LL)$ is a lisse $\widehat{\ZZ}_p$-module, there exists $N\in \mathbb N$ such that $p^N$ kills $R^{i+1}f_{k*}(\LL)[p^n]$ for all $n$. Thus, the composed transition map in the last projective system of the sequence above is zero:
\[
R^{i+1}f_{k*}(\LL)[p^{n+N}]\otimes \mathbb A_{\cris,Y}\lra R^{i+1}f_{k*}(\LL)[p^n]\otimes \mathbb A_{\cris,Y},\quad x\mapsto p^Nx.
\]
Therefore, $R^j\varprojlim_n (R^{i+1}f_{k*}(\LL)[p^n]\otimes \mathbb A_{\cris,Y})=0$ for every $j\in \mathbb Z$, and thus
\[
R^j\varprojlim_n (R^if_{k*}(\LL)\otimes \mathbb A_{\cris, Y}/p^n)\stackrel{\sim}{\lra}R^j\varprojlim_n \left(R^if_{k*}(\LL_n)\otimes \mathbb A_{\cris,Y}\right)
\]
for all $j\in \mathbb Z$. In particular,
\begin{equation}\label{eq.relative-iso0}
\begin{array}{lcr}
R^if_{k*}(\LL)\otimes \mathbb A_{\cris,Y}& \simeq & \varprojlim_n (R^if_{k*}(\LL)\otimes \mathbb A_{\cris, Y}/p^n) \\ & \stackrel{\sim}{\lra} & \varprojlim_n \left(R^if_{k*}(\LL_n)\otimes \mathbb A_{\cris,Y}\right),
\end{array}
\end{equation}
and $R^j\varprojlim_n \left(R^if_{k*}(\LL_n)\otimes \mathbb A_{\cris,Y}\right)\simeq R^j\varprojlim_n \left(R^if_{k*}(\LL)\otimes \mathbb A_{\cris,Y}/p^n\right)$ is killed by $\mathcal I^{2}$ whenever $j>0$ as this is the case for $R^j\varprojlim_n (\mathbb A_{\cris,Y}/p^n)$ by Corollary \ref{cor.acyclicityBcris} and as $R^if_{k*}(\LL)$ is a lisse $\widehat{\ZZ}_p$-sheaf.

Next, with the help of \cite[Corollary 5.11]{Sch}, the same argument as in the proof of Theorem \ref{inout} yields almost isomorphisms
\begin{equation}\label{eq.relative-iso1}
R^if_{k*}(\LL_n)\otimes \mathbb A_{\cris,Y}\stackrel{\approx}{\lra} R^if_{k*}(\LL_n\otimes \mathbb A_{\cris,X}) , \quad n\in \mathbb N.
\end{equation}
So, the kernel and the cokernel of the natural map below are killed by $\mathcal I^2$:
\begin{equation}\label{eq.relative-iso2}
\varprojlim_n(R^if_{k*}(\LL_n)\otimes \mathbb A_{\cris,Y})\lra \varprojlim_n R^if_{k*}(\LL_n\otimes \mathbb A_{\cris,X}).
\end{equation}
Moreover, for $j>0$, from \eqref{eq.relative-iso1}, we find $\mathcal I^4\cdot R^j\varprojlim_n R^if_{k*}(\LL_n\otimes \mathbb A_{\cris,X})=0$ since $\mathcal I^2\cdot R^j\varprojlim_n (R^if_{k*}(\LL_n)\otimes \mathbb A_{\cris,Y})=0$ as observed at the end of the last paragraph. Therefore, by a standard argument using the spectral sequence
\[
E_2^{a,b}=R^a\varprojlim_n R^bf_{k*}(\LL_n\otimes \mathbb A_{\cris,X})\Longrightarrow \mathcal H^{a+b}(R\varprojlim_n Rf_{k*}(\LL_n\otimes \mathbb A_{\cris,X})),
\]
one checks that the kernel and the cokernel of the map
\[
\begin{array}{c}
\mathcal H^i(R\varprojlim_n Rf_{k*}(\LL_n\otimes \mathbb A_{\cris,X}))\simeq R^if_{k*}(R\varprojlim_n (\LL_n\otimes \mathbb A_{\cris,X}))  \\    \lra    E_{2}^{0,i}=\varprojlim_n R^if_{k*}(\LL_n\otimes \mathbb A_{\cris,X}) \end{array}
\]
are killed by some power of $\mathcal I$. On the other hand, as shown in the proof of Theorem \ref{inout}, $\mathcal I^2\cdot R^j\varprojlim_n (\LL_n\otimes \mathbb A_{\cris,X})=0$ if $j>0$. So the kernel and the cokernel of
\[
R^if_{k*}(\LL\otimes \mathbb A_{\cris,X})\lra R^if_{k*}(R\varprojlim_n(\LL_n\otimes \mathbb A_{\cris,X}))
\]
are killed by some power of $\mathcal I$. Consequently, the kernel and cokernel of the map
\begin{equation}\label{eq.relative-iso3}
R^if_{k*}(\LL\otimes \mathbb A_{\cris,X})\lra \varprojlim_n R^if_{k*}(\LL_n\otimes \mathbb A_{\cris,X})
\end{equation}
are killed by some power of $\mathcal I$. Combining the morphisms \eqref{eq.relative-iso0}, \eqref{eq.relative-iso2} and \eqref{eq.relative-iso3}, we deduce that the kernel and the cokernel of the map
\[
R^if_{k*}(\LL)\otimes \mathbb A_{\cris,Y}\lra R^if_{k*}(\LL\otimes \mathbb A_{\cris,X})
\]
are killed by some power of $\mathcal I$, thus also killed by some power of $t$. Inverting $t$, we obtain the desired isomorphism \eqref{eq.relativeprimitive}.

We need to verify the compatibility of the isomorphism \eqref{eq.relativeprimitive} with the extra structures. It clearly respects  Frobenius structures. To check the strict compatibility with respect to filtrations, by taking grading quotients, we just need to show that for each $r\in \mathbb N$, the following natural morphism
\[
R^if_{k*}\LL\otimes \widehat{\cO}_{Y}(r)\lra R^if_{k*}(\LL\otimes \widehat{\cO}_X(r))
\]
is an isomorphism: it is a local question, hence it suffices to show this after restricting the latter morphism to $Y_{\bk}$. As $\widehat{\cO}_X(r)|_{X_{\bk}}\simeq \widehat{\cO}_X|_{X_{\bk}}$ and $\widehat{\cO}_Y(r)|_{Y_{\bk}}\simeq \widehat{\cO}_Y|_{Y_{\bk}}$, we then reduce to the case where $r=0$. The proof of the latter statement is similar as above, so we omit the details here.

Finally, using Proposition \ref{iso}, a similar proof as above shows that the map \eqref{eq.relativeprimitiveObcris} is a filtered isomorphism compatible with Frobenius and connections.
\end{proof}

For a sheaf of $\cO_{\cX}$-modules $\cF$ with an $\cO_{\cY}$-linear connection $\nabla\colon \cF\to \cF\otimes\Omega^1_{\cX/\cY}$, we denote the de Rham complex of $\cF$ as:
\[
DR_{X/Y}(\cF):=(\ldots \lra 0\lra \cF
\stackrel{\nabla}{\lra}\cF\otimes_{\cO_{\cY}} \Omega^1_{\cX/\cY}\stackrel{\nabla}{\lra}\ldots).
\]
The same rule applies if we consider an $\cO_{X}^{\rm un}$-module endowed with an $\cO_Y^{\rm un}$-linear connection, etc.

In the lemma below, assume $\cY=\Spf(A)$ is affine and is \'etale over a torus $\mathcal S=\Spf(\cO_k\{S_1^{\pm 1},\ldots, S_{\delta}^{\pm 1}\})$. For each $1\leq j\leq \delta$, let $(S_j^{1/p^n})_{n\in \mathbb N}$ be a compatible family of $p$-power roots of $S_j$. As in Proposition \ref{iso}, set
\[
\widetilde{Y}:=\left(Y\times_{\mathcal S_k}\Spa\left(k\{S_1^{\pm 1/p^{n}},\ldots, S_{\delta}^{\pm 1/p^{n}}\},\cO_k\{S_1^{\pm 1/p^{n}},\ldots, S_{\delta}^{\pm 1/p^{n}}\}\right)\right)_{n\in \mathbb N}\in Y_{\proet}.
\]

\begin{lemma}\label{quasirelative} Let $V\in Y_{\proet}$ be an affinoid perfectoid which is pro-\'etale over $\widetilde{Y}_{\bk}$, with $\widehat{V}=\Spa(R,R^+)$. Let $w_V$ be the composite of natural morphisms of topoi
\[
w_V\colon X_{\proet}^{\sim}\slash X_{V} \lra X_{\proet}^{\sim}\stackrel{w}{\lra} \mathcal X_{\et}^{\sim}.
\]
\begin{enumerate}
\item[(1)] For any $j>0$, we have $R^jw_{V\ast}\cO\BBcr=0$, and the natural morphism
\[
\cO_{\cX}\widehat{\otimes}_A\cO\BB_{\cris,Y}(V) \lra w_{V*}(\cO\BB_{\cris,X})
\]
is an isomorphism.

\item[(2)] For any $r\in \mathbb Z$ and any $j>0$, we have $R^jw_{V\ast}(\Fil^r\cO\BBcr)=0$. Moreover, the natural morphism
\[
\cO_{\cX}\widehat{\otimes}_{A}\Fil^r\cO\BB_{\cris,Y}(V)\to w_{V\ast}(\Fil^r\cO\BB_{\cris,X})
\]
is an isomorphism.
\end{enumerate}
Here  $\cO_{\cX}\widehat{\otimes}_A\cO\BB_{\cris, Y}(V):=\left(\cO_{\cX}\widehat{\otimes}_A\cO\mathbb{A}_{\cris,Y}(V)\right)[1/t]$ with
\[
\cO_{\cX}\widehat{\otimes}_A\cO\mathbb{A}_{\cris,Y}(V):=\varprojlim \left(\cO_{\cX}\otimes_A\cO\mathbb{A}_{\cris,Y}(V)/p^n\right),
\]
and
\[
\cO_{\cX}\widehat{\otimes}_{A}\Fil^r\cO\BB_{\cris,Y}(V):=\varinjlim_{n\in \mathbb N} t^{-n} \left(\cO_{\cX}\widehat{\otimes}_{A} \Fil^{r+n} \cO\mathbb{A}_{\cris,Y}(V)\right).
\]
In particular, if we filter $\cO_{\cX}\widehat{\otimes}_{A}\cO\BB_{\cris,Y}(V)$ using $\{\cO_{\cX}\widehat{\otimes}_A\Fil^r\cO\BB_{\cris,Y}(V)\}_{r\in \mathbb Z}$, the natural morphism
\[
\cO_{\cX}\widehat{\otimes}_A\cO\BB_{\cris,Y}(V)\lra Rw_{V*}(\cO\BB_{\cris,X})
\]
is an isomorphism in the filtered derived category.
\end{lemma}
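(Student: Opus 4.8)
The plan is to reduce everything to the relative-dimension situation handled by Proposition~\ref{iso} and its acyclicity companion Lemma~\ref{2obcris}, exactly as Lemma~\ref{quasi} is (according to the text) a global reformulation of \cite{AB}. Since $V$ is affinoid perfectoid and pro-\'etale over $\widetilde Y_{\bk}$, the sheaves $\cO\mathbb{A}_{\cris,X}$ and their filtration steps restricted to $X_V$ admit the explicit local description from Lemma~\ref{2obcris} (with the roles of $\cU=\Spf(R^+)$ there played by an affine \'etale-over-a-torus open of $\cX$, and $\widehat V=\Spa(S,S^+)$ by $\Spa(R,R^+)$ here): namely, an affinoid perfectoid $W$ pro-\'etale over $\widetilde{X}_{\bk,V}$ has $\Fil^r\cO\mathbb{A}_{\cris,X}(W)^a$ identified with $\Fil^r\cO\mathbb{A}_{\cris,X}(S_W,S_W^+)^a$ and $H^i(W,\Fil^r\cO\mathbb{A}_{\cris,X})^a=0$ for $i>0$.

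\textbf{Main steps.} First I would treat the integral, uncompleted-modulo-$p^n$ level. Working locally on $\cX$, choose a cover by affine opens $\cU=\Spf(B)$ \'etale over a torus $\mathcal{T}^d$ over $\mathcal{S}=\Spf(A)$ (so that $\cX\to\cY$ is locally a composite $\cU\to\mathcal{T}^d_{\mathcal{S}}\to\mathcal{S}$ followed by the torus structure); then Corollary~\ref{BcrisIso}, or rather its $\cO\mathbb{A}_{\cris}$-form used in Lemma~\ref{2obcris}, gives a presentation $\cO\mathbb{A}_{\cris,X}|_{\widetilde{X}_V}\cong \mathbb{A}_{\cris,Y}\{\langle u_1,\ldots,u_d\rangle\}|_{\widetilde X_V}\widehat\otimes_{A}\cO_{\cX}$ compatibly with filtrations, using that $\cO\mathbb{A}_{\cris,Y}(V)$ is flat and that $B$ is \'etale over $A\{T_1^{\pm1},\ldots,T_d^{\pm1}\}$. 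Taking $w_{V*}$ and using the cohomology vanishing of Lemma~\ref{2obcris} for $V$ (so that higher $Rw_{V*}$ of the $u$-variables' PD-polynomial sheaf vanish, as a direct sum of copies of things with vanishing higher cohomology over affinoid perfectoids), I get the statements for $\cO\mathbb{A}_{\cris,X}^0/p^n$ and its filtration pieces, glue over the cover of $\cX$ via a \v Cech/Mayer--Vietoris argument (the $w_{V*}$-acyclicity makes this clean), then pass to the inverse limit over $n$: here I use that the transition maps are (almost) surjective so $R^1\varprojlim$ vanishes, giving (1) and (2) for $\cO\mathbb{A}_{\cris,X}$. Inverting $p$ yields $\cO\mathbb{B}_{\cris,X}^+$; inverting $t$ (and using Corollary~\ref{cor.tNotZeroDiv}-style non-zero-divisor facts, plus the filtration description $\Fil^r\cO\BB_{\cris}=\varinjlim_s t^{-s}\Fil^{r+s}\cO\BBcrp$) yields $\cO\mathbb{B}_{\cris,X}$. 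The last assertion, that $\cO_{\cX}\widehat\otimes_A\cO\BB_{\cris,Y}(V)\to Rw_{V*}(\cO\BB_{\cris,X})$ is a filtered quasi-isomorphism, is then immediate from (1) (higher direct images vanish so $Rw_{V*}=w_{V*}$) together with (2) (the $H^0$ is computed correctly on each filtration step), noting the filtrations on both sides match by construction.

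\textbf{Where the difficulty lies.} The routine part is the local presentation via Proposition~\ref{iso}; the genuine obstacle is bookkeeping the filtrations through two completions and two localizations simultaneously, i.e.\ checking that $\cO_{\cX}\widehat\otimes_A\Fil^r\cO\BB_{\cris,Y}(V)$ as \emph{defined} in the statement (with the $\varinjlim_n t^{-n}(\cO_{\cX}\widehat\otimes_A\Fil^{r+n}\cO\mathbb{A}_{\cris,Y}(V))$ recipe) really coincides with $w_{V*}\Fil^r\cO\BB_{\cris,X}$ and not merely with something commensurable to it. This requires care that the $p$-adic completion defining $\cO_{\cX}\widehat\otimes_A(-)$ commutes past the $G$-type direct-limit in $t$ and past $w_{V*}$; the key input is that everything in sight is, up to the maximal ideal of $\cO_{\Cp}$, a filtered colimit/limit of $w_{V*}$-acyclic sheaves with controlled $R^1\varprojlim$, so the order of operations does not matter. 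I would also need the flatness of $\cO\mathbb{A}_{\cris,Y}(V)$ over $A$ — or rather, since $\cO_{\cX}$ is only locally free of finite rank over $\cO_{\cY}$ after choosing charts, flatness in charts suffices — to commute $\cO_{\cX}\otimes_A(-)$ with the formation of $\Fil^r$ and with cohomology. Given that the text explicitly says ``the proof is routine (cf.\ Proposition~\ref{iso})'' for the analogous Proposition~\ref{relativepoincare}, I expect the write-up to be short modulo these completion/colimit interchanges.
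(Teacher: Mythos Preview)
There is a genuine gap. The object whose cohomology you must compute to get $R^jw_{V*}\cO\BB_{\cris,X}$ over an affine open $\cU=\Spf(B)\subset\cX$ is $H^j(\cU_V,\cO\BB_{\cris,X})$ with $\cU_V=\cU_k\times_Y V\in X_{\proet}$, and this $\cU_V$ is \emph{not} affinoid perfectoid (it has positive relative dimension $d$ over the perfectoid base $V$). Lemma~\ref{2obcris} therefore gives you nothing directly: it only controls cohomology on affinoid perfectoids $W$ lying above $\widetilde{\cU_V}$, not on $\cU_V$ itself. Your sentence ``using the cohomology vanishing of Lemma~\ref{2obcris} for $V$ (so that higher $Rw_{V*}$ of the $u$-variables' PD-polynomial sheaf vanish, as a direct sum of copies of things with vanishing higher cohomology over affinoid perfectoids)'' is exactly where the argument fails: a direct sum of sheaves acyclic on affinoid perfectoids need not be acyclic on $\cU_V$.

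What the paper actually does is pass to the profinite Galois cover $\widetilde{\cU_V}\to\cU_V$ obtained by adjoining all $p$-power roots of the torus coordinates $T_1,\ldots,T_d$; this cover \emph{is} affinoid perfectoid with Galois group $\Gamma\cong\Zp(1)^d$. Cartan--Leray then identifies $H^j(\cU_V,\cO\BB_{\cris,X})$ with the continuous group cohomology $H^j(\Gamma,\cO\BB_{\cris}(\widetilde{S}_\infty,\widetilde{S}_\infty^+))$. The vanishing of this group cohomology for $j>0$ and the identification of $H^0$ with $B\widehat\otimes_A\cO\BB_{\cris}(R,R^+)$ is Theorem~\ref{withoutfil} (and, for the filtration, the subsequent results in \S\ref{acy}); this is the entire content of the Appendix and is a nontrivial computation in the style of Andreatta--Brinon, not a formal consequence of acyclicity on perfectoids. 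Relatedly, your local presentation ``$\cO\mathbb{A}_{\cris,X}|_{\widetilde X_V}\cong\mathbb{A}_{\cris,Y}\{\langle u_1,\ldots,u_d\rangle\}|_{\widetilde X_V}\widehat\otimes_A\cO_{\cX}$'' is not correct: Proposition~\ref{iso} gives $\cO\mathbb{A}_{\cris,X}\simeq\mathbb{A}_{\cris,X}\{\langle u_1,\ldots,u_d,w_1,\ldots,w_\delta\rangle\}$, and $\mathbb{A}_{\cris,X}$ is not the pullback of $\mathbb{A}_{\cris,Y}$---it is built from $\cO_X^{\flat+}$, not $\cO_Y^{\flat+}$. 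The passage from one to the other is again the $\Gamma$-cohomology computation.
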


\begin{proof} Recall first that, for $\cF$ a pro-\'etale sheaf on $X$ and for $j\geq 0$, $R^jw_{V*}\cF$ is the associated sheaf on $\cX_{\et}$ of the presheaf sending $\cU\in \cX_{\et}$ to $
H^i(\cU_V,\cF)$, where $\cU_V:=\cU_k\times_X X_V$. Take $\mathcal U=\Spf(B) \in \cX_{\et}$ to be affine such that the composition $\cU\to\cX\ra \cY$ can be factored as
\[
\cU\lra \mathcal T\lra \cY,
\]
where the first morphism is \'etale and $\mathcal T:=\Spf(A\{T_1^{\pm 1},\ldots, T_d^{\pm 1}\})$. Write $\mathcal{T}_V=\mathcal{T}_k\times_Y V$. Then $\mathcal T_V=\Spa(S,S^+)$ with $S^+=R\{T_1^{\pm 1},\ldots, T_d^{\pm 1}\}$ and $S=S^+[1/p]$. Write  $\mathcal U_V=\Spa(\widetilde{S},\widetilde S^+)$. For each $1\leq i\leq d$, let $(T_i^{1/p^{n}})_{n\in\mathbb N}$ be a compatible family of $p$-power roots of $T_i$. Set
\[
S_{\infty}^+:=R^+\{T_1^{\pm 1/p^{\infty}},\ldots, T_d^{\pm 1/p^{\infty}}\}, \quad \widetilde{S}_{\infty}^+:=B\widehat{\otimes}_{A\{T_1^{\pm 1},\ldots, T_d^{\pm 1}\}}S_{\infty}^+,
\]
$S_{\infty}:=S_{\infty}^+[1/p]$ and $\widetilde{S}_{\infty}:=\widetilde{S}_{\infty}^+[1/p]$. Then $(S_{\infty},S_{\infty}^+)$ and $(\widetilde{S}_{\infty},\widetilde{S}_{\infty}^+)$ are affinoid perfectoid algebras over $(\widehat{\bk},\cO_{\widehat{\bk}})$. Let $\widetilde{\mathcal U_V}\in X_{\proet}$ (resp. $\widetilde{\mathcal T_V}\in \mathcal{T}_{k\ \!\proet}$) be the affinoid perfectoid corresponding to $(\widetilde{S}_{\infty},\widetilde{S}_{\infty}^+)$ (resp. to $(S_{\infty},S_{\infty}^+)$). We have the following commutative diagram of ringed spaces
\[
\xymatrix{\widehat{\widetilde{\mathcal U_V}}=\Spa(\widetilde{S}_{\infty},\widetilde{S}_{\infty}^+)\ar[r]^{\Gamma} \ar[d]& \widehat{\mathcal U_V}=\Spa(\widetilde{S},\widetilde{S}^+)\ar[d]\ar[r] & \cU=\Spf(B)\ar[d]\\ \widehat{\widetilde{\mathcal{T}_V} }=\Spa(S_{\infty},S_{\infty}^+)\ar[r] & \widehat{\mathcal T_V}=\Spa(S,S^+)\ar[r]\ar[d] & \mathcal T\ar[d]\\ & \widehat{V}=\Spa(R,R^+)\ar[r] & \mathcal Y=\Spf(A).}
\]
The morphism $\widetilde{\cU_V}\to \cU_V$ is a profinite Galois cover, with Galois group $\Gamma\simeq \Zp(1)^d$. For $q\in \mathbb N$, let $\widetilde{\cU_V}^{q}$ be the $(q+1)$-fold fiber product of $\widetilde{\cU_{V}}$ over $\cU_{V}$. So $\widetilde{\cU_{V}}^{q}\simeq \widetilde{\cU_{V}}\times \Gamma^{q}$ is an affinoid perfectoid.

(1) As in the proof of Lemma \ref{twodcris}, there is a natural isomorphism of $B_{\cris}$-modules
\[
H^q(\Gamma, \cO\mathbb A_{\cris}(\widetilde{S}_{\infty},\widetilde{S}_{\infty}^+))[1/t]\stackrel{\sim}{\lra}H^q(\cU_V, \cO\mathbb B_{\cris, X}),
\]
where the first group is the continuous group cohomology and $\cO\mathbb A_{\cris}(\widetilde{S}_{\infty},\widetilde{S}_{\infty}^+)$ is endowed with the $p$-adic topology.  So, by Theorem \ref{withoutfil}, $H^q(\cU_V,\cO\BB_{\cris,X})=0$ whenever $q>0$, and there is a natural  isomorphism
\begin{equation}\label{eq:Galois-proet-is-the-same}
B\widehat{\otimes}_A\cO\BBcr(R,R^+) \stackrel{\sim}{\lra}H^0(\cU_V,\cO\BB_{\cris,X}).
\end{equation}
On the other hand, $V$ being affinoid perfectoid with $\widehat{V}=\Spa(R,R^+)$, the maps
\[
\cO\mathbb A_{\cris}(R,R^+)/p^n\lra \cO\mathbb A_{\cris, Y}(V)/p^n, \quad n\in \mathbb N,
\]
and thus the maps
\[
B\otimes_A \cO\mathbb A_{\cris}(R,R^+)/p^n\lra B\otimes_A \cO\mathbb A_{\cris,Y}(V)/p^n, \quad n\in \mathbb N,
\]
are almost isomorphisms (Lemma \ref{2obcris}). Passing to projective limits, it follows that the kernel and the cokernel of the induced map
\[
B\widehat{\otimes}_A \cO\mathbb A_{\cris}(R,R^+)\lra B\widehat{\otimes}_A \cO\mathbb A_{\cris,Y}(V)
\]
are killed by $\mathcal I^2$, hence also by $t^2$. Inverting $t$, we deduce $B\widehat{\otimes}_A \cO\mathbb B_{\cris}(R,R^+)\stackrel{\sim}{\ra}B\widehat{\otimes}_A \cO\mathbb B_{\cris,Y}(V)$, and an isomorphism from \eqref{eq:Galois-proet-is-the-same}
\[
\cO_{\cX}(\cU)\widehat{\otimes}_A \cO\mathbb B_{\cris,Y}(V)=B\widehat{\otimes}_A\cO\mathbb B_{\cris,Y}(V)\stackrel{\sim}{\lra} H^0(\cU_V, \cO\mathbb B_{\cris,X}).
\]

To conclude the proof of (1), it remains to check that the canonical morphism 
\begin{equation*}
\cO_{\cX}(\cU)\widehat{\otimes}_A\cO\mathbb{B}_{\cris, Y}(V)\lra \left(\cO_{\cX}\widehat{\otimes}_A\cO\mathbb{B}_{\cris,Y}(V)\right)(\cU)
\end{equation*}
is an isomorphism. In fact, we have
\begin{eqnarray*}
\cO_{\cX}(\cU)\widehat{\otimes}_A \cO\mathbb{A}_{\cris,Y}(V) & = & \varprojlim_n \cO_{\cX}(\cU)\otimes_A \cO\mathbb{A}_{\cris,Y}(V)/p^n \\ & \stackrel{\sim}{\lra} & \varprojlim_{n}\left(\left(\cO_{\cX}\otimes_A \cO\mathbb{A}_{\cris,Y}(V)/p^n\right)(\cU) \right)\\ & \stackrel{\sim}{\lra} & \left(\cO_{\cX}\widehat{\otimes}_A \cO\mathbb{A}_{\cris,Y}(V)\right)(\cU).
\end{eqnarray*}
Therefore, as $\cU$ is quasi-compact and quasi-separated, we find
\begin{eqnarray*}
\cO_{\cX}(\cU)\widehat{\otimes}_A \cO\mathbb{B}_{\cris,Y}(V) & = & \left(\cO_{\cX}(\cU)\widehat{\otimes}_A \cO\mathbb{A}_{\cris,Y}(V)\right)[1/t] \\ & \stackrel{\sim}{\lra} & \left((\cO_{\cX}\widehat{\otimes}_A \cO\mathbb{A}_{\cris,Y}(V))(\cU)\right)[1/t] \\ & \stackrel{\sim}{\lra} & \left( \cO_{\cX}\widehat{\otimes}_A \cO\mathbb{A}_{\cris,Y}(V)[1/t]\right)(\cU) \\ & = & \left(\cO_{\cX}\widehat{\otimes}_A\cO\BB_{\cris,Y}(V)\right)(\cU),
\end{eqnarray*}
as required.

(2) We shall only prove (2) when $r=0$: the general case can be deduced by twisting. As in (1), there exists a natural isomorphism
\begin{equation*}
\varinjlim_{s\geq 0}H^q(\Gamma, \Fil^s\cO\mathbb A_{\cris}(\widetilde{S}_{\infty},\widetilde{S}_{\infty}^+))\lra H^q(\cU_{V},\Fil^0\cO\mathbb B_{\cris,X}).
\end{equation*}
By definition, the first group is $H^q(\Gamma, \Fil^0\cO\mathbb B_{\cris}(\widetilde{S}_{\infty},\widetilde{S}_{\infty}^+))$ computed in \S~\ref{acy}. So, according to Proposition \ref{prop.higher-cohomology-of-Fil} and Corollary \ref{cor.invariant-of-Fil}, $H^q(\cU_V, \Fil^0\cO\mathbb B_{\cris,X})=0$ if $q>0$, and we have an isomorphism
\[
B\widehat{\otimes}\Fil^0\cO\mathbb B_{\cris}(R,R^+):=\varinjlim_{s\geq 0}B\widehat{\otimes}\Fil^s\cO\mathbb A_{\cris}(R,R^+)\stackrel{\sim}{\lra} H^0(\cU_V, \Fil^0\cO\mathbb B_{\cris,X}).
\]
To go further, one has to identify $B\widehat{\otimes}\Fil^0\cO\mathbb B_{\cris}(R,R^+)$ with $B\widehat{\otimes}\Fil^0\cO\mathbb B_{\cris}(V):=\varinjlim_{s
\geq 0}B\widehat{\otimes}\Fil^s\cO\mathbb A_{\cris}(V)$. As $V$ is an affinoid perfectoid pro-\'etale over $\widetilde{Y}_{\bk}$, by Lemma \ref{lem.technical-I} and Lemma \ref{2obcris}, the kernels and cokernels of the natural maps
\[
\Fil^s\cO\mathbb A_{\cris}(R,R^+)\lra \Fil^s\cO\mathbb A_{\cris,Y}(V), \quad s\in \mathbb N,
\]
are killed by $\mathcal I^2$, and $\mathcal I^3\cdot H^1(V,\Fil^s\cO\mathbb A_{\cris,Y})=0$. In particular, the kernels and cokernels of
\[
\mathrm{gr}^s\cO\mathbb A_{\cris}(R,R^+)\lra \mathrm{gr}^s\cO\mathbb A_{\cris,Y}(V), \quad s\in \mathbb N,
\]
are killed by some power of $\mathcal I(A_{\cris}/\ker(\theta))=\mathcal I\cO_{\mathbb C_p}$, thus by $p^{1/p^N}$ for every $N\in \mathbb N$. Moreover, we have the following commutative diagram with exact rows
\[
\xymatrix{0\ar[r] & B\widehat{\otimes}_A \Fil^{s+1}\cO\mathbb{A}_{\cris}(R,R^+)\ar[r]\ar[d] & B\widehat{\otimes}_A \Fil^s\cO\mathbb{A}_{\cris}(R,R^+)\ar[r]\ar[d] & B\widehat{\otimes}_A \mathrm{gr}^s\cO\mathbb{A}_{\cris}(R,R^+)\ar[r]\ar[d]^{\textrm{iso. up to }p\textrm{-torsion}}&  0 \\0\ar[r] & B\widehat{\otimes}_A \Fil^{s+1}\cO\mathbb{A}_{\cris,Y}(V)\ar[r] & B\widehat{\otimes}_A \Fil^s\cO\mathbb{A}_{\cris,Y}(V)\ar[r] & B\widehat{\otimes}_A \mathrm{gr}^s\cO\mathbb{A}_{\cris,Y}(V).& }
\]
By the observations above, the kernels and cokernels of the first two vertical maps are killed by some power of $\mathcal I$, thus by some power of $t$, and the last vertical map becomes an isomorphism after inverting $p$. So, by a similar argument as in the proof of Lemma \ref{cor.acyclicityBcris} (2), we find $B\widehat{\otimes}_A \Fil^0\cO\mathbb B_{\cris}(R,R^+)\stackrel{\sim}{\ra}B\widehat{\otimes}_A\Fil^0\cO\mathbb B_{\cris,Y}(V)$. We get finally a natural isomorphism
\[
B\widehat{\otimes}_A\Fil^0\cO\mathbb B_{\cris,Y}(V)\stackrel{\sim}{\lra} H^0(\cU_V, \Fil^0\cO\mathbb B_{\cris,X}).
\]
The remaining part of (2) can be done similarly as in the last part of the proof of (1), so we omit the details here.
\end{proof}

From now on, assume $f\colon \cX\to \cY$ is a proper smooth morphism (between smooth formal schemes) over $\cO_k$. Its closed fiber gives rise to a morphism between the crystalline topoi,
\[
f_{\cris}\colon (\cX_0/\cO_k)_{\cris}^{\sim}\lra (\cY_0/\cO_k)_{\cris}^{\sim}.
\]
Let $\cE$ be a filtered convergent $F$-isocrystal on $\cX_0/\cO_k$, and $\mathcal M$ an $F$-crystal on $\cX_0/\cO_k$ such that $\cE\simeq \mathcal M^{\rm an}(n)$ for some $n\in \mathbb N$ (cf. Remark \ref{rk.CrystalVSIsoc}). Then $\cM$ can be viewed naturally as a coherent $\cO_{\cX}$-module endowed with an integrable and quasi-nilpotent $\cO_k$-linear connection $\cM\to \cM\otimes \Omega^1_{\cX/\cO_k}$.

In the following we consider the higher direct image $R^{i}f_{\cris*}\mathcal M$ of the crystal $\mathcal M$. One can determine the value of this abelian sheaf on $\cY_0/\cO_k$ at the $p$-adic PD-thickening $\cY_0\hookrightarrow \cY$ in terms of the relative de Rham complex $DR_{X/Y}(\mathcal M)$ of $\mathcal M$. To state this, take $\mathcal{V}=\Spf(A)$ an affine open subset of $\cY$, and put $\cX_A:=f^{-1}(\mathcal V)$. We consider $A$ as a PD-ring with the canonical divided power structure on $(p)\subset A$. In particular, we can consider the crystalline site $(\cX_{A,0}/A)_{\cris}$ of $\cX_{A,0}:=\cX\times_{\cY}\mathcal V_0$ relative to $A$. By \cite[Lemme 3.2.2]{Ber}, the latter can be identified naturally to the open subsite of $(\cX_0/\cO_k)_{\cris}$ whose objects are objets $(U,T)$ of $(\cX_0/\cO_k)_{\cris}$ such that $f(U)\subset \mathcal V_0$ and such that there exists a morphism $\alpha\colon T\to \mathcal V_n:=\mathcal V\otimes_A A/p^{n+1}$ for some $n\in \mathbb N$, making the square below commute
\[
\xymatrix{U\ar@{^(->}[r]\ar[d]_{\rm can} & T\ar[d]^{\alpha} \\ \mathcal{V}_0\ar@{^(->}[r] & \mathcal{V}_n.}
\]
Using \cite[Corollaire 3.2.3]{Ber} and a limit argument, one finds
\[
R^if_{\cris*}(\cM)(\mathcal{V}_0,\mathcal{V})\stackrel{\sim}{\lra} H^i_{\cris}(\cX_{A,0}/A,\cM)
\]
where we denote again by $\cM$ the restriction of $\cM$ to $(\cX_{A,0}/A)_{\cris}$. Let
\[
u=u_{\cX_{A,0}/A}: (\cX_{A,0}/A)_{\cris}^{\sim}\lra \cX_{A~ \! \et}^{\sim}
\]
be the morphism of topoi such that $u_{\ast}(\mathcal F)(\cU)=H^0_{\cris}(\cU_0/A,\mathcal F)$ for $\cU\in \cX_{A~\!\et}$. By \cite[Theorem 7.23]{BO}, there is a natural quasi-isomorphism $Ru_{\ast} \cM\stackrel{\sim}{\ra} DR_{X/Y}(\cM)$ in the derived category, inducing an isomorphism
\[
H^i_{\cris}(\cX_{A,0}/A,\cM)\simto H^i(\cX_{A},DR_{X/Y}(\cM)).
\]
Passing to associated sheaves, we deduce $
R^if_{\cris *}(\cM)_{\cY}\stackrel{\sim}{\ra}R^if_{*}(DR_{X/Y}(\cM))$. On the other hand, as $f\colon \cX\to \cY$ is proper and smooth, $R^if_{*}(DR_{X/Y}(\cE))$, viewed as a coherent sheaf on the adic space $Y$, is the $i$-th relative convergent cohomology of $\cE$ with respect to the morphism $f_0\colon \cX_0\to \cY_0$. Thus, by \cite[Th\'eor\`eme 5]{Ber86} (see also \cite[Theorem 4.1.4]{Tsu}), if we invert $p$, the $\cO_{\cY}[1/p]$-module $R^if_{*}(DR_{X/Y}(\cE))\simeq R^if_{*}(DR_{X/Y}(\cM))[1/p]$, together with the Gauss-Manin connection and the natural Frobenius structure inherited from $R^if_{\cris*}(\cM)_{\cY}\simeq R^if_{*}(DR_{X/Y}(\cM))$, is a convergent $F$-isocrystal on $\cY_0/\cO_k$, denoted by $R^if_{\cris *}(\cE)$ in the following (this is an abuse of notation, a more appropriate notation should be $R^if_{0~ \! \mathrm{conv}*}(\cE)$). Using the filtration on $\cE$, one sees that $R^if_{\cris *}(\cE)$ has naturally a filtration, and it is well-known that this filtration satisfies Griffiths transversality with respect to the Gauss-Manin connection.

\begin{prop} \label{main1relative} Let $\cX\to \cY$ be a proper smooth morphism of smooth $p$-adic formal schemes over $\cO_k$. Let $\mathcal E$ be a filtered convergent $F$-isocrystal on $\cX_0/\cO_k$ and $\LL$ a lisse $\widehat{\ZZ}_p$-sheaf on $X_{\proet}$. Assume that $\mathcal E$ and $\LL$ are associated.
\begin{enumerate}
\item For every $i\in \mathbb Z$, $R^if_{\cris *}\cE$ is a filtered convergent $F$-isocrystal on $\cY_0/\cO_k$, with gradeds $R^if_*(\mathrm{gr}^{r}DR_{X/Y}(\cE))$, $r\in \mathbb Z$.
\item There is a natural filtered isomorphism of $\cO\BB_{\cris,Y}$-modules
\begin{equation}\label{eq.isorelative}
R^if_{k*}(\LL\otimes_{\widehat{\ZZ}_p}\mathbb A_{\cris,X}\widehat{\otimes}_{f_k^{-1}\mathbb A_{\cris,Y}}f_k^{-1}\cO\mathbb A_{\cris,Y}[1/t]) \stackrel{\sim}{\longrightarrow} w_{\cY}^{-1}(R^if_{\cris *}( \cE))\otimes\cO\BB_{\cris,Y}
\end{equation}
which is compatible with Frobenius and connection.
\end{enumerate}
\end{prop}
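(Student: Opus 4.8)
The statement is a relative analogue of Theorem \ref{main1}, so the plan is to push the argument of Corollary \ref{quasicor} and Theorem \ref{main1} forward along $f_k$ and localize on the base. Since \eqref{eq.isorelative} is a morphism of $\cO\BB_{\cris,Y}$-modules, being an isomorphism may be checked locally on $\cY$; so I would assume $\cY=\Spf A$ is affine and \'etale over a torus $\mathcal S$, and then test after evaluating on the affinoid perfectoids $V\in Y_{\proet}$ that are pro-\'etale over $\widetilde Y_{\bk}$, which form a basis. For the right-hand side, recall that $R^if_{\cris*}\cE$ is a convergent $F$-isocrystal on $\cY_0/\cO_k$, hence a locally free $\cO_{\cY}[1/p]$-module; so, $\cY$ being affine, $(R^if_{\cris*}\cE)(\cY)$ is a projective $A[1/p]$-module, and exactly as in Corollary \ref{easyfact} (now applied over $\cY$, using Lemma \ref{2obcris} to identify sections of $\cO\BB_{\cris,Y}$ over $V$) one gets $(w_{\cY}^{-1}(R^if_{\cris*}\cE)\otimes_{\cO_Y^{\ur}}\cO\BB_{\cris,Y})(V)\simeq (R^if_{\cris*}\cE)(\cY)\otimes_A\cO\BB_{\cris,Y}(V)$, strictly compatibly with filtrations. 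Thus it remains to produce, functorially in such $V$, a filtered, Frobenius-compatible isomorphism
\[
H^i\bigl(X_V,\ \LL\otimes_{\widehat\ZZ_p}\BB_{\cris,X}\widehat\otimes_{f_k^{-1}\BB_{\cris,Y}}f_k^{-1}\cO\BB_{\cris,Y}\bigr)\ \simeq\ (R^if_{\cris*}\cE)(\cY)\otimes_A\cO\BB_{\cris,Y}(V).
\]

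To compute the left-hand side, I would first apply the relative Poincar\'e lemma (Proposition \ref{relativepoincare}) to obtain a strictly filtered quasi-isomorphism $\BB_{\cris,X}\widehat\otimes_{f_k^{-1}\BB_{\cris,Y}}f_k^{-1}\cO\BB_{\cris,Y}\simeq DR_{X/Y}(\cO\BB_{\cris,X})$, then tensor with $\LL$ (harmless, since $\cO\BB_{\cris,X}$ is a $\QQ_p$-algebra) and use that $\LL$ and $\cE$ are associated to get filtered quasi-isomorphisms $\LL\otimes\BB_{\cris,X}\widehat\otimes f_k^{-1}\cO\BB_{\cris,Y}\simeq DR_{X/Y}(\LL\otimes\cO\BB_{\cris,X})\simeq DR_{X/Y}(w_{\cX}^{-1}\cE\otimes_{\cO_X^{\ur}}\cO\BB_{\cris,X})$. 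Next I push forward along the morphism $w_V\colon X_{\proet}^{\sim}/X_V\to\cX_{\et}^{\sim}$ of Lemma \ref{quasirelative}: since $R^jw_{V*}(\Fil^r\cO\BB_{\cris,X})=0$ for $j>0$ by part (1) of that lemma, and since $\cE$ is locally a direct summand of a finite free $\cO_{\cX}[1/p]$-module while $\Omega^{j,\ur}_{X/Y}=w_{\cX}^{-1}\Omega^j_{\cX/\cY}$ is locally free, the projection formula together with part (2) of Lemma \ref{quasirelative} identifies $Rw_{V*}$ of this complex with $DR_{X/Y}\bigl(\cO_{\cX}\widehat\otimes_A\cO\BB_{\cris,Y}(V)\bigr)\otimes_{\cO_{\cX}[1/p]}\cE$ on $\cX_{\et}$ (here $\cX_A=\cX$ as $\cY$ is affine), the factor $\cO_{\cX}\widehat\otimes_A\cO\BB_{\cris,Y}(V)$ being horizontal for $\nabla_{X/Y}$. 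Taking $R\Gamma(\cX,-)$ and then Lemma \ref{lem.techBC} yields, in the filtered derived category,
\[
R\Gamma\bigl(X_V,\ \LL\otimes\BB_{\cris,X}\widehat\otimes f_k^{-1}\cO\BB_{\cris,Y}\bigr)\ \simeq\ R\Gamma(\cX, DR_{X/Y}(\cE))\otimes_A\cO\BB_{\cris,Y}(V).
\]

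Now $R\Gamma(\cX,DR_{X/Y}(\cE))$ is a bounded complex of $A[1/p]$-modules all of whose cohomology modules $\mathbb H^j(\cX,DR_{X/Y}(\cE))=(R^jf_{\cris*}\cE)(\cY)$ are projective (the $R^jf_{\cris*}\cE$ being locally free isocrystals on the affine $\cY$, and using the isomorphism \eqref{eq.BOrelative}); hence it is formal and the derived tensor product collapses degreewise, giving $H^i\simeq (R^if_{\cris*}\cE)(\cY)\otimes_A\cO\BB_{\cris,Y}(V)$, with the filtration induced by $\Fil^\bullet(R^if_{\cris*}\cE)$ and $\Fil^\bullet\cO\BB_{\cris,Y}(V)$. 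Together with the first paragraph this is the desired identification; functoriality in $V$ is built into all the constructions, so sheafifying over the basis produces \eqref{eq.isorelative} as a filtered isomorphism.

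It remains to check compatibility with Frobenius and connection. For the Frobenius I would pass to a cover of $\cX$ by opens \'etale over a relative torus $\mathcal T^d_{\cY}$ carrying Frobenius lifts $\sigma$ lying over a chosen lift $\sigma_Y$ on the small $\cY$: over such an open, Proposition \ref{relativepoincare}(2) makes the Poincar\'e quasi-isomorphism $\varphi$-equivariant, Lemmas \ref{quasirelative} and \ref{lem.techBC} are visibly compatible with the induced Frobenii, and the local isomorphisms glue because the change-of-lift formula for $\varphi$ is the same (Lemma \ref{sigma12}, resp. Lemma \ref{fsigma12}) on $R^if_{\cris*}\cE$ and on $\cO\BB_{\cris,Y}$. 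For the connection, the Gauss--Manin connection on $R^if_{\cris*}\cE$ is by construction obtained by evaluating the crystal $R^if_{\cris*}\cM$ on $p$-adic PD-thickenings of $\cY_0$; since $\cO\BB_{\cris,Y}(V)$ carries a PD-map to $S^+/pS^+$, evaluating there identifies, exactly as in the discussion around \eqref{identification} and Remark \ref{rk.compfiltrations} (now over the base rather than $\cX$), the transport along $\cO\BB_{\cris,Y}(V)$ with the Gauss--Manin connection, which matches the connection induced on the left-hand side by $\nabla$ on $\cO\BB_{\cris,Y}$. The main obstacle, granting Lemmas \ref{quasirelative} and \ref{lem.techBC}, is precisely this last identification --- showing that the connection coming from the period sheaf on the left agrees with the Gauss--Manin connection on $R^if_{\cris*}\cE$ --- everything else being a bookkeeping assembly of the already-established acyclicity and base-change results while tracking the filtrations.
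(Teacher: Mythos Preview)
Your proposal is correct and follows essentially the same route as the paper: relative Poincar\'e lemma plus the associatedness of $\LL$ and $\cE$ to pass to $DR_{X/Y}(w_{\cX}^{-1}\cE\otimes\cO\BB_{\cris,X})$, then Lemma~\ref{quasirelative} and Lemma~\ref{lem.techBC} to descend to $\cX_{\et}$ and pull the scalars out, checked on affinoid perfectoids $V$ over $\widetilde Y_{\bk}$. Two small remarks: the paper organizes the argument by first writing down a global adjunction morphism $w_{\cY}^{-1}R^if_{*}(DR_{X/Y}(\cE))\otimes\cO\BB_{\cris,Y}\to R^if_{k*}(w_{\cX}^{-1}DR_{X/Y}(\cE)\otimes\cO\BB_{\cris,X})$ and then verifying it is a filtered isomorphism locally (so the connection compatibility is automatic from naturality of adjunction, not an obstacle), and it extracts $H^i$ from the derived tensor product using flatness of $\cO\BB_{\cris,Y}(V)$ over $A$ (Brinon) rather than your formality argument---both are fine.
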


\begin{proof} (1) We have observed above that $R^if_{\cris*}\cE$ is naturally a convergent $F$-isocrystal. To complete the proof of (1), it suffices to check that the filtration on $R^if_{\cris*}\cE$ is given by locally direct summands. By Proposition \ref{prop.crisdr}, the lisse $\widehat{\Z}_p$-sheaf $\LL$ is de Rham with associated filtered $\cO_X$-module with integrable connection $\cE$. Therefore the Hodge-to-de Rham spectral sequence
\[
E_1^{i,j}=R^{i+j}f_{*}\left(\mathrm{gr}^{i}\left(DR_{X/Y}(\cE)\right)\right)\Longrightarrow R^{i+j}f_{*}(DR_{X/Y}(\cE))
\]
degenerates at $E_1$. Moreover, $E_{1}^{i,j}$, the relative Hodge cohomology of $\cE$ in \cite[Theorem 8.8]{Sch}, is a locally free $\cO_Y$-module of finite rank for all $i,j$ by \emph{loc. cit}. Thereby the filtration on $R^if_{*}(DR_{X/Y}(\cE))=R^if_{\cris*}(\cE)$, which is the same as the one induced by the spectral sequence above, is given by locally direct summands, with gradeds $R^{i}f_{*}(\mathrm{gr}^{r}(DR_{X/Y}(\cE)))$, $r\in \mathbb Z$.

(2) Using Proposition \ref{relativepoincare} (1) and the fact that $\LL$ and $\cE$ are associated, we have the following filtered isomorphisms compatible with connection:
\begin{equation}\label{eq.isosrelatifs}
\begin{array}{cl}
& R^if_{k*}(\LL\otimes \mathbb A_{\cris,X}\widehat{\otimes}f_k^{-1}\cO\mathbb A_{\cris,Y}[1/t]) \\ \stackrel{\sim}{\lra} & R^if_{k*}(\LL\otimes DR_{X/Y}(\cO\BB_{\cris,X})) \\ \stackrel{\sim}{\lra} & R^if_{k*}(DR_{X/Y}(\LL\otimes \cO\BB_{\cris,X})) \\ \stackrel{\sim}{\lra} & R^if_{k*}(DR_{X/Y}(w_{\cX}^{-1}\cE\otimes \cO\BB_{\cris,X})). 
\end{array}
\end{equation}
On the other hand, the morphism below given by adjunction respects the connections on both sides:
\begin{equation}\label{eq.toshowitisaniso}
w_{\cY}^{-1}R^if_{*}(DR_{X/Y}(\cE))\otimes \cO\BB_{\cris,Y}\lra R^if_{k*}(DR_{X/Y}(w_{\cX}^{-1}\cE\otimes\cO\BB_{\cris,X})).
\end{equation}
We claim that \eqref{eq.toshowitisaniso} is a filtered isomorphism. This is a local question, we may and do assume that $\cY=\Spf(A)$ is affine and is \'etale over some torus over $\cO_k$. Let $V\in Y_{\proet}$ be an affinoid perfectoid pro-\'etale over $\widetilde{Y}_{\bk}$. As $R^if_{*}(DR_{X/Y}(\cE))=R^if_{\cris*}(\cE)$ is a locally free $\cO_{\cY}[1/p]$-module on $\cY$, we have
\[
\left(w_{\cY}^{-1}R^if_{*}(DR_{X/Y}(\cE))\otimes \cO\BB_{\cris,Y}\right)(V)\simeq H^i(\cX,DR_{X/Y}(\cE))\otimes_A\cO\BB_{\cris,Y}(V).
\]
So we only need to check that the natural morphism below is a filtered isomorphism
\[
H^i(\cX,DR_{X/Y}(\cE))\otimes_A\cO\BB_{\cris,Y}(V)\lra H^i(X_V,DR_{X/Y}(w_{\cX}^{-1}\cE\otimes\cO\BB_{\cris,X})).
\]
By Lemma \ref{quasirelative}, one has further identifications strictly compatible with filtrations:
\begin{eqnarray*}
H^i(X_V,DR_{X/Y}(w_{\cX}^{-1}\cE\otimes \cO\BB_{\cris,X}))&\simeq & H^i(\cX, Rw_{V*}(DR_{X/Y}(w_{\cX}^{-1}\cE\otimes \cO\BB_{\cris,X})))\\ & \simeq &  H^i(\cX, DR_{X/Y}(\cE\otimes \cO_{\cX}\widehat{\otimes}_A\cO\BB_{\cris,Y}(V)).
\end{eqnarray*}
Write $\widehat{V}=\Spa(R,R^+)$. So $\cO\BB_{\cris}(R,R^+)\stackrel{\sim}{\ra}\cO\BB_{\cris,Y}(V)$ by Corollary \ref{cor.acyclicityOBcris}. Thus, to prove our claim, it suffices to show that the canonical morphism
\[
H^i(\cX,DR_{X/Y}(\cE))\otimes_A\cO\BB_{\cris}^+(R,R^+)\lra H^i(\cX, DR_{X/Y}(\cE\otimes \cO_{\cX}\widehat{\otimes}_A\cO\BB_{\cris}^+(R,R^+)))
\]
is a filtered isomorphism. One only needs to check this on the gradeds. Since the gradeds of $\cO\mathbb B^+_{\cris}(R,R^+)$ are finite free $R$-modules, we are reduced to showing that for every $r\in \mathbb Z$, the natural map
\begin{equation}\label{eq:technical-base-change-OBcris}
H^i(\cX,\mathrm{gr}^rDR_{X/Y}(\cE))\otimes_A R\lra H^i(\cX, \mathrm{gr}^r(DR_{X/Y}(\cE))\otimes \cO_{\cX}\widehat{\otimes}_AR)
\end{equation}
is an isomorphism. This follows from Proposition \ref{prop:technical-base-change} by taking $B=R^+$. More precisely, let $\cF$ be a bounded complex of coherent sheaves on $\cX$, such that $\cF[1/p]=\mathrm{gr}^rDR_{X/Y}(\cE)$. So, for each term $\cF^i$ of $\cF$, $\cF^i[1/p]$ is locally a direct factor of a finite free $\cO_{\cX}[1/p]$-module. In particular, the kernel and cokernel of the natural map $\cF\otimes \cO_{\cX}\widehat{\otimes}_A R^+\ra \cF\widehat{\otimes}_{A}R^+$ is killed by some bounded power of $p$. So
\begin{equation}\label{eq:iso-for-cF-tensor-cO}
H^i(\cX,\cF\otimes \cO_{\cX}\widehat{\otimes}_A R^+)[1/p]\stackrel{\sim}{\lra} H^i(\cX,\cF\widehat{\otimes}_{A}R^+)[1/p].
\end{equation}
Moreover, by (1), $R^if_{*}\mathrm{gr}^rDR_{X/Y}(\cE)$ is a graded piece of the filtered isocrystal $R^if_{\cris*}(\cE)=R^if_{*}(DR_{X/Y}(\cE))$. As $\cY=\Spf(A)$ is affine, taking global sections, we see that $H^i(\cX,DR_{X/Y}(\cE))$ is projective over $A[1/p]$, and $H^i(\cX,\mathrm{gr}^rDR_{X/Y}(\cE))$ is locally a direct factor of $H^{i}(\cX,DR_{X/Y}(\cE))$. Therefore, $H^i(\cX,\mathrm{gr}^r(DR_{X/Y}(\cE)))=H^i(\cX,\cF)[1/p]$ is flat over $A[1/p]$. By Proposition \ref{prop:technical-base-change}, the kernel and the cokernel of the map
\[
H^i(\cX,\cF)\otimes_A R^+\lra H^i(\cX,\cF\widehat{\otimes}_A R^+)
\]
are killed by some power of $p$. Inverting $p$ and combining \eqref{eq:iso-for-cF-tensor-cO}, we obtain that  \eqref{eq:technical-base-change-OBcris} is an isomorphism, completing the proof of our claim.

Composing the isomorphisms in \eqref{eq.isosrelatifs} with the inverse of \eqref{eq.toshowitisaniso}, we get the desired filtered isomorphism \eqref{eq.isorelative} that is compatible with connections on both sides. It remains to check the Frobenius compatibility of \eqref{eq.isorelative}. For this, we may and do assume again that $\cY=\Spf(A)$ is affine and is \'etale over some torus over $\cO_k$, and let $V\in Y_{\proet}$ some affinoid perfectoid pro-\'etale over $\widetilde{Y}$. In particular, $A$ admits a lifting of the Frobenius on $\cY_0$, denoted by $\sigma$.  Let $\mathcal M$ be an $F$-crystal on $\cX_0/\cO_k$ such that $\cE=\mathcal M^{\rm an}(n)$ for some $n\in \mathbb N$ (Remark \ref{rk.CrystalVSIsoc}).  Then the crystalline cohomology $H^i_{\cris}(\cX_0/A,\mathcal M)$ is endowed with a Frobenius which is $\sigma$-semilinear. We just need to check the Frobenius compatibility of composition of the maps below (here the last one is induced by the inverse of \eqref{eq.isorelative}):
\[
\begin{array}{c}
H_{\cris}^i(\cX_0/A,\mathcal M)\lra H_{\cris}^i(\cX_0/A,\mathcal M)[1/p]\stackrel{\sim}{\lra} H^i(\cX,DR_{X/Y}(\cE))\lra \\ \left(w_{\cY}^{-1}(R^if_{\cris*}(\cE))\otimes \cO\BB_{\cris,Y}\right)(V)  \lra H^i(X_V, \LL\otimes \mathbb A_{\cris,X}\widehat{\otimes}f_k^{-1}\cO\mathbb A_{\cris,Y}[1/t]),
\end{array}
\]
which can be done in the same way as in the proof of Theorem \ref{main1}.
\end{proof}

The relative crystalline comparison theorem then can be stated as follows:

\begin{thm}\label{thm.relativecomp} Let $\LL$ be a crystalline lisse  $\widehat{\ZZ}_p$-sheaf on $X$ associated to a filtered $F$-isocrystal  $\cE$ on $\cX_0/\cO_k$. Assume that, for any $i\in \mathbb Z$, $R^if_{k*}\LL$ is a lisse  $\widehat{\ZZ}_p$-sheaf on $Y$. Then $R^if_{k*}\LL$ is crystalline and  is associated to the filtered convergent $F$-isocrystal $R^if_{\cris\ast}\cE$.
\end{thm}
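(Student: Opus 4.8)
\textbf{Proof plan for Theorem~\ref{thm.relativecomp}.}
The plan is to deduce the relative comparison from Proposition~\ref{main1relative} together with the characterization of crystalline sheaves in Proposition~\ref{prop.CrysSheaf} and the identification $w_{\cY\ast}\cO\BB_{\cris,Y}\simeq \cO_{\cY_{\et}}[1/p]$ from Corollary~\ref{higherox}. First I would set $\LL':=R^if_{k\ast}\LL$, which by hypothesis is a lisse $\widehat{\ZZ}_p$-sheaf on $Y_{\proet}$, and $\cE':=R^if_{\cris\ast}\cE$, which is a filtered convergent $F$-isocrystal on $\cY_0/\cO_k$ by the discussion preceding Proposition~\ref{main1relative}. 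The goal is to produce an isomorphism of $\cO\BB_{\cris,Y}$-modules $w_{\cY}^{-1}\cE'\otimes_{\cO_Y^{\ur}}\cO\BB_{\cris,Y}\simeq \LL'\otimes_{\widehat{\ZZ}_p}\cO\BB_{\cris,Y}$ compatible with connection, filtration and Frobenius. The natural candidate is the isomorphism \eqref{eq.isorelative} of Proposition~\ref{main1relative}, provided one can identify its left-hand side $R^if_{k\ast}(\LL\otimes_{\widehat{\ZZ}_p}\BB_{\cris,X}\widehat{\otimes}f_k^{-1}\cO\BB_{\cris,Y})$ with $\LL'\otimes_{\widehat{\ZZ}_p}\cO\BB_{\cris,Y}$.

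The key intermediate step is therefore the base-change identity $R^if_{k\ast}(\LL\otimes_{\widehat{\ZZ}_p}\BB_{\cris,X}\widehat{\otimes}f_k^{-1}\cO\BB_{\cris,Y})\simeq \big(R^if_{k\ast}(\LL\otimes_{\widehat{\ZZ}_p}\BB_{\cris,X})\big)\otimes_{\BB_{\cris,Y}}\cO\BB_{\cris,Y}$, which follows by the projection formula once one knows $\cO\BB_{\cris,Y}$ is, locally on $Y_{\proet}$, a sufficiently flat $\BB_{\cris,Y}$-module (by Corollary~\ref{BcrisIso} it is locally a divided-power polynomial algebra $\BB_{\cris,Y}\{\langle u_1,\dots,u_\delta\rangle\}$, so this is a completed-polynomial flatness statement as in Lemma~\ref{2obcris}). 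Combined with Proposition~\ref{part1relative}, which gives $R^if_{k\ast}(\LL\otimes_{\widehat{\ZZ}_p}\BB_{\cris,X}^{+})\simeq (R^if_{k\ast}\LL)\otimes_{\widehat{\ZZ}_p}\BB_{\cris,Y}^{+}$ compatibly with filtration and Frobenius (after inverting $t$ this upgrades to $\BB_{\cris}$), one obtains the desired identification of the left-hand side of \eqref{eq.isorelative} with $\LL'\otimes_{\widehat{\ZZ}_p}\cO\BB_{\cris,Y}$. Feeding this into Proposition~\ref{main1relative} yields an isomorphism $\LL'\otimes_{\widehat{\ZZ}_p}\cO\BB_{\cris,Y}\simeq w_{\cY}^{-1}\cE'\otimes_{\cO_Y^{\ur}}\cO\BB_{\cris,Y}$ compatible with connection, filtration and Frobenius, which is precisely the assertion that $\LL'$ is crystalline and associated to $\cE'$ in the sense of Definition~\ref{associated}. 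One should also check that $\cE'\simeq\DD_{\cris}(\LL')$, which follows formally by applying $w_{\cY\ast}$ to the last isomorphism and using Corollary~\ref{higherox}, exactly as in the first paragraph of the proof of Proposition~\ref{prop.CrysSheaf}.

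I expect the main obstacle to be the bookkeeping around the \emph{completed} tensor product $\BB_{\cris,X}\widehat{\otimes}f_k^{-1}\cO\BB_{\cris,Y}$ appearing in Proposition~\ref{main1relative}: one must be careful that the projection formula and the passage to cohomology commute with the relevant $p$-adic completions and the inversion of $t$, and that all the identifications made via Lemma~\ref{quasirelative} and Lemma~\ref{lem.techBC} are strictly compatible with filtrations (not merely filtered quasi-isomorphisms). A secondary point requiring care is Frobenius compatibility: since $\cX$ and $\cY$ need not admit global Frobenius lifts, one argues locally over a cover of $\cY$ by small affine opens equipped with compatible Frobenius lifts $\sigma_{\cU}$, $\sigma_{\cV}$ with $\sigma\circ f = f\circ\sigma$, uses Proposition~\ref{relativepoincare}(2) and Lemma~\ref{sigma12}/Lemma~\ref{fsigma12} to see the local Frobenii glue, and then invokes that all the maps in the chain \eqref{eq.isosrelatifs} and in Proposition~\ref{part1relative} are Frobenius-equivariant. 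Apart from these coherence checks the argument is a formal concatenation of results already established.
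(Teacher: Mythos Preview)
Your overall plan is the same as the paper's: combine Proposition~\ref{part1relative} with Proposition~\ref{main1relative} via a projection-formula step identifying $R^if_{k\ast}(\LL\otimes\BB_{\cris,X}\widehat{\otimes}f_k^{-1}\cO\BB_{\cris,Y})$ with $(R^if_{k\ast}\LL)\otimes\cO\BB_{\cris,Y}$. The paper carries out that projection step at the integral level, using that locally on $\widetilde Y$ one has $\cO\mathbb{A}_{\cris,Y}/p^n\simeq(\mathbb{A}_{\cris,Y}/p^n)\langle w_1,\ldots,w_\delta\rangle$, which is \emph{free} over $\mathbb{A}_{\cris,Y}/p^n$ with basis the divided-power monomials $w^{[\underline\alpha]}$; this is exactly the ``completed-polynomial flatness'' you anticipate, and your concerns about completions, inverting $t$, and Frobenius gluing are the right ones.

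There is, however, one substantive step you have skipped. You assert that $\cE'=R^if_{\cris\ast}\cE$ is a filtered convergent $F$-isocrystal ``by the discussion preceding Proposition~\ref{main1relative}'', but that discussion only gives a convergent $F$-isocrystal carrying a filtration satisfying Griffiths transversality; it does \emph{not} show that each $\Fil^r\cE'$ is a locally direct summand, which is part of the definition of $FF\textrm{-}\mathrm{Isoc}^{\dagger}(\cY_0/\cO_k)$ and is required for the notion of ``associated'' in Definition~\ref{associated} to make sense. The paper addresses this separately in the first paragraph of its proof: since $\LL$ is crystalline it is de Rham (Proposition~\ref{prop.crisdr}), so by \cite[Theorem~8.8]{Sch13} the relative Hodge-to-de Rham spectral sequence $E_1^{i,j}=R^{i+j}f_{\ast}(\mathrm{gr}^i DR_{X/Y}(\cE))\Rightarrow R^{i+j}f_{\ast}(DR_{X/Y}(\cE))$ degenerates at $E_1$ with each $E_1^{i,j}$ locally free of finite rank, and hence the induced filtration on $R^if_{\cris\ast}\cE$ is by locally direct summands. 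You should insert this argument before invoking Proposition~\ref{main1relative}.
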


\begin{proof} We have seen in Proposition \ref{main1relative} (1) that $R^if_{\cris *}\cE$ is a filtered convergent $F$-isocrystal on $\cY_0/\cO_k$. To complete the proof, we need to find filtered isomorphisms that are compatible with Frobenius and connections:
\[
R^if_{k*}(\LL)\otimes \cO\BB_{\cris,Y}\stackrel{\sim}{\lra} w_{\cY}^{-1}R^if_{\cris*}(\cE)\otimes \cO\BB_{\cris,Y}, \quad i\in \mathbb Z.
\]
For this, it suffices to combine Proposition \ref{part1relative} and Proposition \ref{main1relative} (2).
\end{proof}

\section{Appendix: geometric acyclicity of $\cO\BBcr$}\label{acy}

In this section, we extend the main results of \cite{AB} to the setting of perfectoids. The generalization is rather straightforward. Although one may see here certain difference from the arguments in \cite{AB}, the strategy and technique are entirely theirs.

Let $f\colon \cX=\Spf(B)\to \cY=\Spf(A)$ be a smooth morphism of smooth affine formal schemes over $\cO_k$. Write $X$ (resp. $Y$) for the generic fiber of $\cX$ (resp. of $\cY$). By abuse of notation the morphism $X\to Y$ induced from $f$ is still denoted by $f$.

Assume that $\cY$ is \'etale over the torus $\mathcal S:=\Spf(\cO_k\{S_1^{\pm 1}, \ldots, S_{\delta}^{\pm 1}\})$ defined over $\cO_k$ and that the morphism $f\colon \cX\to \cY$ can factor as
\[
\cX \stackrel{\textrm{\'etale}}{\lra} \mathcal{T}\lra \cY,
\]
where  $\mathcal T=\Spf(C)$ is a torus over $\cY$ and  the first morphism $\cX\to \mathcal T$ is \'etale.

Write $C=A\{T_1^{\pm 1},\ldots, T_d^{\pm 1}\}$. For each $1\leq i\leq d$ (resp. each $1\leq j\leq \delta$), let $\{T_i^{1/p^n}\}_{n\in \mathbb N}$ (resp. $\{S_j^{1/p^n}\}_{n\in \mathbb N}$) be a compatible family of $p$-power roots of $T_i$ (resp. of $S_j$). As in Proposition \ref{iso}, we denote by $\widetilde Y$ the following fiber product over the generic fiber $\mathcal S_k$ of $\mathcal S$:
\[
\widetilde Y=Y\times_{\mathcal S_k}\Spa(k\{S_1^{\pm 1/p^{\infty}},\ldots, S_{\delta}^{\pm 1/p^{\infty}}\},\cO_k\{S_1^{\pm 1/p^{\infty}},\ldots, S_{\delta}^{\pm 1/p^{\infty}}\}).
\]
Let $V\in Y_{\proet}$ be an affinoid perfectoid over $\widetilde{Y}_{\bk}$ with $\widehat{V}=\Spa(R,R^+)$. Let $T_V=\Spa(S,S^+)$ be the base change $\cT_k\times_Y V$ and $X_V=\Spa(\widetilde{S},\widetilde{S}^+)$ the base change $X\times_Y V$. Thus $S^+=R^+\{T_1^{\pm 1}, \ldots, T_d^{\pm 1}\}$ and $S=S^+[1/p]$. Set
\[
S_{\infty}^+=R^+\left\{T_1^{\pm 1/p^{\infty}},\cdots, T_d^{\pm 1/p^{\infty}}\right\}, \quad \widetilde{S}_{\infty}^{+}:=B\widehat{\otimes}_C S_{\infty}^+,
\]
$S_{\infty}:=S_{\infty}^+[1/p]$ and $\widetilde{S}_{\infty}:=\widetilde{S}_{\infty}^+[1/p]$. Then $(S_{\infty},S_{\infty}^+)$ and $(\widetilde{S}_{\infty},\widetilde{S}_{\infty}^+)$ are affinoid perfectoids and
\[
S^{\flat+}_{\infty}=R^{\flat+}\left\{(T_1^{\flat})^{\pm 1/p^{\infty}},\cdots, (T_d^{\flat})^{\pm 1/p^{\infty}}\right\},
\]
where $T_i^{\flat}:=(T_i, T_i^{1/p}, T_{i}^{1/p^2}, \ldots )\in S_{\infty}^{\flat +}$. The inclusions $S^+\subset S_{\infty}^+$ and $\widetilde S^+\subset \widetilde{S}_{\infty}^+$ define two profinite Galois covers. Their Galois groups are the same, denoted by $\Gamma$, which is a profinite group isomorphic to $\mathbb Z_p(1)^d$. One can summarize these notations in the following commutative diagramme
\[
\xymatrix{\widetilde{S}_{\infty}^+ & \widetilde{S}^+\ar[l]_{\Gamma} & B\ar[l] \\ S_{\infty}^+\ar[u] & S^+\ar[u]\ar[l]_{\Gamma} & C\ar[u]_{\textrm{\'etale}}\ar[l] \\ R^+\{T_1^{\pm 1/p^{\infty}},\ldots, T_d^{\pm 1/p^{\infty}}\}\ar@{=}[u] & R^+\{T_1^{\pm 1},\ldots, T_d^{\pm 1}\}\ar@{=}[u]\ar[l]_{\Gamma} & A\{T_1^{\pm 1},\ldots, T_d^{\pm 1}\}\ar@{=}[u]\ar[l] \\ & R^+\ar[u]& A\ar[u]\ar[l]}
\]
The group $\Gamma$ acts naturally on the period ring $\cO\BBcr(\widetilde{S}_{\infty},\widetilde{S}_{\infty}^+)$ and on its filtration $\Fil^r\cO\BBcr(\widetilde{S}_{\infty},\widetilde{S}_{\infty}^+)$. The aim of this appendix is to compute the group cohomology
\[
H^q\left(\Gamma, \cO\BBcr\left(\widetilde{S}_{\infty},\widetilde{S}_{\infty}^+\right)\right):=H^q_{\rm cont}\left(\Gamma, \cO\AAcr\left(\widetilde{S}_{\infty},\widetilde{S}^+_{\infty}\right)\right)[1/t]
\]
and
\[
H^q\left(\Gamma, \Fil^r\cO\BBcr\left(\widetilde{S}_{\infty},\widetilde{S}_{\infty}^+\right)\right):=\varinjlim_{n\geq |r|} H_{\rm cont}^q\left(\Gamma, \frac{1}{t^n}\Fil^{r+n}\cO\AAcr\left(\widetilde{S}_{\infty},\widetilde{S}_{\infty}^+\right)\right)
\]
for $q,r\in \mathbb Z$.

In the following, we will omit systematically the subscript {\textquotedblleft cont\textquotedblright} whenever there is no confusion arising. Moreover, we shall use multi-indice to simplify the notation:  for example, for $\underline{a}=(a_1,\ldots, a_d)\in \mathbb Z[1/p]^d$, $T^{\underline{a}}:=T_1^{a_1}\cdot T_2^{a_2}\cdots T_d^{a_d}$.

\subsection{Cohomology of $\cO\BBcr$}

We will first compute $
H^q(\Gamma, \cO\AAcr(S_{\infty},S_{\infty}^+)/p^n)$ up to $(1-[\epsilon])^{\infty}$-torsion for all $q,n\in \mathbb N$ (Corollary \ref{cor.cohomologyofobcris1}). From these computations, we deduce from the results about the cohomology groups $H^q(\Gamma, \cO\BBcr(\widetilde{S}_{\infty},\widetilde{S}_{\infty}^+))$, $q\in \mathbb Z$ (Theorem \ref{withoutfil}).

\begin{lemma}\label{lem.isotechnique}
For $n\in \Z_{\geq 1}$, there are natural isomorphisms
\[
\AAcr(R,R^+)/p^n\otimes_{W(R^{\flat+})/p^n}W(S_{\infty}^{\flat+})/p^n\stackrel{\sim}{\lra} \AAcr(S_{\infty},S_{\infty}^+)/p^n
\]
and
\[
\left(\AAcr(S_{\infty},S_{\infty}^+)/p^n \otimes \cO\AAcr(R,R^+)/p^n\right)\langle u_1,\ldots, u_d\rangle \stackrel{\sim}{\lra} \cO\AAcr(S_{\infty},S_{\infty}^+)/p^n,
\]
sending $u_i$ to $T_i-[T_i^{\flat}]$. Here the tensor product in the second isomorphism above is taken over $\AAcr(R,R^+)/p^n$. Moreover, the natural morphisms
\[
\AAcr(R,R^+)/p^n\to \AAcr(S_{\infty},S_{\infty}^+)/p^n, \quad \cO\AAcr(R,R^+)/p^n\to \cO\AAcr(S_{\infty},S_{\infty}^+)/p^n
\]
are both injective.

\end{lemma}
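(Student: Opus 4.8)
The plan is to reduce everything to the absolute (non-relative) statements already available, namely Lemma \ref{vanish} (and its $\cO\AAcr$-variant from the proof of Lemma \ref{algebra}), the explicit description of $\AAcr(R,R^+)/p$ in terms of $R^{\flat+}$, and the description $\cO\AAcr\simeq \AAcr\{\langle u_1,\ldots,u_d\rangle\}$ coming from Proposition \ref{iso} together with its perfectoid-algebra version (the map $\alpha$ before Lemma \ref{2obcris}). First I would treat the first isomorphism. Since $R^{\flat+}\to S_{\infty}^{\flat+}$ is faithfully flat (indeed $S_{\infty}^{\flat+}=R^{\flat+}\{(T_i^{\flat})^{\pm1/p^{\infty}}\}$ is a filtered colimit of finite free $R^{\flat+}$-modules, each of the form $R^{\flat+}[(T^{\flat})^{\pm1/p^n}]$), applying $W(-)$ and reducing mod $p^n$ stays exact on the relevant level; the point is that $\ker(\theta_{(S_\infty,S_\infty^+)})$ is generated by the same element $\xi=[p^\flat]-p$ as $\ker(\theta_{(R,R^+)})$ (Lemma \ref{nonzerodivisor}), so the PD-envelope and its $p$-adic completion are obtained from those of $\AAcr(R,R^+)$ simply by base change along $W(R^{\flat+})\to W(S_\infty^{\flat+})$. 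Concretely $\AAcr^0(S_\infty,S_\infty^+)=\AAcr^0(R,R^+)\otimes_{W(R^{\flat+})}W(S_\infty^{\flat+})$ because adjoining $\xi^{[n]}$ commutes with the flat base change, and then one reduces mod $p^n$ (note $\AAcr^0$ is $p$-torsion free, so $\AAcr(R,R^+)/p^n=\AAcr^0(R,R^+)/p^n$, and likewise over $S_\infty$).

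Next, for the second isomorphism I would invoke the morphism $\alpha\colon \AAcr(S_\infty,S_\infty^+)\{\langle u_1,\ldots,u_d\rangle\}\to \cO\AAcr(S_\infty,S_\infty^+)$, $u_i\mapsto T_i\otimes1-1\otimes[T_i^\flat]$, which is an isomorphism by the same argument as in Proposition \ref{iso} (this is exactly the statement recalled, for $\cO\AAcr(S,S^+)$, in the proof of Lemma \ref{2obcris}). Combining with the first isomorphism of the present lemma gives
\[
\cO\AAcr(S_\infty,S_\infty^+)/p^n\simeq \bigl(\AAcr(R,R^+)/p^n\otimes_{W(R^{\flat+})/p^n}W(S_\infty^{\flat+})/p^n\bigr)\{\langle u_1,\ldots,u_d\rangle\}/p^n.
\]
To match this with the asserted form $(\AAcr(S_\infty,S_\infty^+)/p^n\otimes_{\AAcr(R,R^+)/p^n}\cO\AAcr(R,R^+)/p^n)\langle u_1,\ldots,u_d\rangle$, I would use $\cO\AAcr(R,R^+)\simeq \AAcr(R,R^+)\{\langle u_1,\ldots,u_d\rangle\}$ (the $\cU=\mathcal T^d$ case of Proposition \ref{iso}, i.e. with $B=A\{T_i^{\pm1}\}$), reducing the identification to the evident fact that PD-polynomial rings commute with base change, i.e. $M\{\langle\underline u\rangle\}\otimes_M M'\simeq M'\{\langle\underline u\rangle\}$ after $p$-adic completion when $M'$ is $M$-flat; here flatness of $\AAcr(S_\infty,S_\infty^+)/p^n$ over $\AAcr(R,R^+)/p^n$ follows from the first isomorphism plus flatness of $W(S_\infty^{\flat+})/p^n$ over $W(R^{\flat+})/p^n$. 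One must be a little careful about whether the PD-structure on $\AAcr^0(S_\infty,S_\infty^+)\langle\underline u\rangle$ is really the base change of that on $\AAcr^0(R,R^+)\langle\underline u\rangle$ with the extra variables; since the divided powers of $\xi$ and of the $u_i$ are formal and survive flat base change, this is routine.

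Finally, the injectivity statements: for $\AAcr(R,R^+)/p^n\to \AAcr(S_\infty,S_\infty^+)/p^n$ this is immediate from the first isomorphism together with faithful flatness of $W(S_\infty^{\flat+})/p^n$ over $W(R^{\flat+})/p^n$ (which gives that $M\mapsto M\otimes_{W(R^{\flat+})/p^n}W(S_\infty^{\flat+})/p^n$ is injective on $M=\AAcr(R,R^+)/p^n$, viewing the latter as a submodule via $1$); for $\cO\AAcr(R,R^+)/p^n\to \cO\AAcr(S_\infty,S_\infty^+)/p^n$ one applies the PD-polynomial-ring description on both sides, under which the map becomes $\AAcr(R,R^+)/p^n\{\langle\underline u\rangle\}\hookrightarrow \AAcr(S_\infty,S_\infty^+)/p^n\{\langle\underline u\rangle\}$, coefficient-wise the previous injection. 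The main obstacle I anticipate is bookkeeping the $p$-adic completions: one has to check that the flat base change $-\otimes_{W(R^{\flat+})}W(S_\infty^{\flat+})$ commutes with the $p$-adic completion defining $\AAcr$ (and with the completed PD-polynomial ring $\{\langle\underline u\rangle\}$), which comes down to the transition maps in the inverse system $\AAcr^0/p^n$ being nice enough (they are, because $\AAcr^0$ is $p$-torsion free and $W(S_\infty^{\flat+})/p^n$ is flat over $W(R^{\flat+})/p^n$, so no $R^1\varprojlim$ obstruction appears); this is the same style of argument as in the proof of Lemma \ref{vanish}, and I would simply point to it rather than repeat it.
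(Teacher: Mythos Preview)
Your overall strategy matches the paper's: write $\AAcr^0$ explicitly as $W(-)[X_m]/(m!X_m-\xi^m)$ to get the first isomorphism by base change, then use the Proposition~\ref{iso}-style description of $\cO\AAcr$ as a completed PD-polynomial ring over $\AAcr$ for the second isomorphism and the injectivity. However, there is a genuine bookkeeping error in your treatment of the second isomorphism that makes the computation as written incorrect.

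The issue is your identification of the PD-polynomial variables. In the appendix setup, $(R,R^+)$ comes from $V\in Y_{\proet}$, so $\cO\AAcr(R,R^+)$ is the sheaf $\cO\mathbb A_{\cris,Y}$ evaluated at $V$; its base formal scheme is $\cY$, which is \'etale over the torus $\mathcal S$ with coordinates $S_1,\ldots,S_\delta$. Hence the correct description is
\[
\cO\AAcr(R,R^+)\;\simeq\;\AAcr(R,R^+)\{\langle w_1,\ldots,w_\delta\rangle\},\qquad w_j=S_j-[S_j^\flat],
\]
not $\AAcr(R,R^+)\{\langle u_1,\ldots,u_d\rangle\}$. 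Likewise, $(S_\infty,S_\infty^+)$ lies in $X_{\proet}$ and $\cX$ is \'etale over a $(d+\delta)$-dimensional torus over $\cO_k$ (the coordinates $T_i$ \emph{and} the $S_j$ pulled back from $\cY$), so
\[
\cO\AAcr(S_\infty,S_\infty^+)\;\simeq\;\AAcr(S_\infty,S_\infty^+)\{\langle u_1,\ldots,u_d,w_1,\ldots,w_\delta\rangle\}.
\]
With your descriptions (only $u_i$'s on both sides), the tensor-and-add-PD-variables calculation does not reproduce the asserted isomorphism. With the correct descriptions, it does immediately: tensoring $\AAcr(R,R^+)/p^n\langle w_j\rangle$ up to $\AAcr(S_\infty,S_\infty^+)/p^n$ gives $\AAcr(S_\infty,S_\infty^+)/p^n\langle w_j\rangle$, and adjoining $\langle u_i\rangle$ yields $\AAcr(S_\infty,S_\infty^+)/p^n\langle u_i,w_j\rangle$, which is the right-hand side. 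This is precisely the chain of isomorphisms the paper writes out.

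For injectivity, your faithful-flatness route is a legitimate alternative to the paper's explicit $\bmod\ p$ check (the paper simply observes that $R^{\flat+}/(p^\flat)^p\hookrightarrow S_\infty^{\flat+}/(p^\flat)^p$ is the inclusion of the coefficient ring into a polynomial ring, then bootstraps via $p$-torsion-freeness). Your version requires knowing that $W(S_\infty^{\flat+})/p^n$ is faithfully flat over $W(R^{\flat+})/p^n$; this is true but you should say why (e.g.\ use the $V$-filtration on $W_n$ of perfect rings with graded pieces $R^{\flat+}\hookrightarrow S_\infty^{\flat+}$, and note that $S_\infty^{\flat+}/(p^\flat)$ is free over $R^{\flat+}/(p^\flat)$), rather than just asserting it.
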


\begin{proof}
 Recall $\xi=[p^{\flat}]-p$. We know that $\AAcr(S_{\infty},S_{\infty}^+)$ is the $p$-adic completion of
\[
\AAcr^{0}(S_{\infty},S_{\infty}^+):=W(S_{\infty}^{\flat+})\left[\frac{\xi^m}{m!}|m=0,1,\cdots\right]=\frac{W(S_{\infty}^{\flat+})[X_0,X_1,\cdots]}{(m! X_m-\xi^m: m\in \mathbb Z_{\geq 0})}.
\]
Note that we have the same expression with $R$ in place of $S_{\infty}$. We then have
\begin{eqnarray*}
\AAcr^{0}(S_{\infty},S_{\infty}^+) &\stackrel{\sim}{\longleftarrow} & W(S_{\infty}^{\flat+})\otimes_{W(R^{\flat+})}\frac{W(R^{\flat+})[X_0,X_1,\cdots]}{(m!X_m-\xi^m|m\in \Z_{\geq 0})}\\ & =& W(S_{\infty}^{\flat+})\otimes_{W(R^{\flat+})}\AAcr^{0}(R,R^+).
\end{eqnarray*}
The first isomorphism follows.

Secondly, as $V$ lies above $\widetilde Y_{\bk}$, by Proposition \ref{iso} we have
\[
\AAcr(R,R^+)\{\langle w_1,\ldots, w_{\delta}\rangle \} \stackrel{\sim}{\lra}\cO\AAcr(R,R^+), \quad w_j\mapsto S_j-[S_j^{\flat}]
\]
where $S_j^{\flat}:=(S_j,S_j^{1/p},S_j^{1/p^2},\ldots)\in R^{\flat+}$. Similarly, we have
\[
\AAcr(S_{\infty},S_{\infty}^+)\{\langle u_1,\ldots, u_d,w_1,\ldots, w_{\delta}\rangle\} \stackrel{\sim}{\lra} \cO\AAcr(S_{\infty},S_{\infty}^+), \quad u_i \mapsto T_i-[T_i^{\flat}], \ w_j\mapsto S_j-[S_j^{\flat}].
\]
Thus (the isomorphisms below are all the natural ones)
\begin{eqnarray*}
\frac{\cO\AAcr(S_{\infty},S_{\infty}^+)}{p^n} & \stackrel{\sim}{\longleftarrow} & \left(\frac{\AAcr(S_{\infty},S_{\infty}^+)}{p^n}\right)\langle u_1,\ldots, u_d,w_1,\ldots, w_{\delta}\rangle \\ & \stackrel{\sim}{\longleftarrow} & \frac{\AAcr (S_{\infty},S_{\infty}^+)}{p^n} \otimes_{\frac{\AAcr(R,R^+)}{p^n}} \left(\frac{\AAcr(R,R^+)}{p^n}\langle u_1,\ldots, u_d,w_1,\ldots, w_{\delta}\rangle\right) \\ & \stackrel{\sim}{\lra}& \frac{\AAcr(S_{\infty},S_{\infty}^+)}{p^n}\otimes_{\frac{\AAcr(R,R^+)}{p^n}}\left(\frac{\cO\AAcr(R,R^+)}{p^n}\langle u_1,\ldots, u_d\rangle\right) \\
& \stackrel{\sim}{\lra} & \left(\frac{\AAcr(S_{\infty},S_{\infty}^+)}{p^n}\otimes_{\frac{\AAcr(R,R^+)}{p^n}}\frac{\cO\AAcr(R,R^+)}{p^n}\right)\langle u_1,\ldots, u_d\rangle.
 \end{eqnarray*}
So our second isomorphism is obtained.

Next we prove that the natural morphism $\AAcr(R,R^+)/p^n\to \AAcr(S_{\infty},S_{\infty}^+)/p^n$ is injective.  When $n=1$, we are reduced to showing the injectivity of
 \[
\frac{(R^{\flat+}/(p^{\flat})^p)[X_1,X_2,\ldots ]}{(X_1^p,X_2^p,\ldots)} \lra \frac{(S^{\flat+}/(p^{\flat})^p)[X_1,X_2,\ldots ]}{(X_1^p,X_2^p,\ldots)},
\]
or equivalently the injectivity of
\[
R^{\flat+}/(p^{\flat})^p\to S^{\flat+}/(p^{\flat})^p=\left(R^{\flat+}/(p^{\flat})^p\right)\left[(T_1^{\flat})^{\pm 1/p^{\infty}},\ldots (T_d^{\flat})^{\pm 1/p^{\infty}}\right],
\]
which is clear. The general case follows easily since $\AAcr(S_{\infty},S_{\infty}^+)$ is $p$-torsion free. One deduces also the injectivity of $\cO\AAcr(R,^+)/p^n\to \cO\AAcr(S_{\infty},S_{\infty}^+)/p^n$ by using the natural isomorphisms
$
\cO\AAcr(R,R^+)/p^n\simeq (\AAcr(R,R^+)/p^n)\langle w_1,\ldots, w_{\delta}\rangle$, and $\cO\AAcr(S_{\infty},S_{\infty}^+)/p^n\simeq (\AAcr(S_{\infty},S_{\infty}^+)/p^n)\langle u_1,\ldots, u_d,w_1,\ldots, w_{\delta}\rangle$. This concludes the proof of our lemma.
\end{proof}

\begin{prop} \label{prop.freeness}
$ \AAcr(S_{\infty},S_{\infty}^+)/p^n$ is free over $\AAcr(R,R^+)/p^n$ with a basis given by $\{[T^{\flat}]^{\underline a}|\underline{a}\in \Z[1/p]^d\}$.
\end{prop}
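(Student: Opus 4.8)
The plan is to reduce the claim to the analogous, well-understood statement for the Witt vectors and then transport it through the isomorphisms established in Lemma~\ref{lem.isotechnique}. First I would recall that $S_\infty^{\flat+}=R^{\flat+}\{(T_1^\flat)^{\pm1/p^\infty},\ldots,(T_d^\flat)^{\pm1/p^\infty}\}$, so that $S_\infty^{\flat+}/p^\flat$ is a free $R^{\flat+}/p^\flat$-module with basis $\{(T^\flat)^{\underline a}:\underline a\in\ZZ[1/p]^d\}$ (here the completion causes no trouble at the level of $\ZZ[1/p]^d$-indexed monomials because only finitely many exponents with bounded denominator occur below any fixed $p$-adic truncation, and the perfectoid tilt is literally the completed monoid algebra). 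By the standard description of Witt vectors of a perfect ring modulo $p^n$ — namely $W(\cdot)/p^n$ depends functorially and exactly on the underlying ring modulo $p$, and carries free modules to free modules with the ``same'' basis via Teichmann lifts — it follows that $W(S_\infty^{\flat+})/p^n$ is free over $W(R^{\flat+})/p^n$ with basis $\{[T^\flat]^{\underline a}:\underline a\in\ZZ[1/p]^d\}$. Concretely one checks this by devissage on $n$ using the exact sequences $0\to W(R^{\flat+})/p\xrightarrow{p^{n-1}}W(R^{\flat+})/p^n\to W(R^{\flat+})/p^{n-1}\to 0$ and the identification of the graded pieces with $R^{\flat+}/p$ via the $(n{-}1)$-st ghost/Witt component.

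Next I would invoke the first isomorphism of Lemma~\ref{lem.isotechnique},
\[
\AAcr(R,R^+)/p^n\otimes_{W(R^{\flat+})/p^n}W(S_\infty^{\flat+})/p^n\;\simto\;\AAcr(S_\infty,S_\infty^+)/p^n.
\]
Since base change along $W(R^{\flat+})/p^n\to\AAcr(R,R^+)/p^n$ preserves freeness and preserves a chosen basis, the freeness of $W(S_\infty^{\flat+})/p^n$ over $W(R^{\flat+})/p^n$ with basis $\{[T^\flat]^{\underline a}\}$ immediately yields that $\AAcr(S_\infty,S_\infty^+)/p^n$ is free over $\AAcr(R,R^+)/p^n$ with the same basis $\{[T^\flat]^{\underline a}:\underline a\in\ZZ[1/p]^d\}$. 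The injectivity statement already recorded in Lemma~\ref{lem.isotechnique} is consistent with (and a consequence of) this, so no extra work is needed there.

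The one genuine subtlety — and the step I expect to be the main obstacle — is making the freeness of $W(S_\infty^{\flat+})/p^n$ over $W(R^{\flat+})/p^n$ with the \emph{specified} basis $\{[T^\flat]^{\underline a}\}$ completely rigorous, because $S_\infty^{\flat+}$ is a \emph{completed} (rather than ordinary) monoid algebra over $R^{\flat+}$ and Witt vectors interact with completions only through compatible systems of truncations. The clean way to handle this is to argue modulo $p$ first, where $W(S_\infty^{\flat+})/p=S_\infty^{\flat+}$ is literally the $p$-adically completed free module $\widehat{\bigoplus}_{\underline a}R^{\flat+}/p\cdot(T^\flat)^{\underline a}$; one checks that an element of $W(S_\infty^{\flat+})/p^n$ has a \emph{unique} convergent expansion $\sum_{\underline a}c_{\underline a}[T^\flat]^{\underline a}$ with $c_{\underline a}\in\AAcr(R,R^+)/p^n$ tending $p$-adically to $0$ in an appropriate sense, by lifting coefficients one Witt component at a time and using that at each stage the relevant quotient is honestly free over $R^{\flat+}/p$ with the monomial basis. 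Since $\AAcr(S_\infty,S_\infty^+)/p^n$ has no $p$-torsion and is itself $p$-adically complete, the finitely-many-terms-below-each-truncation phenomenon shows the resulting module is free in the strict algebraic sense stated. Once this bookkeeping is in place, the rest of the proof is formal.
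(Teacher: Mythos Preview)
Your overall strategy of invoking the tensor product isomorphism from Lemma~\ref{lem.isotechnique} is sound, but the argument breaks down precisely at the point you flag as ``the one genuine subtlety,'' and your proposed resolution does not work as written.

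The central claim --- that $W(S_\infty^{\flat+})/p^n$ is free over $W(R^{\flat+})/p^n$ with basis $\{[T^\flat]^{\underline a}\}$ --- is \emph{false}. The ring $R^{\flat+}$ has characteristic $p$, so $R^{\flat+}/p=R^{\flat+}$, and $S_\infty^{\flat+}=R^{\flat+}\{(T^\flat)^{\pm 1/p^\infty}\}$ is the $p^\flat$-adic \emph{completion} of the Laurent-monoid algebra $\mathsf S:=R^{\flat+}[(T^\flat)^{\pm 1/p^\infty}]$. Hence $W(S_\infty^{\flat+})/p=S_\infty^{\flat+}$ is only a topologically free $R^{\flat+}$-module on the monomials, not a free one; the same holds modulo $p^n$. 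Your last paragraph then compounds the confusion: $\AAcr(S_\infty,S_\infty^+)/p^n$ is a $\ZZ/p^n$-module, so it certainly has $p$-torsion, and its ``$p$-adic completeness'' is vacuous and cannot rescue the argument. The phrase ``finitely-many-terms-below-each-truncation'' does not name an actual mechanism here.

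What you are missing is the reason the completion disappears \emph{after} passing to $\AAcr$: since $\xi=[p^\flat]-p$ has divided powers, $\xi^p=p!\,\xi^{[p]}\in p\,\AAcr$, so $[p^\flat]^p\in p\,\AAcr$ and hence $[p^\flat]^{pn}=0$ in $\AAcr(R,R^+)/p^n$. This nilpotence is what collapses the $p^\flat$-adic completion to a genuinely free module upon tensoring. The paper's proof exploits this directly: for any $x\in S_\infty^{\flat+}$ one writes $x=\sum_{i<p}(p^\flat)^i y_i+(p^\flat)^p x''$ with $y_i\in\mathsf S$, and uses $\xi^p\in p\,\AAcr$ to push the tail into $p\,\AAcr(S_\infty,S_\infty^+)$; iterating and then using $[x+y]\equiv[x]+[y]\bmod p$ gives generation by the $[T^\flat]^{\underline a}$. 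Linear independence is checked by reducing modulo $p$ and reading off coefficients in the explicit presentation
\[
\AAcr(S_\infty,S_\infty^+)/p\ \simeq\ \frac{\bigl(R^{\flat+}/(p^\flat)^p\bigr)\!\bigl[(T^\flat)^{\pm 1/p^\infty},\,\delta_2,\delta_3,\ldots\bigr]}{(\delta_2^p,\delta_3^p,\ldots)},
\]
where the monomials $(T^\flat)^{\underline a}$ are visibly $\AAcr(R,R^+)/p$-free; one then climbs back up using $p$-torsion-freeness of $\AAcr(S_\infty,S_\infty^+)$. If you want to salvage your route, the clean statement to prove first is that $\AAcr(R,R^+)/p^n\otimes_{W(R^{\flat+})/p^n}W(S_\infty^{\flat+})/p^n$ coincides with $\AAcr(R,R^+)/p^n\otimes_{W(R^{\flat+})/p^n}W(\mathsf S)/p^n$ (because $[p^\flat]^{pn}$ acts as zero on the left tensor factor), and \emph{that} tensor product is visibly free on the monomial basis.
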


 \begin{proof}
By Lemma \ref{lem.isotechnique}, $\AAcr(S_{\infty},S_{\infty}^+)/p^n$ is generated over $\AAcr(R,R^+)/p^n$ by elements of the form $[x]$ with $x\in S_{\infty}^{\flat+}=R^{\flat+}\{(T_1^{\flat})^{\pm 1/p^{\infty}},\ldots, (T_d^{\flat})^{\pm 1/p^{\infty}}\}$. Write $B_n\subset \AAcr(S_{\infty},S_{\infty}^+)/p^n$ for the $\AAcr(R,R^+)/p^n$-submodule generated by elements of the form $[x]$ with $x\in \mathsf{S}:=R^{\flat+}\left[(T_1^{\flat})^{\pm 1/p^{\infty}},\ldots, (T_d^{\flat})^{\pm 1/p^{\infty}}\right]\subset S_{\infty}^{\flat +}$. We claim that $B_n=\AAcr(S_{\infty},S_{\infty}^+)/p^n$.

Since $S_{\infty}^{\flat +}$ is the $p^{\flat}$-adic completion of $\mathsf{S}$, for each $x\in S^{\flat+}$ we can write $x=y_0+p^{\flat}x'$ with $x'\in \mathsf{S}$. Iteration yields
\[
x=y_0+p^{\flat}y_1+\cdots +(p^{\flat})^{p-1}y_{p-1}+(p^{\flat})^px''
\]
with $y_i\in \mathsf{S}$ and $x''\in S_{\infty}^{\flat +}$. Then in $W(S_{\infty}^{\flat +})$:
 \begin{eqnarray*}
[x]&\equiv & [y_0]+[p^{\flat}][y_1]+\cdots +[(p^{\flat})^{p-1}][y_{p-1}]+[(p^{\flat})^p][x'']  \quad \textrm{mod} \quad pW(S_{\infty}^{\flat+})\\
&\equiv & [y_0]+\xi [y_1]+\cdots +\xi^{p-1}[y_{p-1}]+\xi^p[x'']  \quad  \textrm{mod} \quad pW(S_{\infty}^{\flat+}).
\end{eqnarray*}
As $\xi\in \mathbb A_{\cris}(S_{\infty},S_{\infty}^+)$ has divided power, $\xi^p=p!\cdot \xi^{[p]}\in p\AAcr(S_{\infty},S_{\infty}^+)$. So we obtain  in $\mathbb A_{\cris}(S_{\infty},S_{\infty}^+)$
\[
[x]\equiv [y_0]+\xi [y_1]+\cdots +\xi^{p-1}[y_{p-1}] \quad  \textrm{mod} \quad p\AAcr(S_{\infty},S_{\infty}^+).
\]
 For any $\alpha\in \AAcr(S_{\infty},S_{\infty}^+)/p^n=\AAcr(R,R^+)/p^n\otimes_{W(R^{\flat+})/p^n}W(S_{\infty}^{\flat+})/p^n$, we may write
 \[
 \alpha=\sum_{i=0}^m\lambda_i[x_i]+p\alpha', \quad x_i\in S_{\infty}^{\flat+},\lambda_i\in \AAcr(R,R^+)/p^n, \alpha'\in \AAcr(S_{\infty},S_{\infty}^+)/p^n.
 \]
 The observation above tells us that one can write
 \[
 \alpha=\beta_0+p\alpha'',\quad \beta_0\in B_n, \alpha''\in \AAcr(S_{\infty},S_{\infty}^+)/p^n.
 \]
By iteration again, we find
\[
\alpha=\beta_0+p\beta_1+\cdots p^{n-1}\beta_{n-1}+p^n\tilde{\alpha}, \quad \beta_0,\cdots, \beta_{n-1}\in B_n, \tilde{\alpha}\in \AAcr(S_{\infty},S_{\infty}^+)/p^n.
\]
Thus
\[
\alpha=\beta_0+p\beta_1+\cdots p^{n-1}\beta_{n-1}\in B_n\subset \AAcr(S_{\infty},S_{\infty}^+)/p^n.
 \]
This shows the claim, i.e.  $\AAcr(S_{\infty},S_{\infty}^+)/p^n$ is generated over $\AAcr(R,R^+)/p^n$ by the elements of the form $[x]$ with $x\in \mathsf S=R^{\flat +}[(T_1^{\flat})^{\pm 1/p^{\infty}},\ldots, (T_d^{\flat})^{\pm 1/p^{\infty}}]\subset S_{\infty}^{\flat+}$. Furthermore, as for any $x,y\in S_{\infty}^{\flat +}$
\[
[x+y]\equiv [x]+[y] \quad \mathrm{mod} \quad pW(S_{\infty}^{\flat +}),
\]
a similar argument shows that $\AAcr(S_{\infty},S_{\infty}^+)/p^n$ is generated over $\AAcr(R,R^+)/p^n$ by the family of elements  $\{[T^{\flat}]^{\underline a}|\underline{a}\in \Z[1/p]^d\}$.

It remains to show  the freeness of the family $\{[T^{\flat}]^{\underline a}|\underline{a}\in \Z[1/p]^d\}$ over $\AAcr(R,R^+)/p^n$. For this, suppose there exist $\lambda_1,\cdots, \lambda_m\in \AAcr(R,R^+)$ and distinct elements $\underline{a}_1,\cdots,\underline{a}_m\in \Z[1/p]^d$ such that
 \[
 \sum_{i=1}^m\lambda_i[T^{\flat}]^{\underline{a}_i}\in p^n\AAcr(S_{\infty},S_{\infty}^+).
\]
One needs to prove $\lambda_i\in p^n \mathbb A_{\cris}(R,R^+)$ for each $i$. Modulo $p$ we find
\[
\sum_{i=1}^m\overline{\lambda_i} \cdot (T^{\flat})^{\underline{a}_i}=0 \quad \textrm{in }\AAcr(S_{\infty},S_{\infty}^+)/p,
\]
with $\overline{\lambda_i}\in \AAcr(R,R^+)/p$ the reduction modulo $p$ of $\lambda_i$.
On the other hand, the family  of elements $\{(T^{\flat})^{\underline{a}}:\underline{a}\in \mathbb Z[1/p]^d\}$ in
\begin{eqnarray*}
\AAcr(S_{\infty},S_{\infty}^+)/p & \simeq  & \frac{S^{\flat+}/((p^{\flat})^p)[\delta_2,\delta_3,\ldots]}{(\delta_2^p,\delta_3^p,\ldots )} \\ & \simeq & \frac{R^{\flat +}/((p^{\flat})^p)[(T_1^{\flat})^{\pm1/p^{\infty}},\ldots, (T_d^{\flat})^{\pm 1/p^{\infty}}, \delta_2, \delta_3,\ldots ]}{(\delta_2^p,\delta_3^p,\ldots )}
\end{eqnarray*}
is free over $\AAcr(R,R^+)/p\simeq \frac{R^{\flat+}/((p^{\flat})^p)[\delta_2,\delta_3,\ldots]}{(\delta_2^p,\delta_3^p,\ldots )}$. Therefore, $\overline{\lambda_i}=0$, or equivalently, $\lambda_i=p\lambda_i'$ for some $\lambda_i'\in  \AAcr(R,R^+)$. In particular,
 \[
 \sum_{i=1}^{m} \lambda_i[T^{\flat}]^{\underline{a}_i}=
p\cdot \left (\sum_{i=1}^m\lambda_i'[T^{\flat}]^{\underline{a}_i}\right)\in p^n\AAcr(S_{\infty},S_{\infty}^+).
\]
 But $\AAcr(S_{\infty},S_{\infty}^+)$ is $p$-torsion free, which implies that
\[
\sum_{i=1}^m\lambda_i'[T^{\flat}]^{\underline{a}_i}\in p^{n-1}\AAcr(S_{\infty},S_{\infty}^+).
\]
This way, we may find $\lambda_i=p^n\tilde{\lambda}_i$ for some $\tilde{\lambda}_i\in  \AAcr(R,R^+)$, which concludes the proof of the freeness.
 \end{proof}

Recall that $\Gamma$ is the Galois group of the profinite cover $(S_{\infty},S_{\infty}^+)$ of $(S,S^+)$. Let $\epsilon=(\epsilon^{(0)},\epsilon^{(1)},\ldots)\in \cO_{\widehat{\bk}}^{\flat}$ be a compatible system of $p$-power roots of unity such that $\epsilon^{(0)}=1$ and that $\epsilon^{(1)}\neq 1$. Let $\{\gamma_1,\ldots, \gamma_d\}$ be a family of generators such that for each $1\leq i\leq d$, $\gamma_i$ acts trivially on the variables $T_j$ for any index $j$ different from $i$ and that $\gamma_i(T_i^{\flat})=\epsilon T_i^{\flat}$.

\begin{lemma} Let $1\leq i\leq d$ be an integer. Then one has $\gamma_i([T_i^{\flat}]^{p^n})=[T_i^{\flat}]^{p^n}$ in $\cO\AAcr(S_{\infty},S_{\infty}^+)/p^n$.
\end{lemma}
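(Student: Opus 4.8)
The plan is to reduce the assertion to the single congruence
\[
[\epsilon]^{p^n}\equiv 1 \pmod{p^n\AAcr(S_{\infty},S_{\infty}^+)}.
\]
First I would compute the action of $\gamma_i$ directly: since the Teichm\"uller map $x\mapsto[x]$ is multiplicative and $\gamma_i(T_i^{\flat})=\epsilon\cdot T_i^{\flat}$ inside $S_{\infty}^{\flat+}$, one gets in $\cO\AAcr(S_{\infty},S_{\infty}^+)$
\[
\gamma_i\bigl([T_i^{\flat}]^{p^n}\bigr)=\bigl[\gamma_i(T_i^{\flat})\bigr]^{p^n}=\bigl[\epsilon T_i^{\flat}\bigr]^{p^n}=[\epsilon]^{p^n}\cdot[T_i^{\flat}]^{p^n}.
\]
Hence it suffices to prove $[\epsilon]^{p^n}-1\in p^n\AAcr(S_{\infty},S_{\infty}^+)$, and for this one may work inside the subring $A_{\cris}=\AAcr(\widehat{\bk},\cO_{\widehat{\bk}})$, to which $[\epsilon]$ belongs and which maps to $\AAcr(S_{\infty},S_{\infty}^+)$ (Lemma \ref{lem.isotechnique}).

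For the congruence, recall that $\pi:=1-[\epsilon]$ lies in $\Fil^1\AAcr$, which is a PD-ideal, so each divided power $\pi^{[k]}=\pi^{k}/k!$ is an honest element of $\AAcr$. Expanding the $p^n$-th power and regrouping,
\[
[\epsilon]^{p^n}=(1-\pi)^{p^n}=1+\sum_{k=1}^{p^n}(-1)^{k}\binom{p^n}{k}\,k!\,\pi^{[k]},
\]
and for $1\le k\le p^n$ the integer $\binom{p^n}{k}k!=p^n(p^n-1)\cdots(p^n-k+1)$ is divisible by $p^n$. Therefore every term of the sum lies in $p^n\AAcr$, which gives the desired congruence. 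Equivalently, using $[\epsilon]=\exp(t)$ with $t=\log([\epsilon])\in\Fil^1\AAcr$, one has $[\epsilon]^{p^n}=\exp(p^n t)=1+\sum_{k\ge1}p^{nk}t^{[k]}$, and each term is again a multiple of $p^n$. Combining this with the computation of $\gamma_i$ above and reducing modulo $p^n$ yields $\gamma_i([T_i^{\flat}]^{p^n})=[T_i^{\flat}]^{p^n}$ in $\cO\AAcr(S_{\infty},S_{\infty}^+)/p^n$.

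I do not expect a genuine obstacle here: the argument is a short computation, following \cite{AB}. The only point that deserves attention is integrality---one must make sure the divided powers $\pi^{[k]}$ (equivalently $t^{[k]}$) are elements of $\AAcr$ itself and that the coefficients $\binom{p^n}{k}k!$ are divisible by $p^n$ in $\ZZ$, so that the congruence holds on the nose rather than only after inverting $p$ or modulo $(1-[\epsilon])^{\infty}$-torsion, as with some of the other statements in this appendix.
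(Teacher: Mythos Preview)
Your proof is correct and follows essentially the same route as the paper: reduce to the congruence $[\epsilon]^{p^n}\equiv 1\pmod{p^nA_{\cris}}$ via the multiplicativity of the Teichm\"uller lift, then verify that congruence using $[\epsilon]^{p^n}=\exp(p^nt)=1+\sum_{r\geq 1}p^{nr}t^{[r]}$. Your additional binomial argument via $\binom{p^n}{k}k!=p^n(p^n-1)\cdots(p^n-k+1)$ is a harmless extra check not present in the paper.
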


\begin{proof} By definition, $\gamma_i([T_i^{\flat}]^{p^n})=[\epsilon]^{p^n}[T_i^{\flat}]^{p^n}$ in $\cO\AAcr(S_{\infty},S_{\infty}^+)$. So our lemma follows from the fact that $[\epsilon]^{p^n}-1=\exp(p^n t)-1=\sum_{r\geq 1}p^{nr} t^{[r]}\in p^nA_{\cris}$.
\end{proof}

Let $A_n$ be the $\cO\AAcr(R,R^+)$-subalgebra of $\cO\AAcr(S_{\infty},S_{\infty}^+)/p^n$ generated by $[T_i^{\flat}]^{\pm p^n}$ for $1\leq i\leq d$. The previous lemma shows that $\Gamma$ acts trivially on $A_n$. Furthermore, by the second isomorphism of Lemma \ref{lem.isotechnique} and by Proposition \ref{prop.freeness}, we have
\[
\frac{\cO\AAcr(S_{\infty},S_{\infty}^+)}{p^n} \stackrel{\sim}{\lra} \left(\bigoplus_{\underline{a}\in \mathbb Z[1/p]^d\cap [0,p^n)^d} A_{n}[T^{\flat}]^{\underline a}\right) \langle u_1,\ldots, u_d\rangle,\quad T_i-[T_i^{\flat}]\mapsto u_i.
\]
Transport the Galois action of $\Gamma$ on $\cO\mathbb A_{\cris}(S_{\infty},S_{\infty}^+)/p^n$ to the right-hand side of this isomorphism. It follows that
\[
\gamma_i(u_i)=u_i+(1-[\epsilon])[T_i^{\flat}].
\]
Therefore,
\[
\gamma_i(u_i^{[n]})=u_i^{[n]}+\sum_{j=1}^{n}[T_i^{\flat}]^j(1-[\epsilon])^{[j]}u_i^{[n-j]}.
\]
For other index $j\neq i$, $\gamma_i(u_j)=u_j$ and hence $\gamma_i(u_j^{[n]})=u_j^{[n]}$ for any $n$. Set
\[
\mathbf{X}_{n}:=\bigoplus_{\underline{a}\in (\mathbb Z[1/p]\cap [0,p^n))^d \setminus \mathbb Z^d} A_n[T^{\flat}]^{\underline a}, \quad \textrm{and}\quad \mathbf A_n:=\bigoplus_{\underline{a}\in \mathbb Z^d\cap [0,p^n)^d}A_n[T^{\flat}]^{\underline{a}}.
\]
Then we have the following decomposition, which respects  the $\Gamma$-actions:
 \[
 \cO\AAcr(S_{\infty},S_{\infty}^+)/p^n= \mathbf{X}_n\langle u_1,\ldots, u_d\rangle\oplus \mathbf A_n\langle u_1,\ldots, u_d\rangle.
 \]

The following result can be proven similarly  as \cite[Proposition 16]{AB}.
\begin{prop} \label{hqxnu}For any $q\in \mathbb N$, $H^{q}(\Gamma,\mathbf X_n\langle u_1,\ldots, u_d\rangle)$ is killed by $(1-[\epsilon]^{1/p})^2$.
\end{prop}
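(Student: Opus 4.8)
The plan is to compute the cohomology of each graded piece $A_n[T^\flat]^{\underline a}\langle u_1,\dots,u_d\rangle$ with $\underline a\in(\mathbb Z[1/p]\cap[0,p^n))^d\setminus\mathbb Z^d$ separately, and show that it is annihilated by $(1-[\epsilon]^{1/p})^2$ uniformly; since $\Gamma$-cohomology commutes with the direct sum over $\underline a$ (at least up to the relevant torsion), this gives the claim. Fix such an $\underline a$; since $\underline a\notin\mathbb Z^d$, there is at least one index $i$ with $a_i\notin\mathbb Z$. I would single out this coordinate and use the Koszul-type (iterated mapping cone) description of continuous cohomology of $\Gamma\cong\mathbb Z_p(1)^d$ with respect to the topological generators $\gamma_1,\dots,\gamma_d$: $R\Gamma(\Gamma,M)$ is computed by the total complex of $\bigotimes_{j=1}^d[\,M\xrightarrow{\gamma_j-1}M\,]$. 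So it suffices to show that the subcomplex $[\,N\xrightarrow{\gamma_i-1}N\,]$, for $N=A_n[T^\flat]^{\underline a}\langle u_1,\dots,u_d\rangle$, has both kernel and cokernel of $\gamma_i-1$ killed by $(1-[\epsilon]^{1/p})^2$; this then propagates through the tensor product.

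The heart of the matter is therefore to analyze the operator $\gamma_i-1$ on $N$. Using the formulas recalled just before the proposition, $\gamma_i$ fixes $A_n$ and the $u_j$ ($j\ne i$), sends $[T^\flat]^{\underline a}$ to $[\epsilon]^{a_i}[T^\flat]^{\underline a}$, and acts on $u_i^{[m]}$ by $\gamma_i(u_i^{[m]})=u_i^{[m]}+\sum_{\ell=1}^m[T_i^\flat]^\ell(1-[\epsilon])^{[\ell]}u_i^{[m-\ell]}$. The key point is that $a_i\in\mathbb Z[1/p]\setminus\mathbb Z$ with $0\le a_i<p^n$, so $p a_i\in\mathbb Z$ is not divisible by $p$; hence $[\epsilon]^{a_i}-1$ is a unit multiple of $1-[\epsilon]^{1/p^s}$ for the appropriate $s\ge1$ (the exact $p$-power denominator of $a_i$), and in particular $[\epsilon]^{a_i}-1$ divides, and is divided by a unit times, $1-[\epsilon]^{1/p}$ up to elements that are more divisible — more precisely $(1-[\epsilon]^{1/p})$ lies in the ideal generated by $[\epsilon]^{a_i}-1$ and vice versa up to the first power, which is what yields the exponent $2$ after combining the "diagonal" scalar action with the lower-order nilpotent correction coming from the $u_i$-variables. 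Concretely I would write $\gamma_i-1=(\,[\epsilon]^{a_i}-1\,)\cdot(\text{invertible diagonal part})+(\text{strictly }u_i\text{-degree-lowering nilpotent})$ and run the standard argument: on the associated graded for the $u_i$-filtration the operator is multiplication by the unit-times-$([\epsilon]^{a_i}-1)$ scalar, so kernel and cokernel of $\gamma_i-1$ are killed by $[\epsilon]^{a_i}-1$ times a bounded extra factor; tracking the filtration length contributes at most one more factor of $1-[\epsilon]^{1/p}$, giving $(1-[\epsilon]^{1/p})^2$. This is exactly the structure of \cite[Proposition 16]{AB}, and I would follow their computation, only replacing their base ring by $\cO\AAcr(R,R^+)$ and noting that $A_n$, being built from $\cO\AAcr(R,R^+)$ by adjoining the $\Gamma$-invariant units $[T_i^\flat]^{\pm p^n}$, plays the same inert role.

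Finally I would assemble the pieces: the Koszul complex for all $d$ generators is the tensor product over $j$ of the two-term complexes; for $j\ne i$ the complex is $[\,N\xrightarrow{0}N\,]$, contributing only a shift, while the $j=i$ factor contributes cohomology killed by $(1-[\epsilon]^{1/p})^2$ by the previous paragraph. A spectral sequence / repeated-cone argument then shows $H^q(\Gamma,N)$ is killed by $(1-[\epsilon]^{1/p})^2$ for every $q$ (the exponent does not grow because the bad factor only appears in the single direction $i$). Summing over all $\underline a\in(\mathbb Z[1/p]\cap[0,p^n))^d\setminus\mathbb Z^d$ — and using that continuous cohomology of $\Gamma$ commutes with this direct sum, as each summand is a finitely presented module over the Noetherian-up-to-completion ring in question and the action is continuous — yields that $H^q(\Gamma,\mathbf X_n\langle u_1,\dots,u_d\rangle)$ is killed by $(1-[\epsilon]^{1/p})^2$, as asserted. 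The main obstacle I anticipate is the bookkeeping in the mixed scalar/nilpotent decomposition of $\gamma_i-1$ on the divided-power variables $u_i^{[m]}$: one must check carefully that the nilpotent correction does not worsen the power of $1-[\epsilon]^{1/p}$ beyond $2$, uniformly in $n$, $m$, and $\underline a$, and that passing to the $p$-adic completion (the $\{\langle u_1,\dots,u_d\rangle\}$ rather than $\langle u_1,\dots,u_d\rangle$ at the end) does not introduce new torsion — but since we are working modulo $p^n$ here, completion is not yet an issue, and the uniform bound is precisely what \cite{AB} establishes.
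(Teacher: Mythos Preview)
Your overall strategy matches the paper's --- single out a coordinate $i$ with $a_i\notin\mathbb Z$, control the $\gamma_i$-cohomology first, then pass to $\Gamma$ --- but two steps break down. First, the summand $N=A_n[T^\flat]^{\underline a}\langle u_1,\dots,u_d\rangle$ is \emph{not} $\gamma_i$-stable: since $\gamma_i(u_i)=u_i+(1-[\epsilon])[T_i^\flat]$ and $[T_i^\flat]\notin A_n$ (only $[T_i^\flat]^{\pm p^n}$ lie in $A_n$), acting by $\gamma_i$ shifts the $i$-th exponent by positive integers and lands in a different $\underline a$-summand. So one cannot work one $\underline a$ at a time once the $u$'s are present. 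The paper avoids this by partitioning $\mathbf X_n$ into the blocks $\mathbf X_n^{(i)}=\bigoplus_{\underline a\in E^{(i)}}A_n[T^\flat]^{\underline a}$, where $E^{(i)}$ records the \emph{first} index with $a_i\notin\mathbb Z$; integer shifts of any coordinate preserve each $E^{(i)}$, so $\mathbf X_n^{(i)}\langle u_1,\dots,u_d\rangle$ is genuinely $\Gamma$-stable.

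Second, the exponent-counting is off in both halves. Your bound on $\ker/\mathrm{coker}(\gamma_i-1)$ via ``diagonal $[\epsilon]^{a_i}-1$ plus $u_i$-degree-lowering nilpotent, then track filtration length'' yields nothing: the $u_i$-divided-power filtration is infinite, so there is no finite length to track. The sharper fact one needs (this is \cite[Lemme~15]{AB}) is that every off-diagonal coefficient $(1-[\epsilon])^{[\ell]}$, $\ell\ge1$, is already a multiple of $1-[\epsilon]$ and hence of $1-[\epsilon]^{1/p}$; an induction on the $u_i$-degree then shows $H^q(\mathbb Z_p\gamma_i,\mathbf X_n^{(i)}\langle u_1,\dots,u_d\rangle)$ is killed by a \emph{single} power of $1-[\epsilon]^{1/p}$. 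And your claim that $\gamma_j-1=0$ for $j\neq i$ is simply false: $\gamma_j$ scales $[T^\flat]^{\underline a}$ by $[\epsilon]^{a_j}$ and moves $u_j$. The second factor of $1-[\epsilon]^{1/p}$ comes instead from the Hochschild--Serre spectral sequence for $\mathbb Z_p\gamma_i\lhd\Gamma$: since $\mathbb Z_p$ has cohomological dimension~$1$, only the rows $q=0,1$ are nonzero, each killed by $1-[\epsilon]^{1/p}$ by the previous step, so the abutment is killed by $(1-[\epsilon]^{1/p})^2$. With your exponent-two estimate at the $\gamma_i$-step, even after repairing the passage to $\Gamma$ you would get only $(1-[\epsilon]^{1/p})^4$.
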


The computation of $H^q(\Gamma, \mathbf A_n\langle u_1,\ldots, u_d\rangle )$ is more subtle. Note that we have the following decomposition
\[
\mathbf A_n\langle u_1,\ldots, u_d\rangle =\bigotimes_{i=1}^d (\cO\AAcr(R,R^+)/p^n)\left[[T_i^{\flat}]^{\pm 1}\right]\langle u_i\rangle,
\]
where the tensor products above are taken over $\cO\AAcr(R,R^+)/p^n$.

We shall first treat the case where $d=1$. We set $T:=T_1$, $u:=u_1$ and $\gamma:=\gamma_1$. Let $\mathcal A_n^{(m)}$ be the $A_n$-submodule of $\cO\AAcr(S_{\infty},S_{\infty}^+)/p^n$ generated by the $u^{[m+a]}/[T^{\flat}]^a$'s with $m+a\geq 0$ and $0\leq a<p^n$. Then
\[
\mathbf{A}_n\langle u\rangle =A_n\left[[T^{\flat}]\right]\langle u\rangle =\sum_{m> -p^n} \mathcal A_n^{(m)}.
\]
Consider the following complex:
\begin{equation}\label{eq.An<U>}
A_n \left[[T^{\flat}]\right]\langle u\rangle \stackrel{\gamma_-1}{\longrightarrow} A_n\left[[T^{\flat}]\right]\langle u\rangle,
\end{equation}
which computes $H^q(\Gamma, \mathbf A_n\langle u\rangle )=H^q\left(\Gamma,A_n\left[[T^{\flat}]\right]\langle u\rangle \right)$.

Again, the following lemma can be proven in a completely analogous way as is done in the proof of \cite[Proposition 20]{AB}.
\begin{lemma} The cokernel of \eqref{eq.An<U>}, and hence $H^q\left(\Gamma, \mathbf A_n\langle u\rangle\right)$ for any $q>0$, are killed by $1-[\epsilon]$.
\end{lemma}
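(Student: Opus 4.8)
The plan is to compute the cokernel of the map $\gamma-1$ on $A_n[[T^\flat]]\langle u\rangle$ by working ``degree by degree'' in the divided-power variable $u$, tracking how $\gamma$ moves a monomial $u^{[m+a]}/[T^\flat]^a$. First I would record the exact formula for the $\Gamma$-action in this $d=1$ setting, namely $\gamma(u)=u+(1-[\epsilon])[T^\flat]$, whence
\[
\gamma\bigl(u^{[n]}\bigr)=u^{[n]}+\sum_{j=1}^{n}[T^\flat]^{j}(1-[\epsilon])^{[j]}u^{[n-j]},
\]
which is precisely the formula displayed just before the statement. From this one sees that $\gamma$ preserves each $A_n$-submodule $\mathcal A_n^{(m)}$ (the coefficient $[T^\flat]^{\pm}$ bookkeeping is designed exactly so that $u^{[m+a]}/[T^\flat]^a$ is sent to an $A_n$-combination of the $u^{[m+a']}/[T^\flat]^{a'}$ with $a'\le a$), and moreover that modulo $\mathcal A_n^{(m+1)}$ the operator $\gamma-1$ acts on $\mathcal A_n^{(m)}/\mathcal A_n^{(m+1)}$ by multiplication involving a unit times $(1-[\epsilon])$. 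So the natural strategy is: (i) show $\gamma-1$ is, up to the factor $1-[\epsilon]$, ``triangular'' with respect to the filtration $\{\mathcal A_n^{(m)}\}_m$; (ii) deduce surjectivity of $1-[\epsilon]$ composed with an inverse on each graded piece; (iii) pass to the (countable, exhaustive, separated-enough after reduction mod $p^n$) limit in $m$ to conclude that $(1-[\epsilon])$ annihilates $\mathrm{coker}(\gamma-1)$.

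In more detail, the key computation is the following: for a basis element $e_{m,a}:=u^{[m+a]}/[T^\flat]^{a}$ of $\mathcal A_n^{(m)}$, expand $\gamma(e_{m,a})-e_{m,a}$ using the formula above. The leading term (the one still in $\mathcal A_n^{(m)}$ but with the top value of $a$, or equivalently the term of lowest order in the complementary filtration) is $(1-[\epsilon])\cdot(\text{unit})\cdot e_{m,a-1}$ or a comparable expression, and all remaining terms lie deeper in the filtration $\mathcal A_n^{(m+1)}, \mathcal A_n^{(m+2)},\dots$, carrying higher divided powers $(1-[\epsilon])^{[j]}$ of $1-[\epsilon]$. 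Thus on the associated graded $\mathrm{gr}^m = \mathcal A_n^{(m)}/\mathcal A_n^{(m+1)}$ the induced endomorphism $\overline{\gamma-1}$ is $(1-[\epsilon])$ times an automorphism. Since $A_n$ and all the $\mathcal A_n^{(m)}$ are $p^n$-torsion (we are working mod $p^n$) and the filtration is exhaustive and locally finite in each fixed ``window'' of $u$-degrees, a standard successive-approximation / dévissage argument — solving the congruence mod $\mathcal A_n^{(m+1)}$ and iterating — shows that for any target $y\in A_n[[T^\flat]]\langle u\rangle$ there is $x$ with $(\gamma-1)(x)=(1-[\epsilon])y$. That gives $(1-[\epsilon])\cdot\mathrm{coker}(\gamma-1)=0$, and since $\Gamma\cong\mathbb Z_p$ is topologically cyclic the complex \eqref{eq.An<U>} computes all of $H^q(\Gamma,\mathbf A_n\langle u\rangle)$, which therefore also is killed by $1-[\epsilon]$ for $q>0$ (and for $q=0$ one reads off $H^0$ directly, though that is not what is asserted here).

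The main obstacle I anticipate is \emph{convergence and exhaustiveness of the filtration}: the module $A_n[[T^\flat]]\langle u\rangle = \sum_{m>-p^n}\mathcal A_n^{(m)}$ is a sum, not a limit, so an element involves only finitely many ``directions'', but the successive-approximation argument produces a priori an infinite series in the $m$-direction, and one must check it actually terminates or lies in the module. This is handled exactly as in \cite{AB}: a fixed element of $A_n[[T^\flat]]\langle u\rangle$ has bounded $u$-divided-power degree and bounded $[T^\flat]$-degree, so the filtration is finite on the relevant finitely generated $A_n$-submodule, and the iteration stops after finitely many steps. Once that bookkeeping is set up correctly, the rest is the elementary binomial/divided-power identity for $\gamma(u^{[n]})$ together with the observation $(1-[\epsilon]^{1/p})$ or $(1-[\epsilon])$ divides $(1-[\epsilon])^{[j]}$ for $j\ge 1$ up to units, which is where the single factor $1-[\epsilon]$ (rather than a higher power) comes from. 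I would follow \cite[Proposition 20]{AB} closely for this, indicating that the perfectoid generality changes nothing since all the identities take place in $\cO\AAcr(S_\infty,S_\infty^+)/p^n$ and rely only on Proposition \ref{prop.freeness} and Lemma \ref{lem.isotechnique}.
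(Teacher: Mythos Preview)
Your outline has the right shape --- induction along the $\mathcal A_n^{(m)}$, factoring out $1-[\epsilon]$, and an invertibility statement on each piece --- but two of the structural claims are wrong, and the second one is exactly where the work is.

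First, the direction of the overflow is the opposite of what you wrote. One has $\gamma([T^{\flat}]^{-a})=[\epsilon]^{-a}[T^{\flat}]^{-a}$, so expanding $(\gamma-1)(u^{[m+a]}/[T^{\flat}]^{a})$ gives terms $u^{[m+a-i]}/[T^{\flat}]^{a-i}$ for $1\le i\le a$, which stay in $\mathcal A_n^{(m)}$, together with terms that land in $\mathcal A_n^{(m-p^n)},\mathcal A_n^{(m-2p^n)},\dots$ --- \emph{lower} $m$, not higher. The $\mathcal A_n^{(m)}$ are direct summands, not a nested filtration, so ``$\mathcal A_n^{(m)}/\mathcal A_n^{(m+1)}$'' has no meaning; the correct bookkeeping is to prove $(1-[\epsilon])\mathcal A_n^{(m)}\subset \mathrm{Im}(\gamma-1)$ by induction on increasing $m$, starting from the base case $m=1-p^n$ (where $(\gamma-1)(u/[T^{\flat}]^{p^n})=(1-[\epsilon])/[T^{\flat}]^{p^n-1}$).

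Second, and more seriously, your assertion that on each graded piece $\gamma-1$ is ``$(1-[\epsilon])$ times an automorphism'' is not true as you have set it up. On $\mathcal A_n^{(m)}$ modulo lower pieces, the coefficient of $e_{m,a}$ in $(\gamma-1)(e_{m,a})$ is $[\epsilon]^{-a}-1$, which \emph{vanishes} for $a=0$; what remains is a strictly lower-triangular (in $a$) matrix, hence nilpotent after dividing by $1-[\epsilon]$. So $(\gamma-1)|_{\mathcal A_n^{(m)}}$ alone cannot hit $(1-[\epsilon])e_{m,p^n-1}$. The paper's remedy is to bring in one \emph{extra} source element, namely $u^{[m+p^n]}/[T^{\flat}]^{p^n}$, which lives in $\mathcal A_n^{(m+p^n)}$ but whose image under $\gamma-1$ contributes (modulo lower $m$) a new relation in $\mathcal A_n^{(m)}$ with leading term $(1-[\epsilon])e_{m,p^n-1}$ (using $[\epsilon]^{p^n}=1$ in $A_{\cris}/p^n$). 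Assembling the $D=\min(p^n,p^n+m)$ relations for $a=p^n,p^n-1,\dots,p^n-D+1$ against the $D$ generators $e_{m,p^n-1},\dots,e_{m,p^n-D}$ gives, after dividing by $1-[\epsilon]$, a square matrix $M_n^{(m)}$ with $1$'s on the diagonal, $\alpha_a=(1-[\epsilon]^a)/(1-[\epsilon])$ on the superdiagonal, and $\beta_j=(1-[\epsilon])^{[j]}/(1-[\epsilon])$ below. Its invertibility is a separate computation (\cite[Lemme 19]{AB}), not a triviality. Once that is in hand, the induction closes and gives $(1-[\epsilon])\mathcal A_n^{(m)}\subset\mathrm{Im}(\gamma-1)$ for all $m$, which is the lemma.
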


One still needs to compute $H^0\left(\Gamma, \mathbf A_n\langle u\rangle \right)$. Note first that we have the following isomorphism
\[
\left(\cO\AAcr(R,R^+)/p^n\right)\left[T^{\pm 1}\right]\langle u\rangle \simto \mathbf{A}_n\langle u\rangle=(\cO\AAcr(R,R^+)/p^n) \left[[T^{\flat}]\right]\langle u\rangle
\]
sending $T$ to $u+[T^{\flat}]$. Endow an action of $\Gamma$ on $(\cO\AAcr(R,R^+)/p^n)[T^{\pm 1}]\langle u\rangle$ via the isomorphism above. So $H^0(\Gamma,\mathbf A_n\langle u\rangle )$ is naturally isomorphic to the kernel of the morphism
\begin{equation*}
\gamma-1\colon (\cO\AAcr (R,R^+)/p^n)[T^{\pm 1}]\langle u\rangle \longrightarrow (\cO\AAcr(R,R^+)/p^n)[T^{\pm 1}]\langle u\rangle,
\end{equation*}
and there is a natural injection
\[
C\otimes_{A} \cO\AAcr (R,R^+)/p^n=(\cO\AAcr(R,R^+)/p^n)[T^{\pm 1}] \hra H^0(\Gamma,\mathbf A_n\langle u\rangle).
\]

The proof of \cite[Lemme 29]{AB} applies to this case. We have

\begin{lemma} The cokernel of the last map is killed by $1-[\epsilon]$.
\end{lemma}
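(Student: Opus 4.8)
The plan is to compute the kernel of the operator $\gamma - 1$ on $(\cO\AAcr(R,R^+)/p^n)[T^{\pm 1}]\langle u\rangle$ explicitly, following \cite[Lemme 29]{AB}. First I would fix notation: write $D_n := \cO\AAcr(R,R^+)/p^n$, so that the module in question decomposes as $\bigoplus_{m \in \ZZ} D_n[u]\cdot T^m$ and, under the substitution $T = u + [T^\flat]$, we have $\gamma(T) = [\epsilon]\,T$. Thus on the monomial $T^m$ the action is $\gamma(T^m) = [\epsilon]^m T^m$, and more generally $\gamma$ acts $D_n$-linearly on each graded piece $D_n[u]\cdot T^m$ only after re-expanding; the cleanest approach is to work directly in the $T$-variable where the grading by powers of $T$ is $\gamma$-stable (since $\gamma$ scales $T$ by the unit $[\epsilon]$). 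An element $\sum_m P_m(u)\,T^m$ is fixed iff $[\epsilon]^m P_m = P_m$ in $D_n[u]$ for every $m$, i.e. iff $(1 - [\epsilon]^m)P_m = 0$.

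The heart of the matter is then the following elementary fact about $A_{\cris}/p^n$: for $m \in \ZZ$, the element $1 - [\epsilon]^m$ is, up to a unit, a power of $1-[\epsilon]$ determined by $v_p(m)$; concretely, if $p^n \mid m$ then $[\epsilon]^m = 1$ in $A_{\cris}/p^n$ (as was used repeatedly above, since $[\epsilon]^{p^n} - 1 = \exp(p^n t) - 1 \in p^n A_{\cris}$), whereas if $p^n \nmid m$ then $1 - [\epsilon]^m$ divides $1 - [\epsilon]$ up to a unit, or more precisely $(1-[\epsilon])$ is a multiple of $(1 - [\epsilon]^m)$ when $p \nmid m$ and one controls the general case by writing $m = p^a m'$. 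Hence the annihilator of $1 - [\epsilon]^m$ in $D_n[u]$ is killed by $1 - [\epsilon]$ whenever $p^n \nmid m$, and is everything when $p^n \mid m$. Therefore $H^0(\Gamma, \mathbf{A}_n\langle u\rangle)$, identified with $\ker(\gamma - 1)$, sits in an exact sequence: it contains $\bigoplus_{p^n \mid m} D_n[u]\,T^m$ — wait, one must be careful, since $u$ also transforms — so I would instead argue that modulo the submodule $\bigoplus_{p^n \mid m} D_n\,T^m$ (note: only the $u$-degree-zero part, because $\gamma(u) = u + (1-[\epsilon])[T^\flat] \ne u$), every fixed element is killed by $1-[\epsilon]$. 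The submodule $\bigoplus_{p^n\mid m} D_n\, T^m$ is exactly $C \otimes_A D_n = D_n[T^{\pm 1}]$ after the identification $T \leftrightarrow u + [T^\flat]$ restricted to powers $p^n\cdot(\text{anything})$...

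Let me restate the intended argument more carefully. Since $T = u + [T^\flat]$ is a unit with $\gamma(T) = [\epsilon]T$, and $[\epsilon]^{p^n} = 1$ in $A_{\cris}/p^n$, the powers $T^{p^n j}$ ($j \in \ZZ$) are $\gamma$-invariant; and $D_n[T^{\pm p^n}] = C^{\otimes}\ldots$ — more simply, the natural map $C \otimes_A D_n \to H^0(\Gamma, \mathbf A_n\langle u\rangle)$ sends $T^m$ (the image of the variable $T_1^m$ of $C$) to $T^m = (u+[T^\flat])^m$, and we must show its cokernel is killed by $1-[\epsilon]$. So I would take a general fixed element $x = \sum_{m} P_m(u) T^m$ (finite sum, $P_m \in D_n[u]$), split off its $u$-degree-zero part in each $P_m$, and use $(1-[\epsilon]^m)P_m = 0$ together with the divisibility facts to conclude $(1-[\epsilon])P_m \in D_n \cdot (\text{the image of } C\otimes_A D_n)$. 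The only subtlety, and the step I expect to be the main obstacle, is organizing the $u$-adic (divided-power) expansion: when I expand $(u + [T^\flat])^{[k]}$ versus genuine powers, the terms of positive $u$-degree need to be shown to be forced to vanish (up to $1-[\epsilon]$-torsion) by the invariance condition, which requires the kind of triangular-matrix invertibility argument (à la \cite[Lemme 19]{AB}) that appeared in the previous lemma, or a direct induction on $u$-degree. I would structure the proof as: (i) reduce to a single fixed element; (ii) by $\gamma$-equivariance of the grading by powers of $T$, reduce to analyzing one graded piece $P_m(u)T^m$ at a time; (iii) for $p^n \mid m$, show $P_m(u)T^m$ already lies in the image up to $(1-[\epsilon])$-torsion by an induction on the $u$-degree using $\gamma(u) = u + (1-[\epsilon])[T^\flat]$; (iv) for $p^n \nmid m$, use that $(1-[\epsilon])$ is a multiple of $(1-[\epsilon]^m)$, which together with $(1-[\epsilon]^m)P_m = 0$ would give $(1-[\epsilon])P_m = 0$ directly — but here one must re-examine, since the invariance equation in $u$-positive degree mixes graded pieces, so the correct statement is that the $u$-degree-zero coefficient $p_{m,0}$ satisfies $(1-[\epsilon]^m)p_{m,0} = 0$ exactly, hence is $(1-[\epsilon])$-torsion, and the higher coefficients are controlled inductively. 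I will cite \cite[Lemme 18, Lemme 19]{AB} for the needed facts on $(1-[\epsilon])^{[j]}$ and on invertibility of the associated matrices, exactly as in the preceding lemma.
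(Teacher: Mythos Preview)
Your computation of the Galois action on $T$ is wrong, and this derails the whole approach. Under the isomorphism sending $T\mapsto u+[T^{\flat}]$, one has
\[
\gamma(T)=\gamma(u)+\gamma([T^{\flat}])=\bigl(u+(1-[\epsilon])[T^{\flat}]\bigr)+[\epsilon][T^{\flat}]=u+[T^{\flat}]=T,
\]
so $\gamma$ fixes $T$, not scales it by $[\epsilon]$. Consequently your proposed decomposition $x=\sum_m P_m(u)\,T^m$ with $P_m\in D_n\langle u\rangle$ is \emph{not} $\gamma$-stable: since $\gamma(u)=(1-[\epsilon])T+[\epsilon]u$ involves $T$, applying $\gamma$ to $P_m(u)T^m$ produces terms in other $T$-graded pieces. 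Your step (ii) therefore fails, and with it the reduction to the equation $(1-[\epsilon]^m)P_m=0$ on which the rest of your outline depends. You notice something is off midway through but do not locate the error; the later gestures toward ``induction on $u$-degree'' are on the right track but not organized into an argument.

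The paper's proof exploits exactly the fact you miscomputed: because $\gamma(T)=T$, one should decompose by $u$-degree instead, writing $\alpha=\sum_{s=0}^{N}a_s\,u^{[s]}$ with $a_s\in D_n[T^{\pm 1}]$ on which $\gamma$ acts trivially. Then $(\gamma-1)(u^{[m]})$ is a $D_n[T^{\pm 1}]$-linear combination of $u^{[j]}$ with $j\le m$, and one obtains a matrix identity
\[
(\gamma-1)\bigl(1,u,\ldots,u^{[N]}\bigr)=(1-[\epsilon])\bigl(1,u,\ldots,u^{[N]}\bigr)\,G^{(N)},\qquad G^{(N)}=\begin{pmatrix}0&H^{(N)}\\ 0&\ast\end{pmatrix},
\]
where $H^{(N)}\in\mathbf M_N(A_{\cris}/p^n[T^{\pm 1}])$ is upper-triangular with units on the diagonal, hence invertible. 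The invariance condition $(\gamma-1)\alpha=0$ then forces $(1-[\epsilon])H^{(N)}(a_1,\ldots,a_N)^t=0$, i.e.\ $(1-[\epsilon])a_s=0$ for $s\ge 1$, which is precisely the statement that $(1-[\epsilon])\alpha$ lies in $D_n[T^{\pm 1}]$.
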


Now we are ready to prove

\begin{prop} \label{hqanu}
For any $d>0$, $n>0$ and $q>0$, $H^q\left(\Gamma, \mathbf A_n\langle u_1,\ldots, u_d\rangle \right)$ is killed by $(1-[\epsilon])^{2d-1}$. Moreover, the natural morphism
\begin{equation}\label{eq.injmor}
C\otimes_A\cO\AAcr(R,R^+)/p^n \lra H^0(\Gamma, \mathbf A_n\langle u_1,\ldots, u_d\rangle ), \quad T_i\mapsto u_i+[T_i^{\flat}]
\end{equation}
is injective with cokernel killed by $(1-[\epsilon])^{2d-1}$.
\end{prop}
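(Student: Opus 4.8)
The plan is to reduce the $d$-variable statement to the one-variable results proved above by an induction on $d$, using the tensor decomposition
\[
\mathbf A_n\langle u_1,\ldots, u_d\rangle =\bigotimes_{i=1}^d (\cO\AAcr(R,R^+)/p^n)\left[[T_i^{\flat}]^{\pm 1}\right]\langle u_i\rangle,
\]
where all tensor products are over $\cO\AAcr(R,R^+)/p^n$, together with the Hochschild--Serre spectral sequence for the decomposition $\Gamma=\Gamma'\times \Gamma_d$, with $\Gamma'$ the closed subgroup generated by $\gamma_1,\ldots,\gamma_{d-1}$ and $\Gamma_d$ the one generated by $\gamma_d$. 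For the base case $d=1$ one simply combines the two one-variable lemmas already established: the cokernel of $\gamma-1$ on $A_n[[T^\flat]]\langle u\rangle$ (hence $H^q(\Gamma,\mathbf A_1\langle u\rangle)$ for $q>0$) is killed by $1-[\epsilon]$, and the map $C\otimes_A\cO\AAcr(R,R^+)/p^n\hookrightarrow H^0(\Gamma,\mathbf A_1\langle u\rangle)$ is injective with cokernel killed by $1-[\epsilon]$; since $2\cdot 1-1=1$ this is exactly the claimed bound.

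For the inductive step I would write $M':=\bigotimes_{i=1}^{d-1}(\cO\AAcr(R,R^+)/p^n)[[T_i^{\flat}]^{\pm 1}]\langle u_i\rangle$, so that $\mathbf A_n\langle u_1,\ldots,u_d\rangle\simeq M'\otimes_{\cO\AAcr(R,R^+)/p^n}\big((\cO\AAcr(R,R^+)/p^n)[[T_d^{\flat}]^{\pm1}]\langle u_d\rangle\big)$ as a $\Gamma$-module, with $\Gamma'$ acting only on $M'$ and $\Gamma_d$ only on the last factor. Because $(\cO\AAcr(R,R^+)/p^n)[[T_d^{\flat}]^{\pm1}]\langle u_d\rangle$ is a free $\cO\AAcr(R,R^+)/p^n$-module (Proposition \ref{prop.freeness} and the structure of the PD-polynomial algebra), the one-variable computation of $H^\bullet(\Gamma_d,-)$ goes through after base change along $\cO\AAcr(R,R^+)/p^n\to M'$: namely $H^q(\Gamma_d,\mathbf A_n\langle u_1,\ldots,u_d\rangle)$ is killed by $1-[\epsilon]$ for $q>0$, while $H^0(\Gamma_d,\mathbf A_n\langle u_1,\ldots,u_d\rangle)$ contains $M'[T_d^{\pm1}]$ with cokernel killed by $1-[\epsilon]$; here $M'[T_d^{\pm1}]$ carries the residual $\Gamma'$-action making it $\simeq M'\otimes_{\cO\AAcr(R,R^+)/p^n}(C\otimes_A\cO\AAcr(R,R^+)/p^n)$. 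Feeding the $E_2$-page $E_2^{s,q}=H^s(\Gamma',H^q(\Gamma_d,\mathbf A_n\langle u_1,\ldots,u_d\rangle))$ into Hochschild--Serre: for $q>0$ the rows are killed by $1-[\epsilon]$ and then by a further $(1-[\epsilon])^{2(d-1)}$ after applying the inductive hypothesis for $H^s(\Gamma',-)$ (which applies since $H^q(\Gamma_d,-)$ is, up to bounded torsion, a module of the same shape built on a free $\cO\AAcr(R,R^+)/p^n$-module), so those contribute a factor $(1-[\epsilon])^{2d-1}$; the row $q=0$ gives $H^s(\Gamma',M'[T_d^{\pm1}])$ up to $1-[\epsilon]$-torsion in the $\Gamma'$-cohomology coming from the cokernel, and the inductive hypothesis bounds $H^s(\Gamma',M'[T_d^{\pm1}])$ by $(1-[\epsilon])^{2(d-1)-1}$ for $s>0$ (tensoring the case $d-1$ with the free module $C\otimes_A\cO\AAcr(R,R^+)/p^n$ over the last torus variable). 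Assembling the two contributions along the spectral sequence filtration yields that $H^q(\Gamma,\mathbf A_n\langle u_1,\ldots,u_d\rangle)$ is killed by $(1-[\epsilon])^{2d-1}$ for $q>0$. For $q=0$ one has $H^0(\Gamma,\mathbf A_n\langle u_1,\ldots,u_d\rangle)=H^0(\Gamma',H^0(\Gamma_d,-))$, and chasing the cokernel of $C\otimes_A\cO\AAcr(R,R^+)/p^n\to H^0(\Gamma,-)$ through the two-step extension — cokernel killed by $1-[\epsilon]$ at the $\Gamma_d$-level, then by $(1-[\epsilon])^{2(d-1)-1}$ at the $\Gamma'$-level, plus an $H^1(\Gamma',-)$ term bounded by $(1-[\epsilon])^{2(d-1)-1}$ — gives a total bound $(1-[\epsilon])^{2d-1}$ on the cokernel, and injectivity is clear since all the relevant modules are $\cO\AAcr(S_\infty,S_\infty^+)/p^n$-submodules and the map is a restriction of the isomorphism of Lemma \ref{lem.isotechnique}.

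The main obstacle I anticipate is bookkeeping of the torsion exponents through Hochschild--Serre: one must be careful that applying the inductive hypothesis to the $\Gamma_d$-cohomology groups is legitimate, i.e. that $H^q(\Gamma_d,\mathbf A_n\langle u_1,\ldots,u_d\rangle)$ and $H^0(\Gamma_d,\mathbf A_n\langle u_1,\ldots,u_d\rangle)$ really are, up to $(1-[\epsilon])$-torsion, of the form ``$M'$ tensored with a $(d-1)$-variable object of the same type,'' so that the $\Gamma'$-cohomology is controlled. This is where the freeness over $\cO\AAcr(R,R^+)/p^n$ from Proposition \ref{prop.freeness} is essential: it guarantees that forming $H^\bullet(\Gamma_d,-)$ commutes with the base change $\cO\AAcr(R,R^+)/p^n\to M'$ and does not introduce uncontrolled higher $\mathrm{Tor}$'s. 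Once this compatibility is in place, the exponent $2d-1$ falls out of the additivity of killing exponents along the two-row spectral sequence, exactly as in \cite{AB}; there are no new ideas beyond theirs, only the replacement of their base ring by the perfectoid $\cO\AAcr(R,R^+)/p^n$, whose needed flatness and freeness properties have been recorded in Lemma \ref{lem.isotechnique} and Proposition \ref{prop.freeness}.
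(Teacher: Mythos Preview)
Your approach is essentially the same as the paper's: induction on $d$ via the tensor decomposition and the Hochschild--Serre spectral sequence for a one-dimensional subgroup of $\Gamma$, with the base case supplied by the two preceding one-variable lemmas. The only cosmetic difference is that the paper peels off $\Gamma_1$ rather than $\Gamma_d$; your exponent bookkeeping is a bit looser than necessary (for the $q=1$ row no inductive hypothesis is needed since $H^1(\Gamma_d,-)$ is already killed by $1-[\epsilon]$, and the extra ``$H^1(\Gamma',-)$ term'' in your $q=0$ discussion plays no role), but the final bound $(1-[\epsilon])^{2d-1}$ falls out correctly just as in the paper.
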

\begin{proof} Recall that we have the  decomposition
\[
\mathbf A_n\langle u_1,\ldots, u_d\rangle =\bigotimes_{i=1}^d (\cO\AAcr(R,R^+)/p^n)\left[[T_i^{\flat}]^{\pm 1}\right]\langle u_i\rangle.
\]
We shall proceed by induction on $d$. The case $d=1$ comes from the previous two lemmas. For integer $d>1$, one uses Hochschild-Serre spectral sequence
\[
E_{2}^{i,j}=H^i(\Gamma/\Gamma_1,H^j(\Gamma_1, \mathbf{A}_n\langle u_1,\ldots, u_d\rangle))\Longrightarrow H^{i+j}(\Gamma, \mathbf A_n\langle u_1,\ldots, u_d\rangle ).
\]
Using the decomposition above, the group $H^j(\Gamma_1, \mathbf{A}_n\langle u_1,\ldots, u_d\rangle)$ is isomorphic to
\begin{eqnarray*}
H^j\left(\Gamma_1,\cO\AAcr(R,R^+)/p^n\left[[T_1^{\flat}]^{\pm 1}\right]\langle u_1\rangle \right)\otimes \left(\otimes_{i=2}^{d}(\cO\AAcr(R,R^+)/p^n)\left[[T_i^{\flat}]^{\pm 1}\right]\langle u_i\rangle \right). \end{eqnarray*}
So by the calculation done for the case $d=1$, we find that, up to $(1-[\epsilon])$-torsion, $H^j(\Gamma_1, A_n\langle u_1,\ldots, u_d\rangle)$ is zero when $j>0$, and is equal to
\[
(\cO\AAcr(R,R+)/p^n)[T_1^{\pm 1}]\otimes \left(\otimes_{i=2}^{d}(\cO\AAcr(R,R^+)/p^n)\left[[T_i^{\flat}]^{\pm 1} \right] \langle u_i\rangle\right)
\]
when $j=0$.
Thus, up to $(1-[\epsilon])$-torsion, $E_{2}^{i,j}=0$ when $j>0$ and $E_{2}^{i,0}$ is equal to
\[
(\cO\AAcr(R,R+)/p^n)[T_1^{\pm 1}]\otimes H^i\left(\Gamma/\Gamma_1,\left(\otimes_{i=2}^{d}(\cO\AAcr(R,R^+)/p^n)\left[[T_i^{\flat}]^{\pm 1}\right]\langle u_i\rangle \right)\right).
\]
Using the induction hypothesis, we get that, up to $(1-[\epsilon])^{2(d-1)-1+1}$-torsion, $E_{2}^{i,0}=0$ when $i>0$ and
\[
E_{2}^{0,0}=(\cO\AAcr(R,R^+)/p^n)[T_1^{\pm 1},\ldots, T_d^{\pm 1}]=C\otimes_A \cO\AAcr(R,R^+)/p^n.
\]
As $E_2^{i,j}=0$ for $j>1$, we have short exact sequence
\[
0\lra E_{\infty}^{q,0}\lra H^q(\Gamma, \mathbf A_n\langle u_1,\ldots, u_d\rangle)\lra E_{\infty}^{q-1,1}\lra 0.
\]
By what we have shown above, $E_{\infty}^{q-1,1}$ is killed by $(1-[\epsilon])$ (as this is already the case for $E_2^{q-1,1}$), and $E_{\infty}^{q,0}$ is killed by $(1-[\epsilon])^{2d-2}$ for $q>0$ (as this is the case for $E_2^{q,0}$), thus $H^q(\Gamma, \mathbf A_n\langle u_1,\ldots, u_d\rangle)$ is killed by $(1-[\epsilon])^{2d-1}$.
For $q=0$, $H^0(\Gamma, \mathbf A_n\langle u_1,\ldots, u_d\rangle)\simeq E_{\infty}^{0,0}=E_2^{0,0}$. So the cokernel of the natural injection
\[
C\otimes_A\cO\AAcr(R,R^+)/p^n\lra H^0(\Gamma, \mathbf A_n\langle u_1,\ldots, u_d\rangle)
\]
is killed by $(1-[\epsilon])^{2d-2}$, hence by $(1-[\epsilon])^{2d-1}$.
\end{proof}

\begin{rk} With more efforts, one may prove that $H^q(\Gamma, \mathbf{A}_n\langle u_1,\ldots, u_d\rangle)$ is killed by $(1-[\epsilon])^d$ for $q>0$ (\cite[Proposition 21]{AB}), and that the cokernel of the morphism \eqref{eq.injmor} is killed by $(1-[\epsilon])^2$ (\cite[Proposition 30]{AB}).
\end{rk}

\begin{cor}\label{cor.cohomologyofobcris1} For any $n\geq 0$ and any $q>0$, $H^q(\Gamma, \cO\AAcr(S_{\infty},S_{\infty}^+)/p^n)$ is killed by $(1-[\epsilon])^{2d}$. Moreover, the natural morphism
\[
C\widehat{\otimes}_A\cO\AAcr(R,R^+)/p^n\lra  H^0(\Gamma, \cO\AAcr(S_{\infty},S_{\infty}^+)/p^n)
\]
is injective, with cokernel killed by $(1-[\epsilon])^{2d}$.
\end{cor}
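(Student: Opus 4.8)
\textbf{Proof proposal for Corollary \ref{cor.cohomologyofobcris1}.}

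The plan is to bootstrap from the two decompositions obtained just above the statement. Recall that for each $n$ we have identified
\[
\cO\AAcr(S_{\infty},S_{\infty}^+)/p^n = \mathbf{X}_n\langle u_1,\ldots, u_d\rangle \oplus \mathbf{A}_n\langle u_1,\ldots, u_d\rangle
\]
as $\Gamma$-modules, where $\mathbf{X}_n$ collects the summands $A_n[T^\flat]^{\underline a}$ with $\underline a\in (\Z[1/p]\cap[0,p^n))^d\setminus\Z^d$ and $\mathbf{A}_n$ the summands with $\underline a\in \Z^d\cap[0,p^n)^d$. Since group cohomology commutes with finite direct sums, $H^q(\Gamma,\cO\AAcr(S_{\infty},S_{\infty}^+)/p^n)$ splits accordingly. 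First I would invoke Proposition \ref{hqxnu}, which gives that $H^q(\Gamma,\mathbf{X}_n\langle u_1,\ldots,u_d\rangle)$ is killed by $(1-[\epsilon]^{1/p})^2$; since $1-[\epsilon]$ is a multiple of $(1-[\epsilon]^{1/p})^2$ in $\AAcr$ up to a unit — more precisely $(1-[\epsilon]^{1/p})^p$ is, up to unit, $1-[\epsilon]$, and $p\ge 2$ — this part is in particular killed by $(1-[\epsilon])$, hence by $(1-[\epsilon])^{2d}$. (If one prefers to avoid the divisibility bookkeeping, one can simply note $(1-[\epsilon]^{1/p})^2$ divides $(1-[\epsilon])$ after multiplying by a suitable element, which suffices for the "killed by" conclusion.) Then I would apply Proposition \ref{hqanu}: for $q>0$, $H^q(\Gamma,\mathbf{A}_n\langle u_1,\ldots,u_d\rangle)$ is killed by $(1-[\epsilon])^{2d-1}$, hence a fortiori by $(1-[\epsilon])^{2d}$. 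Combining the two summands gives that $H^q(\Gamma,\cO\AAcr(S_{\infty},S_{\infty}^+)/p^n)$ is killed by $(1-[\epsilon])^{2d}$ for all $q>0$.

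For the statement about $H^0$, I would again use the decomposition. On the $\mathbf{X}_n$-summand, $H^0(\Gamma,\mathbf{X}_n\langle u_1,\ldots,u_d\rangle)$ is killed by $(1-[\epsilon])$ by Proposition \ref{hqxnu} (the case $q=0$), so it contributes nothing modulo $(1-[\epsilon])$-torsion. On the $\mathbf{A}_n$-summand, Proposition \ref{hqanu} furnishes the injection
\[
C\otimes_A \cO\AAcr(R,R^+)/p^n \lra H^0(\Gamma,\mathbf{A}_n\langle u_1,\ldots,u_d\rangle),\quad T_i\mapsto u_i+[T_i^\flat],
\]
with cokernel killed by $(1-[\epsilon])^{2d-1}$. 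Composing with the inclusion of the $\mathbf{A}_n$-summand into the full module, I obtain a morphism $C\otimes_A\cO\AAcr(R,R^+)/p^n\to H^0(\Gamma,\cO\AAcr(S_{\infty},S_{\infty}^+)/p^n)$. It is injective because it is the composition of an injection with a direct-summand inclusion. Its cokernel sits in an extension whose two pieces — the cokernel coming from the $\mathbf{A}_n$-part (killed by $(1-[\epsilon])^{2d-1}$) and the $\mathbf{X}_n$-part $H^0(\Gamma,\mathbf{X}_n\langle\ldots\rangle)$ (killed by $(1-[\epsilon])$) — are each killed by $(1-[\epsilon])^{2d-1}$, hence the cokernel is killed by $(1-[\epsilon])^{2d}$. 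Finally I would replace $C\otimes_A\cO\AAcr(R,R^+)/p^n$ by $C\widehat\otimes_A\cO\AAcr(R,R^+)/p^n$ as in the statement; since $C=A\{T_1^{\pm1},\ldots,T_d^{\pm1}\}$ and we are working modulo $p^n$, and the completion is the $p$-adic one along the $A$-factor, the natural map $C\otimes_A\cO\AAcr(R,R^+)/p^n\to C\widehat\otimes_A\cO\AAcr(R,R^+)/p^n$ is already an isomorphism (everything is annihilated by $p^n$), so no change is needed — or, more safely, one observes that the image of $C\otimes_A(-)$ is dense and the target is $p$-adically complete, giving the $\widehat\otimes$ version.

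The only genuinely delicate point is the bookkeeping of exponents when passing between $(1-[\epsilon]^{1/p})$ and $(1-[\epsilon])$, i.e. making sure that every intermediate annihilator (the $(1-[\epsilon]^{1/p})^2$ from $\mathbf{X}_n$, the $(1-[\epsilon])^{2d-1}$ from $\mathbf{A}_n$) is uniformly dominated by $(1-[\epsilon])^{2d}$; this is elementary since $2d\ge 2d-1\ge 1$ and $(1-[\epsilon]^{1/p})^2 \mid (1-[\epsilon])$ in $\AAcr$ up to a unit (as $p\ge 2$ and $1-[\epsilon]=\prod_{\zeta^p=1}(1-\zeta^{1/p}[\epsilon]^{1/p})$-type factorization, each factor being an associate of $1-[\epsilon]^{1/p}$). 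The rest is a formal assembly of the two propositions via the $\Gamma$-equivariant direct sum decomposition, so I do not anticipate any real obstacle beyond that. I expect the argument to occupy only a few lines once the two propositions are in place.
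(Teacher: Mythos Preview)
Your approach is exactly the one the paper (implicitly) uses: the $\Gamma$-equivariant splitting into the $\mathbf X_n$- and $\mathbf A_n$-parts, followed by Propositions~\ref{hqxnu} and~\ref{hqanu}, is all that is needed, and the paper states the corollary without proof for precisely this reason.

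There is one small piece of bookkeeping you have backwards. You assert that $(1-[\epsilon]^{1/p})^2$ divides $1-[\epsilon]$ in $A_{\cris}$ (and your factorization argument for this is garbled); this is not obvious and you do not need it. What you actually need is the opposite and trivial direction: since $1-[\epsilon]^{1/p}$ divides $1-[\epsilon]$ (from $X^p-1=(X-1)(X^{p-1}+\cdots+1)$), one has $(1-[\epsilon]^{1/p})^2\mid(1-[\epsilon])^2$, so anything killed by $(1-[\epsilon]^{1/p})^2$ is killed by $(1-[\epsilon])^2$. Also, the cokernel for $H^0$ is not an extension but a \emph{direct sum} of the $\mathbf A_n$-cokernel and $H^0(\Gamma,\mathbf X_n\langle u\rangle)$, so it is killed by $(1-[\epsilon])^{\max(2d-1,\,2)}$, which is at most $(1-[\epsilon])^{2d}$ for $d\ge 1$. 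With these two corrections the bound $(1-[\epsilon])^{2d}$ drops out immediately, and no appeal to a putative relation $(1-[\epsilon]^{1/p})^2\mid(1-[\epsilon])$ is required.
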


Recall that we want to compute $H^q(\Gamma, \cO\BBcr(\widetilde{S}_{\infty},\widetilde{S}_{\infty}^+))$. For this, one needs

\begin{lemma} Keep the notations above and assume that the morphism $f\colon \cX\to \cY$ is \'etale; thus $V=\Spa(R,R^+)$ and $X\times_YV=\Spa(\widetilde{S},\widetilde{S}^+)=\Spa(\widetilde{S}_{\infty},\widetilde{S}_{\infty}^+)$.
The natural morphism
\[
B\widehat{\otimes}_A\cO\AAcr(R,R^+)\lra \cO\AAcr(\widetilde{S}_{\infty},\widetilde{S}_{\infty}^+)
\]
is an isomorphism.
\end{lemma}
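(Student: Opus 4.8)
The plan is to reduce the statement to the already-established acyclicity/base-change results for the \'etale situation. Since $f\colon\cX\to\cY$ is now assumed \'etale, the torus $\cT$ has relative dimension $d=0$, so $C=A$ and $S_\infty^+=R^+$, while $B$ is \'etale over $A$ and $\widetilde S_\infty^+=B\widehat\otimes_AR^+$. First I would observe that $W(\widetilde S_\infty^{\flat+})$ is obtained from $W(R^{\flat+})$ by base change along the \'etale morphism $A\to B$ in the appropriate sense: modulo $p$ one has $\widetilde S_\infty^{\flat+}/(p^\flat)^p\simeq (B/p)\otimes_{A/p}(R^{\flat+}/(p^\flat)^p)$ because $B$ is \'etale over $A$, so $B/p$ is \'etale over $A/p$ and tensoring is exact, and \'etaleness is insensitive to nilpotent thickenings. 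Lifting $p$-adically, one gets $W(\widetilde S_\infty^{\flat+})/p^n\simeq (B/p^n\text{-}\mathrm{completed})\otimes$ with $W(R^{\flat+})/p^n$; more precisely the natural map $B\widehat\otimes_A W(R^{\flat+})\to W(\widetilde S_\infty^{\flat+})$ is an isomorphism after reduction mod $p^n$ and hence after $p$-adic completion.

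Next I would pass to $\AAcr$ and then to $\cO\AAcr$. Using the presentation $\AAcr^0(R,R^+)=W(R^{\flat+})[\xi^m/m!:m\ge0]$ and the analogous one for $\widetilde S_\infty$, together with the fact that $\xi=[p^\flat]-p$ lies in $W(R^{\flat+})$ and its divided powers are formed the same way on both sides, the isomorphism $B\widehat\otimes_A W(R^{\flat+})\simeq W(\widetilde S_\infty^{\flat+})$ upgrades to $B\widehat\otimes_A\AAcr(R,R^+)\simeq\AAcr(\widetilde S_\infty,\widetilde S_\infty^+)$ (one checks this modulo $p^n$ using that $B$ is flat over $A$, so tensoring commutes with the PD-polynomial generators, then completes). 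Finally, since $\cO\AAcr(R,R^+)\simeq\AAcr(R,R^+)\{\langle w_1,\dots,w_\delta\rangle\}$ by Proposition~\ref{iso} (here $V$ lies above $\widetilde Y_{\bk}$, so the toric variables $w_j=S_j-[S_j^\flat]$ appear), and similarly $\cO\AAcr(\widetilde S_\infty,\widetilde S_\infty^+)\simeq\AAcr(\widetilde S_\infty,\widetilde S_\infty^+)\{\langle w_1,\dots,w_\delta\rangle\}$ with the \emph{same} variables $w_j$ (because $d=0$ means no $u_i$'s, and the $S_j$ come from the base $\cY$, hence from $A$), base-changing the previous isomorphism along $\AAcr(R,R^+)\to\AAcr(R,R^+)\{\langle w_1,\dots,w_\delta\rangle\}$ and $p$-adically completing the PD-polynomial rings gives the desired $B\widehat\otimes_A\cO\AAcr(R,R^+)\simeq\cO\AAcr(\widetilde S_\infty,\widetilde S_\infty^+)$. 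Compatibility of all these identifications is straightforward to track since each map is the canonical one.

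Alternatively, and perhaps more cleanly, I would argue via the sheaf-theoretic description: the left-hand side is, by definition (see the discussion preceding Lemma~\ref{2obcris} and in Lemma~\ref{quasirelative}), a model for $w_{V*}(\cO\AAcr_X)$-type expressions, and since $f$ is \'etale the fibre product $X\times_YV$ is just the pro-\'etale base change; one then invokes Lemma~\ref{2obcris} applied to the affinoid perfectoid $X\times_YV=\Spa(\widetilde S_\infty,\widetilde S_\infty^+)$ to identify $\cO\AAcr(\widetilde S_\infty,\widetilde S_\infty^+)$ with sections of $\cO\AAcr_X$, and compares with the description of those sections over $V$ pulled back along the \'etale map $f$. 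I expect the main obstacle to be bookkeeping rather than mathematics: namely verifying carefully that the \'etaleness of $B/A$ really does give an \emph{exact} base change at the level of the mod-$p$ tilted rings $\widetilde S_\infty^{\flat+}/(p^\flat)^p\simeq (B/p)\otimes_{A/p}(R^{\flat+}/(p^\flat)^p)$ — this uses that Frobenius on $A/p$ is finite (so tilting interacts well with \'etale base change) and that \'etale morphisms lift uniquely through the thickening $R^{\flat+}/(p^\flat)^p\to R^+/p$. Once that exactness is in hand, the rest is a routine propagation through PD-envelopes, $p$-adic completions, and the toric PD-polynomial variables, all of which are flat or exact operations in the relevant ranges.
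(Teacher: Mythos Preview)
Your strategy has a genuine structural gap. The intermediate isomorphisms you aim for, $B\widehat\otimes_A W(R^{\flat+})\simeq W(\widetilde S^{\flat+})$ and $B\widehat\otimes_A\AAcr(R,R^+)\simeq\AAcr(\widetilde S,\widetilde S^+)$, are not well-posed: there is no canonical $A$-algebra structure on $W(R^{\flat+})$ or on $\AAcr(R,R^+)$. The natural map $A\to\cO\AAcr(R,R^+)$ sends $S_j$ to $S_j=w_j+[S_j^\flat]$, so it \emph{does not factor through} $\AAcr(R,R^+)$; consequently you cannot split $B\widehat\otimes_A\cO\AAcr(R,R^+)$ as $(B\widehat\otimes_A\AAcr(R,R^+))\widehat\otimes_{\AAcr(R,R^+)}\AAcr(R,R^+)\{\langle w\rangle\}$. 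Even if you pick a non-canonical lift $A\to\AAcr(R,R^+)$ (say $S_j\mapsto[S_j^\flat]$), your Witt-vector claim fails outright: reducing mod $p$ would give $(B/p)\otimes_{A/p}R^{\flat+}\simeq\widetilde S^{\flat+}$, but the right-hand side is a perfect $\mathbb F_p$-algebra while the left-hand side is not (since $B/p$ is not perfect). Your identity $\widetilde S^{\flat+}/(p^\flat)^p\simeq(B/p)\otimes_{A/p}(R^{\flat+}/(p^\flat)^p)$ has the same problem: there is no canonical $A/p$-algebra structure on $R^{\flat+}/(p^\flat)^p$ to make the tensor product meaningful.

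The paper avoids this by never separating the $\AAcr$ part from the PD-polynomial variables. It works directly with $\cO\AAcr$, where $A$ and $B$ do map in canonically, reduces mod $p$ to the explicit ring $(R^{\flat+}/(p^\flat)^p)[\delta_m,w_j,Z_{jm}]/(\delta_m^p,w_j^p,Z_{jm}^p)$ (and similarly for $\widetilde S$), and then --- crucially --- reduces further modulo the nilpotent element $p^\flat$. That second reduction replaces $R^{\flat+}/p^\flat$ by $R^+/p$ and $\widetilde S^{\flat+}/p^\flat$ by $\widetilde S^+/p$, and it is only at \emph{that} level that the \'etaleness of $B/A$ applies directly: the map $\alpha$ is shown to be \'etale via a diagram chase, and then one checks it is an isomorphism after killing nilpotents, namely that $B/p\otimes_{A/p}R^+/p\to\widetilde S^+/p$ is an isomorphism. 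The passage through $R^+/p$ (rather than any tilted object) is what makes the \'etaleness argument land; your attempt to run \'etaleness at the tilted or Witt-vector level is where the argument breaks.
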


\begin{proof}By Lemma \ref{2obcris} we are reduced to showing that the natural map (here $w_j=S_j-[S_j^{\flat}]$)
\[
B\widehat{\otimes}_A \mathbb A_{\cris}(R,R^+)\{\langle w_1,\ldots, w_{\delta}\rangle \} \lra \mathbb A_{\cris}(\widetilde{S},\widetilde{S}^+)\{\langle w_1,\ldots, w_{\delta}\rangle\}
\]
is an isomorphism. Since both sides of the previous maps are $p$-adically complete and without $p$-torsion, we just need to check that its reduction modulo $p$
\[
B/p\otimes_{A/p}(\mathbb A_{\cris}(R,R^+)/p)\langle w_1,\ldots, w_{\delta}\rangle \lra (\mathbb A_{\cris}(\widetilde{S},\widetilde{S}^+)/p)\langle w_1,\ldots, w_{\delta}\rangle
\]
is an isomorphism. Note that we have the following expression
\[
(\mathbb{A}_{\cris}(R,R^+)/p)\langle w_1,\ldots, w_{\delta}\rangle \simeq \frac{(R^{\flat +}/((p^{\flat})^p))[\delta_m,w_i, Z_{im}]_{1\leq i\leq \delta, m\in \mathbb N}}{(\delta_m^p,w_i^m, Z_{im}^p)_{1\leq i\leq \delta,m\in \mathbb N}},
\]
and the similar expression for $(\mathbb A_{\cris}(\widetilde{S},\widetilde{S}^+)/p)\langle w_1,\ldots, w_{\delta}\rangle$, where $\delta_m$ is the image of $\gamma^{m+1}(\xi)$ with $\gamma: x\mapsto x^p/p$.
One sees that both sides of the morphism above  are $p^{\flat}$-adically complete (in fact $p^{\flat}$ is nilpotent). Moreover $R^{\flat+}$ has no $p^{\flat}$-torsion. So we are reduced to showing that the morphism
\[
B/p\otimes_{A/p} \frac{(R^{\flat +}/p^{\flat})[\delta_m,w_i,Z_{im}]_{1\leq i\leq \delta,m\in \mathbb N}}{(\delta_m^p,w_i^p, Z_{im}^p)_{1\leq i\leq \delta,m\in \mathbb N}}\lra \frac{(\widetilde{S}^{\flat +}/p^{\flat})[\delta_m,w_i,Z_{im}]_{1\leq i\leq \delta,m\in \mathbb N}}{(\delta_m^p,w_i^p, Z_{im}^p)_{1\leq i\leq \delta,m\in \mathbb N}}
\]
is an isomorphism. But $R^{\flat +}/p^{\flat}\simeq R^{+}/p$ and $\widetilde{S}^{\flat +}/p^{\flat}\simeq \widetilde{S}^+/p$, so we just need to show that the following morphism is an isomorphism:
\[
\alpha:B/p\otimes_{A/p} \frac{(R^{+}/p)[\delta_m,w_i,Z_{im}]_{1\leq i\leq \delta,m\in \mathbb N}}{(\delta_m^p,w_i^p, Z_{im}^p)_{1\leq i\leq \delta,m\in \mathbb N}}\lra \frac{(\widetilde{S}^{+}/p)[\delta_m,w_i,Z_{im}]_{1\leq i\leq \delta,m\in \mathbb N}}{(\delta_m^p,w_i^p, Z_{im}^p)_{1\leq i\leq \delta,m\in \mathbb N}}.
\]
To see this, we consider the following diagram
\[
\xymatrix{&& \frac{\widetilde{S}^+/p[\delta_m,w_i,Z_{im}]_{1\leq i\leq \delta,m\in \mathbb N}}{(\delta_m^p,w_i^p, Z_{im}^p)_{1\leq i\leq \delta,m\in \mathbb N}} \\ B/p\ar@/^1pc/[urr]\ar[r] & B/p\otimes_{A/p} \frac{R^+/p[\delta_m,w_i,Z_{im}]_{1\leq i\leq \delta,m\in \mathbb N}}{(\delta_m^p,w_i^p, Z_{im}^p)_{1\leq i\leq \delta,m\in \mathbb N}}\ar[ur]^{\alpha}  & \\ A/p\ar[u]^{\textrm{\'etale}}\ar[r] & \frac{R^+/p[\delta_m,w_i,Z_{im}]_{1\leq i\leq \delta,m\in \mathbb N}}{(\delta_m^p,w_i^p, Z_{im}^p)_{1\leq i\leq \delta,m\in \mathbb N}} \ar@/_1pc/[uur]_{\textrm{\'etale}} \ar[u]^{\textrm{\'etale}}& }.
\]
It follows that $\alpha$ is \'etale. To see that $\alpha$ is an isomorphism, we just need to show that this is the case after modulo some nilpotent ideals of both sides of $\alpha$. Hence we are reduced to showing that the natural morphism
\[
B/p\otimes_{A/p} R^+/p \lra \widetilde{S}^+/p
\]
is an isomorphism, which is clear from the definition.
\end{proof}

Apply the previous lemma to the \'etale morphism $f\colon \cX\to \mathcal T$, we find a canonical $\Gamma$-equivariant isomorphism
\[
B\widehat{\otimes}_C\cO\AAcr(S_{\infty},S_{\infty}^+)\stackrel{\sim}{\lra} \cO\AAcr(\widetilde{S}_{\infty},\widetilde{S}_{\infty}^+).
\]
In particular, we find
\[
H^q(\Gamma, B\widehat{\otimes}_C\cO\AAcr(S_{\infty},S_{\infty}^+))\stackrel{\sim}{\lra} H^q(\Gamma, \cO\AAcr(\widetilde{S}_{\infty},\widetilde{S}_{\infty}^+)).
\]
Now consider the following spectral sequence
\[
E_{2}^{i,j}=R^i\varprojlim_n H^j(\Gamma, B\otimes_C\cO\AAcr(S_{\infty},S_{\infty}^+)/p^n) \Longrightarrow H^{i+j}(\Gamma, B\widehat{\otimes}_C\cO\AAcr(S_{\infty},S_{\infty}^+))
\]
which induces a short exact sequence for each $i$:
\begin{equation}\label{eq.sesforRlim}
\begin{array}{c}
0\lra R^1\varprojlim_n H^{i-1}(\Gamma, B\otimes_C\cO\AAcr(S_{\infty},S_{\infty})/p^n)\lra H^{i}(\Gamma, B\widehat{\otimes}_C\cO\AAcr(S_{\infty},S_{\infty}^+)) \\ \lra \varprojlim_n H^{i}(\Gamma, B\otimes_C\cO\AAcr(S_{\infty},S_{\infty})/p^n) \lra 0.
\end{array}
\end{equation}
As $B$ is flat over $C$, it can be written as a filtered limit of finite free $C$-modules by Lazard's theorem (\cite[Th\'eor\`eme 1.2]{Laz}), and as $\Gamma$ acts trivially on $B$, the following natural morphism is an isomorphism for each $i$:
\[
B\otimes_CH^{i}(\Gamma, \cO\AAcr(S_{\infty},S_{\infty})/p^n) \stackrel{\sim}{\lra} H^{i}(\Gamma, B\otimes_C\cO\AAcr(S_{\infty},S_{\infty})/p^n).
\]
Therefore, for $i\geq 1$, $H^{i}(\Gamma, B\otimes_C\cO\AAcr(S_{\infty},S_{\infty}^+)/p^n)$ is killed by $(1-[\epsilon])^{2d}$ by Corollary \ref{cor.cohomologyofobcris1}. Moreover, by the same corollary, we know that the following morphism is injective with cokernel killed by $(1-[\epsilon])^{2d}$:
\[
C\otimes_A \cO\AAcr(R,R^+)/p^n\lra H^0(\Gamma, \cO\AAcr(S_{\infty},S_{\infty}^+)/p^n).
\]
Thus the same holds if we apply the functor $B\otimes_C-$ to the morphism above
\[
B\otimes_A \cO\AAcr(R,R^+)/p^n\lra B\otimes_CH^0(\Gamma, \cO\AAcr(S_{\infty},S_{\infty}^+)/p^n).
\]
Passing to limits we obtain an injective morphism
\[
B\widehat{\otimes}_A \cO\AAcr(R,R^+)\lra \varprojlim_n \left(B\otimes_CH^0(\Gamma, \cO\AAcr(S_{\infty},S_{\infty}^+)/p^n)\right),
\]
whose cokernel is killed by $(1-[\epsilon])^{2d}$, and  that
\[
R^1\varprojlim_n \left(B\otimes_C H^0(\Gamma, \cO\AAcr(S_{\infty},S_{\infty}^+)/p^n)\right)
\]
is killed by $(1-[\epsilon])^{2d}$. As a result, using the short exact sequence \eqref{eq.sesforRlim}, we deduce that for $i\geq 1$, $H^i(\Gamma, B\widehat{\otimes}_C\cO\AAcr(S_{\infty},S_{\infty}^+))\simeq H^i(\Gamma, \cO\AAcr(\widetilde{S}_{\infty},\widetilde{S}_{\infty}^+))$ is killed by $(1-[\epsilon])^{4d}$, and that the canonical morphism
\[
B\widehat{\otimes}_A\cO\AAcr(R,R^+)\lra H^0(\Gamma, B\widehat{\otimes}_C\cO\AAcr(S_{\infty},S_{\infty}^+))\simeq H^0(\Gamma, \cO\AAcr(\widetilde{S}_{\infty},\widetilde{S}_{\infty}^+))
\]
is injective with cokernel killed by $(1-[\epsilon])^{2d}$. One can summarize the calculations above as follows:

\begin{prop}\label{cor.cohomologyofobcris} (i) For any $n\geq 0$ and $q>0$, $H^q(\Gamma, \cO\AAcr(\widetilde{S}_{\infty},\widetilde{S}_{\infty}^+)/p^n)$ is killed by $(1-[\epsilon])^{2d}$, and the natural morphism
\[
B\otimes_A\cO\AAcr(R,R^+)/p^n\lra H^0(\Gamma, \cO\AAcr(\widetilde{S}_{\infty},\widetilde{S}_{\infty}^+))/p^n
\]
is injective with cokernel killed by $(1-[\epsilon])^{2d}$.

(ii) For any  $q>0$, $H^q(\Gamma, \cO\AAcr(S_{\infty},S_{\infty}^+))$ is killed by $(1-[\epsilon])^{4d}$ and the natural morphism
\[
B\widehat{\otimes}_A\cO\AAcr(R,R^+)\to H^0(\Gamma, \cO\AAcr(\widetilde{S}_{\infty},\widetilde{S}_{\infty}^+))
\]
is injective, with cokernel killed by $(1-[\epsilon])^{2d}$.
\end{prop}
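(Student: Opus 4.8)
The proof of Proposition \ref{cor.cohomologyofobcris} will assemble the local computations carried out in Propositions \ref{hqxnu} and \ref{hqanu} (that is, Propositions 16 and 21 of \cite{AB} adapted to the perfectoid setting) together with the \'etale base change lemma just proved, and then pass to the limit over $n$. The plan is as follows.

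\textbf{Step 1: the torus case, mod $p^n$.} I would first record Corollary \ref{cor.cohomologyofobcris1}, which computes $H^q(\Gamma,\cO\AAcr(S_{\infty},S_{\infty}^+)/p^n)$ for the torus $\cT$: using the $\Gamma$-equivariant decomposition
\[
\cO\AAcr(S_{\infty},S_{\infty}^+)/p^n=\mathbf X_n\langle u_1,\ldots,u_d\rangle\oplus \mathbf A_n\langle u_1,\ldots,u_d\rangle,
\]
Proposition \ref{hqxnu} gives that $H^q(\Gamma,\mathbf X_n\langle u_1,\ldots,u_d\rangle)$ is killed by $(1-[\epsilon]^{1/p})^2$, hence by $(1-[\epsilon])^{2d}$, and Proposition \ref{hqanu} gives that $H^q(\Gamma,\mathbf A_n\langle u_1,\ldots,u_d\rangle)$ is killed by $(1-[\epsilon])^{2d-1}$ for $q>0$, while for $q=0$ the natural map $C\otimes_A\cO\AAcr(R,R^+)/p^n\to H^0(\Gamma,\mathbf A_n\langle u_1,\ldots,u_d\rangle)$ is injective with cokernel killed by $(1-[\epsilon])^{2d-1}$. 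Combining the two summands yields Corollary \ref{cor.cohomologyofobcris1}.

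\textbf{Step 2: \'etale base change to $\cX$, mod $p^n$.} Applying the base-change lemma to the \'etale morphism $f\colon\cX\to\cT$ gives a $\Gamma$-equivariant isomorphism $B\widehat\otimes_C\cO\AAcr(S_{\infty},S_{\infty}^+)\simto\cO\AAcr(\widetilde S_{\infty},\widetilde S_{\infty}^+)$. Since $B$ is flat over $C$, it is a filtered colimit of finite free $C$-modules, and $\Gamma$ acts trivially on $B$, so $B\otimes_C H^q(\Gamma,-)\simto H^q(\Gamma,B\otimes_C-)$ on the mod $p^n$ coefficients; combined with Step 1 this gives part (i): $H^q(\Gamma,\cO\AAcr(\widetilde S_{\infty},\widetilde S_{\infty}^+)/p^n)$ is killed by $(1-[\epsilon])^{2d}$ for $q>0$, and the map $B\otimes_A\cO\AAcr(R,R^+)/p^n\to H^0(\Gamma,\cO\AAcr(\widetilde S_{\infty},\widetilde S_{\infty}^+)/p^n)$ is injective with cokernel killed by $(1-[\epsilon])^{2d}$ (here one uses that $B\otimes_C(C\otimes_A\cO\AAcr(R,R^+)/p^n)=B\otimes_A\cO\AAcr(R,R^+)/p^n$ and flatness of $B$ to preserve injectivity).

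\textbf{Step 3: passage to the limit over $n$.} This is where the only real subtlety lies. One has $\cO\AAcr(\widetilde S_{\infty},\widetilde S_{\infty}^+)=\varprojlim_n \cO\AAcr(\widetilde S_{\infty},\widetilde S_{\infty}^+)/p^n$ (and similarly $B\widehat\otimes_A\cO\AAcr(R,R^+)=\varprojlim_n B\otimes_A\cO\AAcr(R,R^+)/p^n$ since $B$ is flat over $A$), so I would use the spectral sequence
\[
E_2^{i,j}=R^i\varprojlim_n H^j(\Gamma,B\otimes_C\cO\AAcr(S_{\infty},S_{\infty}^+)/p^n)\Longrightarrow H^{i+j}(\Gamma,B\widehat\otimes_C\cO\AAcr(S_{\infty},S_{\infty}^+)),
\]
which degenerates into the short exact sequences of \eqref{eq.sesforRlim} because $R^i\varprojlim$ vanishes for $i\ge 2$ on a tower of abelian groups. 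From Step 2, the $H^{j}$-terms for $j\ge 1$ are killed by $(1-[\epsilon])^{2d}$, so their $\varprojlim$ and $R^1\varprojlim$ are too; and for $j=0$, the cofinal subsystem $\{B\otimes_A\cO\AAcr(R,R^+)/p^n\}_n$ has surjective (indeed split, being reduction-mod-$p^n$) transition maps, hence $R^1\varprojlim$ of it vanishes, so $R^1\varprojlim_n H^0(\Gamma,B\otimes_C\cO\AAcr(S_{\infty},S_{\infty}^+)/p^n)$ is killed by $(1-[\epsilon])^{2d}$ and the limit of the $H^0$'s differs from $B\widehat\otimes_A\cO\AAcr(R,R^+)$ by $(1-[\epsilon])^{2d}$-torsion. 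Feeding all of this into \eqref{eq.sesforRlim} shows that $H^q(\Gamma,\cO\AAcr(\widetilde S_{\infty},\widetilde S_{\infty}^+))$ for $q>0$ is killed by $(1-[\epsilon])^{2d}\cdot(1-[\epsilon])^{2d}=(1-[\epsilon])^{4d}$ (the two factors coming from the $R^1\varprojlim$ contribution in degree $q-1$ and the $\varprojlim$ contribution in degree $q$), and that $B\widehat\otimes_A\cO\AAcr(R,R^+)\to H^0(\Gamma,\cO\AAcr(\widetilde S_{\infty},\widetilde S_{\infty}^+))$ is injective with cokernel killed by $(1-[\epsilon])^{2d}$; this is exactly part (ii). The main obstacle is bookkeeping the various torsion exponents through the spectral sequence and the $R^1\varprojlim$ terms — nothing deep, but one must be careful that injectivity of the degree-zero map survives both the base change by the flat module $B$ and the inverse limit, and that the $(1-[\epsilon])$-power bounds compose correctly; everything else is a direct citation of the preceding propositions and the flatness of $B$ over $A$ (equivalently over $C$).
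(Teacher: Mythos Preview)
Your proposal is correct and follows essentially the same route as the paper: reduce to the torus case mod $p^n$ via Corollary \ref{cor.cohomologyofobcris1}, pass along the \'etale map $\cX\to\cT$ using the base-change lemma together with flatness of $B$ over $C$ (written as a filtered colimit of finite free modules) to commute $B\otimes_C-$ with $H^q(\Gamma,-)$, and then run the $R\varprojlim$ spectral sequence \eqref{eq.sesforRlim} to get the $(1-[\epsilon])^{4d}$ bound in positive degree and the $(1-[\epsilon])^{2d}$ bound on the cokernel in degree zero. The only cosmetic difference is that you make the vanishing of $R^1\varprojlim$ on the subsystem $\{B\otimes_A\cO\AAcr(R,R^+)/p^n\}_n$ explicit, whereas the paper leaves this implicit.
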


\begin{thm}\label{withoutfil} Keep the notations above. Then $H^q(\Gamma,\cO\BBcr(\widetilde{S}_{\infty},\widetilde{S}_{\infty}^+))=0$ for $q\geq 1$, and the natural morphism
\[
B\widehat{\otimes}_A\cO\BBcr(R,R^+)\lra H^0(\Gamma,\cO\BBcr(\widetilde{S}_{\infty},\widetilde{S}_{\infty}^+))
\]
is an isomorphism.
\end{thm}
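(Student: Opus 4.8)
The plan is to deduce Theorem~\ref{withoutfil} from Proposition~\ref{cor.cohomologyofobcris} by inverting $t$, the only genuine input being that $1-[\epsilon]$ becomes invertible in $\cO\BBcr$ once $t$ is inverted. Recall that by the very definitions introduced in this subsection $H^q(\Gamma,\cO\BBcr(\widetilde S_{\infty},\widetilde S_{\infty}^+))=H^q_{\mathrm{cont}}(\Gamma,\cO\AAcr(\widetilde S_{\infty},\widetilde S_{\infty}^+))[1/t]$, and likewise $B\widehat{\otimes}_A\cO\BBcr(R,R^+)=\bigl(B\widehat{\otimes}_A\cO\AAcr(R,R^+)\bigr)[1/t]$; since localization at $t$ is exact, everything will follow once we know that on any $\cO\BBcr(\widetilde S_{\infty},\widetilde S_{\infty}^+)$-module (equivalently, any module over $A_{\cris}[1/t]$, the subring of $\Gamma$-invariants containing $t$ and $1-[\epsilon]$) the element $1-[\epsilon]$ acts invertibly.

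For that I would first record the elementary lemma that $1-[\epsilon]$ divides $t$ in $A_{\cris}$. Indeed $t=-\sum_{n\ge 1}(1-[\epsilon])^{n}/n$ converges in $A_{\cris}$ because $1-[\epsilon]\in\Fil^1 A_{\cris}$ is a PD-element: hence $(1-[\epsilon])^p=p!\,(1-[\epsilon])^{[p]}\in pA_{\cris}$, and by iterating the $p$-th power one gets $(1-[\epsilon])^{p^k}\in p^{p^{k-1}}A_{\cris}$, which makes the partial sums of $w:=-\sum_{n\ge 1}(1-[\epsilon])^{n-1}/n$ $p$-adically Cauchy in the $p$-adically complete ring $A_{\cris}$. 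Thus $w\in A_{\cris}$ and $t=(1-[\epsilon])\,w$; since $t$ is not a zero-divisor (Corollary~\ref{cor.tNotZeroDiv}), $1-[\epsilon]$ is a unit in $A_{\cris}[1/t]$ with inverse $w/t$, and a fortiori in $\cO\BBcr(\widetilde S_{\infty},\widetilde S_{\infty}^+)$ and in $B\widehat{\otimes}_A\cO\BBcr(R,R^+)$, both of which are $A_{\cris}[1/t]$-algebras.

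With this in hand the theorem is immediate. For $q\ge 1$, Proposition~\ref{cor.cohomologyofobcris}(ii) says $H^q_{\mathrm{cont}}(\Gamma,\cO\AAcr(\widetilde S_{\infty},\widetilde S_{\infty}^+))$ is killed by $(1-[\epsilon])^{4d}$; inverting $t$ and using that $1-[\epsilon]$ is now a unit forces $H^q(\Gamma,\cO\BBcr(\widetilde S_{\infty},\widetilde S_{\infty}^+))=0$. For $q=0$, the same proposition gives an injection $B\widehat{\otimes}_A\cO\AAcr(R,R^+)\hookrightarrow H^0_{\mathrm{cont}}(\Gamma,\cO\AAcr(\widetilde S_{\infty},\widetilde S_{\infty}^+))$ with cokernel killed by $(1-[\epsilon])^{2d}$; applying the exact functor $-[1/t]$ kills the cokernel and yields the asserted isomorphism $B\widehat{\otimes}_A\cO\BBcr(R,R^+)\xrightarrow{\sim}H^0(\Gamma,\cO\BBcr(\widetilde S_{\infty},\widetilde S_{\infty}^+))$. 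The only step that is not bookkeeping is the divisibility $1-[\epsilon]\mid t$, and that is standard; so I do not expect a real obstacle here, all the substantive work having been carried out in Proposition~\ref{cor.cohomologyofobcris}, of which the present statement is the $t$-localized shadow.
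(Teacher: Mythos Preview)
Your proof is correct and follows essentially the same approach as the paper: both deduce the theorem from Proposition~\ref{cor.cohomologyofobcris} by observing that $1-[\epsilon]$ divides $t$ in $A_{\cris}$, so that inverting $t$ kills all the $(1-[\epsilon])$-power torsion. The only cosmetic difference is that the paper writes the factorization as $t=-(1-[\epsilon])\sum_{n\ge 1}(n-1)!\cdot\frac{(1-[\epsilon])^{[n]}}{1-[\epsilon]}$ and appeals to \cite[Lemme~18]{AB} for convergence, whereas you use the equivalent expansion $t=-\sum_{n\ge1}(1-[\epsilon])^n/n$ and argue convergence directly from $(1-[\epsilon])^p\in pA_{\cris}$.
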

\begin{proof} By the previous proposition, we just need to remark that to invert $1-[\epsilon]$ one just needs to invert $t$, as
\[
t=\log([\epsilon])=-\sum_{n\geq 1}(n-1)!\cdot (1-[\epsilon])^{[n]}=-(1-[\epsilon])\sum_{n\geq 1}(n-1)!\cdot \frac{(1-[\epsilon])^{[n]}}{1-[\epsilon]}.
\]
Here, by \cite[Lemme 18]{AB}, $\frac{(1-[\epsilon])^{[n]}}{1-[\epsilon]}\in \ker(A_{\cris}\stackrel{\theta}{\to} \widehat{\cO_{\bk}} )$, hence the last summation above converges in $A_{\cris}$.
\end{proof}

\subsection{Cohomology of $\Fil^r\cO\BBcr$}

With Proposition \ref{cor.cohomologyofobcris} in hand, the following results on the cohomology of $\Fil^r\cO\BBcr$ can be shown in exactly the same way as is done in \cite[\S 5]{AB}. We thus only state these results, and refer to \cite{AB} for the detailed proofs. 


\begin{lemma}[\cite{AB} Proposition 32]\label{lem.killedbylargepower} Let $q\in \mathbb N_{>0}$, and $n\in \mathbb Z_{\geq 4d+r}$. The $A_{\cris}$-module $H^q(\Gamma, \Fil^r\cO\AAcr(S_{\infty},S_{\infty}^+))$ is killed by $t^n$.
\end{lemma}

\begin{proof}  Let $\mathrm{gr}^r\cO\AAcr:=\Fil^r\cO\AAcr/\Fil^{r+1}\cO\AAcr$. As $\theta(1-[\epsilon])=0$, $\mathrm{gr}^r\cO\AAcr$ is killed by $1-[\epsilon]$. So using the tautological short exact sequence below
\[
0\lra \Fil^{r+1}\cO\AAcr\lra \Fil^r\cO\AAcr\lra \mathrm{gr}^r\cO\AAcr\lra 0
\]
and by induction on the integer $r\geq 0$, one shows that $H^q(\Gamma, \Fil^r\cO\AAcr)$ is killed by $(1-[\epsilon])^{4d+r}$; the $r=0$ case being Proposition \ref{cor.cohomologyofobcris}(ii). So the multiplication-by-$t^n$ with $n\geq 4d+r$ is zero for $H^q(\Gamma,\cO\AAcr)$.
\end{proof}

 Then, as in \cite[Proposition 34]{AB}, we have

\begin{prop}\label{prop.higher-cohomology-of-Fil} $H^q(\Gamma, \Fil^r\cO\BBcr(\widetilde{S}_{\infty},\widetilde{S}_{\infty}^+))=0$ for any $q>0$.
\end{prop}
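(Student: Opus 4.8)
The statement to prove is $H^q(\Gamma,\Fil^r\cO\BBcr(\widetilde{S}_\infty,\widetilde{S}_\infty^+))=0$ for $q>0$. By Lemma~\ref{lem.QpSpace} this cohomology group is a $\Qp$-vector space, computed as a direct limit $\varinjlim_n H^q(\Gamma,\Fil^{r+n}\cO\AAcr)$ where the transition maps are multiplication by $t$. So it suffices to show that for every $q>0$ this direct system is \emph{essentially zero}, i.e. that for each $n$ there is an $m\geq n$ such that the composite transition map $H^q(\Gamma,\Fil^n\cO\AAcr)\to H^q(\Gamma,\Fil^m\cO\AAcr)$, which is multiplication by $t^{m-n}$, vanishes. (Here I abbreviate indices, absorbing $r$ into the running integer.) This is \emph{exactly} the content of Lemma~\ref{lem.killedbylargepower}: the multiplication-by-$t^n$ map on $H^q(\Gamma,\Fil^r\cO\AAcr)$ is already trivial once $n\geq 4d+r$, because that lemma shows $H^q(\Gamma,\Fil^r\cO\AAcr)$ is killed by $(1-[\epsilon])^{4d+r}$ and $t$ is, up to a unit, a multiple of $1-[\epsilon]$ (the expansion $t=-(1-[\epsilon])\sum_{n\geq 1}(n-1)!\,(1-[\epsilon])^{[n]}/(1-[\epsilon])$ from the proof of Theorem~\ref{withoutfil}).

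\textbf{Key steps, in order.} First I would unwind the definition: $\Fil^r\cO\BBcr(\widetilde{S}_\infty,\widetilde{S}_\infty^+)$ is the colimit over $n$ of $t^{-n}\Fil^{r+n}\cO\AAcr$, and since $\Gamma$-cohomology (continuous cohomology of a topologically finitely generated profinite group, computed by a complex of finite type) commutes with filtered colimits of discrete — or here, of the relevant — coefficient modules, one gets $H^q(\Gamma,\Fil^r\cO\BBcr)=\varinjlim_n H^q(\Gamma,t^{-n}\Fil^{r+n}\cO\AAcr)\cong\varinjlim_n H^q(\Gamma,\Fil^{r+n}\cO\AAcr)$, with transition maps given by multiplication by $t$. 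This colimit-commutation point is worth stating carefully, but it is standard. Second, invoke Lemma~\ref{lem.killedbylargepower}: for $q>0$ and any fixed starting index $n_0$, choosing $m$ with $m-n_0\geq 4d+r+n_0$ makes the transition map $\cdot\, t^{m-n_0}\colon H^q(\Gamma,\Fil^{r+n_0}\cO\AAcr)\to H^q(\Gamma,\Fil^{r+m}\cO\AAcr)$ zero, because it factors through multiplication by a power of $1-[\epsilon]$ that annihilates the source. Third, conclude that every element of the colimit is zero, hence $H^q(\Gamma,\Fil^r\cO\BBcr)=0$ for $q>0$.

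\textbf{Where the work really sits.} There is essentially no obstacle remaining at this stage — the substantive estimates (Proposition~\ref{cor.cohomologyofobcris}(ii) for the $r=0$ case, and the dévissage via $\mathrm{gr}^r\cO\AAcr$ being killed by $1-[\epsilon]$) have already been carried out in Lemma~\ref{lem.killedbylargepower}. The only genuine point requiring care in the write-up is the interchange of $H^q(\Gamma,-)$ with the filtered colimit defining $\Fil^r\cO\BBcr$; since $\Gamma\cong\Zp(1)^d$ and continuous cohomology here is computed by the (finite, Koszul-type) Lazard/Koszul complex on $d$ generators with coefficients in these $p$-adically separated modules, cohomology commutes with the colimit, and one should either cite this or note it explicitly. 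After that, the proof is two lines: every cohomology class in the colimit comes from some $H^q(\Gamma,\Fil^{r+n}\cO\AAcr)$ and dies under a sufficiently high power of $t$ by Lemma~\ref{lem.killedbylargepower}, so the colimit vanishes.
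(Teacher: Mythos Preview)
Your argument has a genuine gap. You invoke Lemma~\ref{lem.killedbylargepower} as if it said that the \emph{transition map}
\[
H^q(\Gamma,\Fil^{r+n_0}\cO\AAcr)\xrightarrow{\ \cdot\, t^{m-n_0}\ } H^q(\Gamma,\Fil^{r+m}\cO\AAcr)
\]
vanishes once $m-n_0$ is large. But the lemma only asserts that the \emph{endomorphism} $\cdot\, t^N$ of $H^q(\Gamma,\Fil^{r+n_0}\cO\AAcr)$ is zero; this endomorphism is the transition map above followed by the map $H^q(\Gamma,\Fil^{r+m}\cO\AAcr)\to H^q(\Gamma,\Fil^{r+n_0}\cO\AAcr)$ induced by the \emph{inclusion} $\Fil^{r+m}\subset\Fil^{r+n_0}$. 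So all you can conclude is that the image of the transition map lands in the kernel of this inclusion-induced map, not that it is zero. Your proposed factorization through ``multiplication by a power of $1-[\epsilon]$ that annihilates the source'' does not help: the annihilation $(1-[\epsilon])^{4d+r+n_0}x=0$ holds in $H^q(\Gamma,\Fil^{r+n_0}\cO\AAcr)$, meaning the cocycle becomes a coboundary \emph{with values in} $\Fil^{r+n_0}$, not in the smaller module $\Fil^{r+m}$.

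What is missing is exactly the extra input the paper supplies in the lemma immediately following the proposition: one must show that for every $s$ the kernel of
\[
H^q(\Gamma,\Fil^{s+1}\cO\AAcr)\longrightarrow H^q(\Gamma,\Fil^{s}\cO\AAcr)
\]
(equivalently, the cokernel of $H^{q-1}(\Gamma,\Fil^{s}\cO\AAcr)\to H^{q-1}(\Gamma,\mathrm{gr}^{s}\cO\AAcr)$) is $p$-torsion. Granting this, the image of any class under the transition map lands in a $p$-power-torsion subgroup, hence is $p$-torsion; since the colimit is a $\Qp$-vector space (Lemma~\ref{lem.QpSpace}), it then vanishes. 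Establishing that $p$-torsion statement is the real work here --- it requires a separate analysis of $\mathrm{gr}^r\cO\AAcr$ via an explicit $\Gamma$-stable submodule and a change of coordinates to the $v_i=\log([T_i^\flat]/T_i)$ --- and your sketch does not address it.
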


\if false
\begin{proof} By \cite[Lemme 33]{AB} , the cohomology group $H^q(\Gamma, \Fil^r\cO\BBcr(\widetilde{S}_{\infty},\widetilde{S}_{\infty}^+))$ is a vector space over $\Qp$. Hence to show the desired annihilation, we just need to show that for any $x\in H^q(\Gamma, t^{-n}\Fil^{r+n}\AAcr)$, there exists $m\gg n$ such that the image of $x$ in $H^q(\Gamma, t^{-m}\Fil^{r+m}\cO\AAcr)$ is $p$-torsion. In view of Lemma \ref{lem.killedbylargepower}, we are reduced to showing that the kernel of the map
\[
H^q(\Gamma, \Fil^{r+1}\cO\AAcr)\to H^q(\Gamma,\Fil^r\cO\AAcr)
\]
is of $p$-torsion, or equivalently, the cokernel of the map
\[
H^{q-1}(\Gamma, \Fil^r\cO\AAcr)\to H^{q-1}(\Gamma, \mathrm{gr}^r\cO\AAcr)
\]
is of $p$-torsion for any $q\geq 1$. This assertion is verified in \cite[Lemmes 35, 36]{AB}.
\end{proof}
\fi

It remains to compute the $\Gamma$-invariants of $\Fil^r\cO\BBcr(S_{\infty},S_{\infty})$. We shall first show $H^0(\Gamma,\Fil^r\cO\AAcr(S_{\infty},S_{\infty}^{+}))=B\widehat{\otimes}_A \Fil^r\cO\AAcr(R,R^+)$ 
in the way of proof of \cite[Proposition 41]{AB}.
\begin{prop} \label{prop:invariant-Fil-OAcris}For each $r\in \mathbb N$, the natural injective map
\[
\iota_r\colon B\widehat{\otimes}_A\Fil^r\cO\AAcr(R,R^+)\lra H^0(\Gamma, \Fil^r\cO\AAcr(S_{\infty},S_{\infty}^+))
\]
is an isomorphism.
\end{prop}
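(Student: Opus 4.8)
The statement to prove is that the natural injection $\iota_r\colon B\widehat{\otimes}_A\Fil^r\cO\AAcr(R,R^+)\to H^0(\Gamma,\Fil^r\cO\AAcr)$ is an isomorphism, i.e. it only remains to check surjectivity. My plan is to imitate the argument of \cite[Proposition 41]{AB}, bootstrapping from the non-filtered case (Proposition~\ref{cor.cohomologyofobcris}, which gives that $B\widehat{\otimes}_A\cO\AAcr(R,R^+)\to H^0(\Gamma,\cO\AAcr)$ is injective with cokernel killed by $(1-[\epsilon])^{2d}$, hence is an isomorphism after inverting $1-[\epsilon]$). The key external input beyond Proposition~\ref{cor.cohomologyofobcris} is Corollary~\ref{cor.intersectioninobcris}: inside $\cO\AAcr$ one has $(B\widehat{\otimes}_A\cO\AAcr(R,R^+))\cap ([\epsilon]-1)^{p-1}\cO\AAcr=([\epsilon]-1)^{p-1}(B\widehat{\otimes}_A\cO\AAcr(R,R^+))$. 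Since $t$ and $1-[\epsilon]$ generate the same ideal up to a unit (as recorded in the proof of Theorem~\ref{withoutfil}), this controls how the image of $\iota_r$ interacts with divisibility by powers of $t$, which is precisely what links the filtered pieces $\Fil^r\cO\AAcr$ (via $\Fil^{r+1}\cO\AAcr\supseteq t\cdot \Fil^r\cO\AAcr$ and the fact that multiplication by $t$ raises filtration by roughly one step).

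\textbf{Key steps.} First I would reduce to proving surjectivity, and I would set up the comparison with the $r=0$ case: given $x\in H^0(\Gamma,\Fil^r\cO\AAcr)\subset H^0(\Gamma,\cO\AAcr)$, by the $r=0$ isomorphism (after inverting $1-[\epsilon]$) we can write $x$ as an element of $B\widehat{\otimes}_A\cO\AAcr(R,R^+)$ up to a bounded power of $(1-[\epsilon])$; more precisely there is a uniform $N$ (say $N=2d$) with $(1-[\epsilon])^N x\in B\widehat{\otimes}_A\cO\AAcr(R,R^+)$. Second, I would argue by descending induction (or a direct divisibility argument) that in fact $x$ itself lies in $B\widehat{\otimes}_A\Fil^r\cO\AAcr(R,R^+)$: the element $(1-[\epsilon])^N x$ lies in $B\widehat{\otimes}_A\cO\AAcr(R,R^+)$ and also, viewed in $\cO\AAcr$, in $(1-[\epsilon])^N\cdot\Fil^r\cO\AAcr$ because $x\in\Fil^r\cO\AAcr$; an iterated application of Corollary~\ref{cor.intersectioninobcris} (peeling off one factor of $(1-[\epsilon])^{p-1}$ at a time, using that $\Fil^r\cO\AAcr$ is stable under the relevant operations and $p$-adically complete) then shows that one may divide all $N$ copies of $(1-[\epsilon])$ back out \emph{inside} $B\widehat{\otimes}_A\Fil^r\cO\AAcr(R,R^+)$. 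Here one also needs the analogue of Lemma~\ref{lem.intersectioninobcris}/Corollary~\ref{cor.intersectioninobcris} with $\Fil^r\cO\AAcr(R,R^+)$ in place of $\cO\AAcr(R,R^+)$, which follows from the same argument since $\Fil^r$ is defined by divided powers of $\ker\theta$ and the elements $\eta$, $\gamma^\nu(\eta)$ all lie in $\ker\theta$, so the module decompositions used in the proof of Lemma~\ref{lem.intersectioninobcris} respect the filtration. Finally I would invoke $p$-adic completeness of both sides to pass from ``divisible by every power of $(1-[\epsilon])$ modulo the image'' to genuine membership in the image, exactly as in the last paragraph of the proof of Lemma~\ref{lem.intersectioninobcris}.

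\textbf{Main obstacle.} The routine part is the $r=0$ input and the formal bootstrapping; the delicate point is making sure that all the intersection/divisibility lemmas (Lemmas~\ref{lem.intersection0}, \ref{lem.intersectioninobcris}, Corollary~\ref{cor.intersectioninobcris}) can be upgraded to the filtered setting, i.e. that $\Fil^r\cO\AAcr$ and $\Fil^r\cO\AAcr(R,R^+)$ behave as direct summands compatibly with the $\Lambda$-module decompositions of $\cO\AAcr/p$ used in \cite{AB}. Concretely, the filtration $\Fil^r$ on $\cO\AAcr$ is the $p$-adic closure of the span of monomials $\tilde\xi^{[n_0]}\underline u^{[\underline n]}\underline w^{[\underline n]}$ with $|\underline n|\ge r$ together with the contributions from $\Fil^{\ge 1}$ of the base; one must check that intersecting with $\eta\cO\AAcr$ (or $([\epsilon]-1)^{p-1}\cO\AAcr$) and with $B\widehat\otimes_A\cO\AAcr(R,R^+)$ is compatible with this grading, which is where the bulk of the (omitted, because parallel to \cite{AB}) verification lies. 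I expect no genuinely new idea is needed beyond observing that $\eta\in\Fil^p\AAcr\subset\ker\theta$ has divided powers, so multiplication by $\eta$ and by $\gamma^\nu(\eta)$ shifts filtration degrees in a controlled way, making the whole $\Fil^\bullet$-refinement of the \cite{AB} argument go through verbatim.
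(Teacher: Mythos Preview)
Your treatment of the case $r=0$ is exactly the paper's argument: from Proposition~\ref{cor.cohomologyofobcris}(ii) the cokernel of $\iota_0$ is killed by $(1-[\epsilon])^{2d}$, so for $x\in H^0(\Gamma,\cO\AAcr)$ one has $(1-[\epsilon])^{2d(p-1)}x\in (B\widehat\otimes_A\cO\AAcr(R,R^+))\cap(1-[\epsilon])^{2d(p-1)}\cO\AAcr$, and Corollary~\ref{cor.intersectioninobcris} together with regularity of $1-[\epsilon]$ gives $x\in B\widehat\otimes_A\cO\AAcr(R,R^+)$.

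For $r>0$, however, the paper does \emph{not} upgrade the intersection lemma to a filtered version as you propose. Instead it runs a short induction on $r$ via the snake lemma: apply $B\widehat\otimes_A(-)$ and $H^0(\Gamma,-)$ to the exact sequence $0\to\Fil^r\to\Fil^{r-1}\to\mathrm{gr}^{r-1}\to 0$ and compare. The rightmost vertical map $B\widehat\otimes_A\mathrm{gr}^{r-1}\cO\AAcr(R,R^+)\to H^0(\Gamma,\mathrm{gr}^{r-1}\cO\AAcr)$ is injective simply because $B\widehat\otimes_A R^+=\widetilde S^+\hookrightarrow\widetilde S_\infty^+$ (and $\mathrm{gr}^{r-1}$ is free over these), so the snake lemma converts the isomorphism $\iota_{r-1}$ into the isomorphism $\iota_r$. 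This sidesteps entirely the ``filtered Corollary~\ref{cor.intersectioninobcris}'' that you identify as the main obstacle.

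Your proposed route is not wrong in spirit, but it is substantially harder and your sketch does not close the gap. The proof of Lemma~\ref{lem.intersectioninobcris} relies on explicit $\Lambda$-module bases of $\cO\AAcr/p$, and verifying that these decompositions respect $\Fil^r$ is a genuine check, not a formality; your assertion that it ``follows from the same argument'' is optimistic. More importantly, your second key step is muddled: once the $r=0$ case already yields $x\in B\widehat\otimes_A\cO\AAcr(R,R^+)$, what remains is precisely the equality $(B\widehat\otimes_A\cO\AAcr(R,R^+))\cap\Fil^r\cO\AAcr=B\widehat\otimes_A\Fil^r\cO\AAcr(R,R^+)$, and multiplying by further powers of $1-[\epsilon]$ does not address this---you need control on the filtration, not on $(1-[\epsilon])$-divisibility. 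The paper's snake-lemma induction supplies exactly that control with essentially no extra work.
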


\if false
\begin{proof} We shall begin with the case where $r=0$. By Proposition \ref{cor.cohomologyofobcris} (ii), the natural morphism $B\widehat{\otimes}_A\cO\AAcr(R,R^+)\to H^0(\Gamma, \cO\AAcr)$ is injective with cokernel killed by $(1-[\epsilon])^{2d}$. It remains to show that the latter map is also surjective. Let $x\in H^0(\Gamma, \cO\AAcr)$. So $(1-[\epsilon])^{2d}x\in B\widehat{\otimes}_A\cO\AAcr(R,R^+)$. In particular,
\[
(1-[\epsilon])^{2d(p-1)}x\in \left(B\widehat{\otimes}_A\cO\AAcr(R,R^+)\right)\bigcap (1-[\epsilon])^{2d(p-1)}\cO\AAcr.
\]
Recall from \cite[Corollaire 40]{AB}\label{cor.intersectioninobcris} that,  inside $\cO\AAcr$, we have
\[
(B\widehat{\otimes}_A \cO\AAcr(R,R^+))\bigcap ([\epsilon]-1)^{p-1}\cO\AAcr=([\epsilon]-1)^{p-1}(B\widehat{\otimes}_A\cO\AAcr(R,R^+)).
\]
Thus the last intersection is $(1-[\epsilon])^{2d(p-1)}(B\widehat{\otimes}_A\cO\AAcr(R,R^+))$. In particular, $x\in B\widehat{\otimes}_A\cO\AAcr(R,R^+)$: note that $1-[\epsilon]$ is a regular element. This concludes the proof for $r=0$. Assume now $r>0$ and that the statement holds for $\Fil^{r-1}\cO\AAcr$. Then we have the following commutative diagram with exact rows
\[
\xymatrix{0\ar[r] & B\widehat{\otimes}_A \Fil^r\cO\AAcr(R,R^+)\ar[r] \ar[d]^{\iota_r}& B\widehat{\otimes}_A \Fil^{r-1}\cO\AAcr(R,R^+)\ar[r]\ar[d]^{\iota_{r-1}} & B\widehat{\otimes}_A \mathrm{gr}^{r-1}\cO\AAcr(R,R^+)\ar[r]\ar@{^(->}[d] & 0 \\ 0\ar[r] & H^0(\Gamma, \Fil^{r}\cO\AAcr)\ar[r] & H^0(\Gamma, \Fil^{r-1}\cO\AAcr)\ar[r] & H^0(\Gamma, \mathrm{gr}^{r-1}\cO\AAcr). & }
\]
One checks that the last vertical morphism is injective as $B\widehat{\otimes}_A R^+=\widetilde{S}^+\subset \widetilde{S}_{\infty}^+$. So by snake lemma,  that $\iota_{r-1}$ is an isomorphism implies that $\iota_r$ is also an isomorphism. This finishes the induction and hence the proof of our proposition.
\end{proof}
\fi

\begin{cor}\label{cor.invariant-of-Fil} The natural morphism
\[
B\widehat{\otimes}_A \Fil^r\cO\BBcr(R,R^+) \lra H^0(\Gamma, \Fil^r\cO\BBcr(S_{\infty},S_{\infty}^+))
\]
is an isomorphism, where
\[
B\widehat{\otimes}_A\Fil^r\cO\BBcr(R,R^+):=\varinjlim_{n\geq 0} B\widehat{\otimes}_A \Fil^{r+n}\cO\AAcr(R,R^+)
\]
with transition maps given by multiplication by $t$.
\end{cor}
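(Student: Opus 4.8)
The statement to prove is the last corollary: that the natural morphism
\[
B\widehat{\otimes}_A \Fil^r\cO\BBcr(R,R^+) \lra H^0(\Gamma, \Fil^r\cO\BBcr)
\]
is an isomorphism, where the left side is the colimit $\varinjlim_n B\widehat{\otimes}_A \Fil^{r+n}\cO\AAcr(R,R^+)$ over the multiplication-by-$t$ transition maps, and the right side is, by definition, $\varinjlim_n H^0(\Gamma, \Fil^{r+n}\cO\AAcr)$ (with the same multiplication-by-$t$ transition maps).

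The plan is to deduce this directly from the preceding Proposition, which asserts that for each integer $m\ge 0$ the natural injection $\iota_m\colon B\widehat{\otimes}_A \Fil^m\cO\AAcr(R,R^+)\to H^0(\Gamma,\Fil^m\cO\AAcr)$ is an isomorphism. First I would recall from the definitions (just before Lemma~\ref{lem.QpSpace} on the $H^q$ side, and from the definition of $\Fil^r\cO\BBcr$ together with its $(R,R^+)$-analogue introduced just before Lemma~\ref{2obcris} / in Section~\ref{acy}) that both sides of the displayed morphism are colimits of the two systems $\{B\widehat{\otimes}_A\Fil^{r+n}\cO\AAcr(R,R^+)\}_n$ and $\{H^0(\Gamma,\Fil^{r+n}\cO\AAcr)\}_n$ indexed by $n\in\mathbb N$, with transition maps ``multiplication by $t$'' on both. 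The point is that the morphism in question is precisely the colimit over $n$ of the morphisms $\iota_{r+n}$; this requires checking that the $\iota_m$ are compatible with the multiplication-by-$t$ transition maps, i.e. that the square
\[
\xymatrix{
B\widehat{\otimes}_A\Fil^{m}\cO\AAcr(R,R^+)\ar[r]^{\cdot t}\ar[d]_{\iota_m} & B\widehat{\otimes}_A\Fil^{m+1}\cO\AAcr(R,R^+)\ar[d]^{\iota_{m+1}}\\
H^0(\Gamma,\Fil^{m}\cO\AAcr)\ar[r]^{\cdot t} & H^0(\Gamma,\Fil^{m+1}\cO\AAcr)
}
\]
commutes. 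This commutes because $t$ is a $\Gamma$-invariant element of $\AAcr$ (indeed $\gamma_i(t)=t$ for all $i$, as $\gamma_i([\epsilon])=[\epsilon]$, so $t=\log[\epsilon]$ is fixed) lying in $\Fil^1\AAcr$, hence multiplication by $t$ is $\Gamma$-equivariant and sends $\Fil^m$ into $\Fil^{m+1}$ on both the sheaf level and on the $(R,R^+)$-level, and $\iota_m$ is the natural (restriction) map, which is by construction compatible with all such structural operations. Then, since filtered colimits are exact (and in particular commute with taking a colimit of a morphism of directed systems), the colimit of a system of isomorphisms $\{\iota_{r+n}\}_n$ is an isomorphism, which is exactly the assertion.

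Concretely the steps I would carry out, in order, are: (1) unwind the two definitions so that the displayed map is literally $\varinjlim_n \iota_{r+n}$; (2) verify $\Gamma$-equivariance of multiplication by $t$ and that it respects the filtrations, hence that the transition maps on the two directed systems are well-defined and that $\{\iota_{r+n}\}_n$ is a morphism of directed systems; (3) invoke the preceding Proposition to say each $\iota_{r+n}$ is an isomorphism; (4) conclude by exactness of filtered colimits. I do not expect any serious obstacle here: the statement is a formal consequence of the previous Proposition once the bookkeeping of the two colimit definitions is made explicit. The only mildly delicate point — really the only thing worth writing down carefully — is step (2), namely confirming that the transition maps used to define $\Fil^r\cO\BBcr$ and $\Fil^r\cO\BBcr(R,R^+)$ (multiplication by $t$, after the identification $t^{-n}\Fil^{r+n}\cong \Fil^{r+n}$) match up under $\iota$, so that the diagram above genuinely commutes; this is immediate from $t$ being invariant and $\iota$ being the canonical comparison map, but it is the one place where one must be explicit rather than hand-wave. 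Hence the proof is short.

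\medskip

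\begin{proof}
By definition, $\Fil^r\cO\BBcr=\varinjlim_{n} t^{-n}\Fil^{r+n}\cO\AAcr$, and since multiplication by $t^n$ identifies $t^{-n}\Fil^{r+n}\cO\AAcr$ with $\Fil^{r+n}\cO\AAcr$, we have canonically
\[
H^0(\Gamma,\Fil^r\cO\BBcr)=\varinjlim_{n} H^0(\Gamma,\Fil^{r+n}\cO\AAcr),
\]
where the transition map $H^0(\Gamma,\Fil^{r+n}\cO\AAcr)\to H^0(\Gamma,\Fil^{r+n+1}\cO\AAcr)$ is induced by $x\mapsto tx$. Likewise, by definition,
\[
B\widehat{\otimes}_A\Fil^r\cO\BBcr(R,R^+)=\varinjlim_{n} B\widehat{\otimes}_A\Fil^{r+n}\cO\AAcr(R,R^+),
\]
with transition maps again given by multiplication by $t$. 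Since $\gamma_i([\epsilon])=[\epsilon]$ for all $i$, the element $t=\log[\epsilon]\in\Fil^1\AAcr$ is fixed by $\Gamma$; hence multiplication by $t$ is $\Gamma$-equivariant and carries $\Fil^{m}$ into $\Fil^{m+1}$, so the transition maps above are well-defined, and the square
\[
\xymatrix{
B\widehat{\otimes}_A\Fil^{m}\cO\AAcr(R,R^+)\ar[r]^{\cdot t}\ar[d]_{\iota_m} & B\widehat{\otimes}_A\Fil^{m+1}\cO\AAcr(R,R^+)\ar[d]^{\iota_{m+1}}\\
H^0(\Gamma,\Fil^{m}\cO\AAcr)\ar[r]^{\cdot t} & H^0(\Gamma,\Fil^{m+1}\cO\AAcr)
}
\]
commutes for every $m\ge 0$, because $\iota_m$ is the natural map and is therefore compatible with multiplication by the invariant element $t$. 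Thus $\{\iota_{r+n}\}_{n\ge 0}$ is a morphism of directed systems, and the morphism in the statement is precisely $\varinjlim_{n}\iota_{r+n}$. By the preceding proposition, each $\iota_{r+n}$ is an isomorphism; since filtered colimits are exact, the colimit $\varinjlim_{n}\iota_{r+n}$ is an isomorphism as well. This proves the corollary.
\end{proof}
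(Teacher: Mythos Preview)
Your proposal is correct and matches the paper's approach: the paper gives no explicit proof for this corollary, treating it as an immediate consequence of the preceding proposition via exactly the colimit argument you spell out. Your verification that $t$ is $\Gamma$-invariant (since $\epsilon\in\cO_{\widehat{\bk}}^{\flat}\subset R^{\flat+}$ is fixed by $\Gamma$) and that the $\iota_m$ form a morphism of directed systems is the only bookkeeping required, and you have handled it correctly.
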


\section{Appendix: base change for cohomology of formal schemes}
The aim of this appendix is to establish a base change result for cohomology of formal schemes (Proposition \ref{prop:technical-base-change}) that is used in the proof of relative comparison theorem. 

\begin{lemma}\label{lem:elementary-commutative-algebra} Let $A$ be a noetherian ring that is $p$-adically complete, i.e., the natural morphism $A\ra \varprojlim A/p^n$ is an isomorphism. Let $B$ be a flat $A$-module, and $F$ a finite $A$-module. Then, if $B$ is $p$-adically complete, so is $F\otimes_A B$.
\end{lemma}

\begin{proof} Let $B_n:=B/p^n$. So $B\stackrel{\sim}{\ra}\varprojlim_nB_n$. Since $B$ is $A$-flat, $B_n$ is flat over $A/p^n$. Moreover, the canonical map $B_{n+1}\ra B_n$ is clearly surjective. Therefore, by \cite[\href{https://stacks.math.columbia.edu/tag/0912}{Lemma 0912}]{Stacks}, for $F$ a finite $A$-module, we have $F\otimes B\simeq \varprojlim_n (F\otimes B_n)\simeq \varprojlim_n(F\otimes B)/p^n$. In other words, $F\otimes B$ is $p$-adically complete. 

\if false 
For $M$ an $A$-module, let us denote by $M[p^n]$ the $A$-submodule of elements killed by $p^n$. As $B$ is flat over $A$, we have $B[p^n]=A[p^n]\otimes_A B$, and $(F\otimes_A B)[p^n]=F[p^n]\otimes_A B$. Since $A$ is noetherian and $F$ is of finite type over $A$, there exists an integer $a\geq 0$ such that $A[p^n]=A[p^a]$, $B[p^n]=B[p^a]$, $F[p^n]=F[p^a]$ and $(F\otimes_A B)[p^n]=(F\otimes_A B)[p^a]$ for any $n\geq a$. On the other hand, write $F$ as the cokernel of an $A$-linear map $A^s\ra A^t$, from which we deduce an exact sequence
\[
B^s\stackrel{\alpha}{\lra} B^t \stackrel{\beta}{\lra} F\otimes_A B\lra 0.
\]
Let $K=\ker(\alpha)$. We claim that the induced topology on $K\subset B^s$ is the same as the $p$-adic topology. Clearly $p^nK\subset K\cap (p^nB^s)$. Conversely, let $x\in K\cap (p^nB^s)$. Let $y\in B^s$ with $x=p^n y$. Then $0=\alpha(x)=p^n\alpha(y)$. So $\alpha(y)\in B^t[p^n]=B^t[p^a]$. Thus $p^ay\in \ker(\alpha)=K$ and $x=p^ny=p^{n-a}(p^ay)\in p^{n-a}K$. Therefore,
\[
p^nK\subset K\cap (p^nB^s)\subset p^{n-a}K, \quad \forall \ n\geq a,
\]
giving our claim. By consequence, as $K$ is complete for the induced topology because $B^t$ is $p$-adically complete thus Hausdorff with respect to the $p$-adic topology (so $K\subset B^s$ is a closed subspace), $K$ is $p$-adically complete. Let $C=\mathrm{Im}(\alpha)\subset B^t$. So, by \cite[Corollary 10.3]{AM}, we deduce from the short exact sequence
\[
0\lra K\lra B^s\lra C\lra 0,
\]
a second short exact sequence
\[
0\lra \varprojlim K/p^n \lra \varprojlim B^s/p^n \lra \varprojlim C/p^n\lra 0,
\]
whence a commutative diagram with exact rows
\[
\xymatrix{0\ar[r] & \varprojlim K/p^n \ar[r] &  \varprojlim B^s/p^n \ar[r] &  \varprojlim C/p^n\ar[r] & 0 \\ 0\ar[r] & K\ar[r]\ar[u]^{\simeq} & B^s\ar[r]\ar[u]^{\simeq} & C\ar[r]\ar[u] & 0 .}
\]
Since $K\stackrel{\sim}{\ra}\varprojlim K/p^n$ and $B^s\stackrel{\sim}{\ra}\varprojlim B^s/p^n$, we find $C\stackrel{\sim}{\ra} \varprojlim C/p^n$. In other words, $C$ is $p$-adically complete.
Similarly, the subspace topology on $C\subset B^t$ is the same as the $p$-adic topology on $C$, so from the exact sequence
\[
0\lra C\lra B^t\lra F\otimes_ AB\lra 0,
\]
and the fact that $C$ and $B^t$ are $p$-adically complete, we deduce that $F\otimes_A B$ is $p$-adically complete, as claimed by our lemma.\fi
\if false 
(2) Let $I\subset A$ be an ideal. Consider the short exact sequence
\[
0\lra I\otimes_A B\lra B\lra (A/I)\otimes_A B \lra 0.
\]
A similar argument as in (1) shows that the sequence below is exact 
\[
0\lra \varprojlim (I\otimes_A B/p^n)\lra \varprojlim(B/p^n)\lra \varprojlim((A/I)\otimes_A B/p^n) \lra 0.
\]
As $\hat{B}/p^n=B/p^n$, $\hat{B}$ is $p$-adically complete and $\varprojlim (I\otimes_A B/p^n)=\varprojlim(I\otimes_A \hat{B}/p^n)$ is the $p$-adic completion of $I\otimes_A \hat{B}$. By (1), the latter is already $p$-adically complete. So the exact sequence above can be written as
\[
0\lra I\otimes_A \hat{B}\lra \hat{B}\lra (A/I)\otimes_A \hat{B}\lra 0.
\]
In particular, the natural map $I\otimes_A \hat B\ra \hat B$ is injective, and $\hat{B}$ is flat over $A$.\fi
\end{proof}

\begin{lemma}\label{lem:flat-base-change} Let $A$ be a $p$-adically complete notherian ring, and $B$ a flat $A$-module that is $p$-adically complete.  Let $\cX\to \Spf(A)$ be a proper morphism of $p$-adic formal schemes. Let $\mathcal F$ be a coherent sheaf on $\cX$. Then, for every $i\in \mathbb Z$, the natural morphism
\[
H^i(\cX,\mathcal F)\otimes_A B\longrightarrow H^i(\cX,\mathcal F\widehat{\otimes}_A B)
\]
is an isomorphism.

\end{lemma}

\begin{proof} If $\mathcal F$ is a coherent sheaf annihilated by some power of $p$, $\mathcal F\widehat{\otimes}_A B=\mathcal F\otimes_A B$. Then, our proposition follows from the standard flat base change result. Indeed, by a theorem of Lazard (\cite[Th\'eor\`eme 1.2]{Laz}), $B$ can be written as a filtered inductive limit of finite free $A$-modules. As $\cX$ is quasi-compact and quasi-separated, one only needs to prove our assertion when $B$ is finite and free over $A$. But in this case our assertion is obvious. In general, let $\mathcal G\subset \mathcal F$ be the subsheaf formed by elements killed by some power of $p$. As $\mathcal F$ is coherent and as $\cX$ is quasi-compact, $\mathcal G$ is killed by a large power of $p$. Therefore $\mathcal G$ and the quotient $\mathcal F/\mathcal G$ are coherent sheaves on $\cX$. Since $\cF/\cG$ is $p$-torsion free, the same holds for $(\cF/\cG)\otimes_A B$ as $B$ is flat over $A$. Using the tautological exact sequence
\begin{equation}\label{eq:ses-proj-systems}
0\lra \mathcal G\otimes_AB\lra \mathcal F\otimes_A B\lra \mathcal (F/\mathcal G)\otimes_A B\lra 0,
\end{equation}
we get a short exact sequence of projective systems:
\[
0\lra \left(\mathcal G\otimes_A B/p^{n}\right)_{n\geq 0}\longrightarrow \left(\mathcal F\otimes_A B/p^{n}\right)_{n\geq 0}\longrightarrow \left((\mathcal F/\mathcal G)\otimes_A B/p^{n}\right)_{n\geq 0}\lra 0.
\]
Because $p^{n}\mathcal G=0$ and thus $p^{n}(\mathcal G\otimes_A B)=0$ for large $n$, we find
\[
\mathcal G\otimes_A B/p^{n+1}\stackrel{\sim}{\lra} \mathcal G\otimes_A B/p^{n}, \quad \textrm{for }n\gg 0.
\]
So $R^1\varprojlim_n \left((\mathcal G\otimes_A B)/p^{n}\right)=0$.
Passing to projective limits in \eqref{eq:ses-proj-systems}, we obtain a short exact sequence
\[
0\lra \mathcal G\widehat{\otimes}_A B\lra \mathcal F\widehat{\otimes}_A B\lra (\mathcal F/\mathcal G)\widehat{\otimes}_AB\lra 0,
\]
from which we get a commutative diagram with exact rows
\[
\xymatrix{\cdots \ar[r]  & H^i(\cX, \mathcal G)\otimes B\ar[r]\ar[d]^{\simeq} & H^i(\cX, \mathcal F)\otimes B\ar[r]\ar[d]^{\rm can} & H^{i}(\cX, \mathcal F/\mathcal G)\otimes B\ar[r]\ar[d]^{\rm can} & \cdots \\ \cdots \ar[r]  & H^i(\cX, \mathcal G\widehat{\otimes} B)\ar[r] & H^i(\cX, \mathcal F\widehat{\otimes} B)\ar[r] & H^{i}(\cX, (\mathcal F/\mathcal G)\widehat{\otimes} B)\ar[r] & \cdots.}
\]
Consequently, to prove our proposition for $\mathcal F$, it suffices to show it for $\mathcal F/\mathcal G$. Therefore,  replacing $\cF$ by $\cF/\cG$ if needed, we assume that $\mathcal F$ is $p$-torsion free.

Let $\mathcal F_n=\mathcal F/p^{n}$.  By flat base change, for all $n\geq 0$, the natural morphisms
\[
H^i(\cX,\mathcal F_n)\otimes_A B\longrightarrow H^i(\cX,\mathcal F_n\otimes_A B), \quad i\in \mathbb Z,
\]
are isomorphisms. Passing to projective limits, we obtain isomorphisms:
\[
\alpha\colon \varprojlim_n\left(H^i(\cX,\mathcal F_n)\otimes_A B\right) \longrightarrow \varprojlim_n H^i(\cX,\mathcal F_n\otimes_A B), \quad i\in \mathbb Z.
\]
As $A$ is noetherian, the projective system $\left(H^{i-1}(\cX,\mathcal F_n)\right)_{n\geq 0}$ satisfies the (ML)-condition (\cite[Corollaire 3.4.4]{Gro61}), hence so does $\left(H^{i-1}(\cX,\mathcal F_n)\otimes_A B\right)_{n\geq 0}$. Thus
\[
R^1\varprojlim_n (H^{i-1}(\cX,\mathcal F_n)\otimes_A B)=R^1\varprojlim_n H^{i-1}(\cX,\mathcal F_n\otimes_A B)=0.
\]
Using the set of affine open formal subschemes of $\cX$ and \cite[Lemma 3.18]{Sch}, one checks that $R^j\varprojlim_n (\mathcal F_n\otimes_A B)=0$ whenever $j>0$. So $H^i(\cX,\cF\widehat{\otimes}_AB)\simeq H^i(\cX,R\varprojlim (\cF_n\otimes_A B))$, and the natural morphism
\[
\beta \colon H^i(\cX,\mathcal F\widehat{\otimes}_A B)\longrightarrow \varprojlim H^i(\cX,\mathcal F_n\otimes_A B).
\]
is surjective with kernel isomorphic to $R^1\varprojlim_n H^{i-1}(\cX,\mathcal F_n\otimes_A B)=0$ (Lemma \ref{jannsen}). In other words, $\beta$ is an isomorphism.

Next, consider the tautological exact sequence (recall that $\mathcal F$ has no $p$-torsion)
\[
0\longrightarrow \mathcal F\stackrel{p^{n}}{\longrightarrow} \mathcal F\longrightarrow \mathcal F_n\longrightarrow 0.
\]
We obtain a short exact sequence
\[
0\longrightarrow H^i(\cX,\mathcal F)/p^{n}\longrightarrow H^i(\cX,\mathcal F_n)\longrightarrow H^{i+1}(\cX,\mathcal F)[p^{n}]\longrightarrow 0,
\]
and thus the one below as $B$ is flat over $A$:
\begin{eqnarray*}
0\longrightarrow H^i(\cX,\mathcal F)\otimes_A B/p^{n} \stackrel{\gamma_n}{\longrightarrow} H^i(\cX,\mathcal F_n)\otimes_A B\longrightarrow H^{i+1}(\cX,\mathcal F)[p^{n}]\otimes_AB\longrightarrow 0.
\end{eqnarray*}
Because $H^{i+1}(\cX,\mathcal F)$ is an $A$-module of finite type and $A$ is noetherian, the $A$-submodule $H^{i+1}(\cX,\mathcal F)_{p\textrm{-tor}}\subset H^{i+1}(\cX,\cF)$ of elements killed by some power of $p$ is finitely generated over $A$. In particular, there exists $a\in \mathbb N$ such that $p^a$ kills $H^{i+1}(\cX,\mathcal F)_{p\textrm{-tor}}$ and thus $H^{i+1}(\cX,\mathcal F)[p^{n}]$ for all $n\in \mathbb N$. It follows that the transition map below is trivial for every $n$:
\[
H^{i+1}(\cX,\mathcal F)[p^{n+a}]\lra H^{i+1}(\cX,\mathcal F)[p^{n}], \quad x\mapsto p^a x.
\]
Thus the projective systems $(H^{i+1}(\cX,\mathcal F)[p^{n}])_n$ and $(H^{i+1}(\cX,\mathcal F)[p^{n}]\otimes_AB)_n$ satisfy the (ML)-condition. So one deduces an isomorphism $\gamma:=\varprojlim \gamma_n$
\[
\gamma \colon H^i(\cX,\mathcal F)\otimes_A B=\varprojlim \left(H^i(\cX,\mathcal F)\otimes_A B/p^{n}\right) \stackrel{\sim}{\longrightarrow} \varprojlim \left(H^i(\cX,\mathcal F_n)\otimes_A B\right).
\]
Here we have the first equality because $H^{i+1}(\cX,\cF)$ is of finite type over $A$, and $B$ is flat and $p$-adically complete (Lemma \ref{lem:elementary-commutative-algebra}).

Finally from the commutative diagram below
\[
\xymatrix{H^i(\cX,\mathcal F)\otimes_A B \ar[rr]^{\rm can}\ar[d]_{\gamma}^{\simeq} & & H^i(\cX,\mathcal F\widehat{\otimes}_A B) \ar[d]^{\beta}_{\simeq}\\ \varprojlim \left(H^i(\cX,\mathcal F_n)\otimes_A B\right) \ar[rr]^{\alpha}_{\simeq} & & \varprojlim H^i(\cX,\mathcal F_n\otimes_A B),}
\]
we obtain that the upper horizontal morphism is an isomorphism, as required.
\end{proof}

\begin{prop}\label{prop:technical-base-change} Let $A$ be a $p$-adically complete notherian ring. Let $B$ be an $A$-module that is $p$-adically complete. Assume that $A$ and $B$ are flat over $\mathbb Z_p$. Let $\cX\to \Spf(A)$ be a proper flat morphism between $p$-adic formal schemes. Let $\mathcal F$ be a bounded complex of coherent sheaves on $\cX$, such that for every term $\cF^i$ of $\cF$, $\cF^i[1/p]$ is locally a direct factor of a finite free $\cO_{\cX}[1/p]$-module.
\begin{enumerate}
\item For every $i\in \mathbb Z$, there is a natural map
\[
 H^i\left( R\Gamma(\cX, \mathcal F)\otimes_A^L B\right)\longrightarrow H^i(\cX, \mathcal F\widehat{\otimes}_A B)
\]
whose kernel and cokernel are killed by some power of $p$.
\item If moreover the finite $A[1/p]$-modules $H^j(\cX,\cF)[1/p]$, $j\in \mathbb Z$, are flat over $A[1/p]$, the kernel and the cokernel of the natural map
\[
H^i(\cX,\cF)\otimes B\lra H^i(\cX,\cF\widehat{\otimes}B)
\]
are annihilated by some power of $p$. In particular, we have isomorphisms
\[
H^i(\cX,\cF)\otimes B[1/p]\stackrel{\sim}{\lra} H^i(\cX,\cF\widehat{\otimes}B[1/p]), \quad \forall \ i\in \mathbb Z.
\]
\end{enumerate}
\end{prop}

\begin{proof} (1) We claim first that $B$ has a resolution $B^{\bullet}\ra B$ by $p$-adically complete and flat $A$-modules. Indeed, let $F^{\bullet}\ra B$ be a resolution of $B$ by free $A$-modules. As $A$ is flat over $\mathbb Z_p$, each $F^{i}$ is $p$-torsion free. Since $B$ is flat over $\Zp$, it is also $p$-torsion free. Therefore, the induced complex
\begin{equation}\label{eq:complex-B}
\cdots \lra F^{-1}/p\lra F^0/p\lra B/p\lra 0 \lra \cdots,
\end{equation}
and thus the complex
\[
\cdots \lra \hat{F}^{-1}\lra \hat{F}^0\lra B\lra 0\lra \cdots
\]
are exact. Therefore, we get a resolution $B^{\bullet}:=\hat{F}^{\bullet}$ of $B$ by flat $A$-modules (\cite[\href{https://stacks.math.columbia.edu/tag/06LE}{Lemma 06LE}]{Stacks}) that are $p$-adically complete (\cite[\href{https://stacks.math.columbia.edu/tag/05GG}{Lemma 05GG}]{Stacks}). In particular, by Lemma \ref{lem:flat-base-change}, we obtain an isomorphism in the derived category
\begin{equation}\label{eq:base-change-for-complete-flat-A-algebra}
R\Gamma(\cX, \cF)\otimes_A B^{\bullet}\stackrel{\sim}{\lra}R\Gamma(\cX,\cF\widehat{\otimes}B^{\bullet}).
\end{equation}

Consider the morphism $R\Gamma(\cX,\cX)\otimes^L B\ra R\Gamma(\cX, \cF\widehat{\otimes}B)$ in the derived category of abelian sheaves on $\cX$ defined by the the commutative diagram below
\begin{equation}\label{eq.diagram-mapping-cone}
\xymatrix{R\Gamma(\cX,\cF)\otimes^L B\ar[r]  & R\Gamma(\cX,\cF\widehat{\otimes}B) \\ R\Gamma(\cX,\cF)\otimes B^{\bullet}\ar[r]^{\simeq}_{\eqref{eq:base-change-for-complete-flat-A-algebra}}\ar[u]^{\simeq} & R\Gamma(\cX,\cF\widehat{\otimes}B^{\bullet}).\ar[u] }
\end{equation}
Here the left vertical map is an isomorphism as $B^{\bullet}\ra B$ is a flat resolution of $B$. To complete the proof of (1), it remains to show that the upper horizontal map of \eqref{eq.diagram-mapping-cone} induces a morphism
\begin{equation}\label{eq.diagram-upper-horizontal-cohomology}
 H^i\left( R\Gamma(\cX, \mathcal F)\otimes_A^L B\right)\longrightarrow H^i(\cX, \mathcal F\widehat{\otimes}_A B)
\end{equation}
whose kernel and cokernel are annihilated by some power of $p$. Using the naive truncations of $\cF$ and by induction on the length of the bounded complex $\cF$, we reduce to the case where $\cF$ is a complex concentrated in degree $0$, i.e., a coherent sheaf on $\cX$, such that $\cF[1/p]$ is locally a direct factor of a finite free $\cO_{\cX}[1/p]$-module.

Consider the complex
\begin{equation}\label{eq:cF-tensor-B}
\cdots \lra \cF\widehat{\otimes}_AB^{-1}\lra \cF\widehat{\otimes}_A B^0\lra \cF\widehat{\otimes}_A B\lra 0 \lra \cdots.
\end{equation}
We claim that there exists $N\in \mathbb N$ such that $p^N$ kills all its cohomologies. This is a local question on $\cX$, so assume that $\cF[1/p]$ is a direct factor of the finite free $\cO_{\cX}[1/p]$-module $\cO_{\cX}^d[1/p]$. Because $\cF$ is coherent, there exist morphisms $f:\cF\ra \cO_{\cX}^d$ and $g:\cO_{\cX}^d\ra \cF$ with $g\circ f=p^N\cdot \mathrm{id}_{\cF}$ for some $N\in \mathbb N$. By the functoriality of the complex \eqref{eq:cF-tensor-B} relative to $\cF$, we have the following commutative diagram
\[
\xymatrix{\cdots \ar[r] &  \cF\widehat{\otimes}_AB^{-1}\ar[r]\ar[d]^{f\widehat{\otimes}\mathrm{id}_{B^{-1}}} &  \cF\widehat{\otimes}_A B^0\ar[r] \ar[d]^{f\widehat{\otimes}\mathrm{id}_{B^{0}}} &  \cF\widehat{\otimes}_A B\ar[r] \ar[d]^{f\widehat{\otimes}\mathrm{id}_{B}} & 0 \ar[r] \ar[d]^0& \cdots \\ \cdots \ar[r] &  \cO_{\cX}^d\widehat{\otimes}_AB^{-1}\ar[r]  \ar[d]^{g\widehat{\otimes}\mathrm{id}_{B^{-1}}}&  \cO_{\cX}^d\widehat{\otimes}_A B^0\ar[r] \ar[d]^{g\widehat{\otimes}\mathrm{id}_{B^{0}}}&  \cO_{\cX}^d\widehat{\otimes}_A B\ar[r]\ar[d]^{g\widehat{\otimes}\mathrm{id}_{B}}& 0 \ar[r]\ar[d]^0 & \cdots \\ \cdots \ar[r] &  \cF\widehat{\otimes}_AB^{-1}\ar[r] &  \cF\widehat{\otimes}_A B^0\ar[r] &  \cF\widehat{\otimes}_A B\ar[r] & 0 \ar[r] & \cdots}.
\]
Because $(g\widehat{\otimes}\mathrm{id}_{B})\circ (f\widehat{\otimes}\mathrm{id}_{B})=p^N$ and $(g\widehat{\otimes}\mathrm{id}_{B^i})\circ (f\widehat{\otimes}\mathrm{id}_{B^i})=p^N$ for every $i$, to prove our claim, it suffices to show that the complex in the second row
\begin{equation}\label{eq:cO-tensor-B}
\cdots \lra \cO_{\cX}^d\widehat{\otimes}_AB^{-1}\lra \cO_{\cX}^d\widehat{\otimes}_A B^0\lra \cO_{\cX}^d\widehat{\otimes}_A B\lra 0 \lra \cdots
\end{equation}
is exact. Since $\cX$ is flat over $A$, we obtain from \eqref{eq:complex-B} a similar exact sequence
\begin{equation*}
\cdots \lra \cO_{\cX}^d\otimes_AB^{-1}/p\lra \cO_{\cX}^d\otimes_AB^0/p\lra \cO_{\cX}^d\otimes_AB/p\lra 0\lra \cdots.
\end{equation*}
Because the sheaves $\cO_{\cX}^d\otimes_A B^i$'s and $\cO_{\cX}^d\otimes_A B$ are $p$-torsion free, we deduce as above the exactness of \eqref{eq:cO-tensor-B}, completing the proof of our claim.

Let $C$ be the complex of abelian sheaves concentrated in degrees $\leq 0$ given by the following distinguished triangle
\[
\cF\widehat{\otimes}B^{\bullet} \lra \cF\widehat{\otimes}B\lra C\stackrel{+1}{\lra}.
\]
By what we have shown above, there exists some $N\in \mathbb N$ such that $p^{N}\cdot \mathcal H^i(C)=0$ for every $i\in \mathbb Z$. In particular, the cohomology groups of $R\Gamma(\cX, C)$, which is also the mapping cone of the right vertical map of \eqref{eq.diagram-mapping-cone}, are annihilated by some power of $p$. Consequently, the kernel and the cokernel of the map \eqref{eq.diagram-upper-horizontal-cohomology} are killed by some power of $p$, as required by (1).

(2) Consider the spectral sequence
\[
E_2^{a,b}=\mathrm{Tor}_A^{-a}(H^b(\cX,\cF),B)\Longrightarrow  H^{a+b}\left(R\Gamma(\cX,\cF)\otimes_A^L B\right).
\]
Observe that, for $M$ a finite $A$-module such that $M[1/p]$ is flat, thus locally free, over $A[1/p]$, $\mathrm{Tor}_{A}^{-a}(M, B)$ is killed by some power of $p$ whenever $a<0$. In particular, $E_{2}^{a,b}$ is killed by some power of $p$ for $a<0$. Thus, the kernel and the cokernel of
\[
E_{2}^{0,i}=H^{i}(\cX,\cF)\otimes B\lra H^{i}\left(R\Gamma(\cX,\cF)\otimes^L B\right)
\]
are annihilated by some power of $p$. Combining the first statement of our proposition, we obtain that the natural map
\[
H^i(\cX,\cF)\otimes B\lra H^i(\cX,\cF\widehat{\otimes}B)
\]
has its kernel and cokernel killed by some power of $p$. Inverting $p$. we deduce
\[
H^i(\cX,\cF)\otimes B[1/p]\stackrel{\sim}{\lra} H^i(\cX,\cF\widehat{\otimes}B)[1/p]\simeq   H^i(\cX,\cF\widehat{\otimes}B[1/p]),
\]
as desired in (2).
\end{proof}


\vspace{\baselineskip}

Fucheng Tan

Research Institute for Mathematical Sciences

Kyoto University

Kitashirakawa-Oiwakecho, Sakyo-ku

Kyoto 606-8502, Japan

Email: ftan@kurims.kyoto-u.ac.jp

\vspace{\baselineskip}
 \vspace{\baselineskip}

Jilong Tong

School of Mathematical Sciences

Capital Normal University

105 Xi San Huan Bei Lu

Beijing 100048, China

Email: jilong.tong@cnu.edu.cn

\end{document}